\newtheorem{theorem}{Theorem}[section] 
\newtheorem{proposition}[theorem]{Proposition}
\newtheorem{corollary}[theorem]{Corollary}
\newtheorem{lemma}[theorem]{Lemma}
\theoremstyle{definition}
\newtheorem{definition}[theorem]{Definition}
\newtheorem{example}[theorem]{Example}
\theoremstyle{remark}
\newtheorem{remark}[theorem]{Remark}
\newtheorem{joke}[theorem]{Joke}
\newcommand{\overbar}[1]{\mkern1.5mu\overline{\mkern-1.5mu#1\mkern-1.5mu}\mkern 1.5mu}
\title{Generalized Sums of Linear Orders}
\author{\'Alvaro D\'iaz Ramos}
\address[D\'iaz Ramos]{
Department of Mathematics, 
Stanford University
}
\email{aldr@stanford.edu}
\author{Garrett Ervin}
\address[Ervin]{
Division of Physics, Mathematics, and Astronomy, 
California Institute of Technology
}
\email{gervin@caltech.edu}
\author{Saharon Shelah}
\address[Shelah]{
Einstein Institute of Mathematics, 
Hebrew University of Jerusalem
}
\email{shelah@math.huji.ac.il}
\begin{document}

\begin{abstract}
We study generalized sums of linear orders. These are binary operations that, given linear orders $A$ and $B$, return an order $A \oplus B$ that can be decomposed as an isomorphic copy of $A$ interleaved with a copy of $B$. We show that there is a rich array of associative sums different from the usual sum $+$ and its dual. The simplest of these sums arise from what we call \textit{sum-generating classes} of linear orders. These classes determine canonical decompositions of every linear order into left and right halves. We study the structural and algebraic properties of these classes along with the sums they generate.

We then turn our attention to commutative sums on various subclasses of the linear orders. For this, we introduce the notion of a \textit{complicated class} of linear orders and show that over such classes sums can be constructed in a very flexible way. Using this construction, we prove the existence of associative sums lacking the structural properties of the usual sum. Along the way, we characterize the associative and commutative sums on the ordinals.
\end{abstract}
\maketitle

\tableofcontents
\newpage

\section{Introduction} \label{sect:intro}

The usual sum $A + B$ of two linear orders $A$ and $B$ is obtained by concatenating $A$ and $B$. This operation resembles the disjoint union of sets in that $A + B$ can be partitioned (by cutting at the $+$ sign) into a suborder isomorphic to $A$ and a suborder isomorphic to $B$. We call a binary operation $\oplus$ on the class of linear orders $LO$ a \emph{sum} if it shares this property with $+$, that is, if for every $A, B \in LO$, the order $A \oplus B$ can be written as a disjoint union $A' \sqcup B'$ of two suborders $A'$ and $B'$ such that $A' \cong A$ and $B' \cong B$. In this paper, we are interested in studying such operations. 

The usual sum also has the following properties:
\begin{itemize}
	\item[(i.)] (\emph{regularity}) if $A \cong X$ and $B \cong Y$, then $A + B \cong X + Y$,
	\item[(ii.)] (\emph{associativity}) $A + (B + C) \cong (A + B) + C$. 
\end{itemize}

Dual to the usual sum is the reverse operation $+^*$ defined by $A +^* B = B + A$. Like the usual sum, its dual is regular and associative. 

While intuitively natural operations on linear orders, $+$ and $+^*$ are not sums in the categorical sense of being coproducts. In particular, these operations are not commutative: there are many pairs of orders $A, B$ for which $A + B \not\cong B + A$, and hence also $B +^* A \not\cong A +^* B$. One might ask whether $+$ and $+^*$ have a natural intrinsic characterization among the sums on $LO$. More concretely, we consider the following questions:

\begin{itemize}
	\item[(1.)] Are $+$ and $+^*$ the only regular, associative sums on $LO$?
	\item[(2.)] If not, can $+$ and $+^*$ be characterized among the regular, associative sums on $LO$?
\end{itemize}

The answer to (1.) turns out to be negative in a very strong sense, and one of our main goals in this paper is to construct many regular and associative sums $\oplus$ on $LO$ and natural subclasses of $LO$ that differ from the usual sum and its dual. 

We approach the construction of such sums in two different ways. In the first approach, we begin by defining a global scheme on $LO$ that decomposes every linear order into convex pieces. To sum a given pair of orders $A$ and $B$, each individual $A$-piece is summed with a corresponding $B$-piece with one of $+$ of $+^*$, and the resulting orders are then concatenated to form $A \oplus B$. The regularity and associativity of sums defined in this way will follow from certain canonical properties of the decomposition scheme. 

The simplest scheme is the one under which every linear order is decomposed into a single convex piece (i.e., not decomposed at all). Sums $\oplus$ defined relative to such a scheme have the property that for every $A$ and $B$, either $A \oplus B = A + B$ or $A \oplus B = A +^* B$. A natural question is whether both possibilities can be realized by a single sum, that is, whether it can be that for certain pairs of inputs $A$ and $B$, $\oplus$ returns their usual sum, and for others, their dual sum. It turns out that the answer is negative: we show in \Cref{th: semi-standard sums are standard} that the only regular, associative such sums are $+$ and $+^*$. Said another way, \Cref{th: semi-standard sums are standard} shows that if $\oplus$ is a regular, associative sum on $LO$ such that $A \oplus B$ can always be decomposed into \textit{convex} copies of $A$ and $B$, then $\oplus$ is one of $+$ and $+^*$. This can be viewed as an answer to question (2.) above. 

On the other hand, there are decomposition schemes yielding novel sums under which every order is decomposed into at most \textit{two} convex pieces. Such schemes cut every linear order $A$ ``in the middle," that is, into an initial segment $A_L$ and corresponding final segment $A_R$, so that $A \cong A_L + A_R$. We will focus on the case where the cut made is determined by what we call a \textit{sum-generating class} of linear orders; see \Cref{def: left sum-generating class}. For a fixed sum-generating class $\mathcal{C}$, an associated decomposition scheme is defined by taking $A_L$ to be the longest initial segment of a given order $A$ that belongs to $\mathcal{C}$, and $A_R$ the remaining final segment. We then define a sum relative to this scheme by setting 
\[
A \oplus B := (A_L \circ B_L) + (A_R \diamond B_R),
\] 
where $\circ$ is chosen to be uniformly either $+$ or $+^*$, and likewise for $\diamond$. 

The defining properties of sum-generating classes ensure that sums defined in this way are regular and associative. Conversely, we prove that if a two-piece decomposition scheme is determined by a class of orders $\mathcal{C}$ in the above sense and generates a regular and associative sum, then $\mathcal{C}$ must be a sum-generating class. See \Cref{th: characterization of sum-generating classes with simple sums}.

Sum-generating classes and the decompositions they determine have interesting algebraic and structural properties that we examine in \Cref{subsect: algebraic properties}. In \Cref{subsect: filtrations and sifted sums}, we iterate the two-piece construction to get sums determined by decompositions with many (even infinitely many) pieces. 

Our second approach to constructing sums is inductive. Rather than depending on a global structural decomposition theory like the sums that arise from sum-generating classes, this approach depends instead on a \textit{non-structure} property for the class of orders over which the sum is being defined. More precisely, we begin with a class $K$ of linear orders of a fixed cardinality $\lambda$ with the property that any two orders $A, B \in K$ can be summed together in the maximum possible number of ways, i.e.
\[
|\{X \in K: X = A' \sqcup B', A' \cong A, B' \cong B\}/\cong| = 2^{\lambda}.
\]
We call such $K$ \textit{complicated classes}. In \Cref{subsect: compliacted classes}, we show how to inductively construct sums $\oplus$ on complicated classes $K$ that are not only regular and associative, but commutative as well. We call such sums \textit{good}; see \Cref{sect: good sums of linear orders}. We show how this construction can be used to get good sums on certain classes of non-scattered orders; see \Cref{ex: binary shuffle complicated class}, \Cref{ex: usable compliacted class}.

In the course of our work we also investigate good sums over certain highly structured classes of linear orders. In \Cref{subsect: filtrations and sifted sums}, we show how the Hessenberg sum on the ordinals arises by way of an iterated sum-generating class construction. In \Cref{subsect: good sums of well-orders}, we investigate good sums of well-orders, ultimately giving bounds on the possible sums of two ordinals; see \Cref{th: min sum is minimal/bounds on instances of a sum of ordinals}. In \Cref{subsect: rational shuffles}, we construct a good sum on the class of countable rational shuffles that is quite different from any of the other sums considered in the paper. 

The culmination of our work is \Cref{sect: revisiting the properties of sums}, where we prove that there exist regular, associative sums lacking the basic structural properties of the usual sum. To achieve this, we combine our work on good sums on the ordinals, complicated classes, and filtrations of sum-generating classes.

\section{Conventions and preliminaries}\label{sect: preliminaries}
A \emph{linear order} is a pair $(X, <)$ consisting of a set $X$ and a strict total ordering $<$ on $X$. We often suppress mention of the ordering relation and denote $(X, <)$ by $X$. To indicate that an ordering relation $<$ is an ordering on $X$, we sometimes write it as $<_X$. 

Given a linear order $(X, <)$, its \emph{reverse} is the order $(X, <^*)$, where the ordering $<^*$ is defined by $x <^* y$ if and only if $y < x$, for all $x, y \in X$. We also use the shorthand $X^*$ for this order. 

A \emph{suborder of $X$} is a subset $Y \subseteq X$ equipped with the inherited ordering from $X$. We use the terms \emph{interval} and \emph{convex subset} of $X$ interchangeably to mean a suborder $I \subseteq X$ such that for all $x, y, z \in X$, whenever $x, y \in I$ and $x < z < y$, then $z \in I$. 

An \emph{initial segment} of $X$ is a convex subset $L$ of $X$ with the property that if $x < y$ and $y \in L$ then $x \in L$. A subset $R \subseteq X$ is a \emph{final segment} of $X$ if $X \setminus R$ is an initial segment; equivalently, if whenever $x > y$ and $y \in R$, then $x \in R$. 

An \emph{embedding} from a linear order $X$ to a linear order $Y$ is a map $f: X \rightarrow Y$ such that $x < y$ implies $f(x) < f(y)$ for all $x, y \in X$. We also call embeddings \emph{order-preserving maps}. Embeddings are automatically injective. A \emph{convex embedding} is an embedding $f: X \rightarrow Y$ such that $f[X]$ is convex in $Y$. An \emph{isomorphism} is a surjective embedding. We write $f: X \xrightarrow{\sim} Y$ to indicate that $f$ is an isomorphism from $X$ to $Y$. 

We write $X \preceq Y$ if there is an embedding from $X$ to $Y$, $X \preceq_c Y$ if there is a convex embedding from $X$ to $Y$, and $X \cong Y$ if there is an isomorphism from $X$ to $Y$. We write $X \equiv Y$ if $X \preceq Y$ and $Y \preceq X$, and in this case say that $X$ and $Y$ are \emph{equimorphic}. Equimorphic orders need not be isomorphic in general. 

An \emph{order type} is an isomorphism class of linear orders. Though throughout the paper we will work with orders as opposed to order types, we are often interested in a given order $X$ only up to isomorphism, and we may occasionally conflate $X$ with its order type. 

$LO$ denotes the class of all linear orders. We emphasize that the members of $LO$ are specific linear orders and not order types. We will study various operations on $LO$, and frequently (though not always) these operations will be invariant under isomorphism, in the sense that isomorphic inputs yield isomorphic outputs. Such operations can also be viewed as operations on order types. 

A \emph{well-order} is a linear order $X$ such that every non-empty subset $Y \subseteq X$ has a $<_X$-least element. An \emph{ordinal} is a transitive set well-ordered by the set-membership relation $\in$. We will assume that the reader is familiar with ordinals and their basic properties. For ordinals $\alpha$ and $\beta$, we use the conventional notation $\alpha < \beta$ to mean $\alpha \in \beta$ and $\alpha \leq \beta$ for $\alpha < \beta \vee \alpha = \beta$. For ordinals we have $\alpha \leq \beta$ if and only if $\alpha \leqslant \beta$, and $\alpha < \beta$ if and only if $\alpha \leqslant \beta$ and $\beta \not\leqslant \alpha$. We use $Ord$ to denote the class of ordinals, and $Lim$ to denote the subclass of limit ordinals. 

A \emph{sequence} is a set $\{x_{\beta}: \beta < \alpha\}$ indexed by an ordinal $\alpha$. The \emph{length} of such a sequence is $\alpha$. 

As usual, we identify finite ordinals with natural numbers: $0 = \emptyset$, $1 = \{0\}$, $2 = \{0, 1\}$, and so on. We write $\omega$ for the set of natural numbers $\{0, 1, \ldots\}$, which is also the least infinite ordinal. A \emph{cardinal} is an ordinal that is not in bijection with any smaller ordinal. For $\alpha$ an ordinal, we use both $\aleph_{\alpha}$ and $\omega_{\alpha}$ to denote the $\alpha$-th infinite cardinal. 

Suppose that $X$ is a linear order, and for every $x \in X$ we fix a linear order $A_x$. The \emph{ordered sum} $\Sigma_{x \in X} A_x$ is the set of ordered pairs $\{(x, a): x \in X, a \in A_x\}$ equipped with the ordering relation $<$ defined by lexicographic rule $(x, a) < (y, b)$ if either $x <_X y$, or $x = y$ and $a <_{A_x} b$. Visually, $\Sigma_{x \in X} A_x$ is the linear order obtained by replacing each point $x \in X$ with a copy of $A_x$. We will also call an order of the form $\Sigma_{x \in X} A_x$ an \emph{$X$-sum}. 

If there is a linear order $Y$ such that $A_x = Y$ for every $x \in X$, then we denote the sum $\Sigma_{x \in X} Y$ by $X \times Y$ and call this order the \emph{lexicographic product} of $X$ and $Y$. 

If $A$ and $B$ are linear orders, the \emph{sum} of $A$ and $B$, denoted $A + B$, is the $2$-sum $\Sigma_{x \in 2} A_x$ where $A_0 = A$ and $A_1 = B$. Visually, $A + B$ is the order obtained by placing a copy of $B$ to the right of $A$. Up to isomorphism, $A + B$ is the unique linear order with an initial segment isomorphic to $A$ whose corresponding final segment is isomorphic to $B$. We write $+^*$ for the dual operation, defined by $A +^* B = B + A$. Together, $+$ and $+^*$ are the \emph{standard sums} on $LO$. 

Both the sum and its dual are associative in the sense that for all $A, B, C \in LO$ we have
\[
\begin{array}{r c l}
(A + B) + C & \cong & A + (B + C) \\
(A +^* B) +^* C & \cong & A +^* (B +^* C).
\end{array}
\]
Moreover, these operations are sum-like, in the sense that for any orders $A$ and $B$ the sums $A + B$ and $A +^* B$ can be decomposed into two suborders, one of which is isomorphic to $A$ and the other to $B$. In the following sections, we will study other associative and sum-like binary operations $\oplus$ on $LO$ and subclasses of $LO$. 

Neither of the standard sums is commutative: there are many examples of orders $A$ and $B$ for which $A + B \not\cong B + A$, and hence $B +^* A \not\cong A +^* B$ (i.e., the operations $+$ and $+^*$ do not coincide up to isomorphism). We will also investigate the existence of associative and commutative sum-like operations on subclasses of $LO$. 

\section{Sums of linear orders}\label{sect: sums of linear orders}

In this section, we define the notion of a \textit{sum} of linear orders (\Cref{def: sums}). Before giving the definition, we provide some categorical motivation for it. We assume basic familiarity with category-theoretic terminology, though this will only be needed for the motivating discussion of this section, and will not be required for most of our later results. 

Let $\mathcal C_{LO}$ denote the category whose objects are the linear orders, with morphisms being the strictly order-preserving maps.

\begin{remark} \label{rmk: all morphs injective}
    By trichotomy and the order-preserving condition, all morphisms in this category are injective and are isomorphisms if they are onto.
\end{remark}

Attempting to uncover properties of this category, a natural question to ask is whether any of the standard categorical constructs manifest. For example, we could ask whether there is a natural notion of ``adding'' two objects as given by the coproduct in other categories. Regretfully, this is not the case.

\begin{theorem} \label{th: no coproducts}
    No pair of non-empty objects in $\mathcal{C}_{LO}$ has coproducts.
\end{theorem}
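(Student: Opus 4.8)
The plan is to suppose, toward a contradiction, that a coproduct $A \sqcup B$ exists for two non-empty linear orders $A$ and $B$, and to derive a contradiction by exhibiting morphisms into a common target that cannot be factored uniquely through the putative coproduct. Recall that a coproduct comes with injections $\iota_A \colon A \to A \sqcup B$ and $\iota_B \colon B \to A \sqcup B$ such that for every order $Z$ and every pair of order-preserving maps $f \colon A \to Z$, $g \colon B \to Z$, there is a \emph{unique} order-preserving $h \colon A \sqcup B \to Z$ with $h \iota_A = f$ and $h \iota_B = g$. I would first pin down the image of $A \sqcup B$ inside a well-chosen $Z$: the idea is that the relative order of the $\iota_A$-part and the $\iota_B$-part in $A \sqcup B$ is forced, since one can build test maps $f, g$ placing the copy of $A$ entirely to the left of the copy of $B$, and other test maps placing it entirely to the right.

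Concretely, first I would take $Z = A + B$ with $f, g$ the obvious inclusions; uniqueness of the factorization $h$ shows $A \sqcup B$ admits an order-preserving surjection-free map to $A+B$ in which $\iota_A[A]$ lies below $\iota_B[B]$. Then I would take $Z = B + A$ with the analogous inclusions, forcing a map in which $\iota_A[A]$ lies above $\iota_B[B]$. The tension between these two requirements is the crux. To make it bite, I would choose $Z$ more cleverly — e.g. $Z = A + B$ but composed with two \emph{different} pairs $(f,g)$ and $(f', g')$ that agree on where they send $A$ and $B$ as abstract orders but differ in the gap between the images — so that the two induced maps $h, h'$ must coincide by uniqueness, yet visibly disagree on some point of $A \sqcup B$. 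A clean way: pick a point $a_0 \in A$ and a point $b_0 \in B$; build $f, g$ into $Z$ with $f(a_0) < g(b_0)$ and build $f', g'$ into the same $Z$ (possible by enlarging $Z$, say $Z = A + 1 + B$ versus a reindexing) with the images interleaved differently around those points. Since $h\iota_A = f$, $h\iota_B = g$ determine $h$ on all of $\iota_A[A] \cup \iota_B[B]$, and the coproduct injections are jointly "surjective enough" — more precisely, by \Cref{rmk: all morphs injective} every morphism is injective, so $h$ is determined on its domain and cannot absorb the discrepancy — we get $h = h'$ as maps yet $f \ne f'$, a contradiction.

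The cleanest incarnation, which I would actually write up: take $Z = \mathbb{Q}$ (or any sufficiently homogeneous dense order containing copies of both $A$ and $B$ — if $A, B$ are large, replace $\mathbb{Q}$ by a saturated dense order of the right cardinality). Embed $A$ into $Z$ via $f$; then there exist two embeddings $g, g' \colon B \to Z$ whose images are disjoint from $f[A]$ but lie on opposite sides of $f[a_0]$ for some fixed $a_0 \in A$ — concretely one can slide the copy of $B$ from above to below a chosen point of the copy of $A$, using density of $Z$. The pair $(f,g)$ yields $h$, the pair $(f,g')$ yields $h'$; both satisfy $h\iota_A = f = h'\iota_A$. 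Now the universal property applied to $Z$ with the \emph{single} pair $(f, g)$ already forces $h$ uniquely; but I can recover $g$ and $g'$ as $h\iota_B$ and $h'\iota_B$, and since $h, h'$ are each the unique map induced by their respective pairs there is no contradiction yet — so the real argument must instead fix \emph{one} pair and exploit that $\iota_A$ and $\iota_B$ cannot jointly constrain the order type of $A \sqcup B$: the order $A \sqcup B$ would have to simultaneously have $\iota_A[A]$ below $\iota_B[B]$ (to map to $A+B$ compatibly) and $\iota_A[A]$ above $\iota_B[B]$ (to map to $B+A$ compatibly), which by injectivity of all morphisms (again \Cref{rmk: all morphs injective}) and trichotomy in $A \sqcup B$ is impossible once $A, B$ are non-empty: pick $a \in A$, $b \in B$; in $A \sqcup B$ either $\iota_A(a) < \iota_B(b)$ or $\iota_B(b) < \iota_A(a)$, and whichever holds is violated by one of the two maps just constructed, since an order-preserving $h$ cannot send $\iota_A(a) < \iota_B(b)$ to $f(a) > g(b)$.

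The main obstacle is getting the bookkeeping of the universal property exactly right: the contradiction is \emph{not} with uniqueness of the factoring map but with mere \emph{existence} — the existence of a factoring map through $A \sqcup B$ for the two competing pairs of test maps forces incompatible order relations between $\iota_A(a)$ and $\iota_B(b)$ in the single fixed order $A \sqcup B$. So the write-up must (i) fix $a \in A$, $b \in B$, (ii) by trichotomy assume WLOG $\iota_A(a) \le \iota_B(b)$ in $A \sqcup B$ (equality excluded by injectivity, or rather by $\iota_A[A] \cap \iota_B[B]$ possibly nonempty — one must check the coproduct injections of distinct-origin points can coincide, but a quick argument using a map into a two-point order separating $a$ from $b$ rules this out), then (iii) produce $f \colon A \to Z$, $g \colon B \to Z$ with $g(b) < f(a)$ (e.g. $Z = B + A$), so the induced $h$ satisfies $h(\iota_A(a)) = f(a) > g(b) = h(\iota_B(b))$ while $\iota_A(a) < \iota_B(b)$, contradicting that $h$ is order-preserving. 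That is the whole proof; the only subtlety worth stating explicitly is the non-degeneracy step ruling out $\iota_A(a) = \iota_B(b)$, for which non-emptiness of $A$ and $B$ is exactly what is needed.
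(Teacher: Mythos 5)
Your final distilled argument is correct and is essentially the paper's proof: both derive the contradiction from the fact that the coproduct's universal property forces order-preserving factoring maps to both $A+B$ and $B+A$, which impose incompatible relative orderings of $\iota_A(a)$ and $\iota_B(b)$ for any fixed $a \in A$, $b \in B$ (the paper routes the first half through a lemma identifying the coproduct with $A+B$, while you use trichotomy directly, but the core contradiction is identical). The exploratory detours in your write-up (the two-pairs-into-one-$Z$ idea, the $\mathbb{Q}$ construction) are correctly discarded by you as not yielding a contradiction, and the worry about $\iota_A(a) = \iota_B(b)$ is resolved automatically since a strictly order-preserving map cannot send a single point to two distinct images.
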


To prove this theorem we rely on the following lemma. Let $A$, $B$ be two non-empty linear orders.

\begin{lemma} \label{lm: coproduct is isomorphic to the usual sum}
    Any coproduct of $A$ and $B$ is isomorphic to $A+B$ (with its associated embeddings).
\end{lemma}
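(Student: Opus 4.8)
The plan is to show that any coproduct object $C$ of $A$ and $B$, together with its structure maps $\iota_A \colon A \to C$ and $\iota_B \colon B \to C$, must in fact be (isomorphic to) $A + B$ with the obvious embeddings of $A$ as an initial segment and $B$ as the corresponding final segment. First I would recall from \Cref{rmk: all morphs injective} that every morphism in $\mathcal{C}_{LO}$ is injective, so $\iota_A$ and $\iota_B$ embed $A$ and $B$ into $C$; I would also use that $A + B$ is a legitimate object of the category and comes equipped with its own embeddings $j_A \colon A \to A+B$ (onto the initial segment) and $j_B \colon B \to A + B$ (onto the final segment). By the universal property of the coproduct $C$, there is a unique mediating morphism $h \colon C \to A + B$ with $h \circ \iota_A = j_A$ and $h \circ \iota_B = j_B$. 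The whole proof reduces to showing $h$ is an isomorphism, i.e.\ (by \Cref{rmk: all morphs injective}) that $h$ is onto.

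To see surjectivity, the key point is that the images $\iota_A[A]$ and $\iota_B[B]$ must together cover $C$. I would argue this by contradiction: suppose there is a point $c \in C$ not in $\iota_A[A] \cup \iota_B[B]$. Then I can build two genuinely different cocones out of $A$ and $B$ into a single object — for instance into $A + 1 + B$ — one sending the ``gap'' to the left of where $\iota_A[A]$ ends up and one sending it to the right, or more simply I exhibit two distinct morphisms $C \to D$ that agree after precomposition with $\iota_A$ and $\iota_B$, contradicting the uniqueness clause of the universal property. Concretely: the existence of a point of $C$ outside $\iota_A[A] \cup \iota_B[B]$ lets one define two distinct morphisms $C \to A+B$ (say, $h$ as above versus a variant that ``skips'' the stray point differently) that restrict identically along the structure maps — or, cleaner still, one shows directly that a coproduct cocone in which the two images do not exhaust the object fails to be initial among cocones. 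Thus $\iota_A[A] \cup \iota_B[B] = C$, and since $h$ maps $\iota_A[A]$ onto the initial segment $A$ of $A+B$ and $\iota_B[B]$ onto the final segment $B$, it follows that $h$ is onto, hence an isomorphism, and $\iota_A, \iota_B$ are carried to $j_A, j_B$.

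The remaining subtlety — and I expect this to be the main obstacle — is ruling out the possibility that $\iota_A[A]$ and $\iota_B[B]$, while jointly covering $C$, are interleaved rather than sitting as an initial segment followed by a final segment; equivalently, one must pin down that the order on $C$ forces $h$ to be not merely a bijection but order-preserving with image \emph{convexly} split as $A$ then $B$. Here I would again lean on initiality: given \emph{any} order $D$ with embeddings $f \colon A \to D$, $g \colon B \to D$ (a cocone), the induced map $C \to D$ must be order-preserving, and by varying $D$ over all ways of placing a copy of $A$ and a copy of $B$ — in particular choosing $D = A + B$ and $D = B + A$ and $D$ with the copies interleaved — the constraints collapse to exactly one possibility, namely that $C \cong A + B$ with $A$ an initial and $B$ the complementary final segment. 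Since $A + B$ is characterized up to isomorphism as the unique order with an initial segment isomorphic to $A$ whose corresponding final segment is isomorphic to $B$ (as noted in \Cref{sect: preliminaries}), this completes the identification. I would keep the write-up economical by phrasing the interleaving-exclusion and the covering argument as a single application of the uniqueness part of the universal property against a carefully chosen test object.
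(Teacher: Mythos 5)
Your core argument is the same as the paper's: use the universal property of the coproduct $C$ to obtain the mediating morphism $h\colon C \to A+B$ with $h\circ\iota_A = j_A$ and $h\circ\iota_B = j_B$, recall that all morphisms in $\mathcal{C}_{LO}$ are injective (\Cref{rmk: all morphs injective}), and conclude that $h$ is an isomorphism once it is shown to be onto. However, you have misplaced where surjectivity comes from, and in doing so introduced two detours that are both unnecessary and, as sketched, not actually established. Surjectivity of $h$ has nothing to do with whether $\iota_A[A]\cup\iota_B[B]$ covers $C$: every $x\in A+B$ lies in $j_A[A]$ or $j_B[B]$, hence equals $h(\iota_A(a))$ or $h(\iota_B(b))$ for some $a$ or $b$, so $h$ is onto directly from the two commuting triangles. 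That is the entirety of the paper's argument. Your ``key point'' --- that the structure maps jointly cover $C$ --- is a consequence of the lemma, not an ingredient of its proof, and the argument you sketch for it (producing two distinct morphisms out of $C$ that agree after precomposition with $\iota_A$ and $\iota_B$) is not obviously completable: the uniqueness clause of the universal property gives joint epimorphicity of the injections, but in a concrete category that does not by itself yield joint surjectivity, and actually exhibiting two different order-preserving injections out of $C$ that agree on the images would require a construction you have not supplied. Likewise the interleaving worry in your final paragraph is moot: once $h$ is a surjective morphism it is an isomorphism, and then $\iota_A[A]=h^{-1}[j_A[A]]$ is automatically carried onto the initial segment, so no separate convexity or test-object argument is needed. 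If you delete the covering and interleaving discussions, what remains is exactly the paper's proof.
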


\begin{proof}
    Given any two linear orders $A$, $B$, say $A \oplus B$ is a coproduct of the two and $A \xhookrightarrow{f_1} A \oplus B$, $A \xhookrightarrow{f_2} A \oplus B$ are the associated morphisms. Since $A, B$ have morphisms into their usual sum $A + B$, by the universal property there exists a unique morphism $A \oplus B \xhookrightarrow{\phi} A + B$ such that the following diagram commutes:

\[
\begin{tikzcd}[column sep=large, row sep=large]
A \arrow[dr, hook, "f_1"'] \arrow[drrr, hook, "\iota_1"] & & & & \\
& A \oplus B \arrow[rr, hook, "\phi"] & & A + B & \\
B \arrow[ur, hook, "f_2"] \arrow[urrr, hook, "\iota_2"'] & & & & 
\end{tikzcd}
\]

Where $\iota_1, \iota_2$ are the usual embeddings.

Note that $\phi$ must be an isomorphism since for any $x \in A + B$, either $x \in \iota_1(A)$, so $\phi(f_1(\iota_1^{-1}(x)))= x$; or $x \in \iota_2(B)$, and $\phi(f_2(\iota_2^{-1}(x)))= x$. 
\end{proof}

\begin{proof}[Proof of \ref{th: no coproducts}]

With $\iota_1$, $\iota_2$, $\tau_1$, $\tau_2$ being the usual inclusion mappings, by Lemma \ref{lm: coproduct is isomorphic to the usual sum} there must be a morphism $\phi$ such that the following diagram commutes 

\[
\begin{tikzcd}[column sep=large, row sep=large]
A \arrow[dr, hook, "\iota_1"'] \arrow[drrr, hook, "\tau_1"] & & & & \\
& A + B \arrow[rr, hook, "\phi"] & & B + A & \\
B \arrow[ur, hook, "\iota_2"] \arrow[urrr, hook, "\tau_2"'] & & & & 
\end{tikzcd}
\]

For any $ a\in A$, $b \in B$, we must have $$\iota_1(a) < \iota_2(b)$$ therefore $$\tau(a) = \phi(\iota_1(a)) < \phi(\iota_2(b)) = \tau_2(b)$$ which is false.
\end{proof}

This theorem may also be proven by noticing that the proof of Lemma \ref{lm: coproduct is isomorphic to the usual sum} can be carried out verbatim by using the reverse sum ($(A, B) \mapsto B + A$), instead of the usual sum. This yields a contradiction since the usual sum is not commutative in general. 

Despite showing that the traditional categorical notion of sum is not present in $\mathcal{C}_{LO}$, Lemma \ref{lm: coproduct is isomorphic to the usual sum} points towards a viable generalization. 

Consider all operations that may be substituted in for the usual sum in the proof of Lemma \ref{lm: coproduct is isomorphic to the usual sum}. These will be operations $\oplus$ such that for any linear orders $A$ and $B$, we have associated embeddings $$A \lhook\joinrel\xhookrightarrow{\iota_1} A\oplus B,\ B \lhook\joinrel\xhookrightarrow{\iota_2} A\oplus B,$$
whose images encompass all of $A \oplus B$.
If we further assume that the images of these embeddings are disjoint\footnote{This condition can be done without, leading to the definition of a \textit{collapsed sum}. We will not consider such operations in this paper.}, we get the following alternative definition of a sum.

\begin{definition} \label{def: sums}
    A \textit{sum} $\oplus$ on $LO$ is a rule that given two linear orders $A, B$ assigns a new linear order $A \oplus B$ with associated morphisms $$A \lhook\joinrel\xhookrightarrow{e_A^{A\oplus B}} A\oplus B, B \lhook\joinrel\xhookrightarrow{e_B^{A\oplus B}} A\oplus B,$$ so that given any morphism $1 \xhookrightarrow{p} A\oplus B$, there is exactly one morphism $1 \xhookrightarrow{p_A} A$ or $1 \xhookrightarrow{p_B} B$, such that respectively either

    \begin{center}
        \begin{tikzcd}
1 \arrow[r, "p_A"] \arrow[rr, "p"', bend right] & A \arrow[r, "e_A^{A\oplus B}"] & A\oplus B
\end{tikzcd}
    \end{center}

or

\begin{center}
    \begin{tikzcd}
1 \arrow[r, "p_B"] \arrow[rr, "p"', bend right] & B \arrow[r, "e_B^{A\oplus B}"] & A\oplus B
\end{tikzcd}

\end{center}

commutes.
\end{definition}

\begin{remark}
    In order to remove any reference to non-categorical notions in this definition, we could carry out this definition in the category of non-strict linear orders with non-decreasing maps as morphism. In that case, the point order $1$ can be identified as the final object in the category.
\end{remark}

\begin{remark} \label{rmk: data associated to sums}
    Part of the data of a sum is the associated morphisms $e_A^{A\oplus B}$, $e_B^{A\oplus B}$, just like how part of the data required to define a coproduct is its associated morphisms for any given instantiation of it. One could say two sums $\oplus_1$, $\oplus_2$ are \textit{isomorphic for} $A$, $B$ just in case $(A\oplus_1B, e_A^{A\oplus_1B}, e_B^{A\oplus_1B})$ and $(A\oplus_2B, e_A^{A\oplus_2B}, e_B^{A\oplus_2B})$ are isomorphic in the category whose objects are diagrams of the form:

    \[\begin{tikzcd}
A \arrow["f_A^C", rd, hook] &   & B \arrow["f_B^C"', ld, hook] \\
                   & C &                   
\end{tikzcd}\]

    With morphisms between two such diagrams with sinks at $C_1$ and $C_2$ being morphisms $\varphi \in \text{Hom}_{\mathcal{C}_{LO}}(C_1, C_2)$ making the following diagram commute:

    \[
\begin{tikzcd}[column sep=large, row sep=large]
A \arrow["f_A^{C_1}"', dr, hook] \arrow["f_A^{C_2}", drrr, hook] & & & & \\
& C_1 \arrow[rr, hook, "\varphi"] & & C_2 & \\
B \arrow["f_B^{C_1}", ur, hook] \arrow["f_B^{C_2}"', urrr, hook] & & & & 
\end{tikzcd}
\]

Formally, we say that $\oplus_1$ and $\oplus_2$ are \textit{isomorphic} if they are isomorphic for all pairs of linear orders. Unless otherwise stated though, we will call two sums isomorphic if their base orders always are, without considering the associated embeddings. 
\end{remark}

We now provide a concrete characterization of sums\footnote{An alternate characterization using colorings will be given at the end of this section.}.

\begin{proposition} \label{prop: concrete characterization of sums}
     Any sum $\oplus$ is isomorphic to a sum whose base set is the disjoint union of the orders being summed.
\end{proposition}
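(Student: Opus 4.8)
The plan is to observe that the categorical condition defining a sum forces the two embedding images to partition the underlying set of $A \oplus B$, and then to transport the order structure on $A \oplus B$ across the canonical bijection onto a genuine disjoint union $A \sqcup B$. The resulting operation $\oplus'$ will have all the right formal properties, and the transport bijections will exhibit it as isomorphic to $\oplus$ in the sense of \Cref{rmk: data associated to sums}.

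First I would unpack the condition in \Cref{def: sums}. A morphism $1 \to X$ in $\mathcal{C}_{LO}$ is the same data as a point of $X$, so the defining clause says that for every $x \in A \oplus B$ there is exactly one point $a \in A$ with $e_A^{A\oplus B}(a) = x$, or exactly one point $b \in B$ with $e_B^{A\oplus B}(b) = x$ — where "exactly one" counts factorizations through the two embeddings together. Since $e_A^{A\oplus B}$ and $e_B^{A\oplus B}$ are injective by \Cref{rmk: all morphs injective}, the "at most one factorization in total" part forces $e_A^{A\oplus B}[A] \cap e_B^{A\oplus B}[B] = \emptyset$, and the "at least one" part forces $e_A^{A\oplus B}[A] \cup e_B^{A\oplus B}[B] = A \oplus B$. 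Hence the two images partition $A \oplus B$.

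Next, for each pair $A, B$ fix the concrete disjoint union $A \sqcup B := (A \times \{0\}) \cup (B \times \{1\})$ together with the map $h_{A,B} \colon A \sqcup B \to A \oplus B$ sending $(a,0) \mapsto e_A^{A\oplus B}(a)$ and $(b,1) \mapsto e_B^{A\oplus B}(b)$; by the previous paragraph this is a bijection. Let $A \oplus' B$ be the set $A \sqcup B$ with the order pulled back along $h_{A,B}$, so that $h_{A,B}$ becomes an isomorphism $A \oplus' B \xrightarrow{\sim} A \oplus B$, and take the associated embeddings of $\oplus'$ to be the inclusions $\iota_1 \colon a \mapsto (a,0)$ and $\iota_2 \colon b \mapsto (b,1)$. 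These inclusions are order-preserving precisely because $e_A^{A\oplus B}$ and $e_B^{A\oplus B}$ are, and by construction $h_{A,B} \circ \iota_1 = e_A^{A\oplus B}$ and $h_{A,B} \circ \iota_2 = e_B^{A\oplus B}$.

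Finally I would verify the two remaining points. First, $\oplus'$ is itself a sum: the images of $\iota_1$ and $\iota_2$ are the disjoint pieces $A \times \{0\}$ and $B \times \{1\}$, whose union is all of $A \oplus' B$, so every point lies in exactly one of them and the unique-factorization clause of \Cref{def: sums} holds. Second, $\oplus'$ is isomorphic to $\oplus$: for each pair $A, B$ the map $h_{A,B}$ is an isomorphism of the sink diagrams of \Cref{rmk: data associated to sums} since it intertwines the embeddings. I do not expect a real obstacle here; the only step needing care is the translation of the categorical "exactly one factorization" clause into the set-theoretic statement that the two embedding images partition $A \oplus B$, which is exactly where injectivity of the morphisms (\Cref{rmk: all morphs injective}) enters.
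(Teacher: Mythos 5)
Your proposal is correct and follows essentially the same route as the paper: the map you call $h_{A,B}$ is exactly the paper's $\varphi$, and the order you pull back along it coincides with the order $<^*$ the paper defines case by case. The only difference is organizational — by phrasing the construction as transport of structure along a bijection you get the linear-order axioms for free, whereas the paper defines $<^*$ explicitly and verifies transitivity and totality by hand (establishing disjointness of the images inside the totality check rather than up front, as you do).
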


\begin{proof}
    Let $(A, <_A)$, $(B, <_B)$ be two linear orders.
    
    Define an order $<^*$ on $A \sqcup B$ as follows, $c <^* d$ if and only if one of the following holds:
    
    \begin{enumerate}
        \item[(i)] $c, d \in A$ and $c <_A d$;
        \item[(ii)] $c, d \in B$ and $c <_B d$;
        \item[(iii)] $c \in A$, $d \in B$, $e_A^{A\oplus B} (c) < e_B^{A \oplus B} (d)$;
        \item [(iv)] $d \in A$, $c \in B$, $e_B^{A\oplus B} (c) < e_A^{A \oplus B} (d)$.
    \end{enumerate}

    With the embeddings associated to this sum being the inclusion maps to the disjoint union.

    Let $A\sqcup B \xhookrightarrow{\varphi} A\oplus B$ be such that for any $c \in A\sqcup B$ if $c \in A$, $\varphi(c) = e_A^{A\oplus B}(c)$, and if $c \in B$, $\varphi(c) = e_B^{A\oplus B}(c)$. Given any $c \in A\oplus B$, let $1 \xhookrightarrow{p} A\oplus B$ be the map that selects $c$. By definition of the sum, we may assume without loss of generality that there is only one morphism $1 \xhookrightarrow{p_A} A$ such that $e_A^{A\oplus B} (p_A (0)) = p(0) = c$, then $\varphi(p_A(0)) = c$. Hence, $\varphi$ is onto.

    To show that $(A\sqcup B, <^*)$ is a linear order, we check the axioms: clearly $<^*$ is irreflexive since $<_A$, $<_B$ are. If $a <^* b$, $b<^* c$, then either $a, b, c$ are all in either $A$ or $B$, in which case we're done, or we have six other cases to check, these are all analogous so we will only do one: if $a, b \in A$, $c \in B$, then $e_A^{A\oplus B} (a) < e_A^{A\oplus B} (b)$ (since $e_A^{A\oplus B}$ is a morphism) and $e_A^{A\oplus B}(b) < e_B^{A\oplus B}(c)$, so by transitivity of $<$, $e_A^{A\oplus B}(a) < e_B^{A\oplus B} (c)$, therefore $a < ^* c$. 
    
    Finally, for totality, say $b \not\leq^* a$, if $a, b$ are in the same base order we are immediately done. Without loss of generality, assume $a \in A$, $b \in B$, then $e_A^{A\oplus B} (a) \not<^* e_B^{A \oplus B}(b)$, we need to show that $e_A^{A\oplus B} (a) \neq e_B^{A \oplus B}(b)$. Let $1 \xhookrightarrow{p} A\oplus B$ be the morphism such that $p(0) = e_A^{A\oplus B}(a)$, since the morphism $1 \xhookrightarrow{p_A} A$ such that $p_A(0) = a$ satisfies $p = e_A^{A\oplus B} \circ p_A$, we can't have $e_A^{A\oplus B} (a) = e_B^{A \oplus B}(b)$, or else $1 \xhookrightarrow{p_B} B$ such that $p_B(0) = b$ would also satisfy $p = e_B^{A\oplus B} \circ p_B$, but there is exactly one morphism that will satisfy one of these conditions. Since we have shown that $e_A^{A\oplus B} (a) \not\leq^* e_B^{A \oplus B}(b)$, it follows that $e_B^{A\oplus B} (b) <^* e_A^{A \oplus B}(a)$, therefore $b <^* a$.

    That $\varphi$ is a morphism is immediate since $a <_A b$ and $c <_B d$ imply that $e_A^{A\oplus B} (a) < e_A^{A \oplus B} (b)$ and $e_B^{A\oplus B} (c) < e_B^{A \oplus B} (d)$ (respectively). It is onto, therefore making it an isomorphism. By the definition of this isomorphism these sums are isomorphic for $A$ and $B$.
\end{proof}

It remains to show that this definition of sum is not vacuous, and it is quite straightforward to see that the \textit{usual} and \textit{reverse} sums ($+$ and $+^*$) satisfy it. 

There are other examples of sums, many of which are quite silly.

\begin{example} \label{ex: non-standard, irregular, non-associative sum}
    Consider the operation
    
    \[
    A\oplus B = 
    \begin{cases}
        B + A, \text{ if } A = \{0\}\\
        A + B, \text{ else }
    \end{cases}
    \]

    This is an example of a \textit{non-standard} sum (i.e. a sum not isomorphic to the usual nor reverse sum).
\end{example}

In order to consider more interesting sums, we isolate some reasonable properties of sums by abstracting those of $+$ and $+^*$.

\subsection{Some reasonable properties of sums} \label{subsect: some reasonable properties of sums}

Two of the most reasonable properties of the standard sums are associativity and isomorphism-invariance.

\begin{definition} \label{def: associative}
    A sum $\oplus$ is \textit{associative} if, given orders $A$, $B$, $C$, we have

    $$A \oplus(B\oplus C) \cong (A \oplus B) \oplus C$$
\end{definition}

\begin{definition} \label{def: regular}
    A sum $\oplus$ is \textit{regular} if given linear orders $A$, $A'$, $B$, $B'$ such that $A\cong A'$, $B\cong B'$, then $A \oplus B \cong A' \oplus B'$.
\end{definition}

As mentioned above, the standard sums are both regular and associative.

\begin{example}
    The sum in Example \ref{ex: non-standard, irregular, non-associative sum} is neither associative, nor regular.
    
    We have: 
    $$\{1\} \oplus \mathbb{Q} = \{1\} + \mathbb{Q} \not\cong \mathbb{Q} + \{0\} = \{0\} \oplus \mathbb{Q} \text{ (not regular)};$$
    $$\{0\} \oplus (\{0\} \oplus \mathbb{Q}) = \{0\} \oplus (\mathbb{Q} + \{0\})= \mathbb{Q} + \{0\} + \{0\} \not\cong \{0\} + \{0\} + \mathbb{Q} = (\{0\} \oplus \{0\}) \oplus \mathbb{Q} \text{ (not associative).}$$

    It's dizzying to think about sums which aren't regular, given that the usual sum is itself defined up to isomorphism! If you don't agree, try to define an associative sum which is not regular.
\end{example}

\begin{example}
    We may tweak the previous sum to make a regular sum that is not associative. Consider

    \[
    A\oplus B = 
    \begin{cases}
        B + A, \text{ if } A \cong 1\\
        A + B, \text{ else }
    \end{cases}
    \]
\end{example}

\begin{example} \label{ex: more reasonable lead-up to +_w}
    A more reasonable example of a regular sum that is not associative is

    \[
    A\oplus B = 
    \begin{cases}
        n + A + B', \text{ if } B \cong n + B' \text{ for some } B' \text{ with no least element and } n \in \omega\\
        A + B, \text{ else }
    \end{cases}
    \]

    This fails to be associative since, for example, with $A = \mathbb{Q}$,
    $$(A \oplus \omega) \oplus n \cong n + (A + \omega) \not\cong A + \omega \cong A \oplus (\omega \oplus n).$$

    The problem here is that $\omega$ absorbs $n$.
\end{example}

The initial goal of this paper was to prove a characterization for the standard sums, without referencing their specific structure. Attempting to achieve this, we came up with a strengthening of associativity called \textit{tracking points}.

\begin{definition} \label{def: point-tracking}
    A sum $\oplus$ is said to \textit{track points} if, given any three linear orders $A$, $B$, $C$, there is an induced isomorphism such that the following diagram commutes:

\[\begin{tikzcd}
A \arrow[rd,  "e_A^{A\oplus B}", hook] 
  \arrow[rdd, "e_A^{A\oplus(B\oplus C)}"', hook] 
  & 
  & 
B \arrow[ld,  "e_B^{A\oplus B}"', hook] 
  \arrow[rd,  "e_B^{B\oplus C}", hook] 
  & 
  & 
C \arrow[ld,  "e_C^{B\oplus C}"', hook] 
  \arrow[ldd, "e_C^{(A\oplus B)\oplus C}", hook] 
  \\
& 
A\oplus B 
  \arrow[rrd, "e_{A\oplus B}^{(A\oplus B)\oplus C}", pos=0.1, hook] 
  & 
  & 
B\oplus C 
  \arrow[lld, "e_{B\oplus C}^{A\oplus (B\oplus C)}"', pos=0.1, hook] 
  & 
  \\
& 
A\oplus(B\oplus C) 
  \arrow[rr, "\cong", two heads, hook] 
  & 
  & 
(A\oplus B)\oplus C 
  & 
\end{tikzcd}
\]
\end{definition}

The usual and reverse sum are both examples of sums that track points. Sums which track points are associative in a very strong sense.

\begin{proposition} \label{prop: point-tracking characterization of standard sums}
    If $\oplus$ is a regular, associative sum that tracks points and, for all linear orders $A$, $A \oplus 1 \cong A + 1$ and $1 \oplus A \cong 1 + A$, then $\oplus$ is isomorphic to the usual sum. The analogous characterization for the reverse sum holds if $A \oplus 1 \cong 1 + A$ and $1 \oplus A \cong A + 1$.
\end{proposition}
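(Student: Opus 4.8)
The plan is to reduce everything to a single boundary lemma and then split off one point at a time. Since, unless stated otherwise, two sums count as isomorphic exactly when their underlying orders always agree, it suffices to show that $A \oplus B \cong A + B$ for all $A, B$, i.e.\ that the copy $e_A^{A\oplus B}[A]$ is always an initial segment of $A \oplus B$.

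\textbf{Boundary lemma.} \emph{If $X$ has a greatest element then $X \oplus Y \cong X + Y$ for every $Y$.} To prove this I would write $X \cong X' + 1$ with $X'$ the order $X$ minus its top point; since $X' + 1 \cong X' \oplus 1$ by hypothesis, regularity gives $X \oplus Y \cong (X' \oplus 1) \oplus Y$. Now apply associativity to rewrite this as $X' \oplus (1 \oplus Y)$, and here is where point-tracking does the real work: the associativity isomorphism carries the copies of $X'$, of $1$, and of $Y$ onto the matching copies on the other side. From the presentation $(X' \oplus 1) \oplus Y$ together with $X' \oplus 1 \cong X' + 1$ one reads that the middle copy of $1$ lies entirely above the copy of $X'$; from the presentation $X' \oplus (1 \oplus Y)$ together with $1 \oplus Y \cong 1 + Y$ one reads that it lies entirely below the copy of $Y$. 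Since the underlying set of $X \oplus Y$ is the copy of $X$ (namely the copy of $X'$ followed by that middle point) together with the copy of $Y$, this forces $X \oplus Y \cong X' + 1 + Y \cong X + Y$.

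For the general case, fix $A$, $B$ and a point $a \in A$, and split $A = A_{\le a} + A_{>a}$; note $A_{\le a}$ has greatest element $a$. The boundary lemma gives $A_{\le a} \oplus A_{>a} \cong A_{\le a} + A_{>a} = A$, so by regularity, associativity (with point-tracking) and the lemma again,
\[
A \oplus B \;\cong\; (A_{\le a} \oplus A_{>a}) \oplus B \;\cong\; A_{\le a} \oplus (A_{>a} \oplus B) \;\cong\; A_{\le a} + (A_{>a} \oplus B),
\]
the last step because $A_{\le a}$ has a greatest element. Following the point $a$ along these isomorphisms, its image in $A \oplus B$ is the top of the copy of $A_{\le a}$: it lies above all of the copy of $A_{<a}$ and below everything in the copy of $A_{>a} \oplus B$, in particular below the whole copy of $B$. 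Thus the set of elements of $A \oplus B$ lying below $e_A^{A\oplus B}(a)$ is precisely the copy of $A_{<a}$, which contains no point of the $B$-copy. Letting $a$ range over $A$, every point of the $B$-copy is above every point of the $A$-copy, so $e_A^{A\oplus B}[A]$ is an initial segment and $A \oplus B \cong A + B$. The statement for the reverse sum is the mirror image, run from $A \oplus 1 \cong 1 + A$ and $1 \oplus A \cong A + 1$.

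I expect the boundary lemma to be the crux: the content is that point-tracking lets one import the positional facts ``$1$ is at the top of $X' \oplus 1$'' and ``$1$ is at the bottom of $1 \oplus Y$'' into the interior of the triple product $X' \oplus 1 \oplus Y$, something associativity alone does not provide. (One should also be slightly careful about what ``$A \oplus 1 \cong A + 1$'' asserts: the argument wants the copy of $1$ to genuinely be the maximum of $A \oplus 1$, which is the intended reading here.) The remaining ingredients — the reductions via regularity and associativity, the decomposition $A = A_{\le a} + A_{>a}$, and keeping track of where $a$ lands — are then routine.
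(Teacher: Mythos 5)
Your argument is correct and is essentially the paper's proof run in mirror image: your boundary lemma is the paper's first case (there stated for a summand with a \emph{least} element, handled via $B \cong 1 + B'$ and the chain $A \oplus (1 \oplus B') \cong (A \oplus 1) \oplus B'$ with the same point-tracking diagram chase), and your splitting of $A$ at $A_{\le a} + A_{>a}$ corresponds to the paper's splitting of $B$ at $B_{<b} + B_{\ge b}$. The one subtlety you flag --- that the hypotheses must be read as placing the copy of $1$ at the extreme of $A \oplus 1$ --- is likewise implicit in the paper's ``small diagram chase,'' so nothing further is needed.
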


\begin{proof}
    Consider $A, B \in LO$, if $B$ has a left endpoint, then $B \cong 1+ B'$, so
    $$A \oplus B \cong A \oplus (1 + B') \cong A \oplus (1 \oplus B') \cong (A \oplus 1) \oplus B' \cong (A + 1) \oplus B' \cong A + B,$$

    where the last step followed by point tracking since all elements of $B'$ must lie after the copy of $0 \in 1$ in the order $A \oplus B$ (this requires a small diagram chase).

    On the other hand, if $B$ has no left endpoint, for any $b \in B$, if $B_{<b} = \{x \in B \mid x<b\}$, $B_{\geq b} = \{x \in B \mid x\geq b\}$, we have
    
    $$A \oplus B = A \oplus (B_{<b} + B_{\geq b}) \cong A \oplus (B_{<b} \oplus B_{\geq b}) \cong (A \oplus B_{<b}) \oplus B_{\geq b} \cong (A \oplus B_{<b}) + B_{\geq b}$$

    So $b$ lies after all elements of $A$ in $A \oplus B$ but, since $b$ was arbitrary, this means that all elements of $e_B^{A\oplus B}(B)$ lie after $e_A^{A\oplus B}(A)$ in $A\oplus B$, hence the embedding condition implies that $A \oplus B \cong A +B$.
\end{proof}

Having proven this theorem, we were hopeful that a characterization of the standard sums as the only regular, associative sums would be forthcoming. Point-tracking appears to be a pervasive property of regular, associative sums, and so does the condition on point placement.

Neither of these assumptions hold in general, we will revisit point-tracking in \ref{sect: revisiting the properties of sums}. For now we'll attempt to break the latter hypothesis. Without deviating too much from familiar structure, say we want our sum to be as close to the usual sum as possible but with $A \oplus 1 \cong 1+A$ where does that lead us?

\begin{example} \label{ex: definition of +_w}
     Define $\oplus_0$ as
     
     \[
    A\oplus_0 B = 
    \begin{cases}
        1 + A + B', \text{ if } B \cong 1 + B' \text{ for some } B'\\
        A + B, \text{ else }
    \end{cases}
    \]
    
    This sum is regular, but not associative since, for example,
    
    $$(\mathbb{Q} \oplus_0 \omega) \oplus_0 1 \cong 2 + \mathbb{Q} + \omega \not\cong 1 + \mathbb{Q} + \omega\cong \mathbb{Q} \oplus_0 (\omega \oplus_0 1)$$

    Just like in Example \ref{ex: more reasonable lead-up to +_w}, the problem here is that $\omega$ can absorb a point during associativity. We will attempt to mitigate this going further and treating $\omega$ the same way we would treat a finite ordinal in Example \ref{ex: more reasonable lead-up to +_w}.

    Let

    \[
    A\oplus_1 B = 
    \begin{cases}
        n + A + B', \text{ if } B \cong n + B' \text{ for some } B' \text{ with no least element and } n \in \omega\\
        \omega + A + B', \text{ if } B \cong \omega + B' \text{ for some } B' \text{ with no least element}\\
        A + B, \text{ else }
    \end{cases}
    \]

    But again, we get a failure of associativity by associating with a bigger ordinal which in turn absorbs $\omega$ (and any $n$):
    $$(\mathbb{Q} \oplus_1 \omega\cdot\omega) \oplus_1 \omega = \omega + \mathbb{Q} + \omega \cdot \omega \not\cong \mathbb{Q} + \omega \cdot \omega \cong \mathbb{Q} \oplus_1 (\omega \cdot \omega \oplus_1 \omega)$$

    Noticing the pattern here (no matter how big an ordinal $\tau$ we shift to the left, we may always get a counterexample of associativity by getting a bigger ordinal, like $\tau \cdot \omega$, that absorbs it), we let

    \[
    A\oplus_2 B = 
    \begin{cases}
        \tau + A + B', \text{ if } B \cong \tau + B' \text{ for some } B' \text{ with no least element and } \tau \in Ord\\
        A + B, \text{ else }
    \end{cases}
    \]

    Note that the condition that $B'$ has no least element is necessary to remove ambiguity.

    It is straightforward to check that this non-standard sum is regular and associative using the fact that final segments of ordinals are still ordinals, and that the ordinals are closed under usual sums indexed by ordinals. Note that $\oplus_2$ also tracks points.

    From now on, we denote $\oplus_2$ as $+_w$.
\end{example}

We continue abstracting away reasonable properties of the standard sums, now strengthening regularity.

\begin{definition} \label{def: canonical regularity}
    A sum $\oplus$ is \textit{canonically regular} if given orders $A, A', B, B'$ with isomorphisms $\phi:A\xrightarrow{\sim}A'$, $\psi: B \xrightarrow{\sim} B'$, there is an induced isomorphism making the following diagram commute:

    \[\begin{tikzcd}
A \arrow[dd, "e_{A}^{A\oplus B}", hook] \arrow[rr, "\phi", two heads, hook] 
  &                                                                             & A' \arrow[dd, "e_{A'}^{A'\oplus B'}", hook, pos = 0.3] \\
  & B \arrow[ld, "e_{B}^{A\oplus B}", hook] \arrow[rr, "\psi", two heads, hook, pos = 0.4] 
  &                                                                             & B' \arrow[ld, "e_{B'}^{A'\oplus B'}", hook] \\
A\oplus B \arrow[rr, "\cong", two heads, hook] 
  &                                                                             & A' \oplus B'                                                                    
\end{tikzcd}\]
\end{definition}

The usual and reverse sums are both canonically regular, and in fact, so is $+_w$, since well-orders are preserved under isomorphism.

Examples of sums that are not canonically regular are more dynamic.

\begin{example} \label{ex: regular sum which is not canonically regular} 
    Consider

    \[
    A\oplus B = 
    \begin{cases}
        (-\infty, 0] + B + (0, \infty), \text{ if } A \cong \mathbb{R}\\
        A + B, \text{ else }
    \end{cases}
    \]

    This sum is regular, but not canonically regular. Note that for any $B$, $\varphi: \mathbb{R} \xrightarrow{\sim} \mathbb{R}$ such that $x \mapsto x + 1$ is an order automorphism, and the identity on $B$ is as well. That said, the induced map $\gamma: \mathbb{R} \oplus B \rightarrow \mathbb{R} \oplus B$ is not even a morphism, since for any $b \in B$, $0 < b$ in $\mathbb{R} \oplus B$, but $\gamma(0) = 1 > \gamma(b) = b$.
\end{example}

In a way, sums which are not canonically regular have to shuffle points in some obtrusive way. It is natural to ask whether there are any regular, associative sums which are not canonical. This will be answered in the positive in \Cref{sect: revisiting the properties of sums}. 

To conclude our discussion of basic properties, we codify how close a sum is to being standard.

\begin{definition} \label{def: standard on each side}
    Let $\oplus$ be a sum, a linear order $X$ is
    \begin{itemize}
        \item[(i)] \textit{right-}, \textit{left-standard} (with respect to $\oplus$) if for every order $A$, $X \oplus A \cong X + A$, $A \oplus X \cong A + X$ (respectively);
        \item[(ii)] \textit{right*-}, \textit{left*-standard} (with respect to $\oplus$) if for every order $A$, $X \oplus A \cong A + X$, $A \oplus X \cong X + A$ (respectively);
        \item[(iii)] \textit{standard} if it is always either right-standard and left-standard, or right*-standard and left*-standard.
    \end{itemize}
\end{definition}

\begin{proposition} \label{prop: 1 being standard of any sort implies the finite orders are too}
    If $\oplus$ is regular and associative and $1$ is (right-, left-, right*-, left*-) standard, then every finite order is (right-, left-, right*-, left*-) standard . 
\end{proposition}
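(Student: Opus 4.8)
The plan is to induct on the number of elements of the finite order, handling all four ``sidedness'' conditions by the same three-line computation. The two ``right'' cases (right- and right*-standard) are obtained by expanding $(n+1)\oplus A$ using associativity, and the two ``left'' cases by expanding $A\oplus(n+1)$; within each pair, the starred and unstarred versions differ only in which end the lone extra point ends up on. So it suffices to carry out one case in full, say right-standard, and indicate the trivial modifications for the others.

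Assume $1$ is right-standard, i.e. $1 \oplus A \cong 1 + A$ for every order $A$; I will show $n \oplus A \cong n + A$ for every finite $n$ and every $A$, by induction on $n$. The case $n = 0$ is immediate from the very definition of a sum: $0 \oplus A$ is a disjoint union of an empty suborder with a copy of $A$, so $0 \oplus A \cong A = 0 + A$. The case $n = 1$ is the hypothesis. For the inductive step, suppose $n \oplus A \cong n + A$ for all $A$. The key opening move is to note that, instantiating the hypothesis on $1$ at $A = n$, we get $1 \oplus n \cong 1 + n \cong n + 1$ (finite linear orders being determined up to isomorphism by their size), so $n+1 \cong 1 \oplus n$ as order types. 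Then, using regularity to substitute isomorphic orders and associativity to reassociate,
\[
(n+1) \oplus A \;\cong\; (1 \oplus n) \oplus A \;\cong\; 1 \oplus (n \oplus A) \;\cong\; 1 \oplus (n + A) \;\cong\; 1 + (n + A) \;\cong\; (n+1) + A ,
\]
where the third isomorphism combines the inductive hypothesis with regularity, the fourth is right-standardness of $1$, and the last uses associativity of $+$ together with $1 + n \cong n+1$. This closes the induction.

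The remaining cases are verbatim analogues. For left-standard, $1$ being left-standard gives $n \oplus 1 \cong n + 1$ (hence $n+1 \cong n\oplus 1$), and one expands $A \oplus (n+1) \cong (A \oplus n)\oplus 1 \cong (n+A)\oplus 1 \cong (n+A)+1 \cong (n+1)+A$. For right*-standard one again uses $1 \oplus n \cong n+1$ (so $n+1\cong 1\oplus n$) but ends the chain with $1 \oplus (A + n) \cong (A + n) + 1 \cong A + (n+1)$; the left*-standard case is the mirror of this. Finally, if $1$ is standard in the sense of \Cref{def: standard on each side}(iii), then it is either right- and left-standard or right*- and left*-standard, and applying the two corresponding cases gives that every finite order is standard as well.

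As for the main obstacle: there is essentially no deep difficulty here. The only point that needs a moment's care is the first step of the inductive step -- recognizing that the hypothesis on $1$ already forces $1 \oplus n$ (respectively $n \oplus 1$, in the left cases) to be the $(n{+}1)$-element order, which is exactly what lets one split off a point and invoke associativity. One should also not overlook the base case $n = 0$, which holds for free from the definition of a sum rather than from any hypothesis on $1$.
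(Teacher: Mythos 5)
Your proof is correct and is essentially the same argument as the paper's: both exploit the fact that any $\oplus$-sum of finite orders is the finite order of the appropriate cardinality to write the $(n{+}1)$-point order as $1\oplus n$ (the paper uses $(n{-}1)\oplus 1$ and phrases the induction as a minimal counterexample), then reassociate and apply regularity together with the standardness of $1$. One small slip: in your sketched left-standard analogue the chain should read $A\oplus(n{+}1)\cong(A\oplus n)\oplus 1\cong(A+n)\oplus 1\cong(A+n)+1\cong A+(n{+}1)$; as written, $(n+A)$ and $(n{+}1)+A$ are transcription errors from the right-standard case, though the logic is unaffected.
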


\begin{proof}
    By Proposition \ref{prop: concrete characterization of sums} and looking at cardinalities, all sums agree with the usual sum on finite orders up to isomorphism.

    We will prove that all finite orders are right-standard, the proofs of the other cases are entirely analogous. Let $n$ be least such that $A \oplus n \not\cong A + n$ for some $A$. If $n > 1$, then $A \oplus n \cong A\oplus (n-1 \oplus \{n-1\}) \cong (A \oplus (n-1)) \oplus \{n-1\} \cong (A + (n-1)) \oplus \{n-1\} \cong A + n$. But this is a contradiction to the minimality of $n$, therefore $n$ must be to $1$ but, by assumption, $1$ is right-standard.
\end{proof}

\subsection{Semi-standard sums} \label{subsect: semi-standard sums}

\begin{definition} \label{def: semi-standard}
    Say a sum $\oplus$ is \textit{semi-standard} if given any orders $A, B$, their images $e_A^{A \oplus B}(A)$ and $e_B^{A \oplus B}(B)$ are convex in $A \oplus B$.
\end{definition}

\begin{remark} \label{rmk: equivalent condition to semi-standard}
    Equivalently, $\oplus$ is semi-standard if given any linear orders $A, B$, either $A \oplus B \cong A+B$ or $A\oplus B \cong B+A$.
\end{remark}

We will characterize the regular, associative, semi-standard sums as the standard sums. To achieve this, we introduce an essential result -- the distinguishing lemma. The proof of this lemma makes use of an important theorem due to Lindenbaum \cite{lindenbaum1926communication}.

\begin{theorem}[Fundamental Theorem] \label{th: fundamental theorem}
    If $X, Y$ are linear orders, $X$ is isomorphic to an initial segment of $Y$, and $Y$ is isomorphic to a final segment of $X$, then $X \cong Y$.
\end{theorem}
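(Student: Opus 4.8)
The plan is to run an $\omega$-length tower argument in the spirit of the Cantor--Bernstein theorem, using the initial/final segment hypotheses to keep the relevant images convex. Fix isomorphisms $f\colon X \xrightarrow{\sim} L$ onto an initial segment $L$ of $Y$ and $g\colon Y \xrightarrow{\sim} R$ onto a final segment $R$ of $X$, and write $Y = L + M$, $X = N + R$, where $M = Y\setminus L$ is a final segment of $Y$ and $N = X\setminus R$ is an initial segment of $X$. Note $M\cong g[M] = R\setminus g[L]$ and $N\cong f[N] = L\setminus f[R]$, so the same order types ``$M$'' and ``$N$'' will appear on both sides.

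The engine of the proof is the observation that $h := g\circ f\colon X\to X$ is an order-embedding whose image sits ``in the middle'' of $X$. Indeed $h[X] = g[L]$, and since the isomorphism $g\colon Y\to R$ carries the initial segment $L$ of $Y$ to an initial segment of $R$, the set $X_1 := g[L]$ is an initial segment of $R$, hence convex in $X$, and $X = N + X_1 + (R\setminus X_1)$ with $N$ an initial segment and $R\setminus X_1 = g[M]\cong M$ a final segment. So $X = N_0 + X_1 + P_0$ with $X_1\cong X$, $N_0\cong N$, $P_0\cong M$; applying the isomorphism $h\colon X\to X_1$ to this decomposition (it sends initial segments to initial segments and final segments to final segments of $X_1$) gives $X_1 = N_1 + X_2 + P_1$ with $X_2 = h^2[X]$, and iterating yields, for each $k$, a decomposition
\[
X = N_0 + \dots + N_{k-1} + X_k + P_{k-1} + \dots + P_0,
\]
with each $N_i\cong N$, each $P_i\cong M$, and $X_k = h^k[X]\cong X$ convex in $X$. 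Passing to the limit, $\bigcup_i N_i$ is an initial segment of $X$ of order type $\omega\times N$, $\bigcup_i P_i$ is a final segment of order type $\omega^*\times M$, and $X_\omega := \bigcap_k X_k$ is the convex leftover, so $X \cong (\omega\times N) + X_\omega + (\omega^*\times M)$. The mirror-image argument applied to $h' := f\circ g\colon Y\to Y$ gives $Y \cong (\omega\times N) + Y_\omega + (\omega^*\times M)$ with $Y_\omega := \bigcap_k (h')^k[Y]$.

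It then remains to show $X_\omega\cong Y_\omega$, which I would do by proving $f[X_\omega] = Y_\omega$. From the identity $f\circ h^k = (h')^k\circ f$ we get $f[h^k[X]] = (h')^k[L]$, so by injectivity of $f$ (and since the $h^k[X]$ decrease), $f[X_\omega] = \bigcap_k (h')^k[L]$. A squeeze then finishes: $L\subseteq Y$ gives $(h')^k[L]\subseteq (h')^k[Y]$, while $(h')[Y] = f[g[Y]] = f[R]\subseteq L$ gives $(h')^{k+1}[Y]\subseteq (h')^k[L]$; intersecting over $k$ forces $\bigcap_k (h')^k[L] = \bigcap_k (h')^k[Y] = Y_\omega$. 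Hence $f$ restricts to an isomorphism $X_\omega\xrightarrow{\sim} Y_\omega$, and feeding this into the two displayed decompositions gives $X\cong Y$.

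The main obstacle is the structural claim of the second paragraph: one must check that $h[X]$ is convex in $X$ and, more delicately, that the two leftover pieces are genuinely an initial and a final segment of $X$ of the asserted order types — this is exactly where it matters that $f$ hits an initial segment and $g$ a final segment, and without these hypotheses the tower degenerates (the unrestricted Cantor--Bernstein statement for linear orders is false). The rest is routine bookkeeping: checking that $\bigcup_i N_i$, $\bigcup_i P_i$, and $\bigcap_k X_k$ are honest segments of $X$ with the claimed order types, and that the ``$N$'' and ``$M$'' pieces coincide up to isomorphism on the $X$- and $Y$-sides.
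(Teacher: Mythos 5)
Your proof is correct. Note first that the paper does not actually prove this statement: it cites Lindenbaum and only remarks that the bijection produced by the Schr\"oder--Bernstein construction happens to be order-preserving. So your argument is a genuine, self-contained proof rather than a variant of one in the text. What you do is the standard tower decomposition for Lindenbaum's theorem: writing $Y = L + M$ and $X = N + R$, iterating $h = g\circ f$ to get $X \cong (\omega\times N) + X_\omega + (\omega^*\times M)$ and symmetrically for $Y$, and then matching the cores via the intertwining identity $f\circ h^k = (h')^k\circ f$ together with the squeeze $(h')^{k+1}[Y]\subseteq (h')^k[L]\subseteq (h')^k[Y]$. All the delicate points check out: $h^k[X]$ is convex by induction (a convex subset of a convex subset is convex), the unions $\bigcup_i N_i$ and $\bigcup_i P_i$ are honest initial and final segments of the asserted types, every point omitted from both lands in $\bigcap_k X_k$, and injectivity of $f$ justifies $f[\bigcap_k h^k[X]] = \bigcap_k (h')^k[L]$. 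The only difference from the route the paper gestures at is cosmetic: the literal Schr\"oder--Bernstein bijection would be $f$ on $\bigcup_i N_i$ and $g^{-1}$ elsewhere, whereas you assemble the isomorphism blockwise from the three-part decompositions ($f$ on the core, type-preserving maps on the $\omega\times N$ and $\omega^*\times M$ wings); both yield the same conclusion, and your version makes the convexity bookkeeping explicit.
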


This theorem can be seen as a version of the Schr\"oder–Bernstein theorem applied to linear orders. In fact, the bijection between $X$ and $Y$ furnished by the proof of the Schr\"oder–Bernstein theorem will actually be an isomorphism of linear orders.

\begin{lemma}[Distinguishing] \label{lm: distinguishing}
    Given any set of linear orders $\mathfrak{C}$, there exists a linear order $X_\mathfrak{C}$ such that for any (possibly empty) finite sums of elements of $\mathfrak{C}$, $A$, $B$, $C$, $D$, and any isomorphism $f: A+X_\mathfrak{C}+B \xrightarrow{\sim} C+X_\mathfrak{C}+D$, $f(X_\mathfrak{C}) = X_\mathfrak{C}$, $f(A) = C$, $f(B) = D$. In fact, $X_\mathfrak{C}$ only depends on the supremum of the cardinality of the elements of $\mathfrak{C}$.

    Further, any non-empty, strict initial segment $I$ of $X_\mathfrak{C}$ can be extended to a longer initial segment isomorphic to it.
\end{lemma}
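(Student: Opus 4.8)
The plan is to make $X_{\mathfrak C}$ so ``fat'' that the three blocks of $A + X_{\mathfrak C} + B$ can be read off from a crude cardinality count; this bypasses \Cref{th: fundamental theorem} entirely, which the intended proof presumably needs only because it works with a leaner $X_{\mathfrak C}$. First fix a regular cardinal $\mu$ with $|C| < \mu$ for every $C \in \mathfrak C$, say $\mu = \bigl(\aleph_0 + \sup_{C \in \mathfrak C}|C|\bigr)^{+}$; this depends only on the supremum of the cardinalities of the members of $\mathfrak C$, and any finite sum of members of $\mathfrak C$ has cardinality $< \mu$, so $|A|, |B|, |C|, |D| < \mu$. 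Then set $X_{\mathfrak C} := \mathbb{Q} \times \mu$, that is, $\mathbb{Q}$-many consecutive copies of the ordinal $\mu$. The features I will use are: $|X_{\mathfrak C}| = \aleph_0 \cdot \mu = \mu$; $X_{\mathfrak C}$ has neither a least nor a greatest element; and every nonempty initial segment $I$ of $X_{\mathfrak C}$ contains a set of the form $\{q \in \mathbb{Q} : q < r\} \times \mu$ (take any $(r,\beta) \in I$ and use downward closure), hence $|I| = \mu$, and dually for nonempty final segments.

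For the first assertion, let $f \colon A + X_{\mathfrak C} + B \xrightarrow{\sim} C + X_{\mathfrak C} + D$ be an isomorphism. Call a point $p$ of a linear order \emph{left-small} if $|\{q : q < p\}| < \mu$ and \emph{right-small} if $|\{q : q > p\}| < \mu$; both conditions are isomorphism-invariant. The key claim is that inside $A + X_{\mathfrak C} + B$ the left-small points are exactly those of $A$ and the right-small points exactly those of $B$: if $p \in A$ then $\{q < p\} \subseteq A$ has size $< \mu$; if $p$ lies in the middle copy of $X_{\mathfrak C}$ then $\{q < p\}$ contains the initial segment of $X_{\mathfrak C}$ strictly below $p$, which is nonempty (since $X_{\mathfrak C}$ has no least point) and hence of size $\mu$; and if $p \in B$ then $\{q < p\} \supseteq X_{\mathfrak C}$ has size $\mu$ — and symmetrically for right-small points. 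Running the same computation in $C + X_{\mathfrak C} + D$ and using that $f$ preserves left- and right-smallness gives $f(A) = C$ and $f(B) = D$ as sets, whence $f(X_{\mathfrak C}) = X_{\mathfrak C}$, since $X_{\mathfrak C}$ is precisely the set of points that are neither left- nor right-small. Restricting $f$ to the three convex blocks then yields the three isomorphisms, respecting the decomposition.

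For the ``Further'' clause, let $I$ be a nonempty strict initial segment of $X_{\mathfrak C} = \mathbb{Q} \times \mu$. Being an initial segment of a $\mathbb{Q}$-indexed sum, $I$ is, up to the evident identification, $(\{q \in \mathbb{Q} : q < r\} \times \mu) + J$ for some $r \in \mathbb{Q}$ and some initial segment $J$ of the ordinal $\mu$ (possibly empty or all of $\mu$), or else $I$ consists of all blocks in positions below some cut $C$ of $\mathbb{Q}$. In the first case pick any rational $r' > r$ and let $I'$ be the initial segment of $X_{\mathfrak C}$ made of all blocks in positions $< r'$ together with the initial segment $J$ of the block in position $r'$; since $\{q \in \mathbb{Q} : q < r'\} \cong \mathbb{Q} \cong \{q \in \mathbb{Q} : q < r\}$ we get $I' \cong I$, while $I \subsetneq I'$ because $I'$ contains the block in position $r$ in full and all blocks in positions strictly between $r$ and $r'$. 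The cut case is identical, using $\{q < r'\} \cong \{q < C\}$ for a rational or cut $r' > C$. Hence every nonempty strict initial segment of $X_{\mathfrak C}$ extends to a strictly longer isomorphic one.

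The only point demanding care is the middle case of the key claim — that a point of $X_{\mathfrak C}$ genuinely has a large set below and above it \emph{within} $X_{\mathfrak C}$ — which is exactly why $X_{\mathfrak C}$ is taken to have no endpoints and to be ``fat'' at every nonempty initial and final segment. Beyond that the argument is pure bookkeeping once $X_{\mathfrak C}$ is chosen well; the real content, and the only genuine obstacle, is choosing $X_{\mathfrak C}$, and the product $\mathbb{Q} \times \mu$ is convenient precisely because both halves of the lemma then collapse to a cardinality count together with the self-similarity of $\mathbb{Q}$.
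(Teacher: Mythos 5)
Your proof is correct, and it takes a genuinely different route from the paper's. The paper builds $X_\mathfrak{C}$ as the suborder of $(2^\kappa,<_{\mathrm{lex}})$ consisting of non-eventually-constant sequences, where $\kappa=\max\{\aleph_0,\sup_{L\in\mathfrak{C}}|L|\}$: the displacement property is then extracted from the fact that every nonempty interval of that order has cardinality $2^\kappa$ while $|A|,|B|,|C|,|D|\leq\kappa$, and the ``further'' clause is obtained by a comparatively heavy argument that invokes the Fundamental Theorem (\Cref{th: fundamental theorem}) to produce a decomposition $X\cong X+M+X$, iterates it into a $\mathbb{Z}$-indexed decomposition, and applies a shift automorphism. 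Your $X_\mathfrak{C}=\mathbb{Q}\times\mu$ with $\mu=(\aleph_0+\sup_{L\in\mathfrak{C}}|L|)^+$ is both smaller and structurally simpler; your packaging of the first assertion via the isomorphism-invariant left-small/right-small predicates is clean and, unlike the paper's proof (which argues explicitly only that $f(X_\mathfrak{C})=X_\mathfrak{C}$), makes the conclusions $f(A)=C$ and $f(B)=D$ fully explicit; and your ``further'' clause falls out of the self-similarity of the index order $\mathbb{Q}$ with no appeal to Lindenbaum's theorem. What the paper's choice buys that yours does not is that its $X_\mathfrak{C}$ is dense and contains a convex copy of itself between any two points; since the lemma statement does not record this and the downstream applications cite only the statement (including the ``further'' clause, which is what \Cref{th: shuffle sum of sum-generating segments} uses to build a cofinal chain of initial segments), your order can be substituted without loss. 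The one point worth making explicit in your write-up is that a finite sum of members of $\mathfrak{C}$ indeed has cardinality $<\mu$, which holds because $\mu$ is the successor of $\aleph_0+\sup_{L\in\mathfrak{C}}|L|$; with that noted, the argument is complete.
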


\begin{proof}
    Say $\kappa = \mbox{max}\{\aleph_0, \sup_{L \in \mathfrak{C}} |L|\}$. Let $X_\mathfrak{C}$ be the suborder of $(2^\kappa, <_{\text{lex}})$ consisting of the sequences which are not eventually constant. Note that $X_\mathfrak{C}$ has no endpoints and is $2^\kappa$-dense, since between any two points there is an isomorphic copy of itself.
    
    Say $A, B$ are finite sums of elements of $\mathfrak{C}$ and $f: A+X_\mathfrak{C}+B \xrightarrow{\sim} C+X_\mathfrak{C}+D$ is an isomorphism. Since $|A|, |B|, |C|, |D| \leq \kappa < 2^\kappa$, if $f(X_\mathfrak{C}) \cap D \neq \emptyset$, we get a contradiction from $|D| \geq |f(X_\mathfrak{C}) \cap D| = |f(X_\mathfrak{C})| = 2^\kappa$, since $f(X_\mathfrak{C}) \cap D$ has to be isomorphic to an interval of $X_\mathfrak{C}$; similarly if $f(X_\mathfrak{C}) \cap C \neq \emptyset$, therefore $f(X_\mathfrak{C}) = X_\mathfrak{C}$.

    Let $I \subseteq X_\mathfrak{C}$ be a non-empty, strict initial segment.

    If $r \in 2^{<\kappa}$, note that $I_r = \{\alpha \in X_\mathfrak{C} \; | \; \alpha \restriction \mbox{dom}(r) = r\} \cong X_\mathfrak{C}$, via the isomorphism $g: X_\mathfrak{C} \rightarrow I_r$ such that $g(\alpha) = s^\smallfrown\alpha$. If $L_r, R_r$ are the segments of points less than and greater than all points in $I_r$, then
    $$X_\mathfrak{C} \cong L_r +I_r+R_r.$$
    By the Fundamental Theorem, $I_r + R_r \cong X_\mathfrak{C} \cong L_r + I_r$.

    Since $(2^\kappa, <_{\mbox{lex}})$ is complete, let $\alpha \in 2^\kappa$ be the supremum of $I$. Let $r, s \in 2^{<\kappa}$ be such that $I_r< \alpha < I_s$, this is always possible since $\alpha \neq (0, 0, 0, \dots), (1, 1, 1, \dots)$ given that $I$ is non-empty and strict. If $M$ is the segment between $I_r$ and $I_s$,

    $$X_\mathfrak{C} \cong L_r + I_r + M + I_s + R_s,$$

    so by the above, $X \cong X + M + X$.

    Iterating this identity, we get that $$X \cong L + \dots + M_{-1} + X_{-1} + M_{0} + X_0 + M_1 + X_1 + \dots + R,$$ where for any $i \in \mathbb{Z}$, $M_i \cong M$ and $X_i \cong X$. Further, if $h: X \xrightarrow{\sim}  L + \dots + M_{-1} + X_{-1} + M_{0} + X_0 + M_1 + X_1 + \dots + R$ is such that the cut corresponding to $\alpha$ belongs to $M_0$. Consider the automorphism $$l: L + \dots + M_{-1} + X_{-1} + M_{0} + X_0 + M_1 + X_1 + \dots + R \xrightarrow{\sim} L + \dots + M_{-1} + X_{-1} + M_{0} + X_0 + M_1 + X_1 + \dots + R$$ such that $l$ is the identity on both $L$ and $R$, and for all $i \in \mathbb{Z}$, $l(M_i) = M_{i+1}$ and $l(X_i) = X_{i+1}$.

    Let $I' = (h^{-1}\circ l\circ h) (I)$. This is an initial segment isomorphic to $I$ which is longer than it.
\end{proof}

\begin{remark}
    In practice, the distinguishing lemma can be used like a global version of point-tracking in settings where sums shuffle unobtrusively.
\end{remark}

Any semi-standard sum with a small degree of consistency will in fact be fully standard. This result will characterize the standard sums as the regular, associative and semi-standard sums.

\begin{lemma} \label{lm: semi-standard sum with standard order is standard}
    Suppose $\oplus$ is a regular, associative, semi-standard sum with an order that is left-standard (left$^*$-standard), then $\oplus$ is the usual (reverse) sum.
\end{lemma}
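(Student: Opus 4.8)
The plan is a proof by contradiction. Suppose $\oplus$ is regular, associative and semi-standard and that some nonempty order $S$ is left-standard, but that $\oplus$ is not the usual sum $+$. By \Cref{rmk: equivalent condition to semi-standard}, $A \oplus B$ is always isomorphic to $A + B$ or to $B + A$, so the failure of $\oplus = +$ produces a \emph{bad pair}: nonempty orders $A, B$ with $A \oplus B \cong B + A \not\cong A + B$. To exploit this I would first build a rigid marker: apply \Cref{lm: distinguishing} to the finite set $\mathfrak{C} = \{A, B, S\}$ to obtain $W := X_{\mathfrak{C}}$, so that any isomorphism $P + W + Q \cong R + W + T$ with $P, Q, R, T$ finite sums of elements of $\mathfrak{C}$ forces $P \cong R$ and $Q \cong T$. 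The orders $W + A + B$, $B + W + A$, $W + (B + A)$, $C + W + S$, $W + (C + S)$, etc., all have this shape, with letters drawn from $\mathfrak{C}$.

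The key step, and the only place where left-standardness of $S$ enters, is to show that $W \oplus C \cong W + C$ for every finite sum $C$ of elements of $\mathfrak{C}$. To prove it, combine left-standardness of $S$ (which gives $C \oplus S \cong C + S$ and $(W \oplus C) \oplus S \cong (W \oplus C) + S$) with associativity:
\[
(W \oplus C) + S \;\cong\; (W \oplus C) \oplus S \;\cong\; W \oplus (C \oplus S) \;\cong\; W \oplus (C + S).
\]
By semi-standardness the left-hand side is isomorphic to $W + C + S$ or to $C + W + S$, and the right-hand side to $W + C + S$ or to $C + S + W$. Viewing all four as expressions $P + W + Q$ over $\mathfrak{C}$ and applying \Cref{lm: distinguishing}, the mixed options ($C + W + S$ against $W + C + S$, $C + S + W$ against $W + C + S$, and so on) force $C$ or $S$ to be empty; since $S \neq \emptyset$ and the assertion is trivial when $C = \emptyset$, we are left with $W \oplus C \cong W + C$.

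With this in hand the contradiction comes from associativity applied to the triple $W, A, B$. On one side, using the bad pair and then the previous step, $W \oplus (A \oplus B) \cong W \oplus (B + A) \cong W + B + A$. On the other side, $(W \oplus A) \oplus B \cong (W + A) \oplus B$, which semi-standardness makes isomorphic to $W + A + B$ or to $B + W + A$. Equating the two expressions and applying \Cref{lm: distinguishing} once more yields $B + A \cong A + B$ or $B \cong \emptyset$, both impossible for a bad pair. Hence no bad pair exists, so $\oplus$ is the usual sum. The left$^*$-standard case reduces to this one under the dualization $A \mathbin{\tilde{\oplus}} B := (A^* \oplus B^*)^*$: one checks this operation is again a regular, associative, semi-standard sum, a left$^*$-standard order $X$ for $\oplus$ makes $X^*$ left-standard for $\mathbin{\tilde{\oplus}}$, and the conclusion that $\mathbin{\tilde{\oplus}}$ is the usual sum says exactly that $\oplus$ is the reverse sum.

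I expect the step $W \oplus C \cong W + C$ to be the crux. Semi-standardness leaves $\oplus$ free to silently reverse its two arguments, and the whole argument turns on the fact that pinning $W$ in place with the Distinguishing Lemma, together with the one-sided rigidity supplied by a left-standard $S$, forbids any such reversal around $W$. Everything else is bookkeeping with associativity and with matching the expressions $P + W + Q$.
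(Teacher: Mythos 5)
Your proof is correct and follows essentially the same strategy as the paper's: use the Distinguishing Lemma to pin $X_{\mathfrak{C}}$ in place, exploit associativity against the left-standard order on the right to force $X_{\mathfrak{C}} \oplus C \cong X_{\mathfrak{C}} + C$, and then associate $X_{\mathfrak{C}} \oplus A \oplus B$ to rule out the reversed option. The only differences are presentational (you argue by contradiction with a ``bad pair'' where the paper directly derives $B \oplus C \cong B + C$, and you spell out the dualization for the left$^*$-standard case, which the paper leaves implicit).
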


\begin{proof}
    Say $A$ is left-standard and $B$, $C$ are arbitrary, consider $\mathfrak{C} = \mathcal{P}(A) \cup \mathcal{P}(B) \cup \mathcal{P}(C)$. If $W \in \{B, B \oplus C\}$,

    \[
    X_\mathfrak{C} \oplus(W \oplus A) \cong X_\mathfrak{C} \oplus (W + A) \cong 
    \begin{cases}
        X_\mathfrak{C} + W + A,\\
        W + A+ X_\mathfrak{C}
    \end{cases}
    \]

    this must be congruent to

    \[
    (X_\mathfrak{C} \oplus W) \oplus A\cong (X_\mathfrak{C} \oplus W) + A\cong 
    \begin{cases}
        X_\mathfrak{C} + W + A,\\
        W + X_\mathfrak{C} + A
    \end{cases}
    \]

    So, both of these things must equal $X_\mathfrak{C}+W+A$, thus, $X_\mathfrak{C} \oplus W \cong X_\mathfrak{C} + W$. Using this we show that $B \oplus C \cong B + C$.

    \[
    X_\mathfrak{C} \oplus(B \oplus C) \cong X_\mathfrak{C} + (B \oplus C) \cong 
    \begin{cases}
        X_\mathfrak{C} + B + C,\\
        X_\mathfrak{C} + C + B
    \end{cases}
    \]

    and,

    \[
    (X_\mathfrak{C} \oplus B) \oplus C \cong (X_\mathfrak{C} + B) \oplus C \cong 
    \begin{cases}
        X_\mathfrak{C} + B + C,\\
        C + X_\mathfrak{C} + B
    \end{cases}
    \]

    The only coherent case is the one in which both of these are isomorphic to $X_\mathfrak{C} + B + C$, therefore, we have that $B \oplus C \cong B + C$.
\end{proof}

Thus, in order to show that any regular, associative, semi-standard sum is isomorphic to the usual or reverse sum, we can proceed by contradiction, using the fact that there must be non-standard orders with respect to our sum. 

\begin{theorem} \label{th: semi-standard sums are standard}
    If $\oplus$ is a regular, associative, semi-standard sum, it is isomorphic to either the usual or reverse sum.
\end{theorem}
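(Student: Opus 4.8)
The plan is to reduce everything to Lemma~\ref{lm: semi-standard sum with standard order is standard}. That lemma says that a regular, associative, semi-standard sum which has even a \emph{single} left-standard (resp.\ left$^*$-standard) order must be the usual (resp.\ reverse) sum, so it suffices to produce one such order. Assume toward a contradiction that $\oplus$ is isomorphic to neither $+$ nor $+^*$. Then by Lemma~\ref{lm: semi-standard sum with standard order is standard} no linear order is left-standard and no linear order is left$^*$-standard; applying this to the one-point order $1$ and using semi-standardness (Remark~\ref{rmk: equivalent condition to semi-standard}), we obtain two witnesses: an order $A_0$ with $A_0 \oplus 1 \cong 1 + A_0 \not\cong A_0 + 1$, and an order $A_1$ with $A_1 \oplus 1 \cong A_1 + 1 \not\cong 1 + A_1$. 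Note that each $A_i$ has at least two elements, since otherwise $A_i + 1$ and $1 + A_i$ would coincide.

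Now fix a set of linear orders $\mathfrak{C}$ containing $A_0$, $A_1$, and $1$, and let $X := X_{\mathfrak{C}}$ be the rigid order produced by the Distinguishing Lemma~\ref{lm: distinguishing}, so that whenever $P + X + Q \cong P' + X + Q'$ with $P,Q,P',Q'$ finite sums of members of $\mathfrak{C}$, we have $P \cong P'$ and $Q \cong Q'$. The argument proceeds in four steps of identical shape: take an instance of associativity, expand each occurrence of $\oplus$ using semi-standardness into a short list of candidate values of the form $P + X + Q$, and use the Distinguishing Lemma to decide which candidates on the two sides can possibly agree. First, from $(X \oplus A_0) \oplus 1 \cong X \oplus (A_0 \oplus 1) \cong X \oplus (1 + A_0)$ one checks that $X \oplus A_0 \cong X + A_0$ is impossible -- each candidate match would force one of $A_0 + 1 \cong 1 + A_0$, $A_0 \cong \emptyset$, or $1 \cong \emptyset$ -- so $X \oplus A_0 \cong A_0 + X$. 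Symmetrically, from $(X \oplus A_1) \oplus 1 \cong X \oplus (A_1 + 1)$ one gets $X \oplus A_1 \cong X + A_1$. Next, substituting $X \oplus A_0 \cong A_0 + X$ into $1 \oplus (X \oplus A_0) \cong (1 \oplus X) \oplus A_0$ and again comparing candidates rules out $1 \oplus X \cong 1 + X$, so $1 \oplus X \cong X + 1$. Finally, substituting $X \oplus A_1 \cong X + A_1$ and $1 \oplus X \cong X + 1$ into $1 \oplus (X \oplus A_1) \cong (1 \oplus X) \oplus A_1$, the left side is isomorphic to one of $1 + X + A_1$ or $X + A_1 + 1$ while the right side is isomorphic to one of $X + 1 + A_1$ or $A_1 + X + 1$; the Distinguishing Lemma shows no order in the first pair is isomorphic to one in the second (this uses $A_1 + 1 \not\cong 1 + A_1$ together with $|A_1| \geq 2$), contradicting associativity. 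Hence a left-standard or left$^*$-standard order does exist after all, and Lemma~\ref{lm: semi-standard sum with standard order is standard} yields the theorem.

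The per-step verification is routine bookkeeping; the one thing to watch is that in \emph{every} candidate identification at least one of the isomorphisms the Distinguishing Lemma would demand is actually false. The only borderline cases are those in which the lemma would require $A_i \cong \emptyset$ or $A_i \cong 1$, and these are precisely the places where the defining inequality $A_i + 1 \not\cong 1 + A_i$ of the witnesses is used. Accordingly, the genuine content is not the computations but the setup: recognizing that Lemma~\ref{lm: semi-standard sum with standard order is standard} reduces the whole problem to exhibiting a single (left- or left$^*$-)standard order, and that any obstruction to having one is already forced by how $\oplus$ interacts with the single point $1$, so that the two witnesses $A_0, A_1$ and one rigid order $X$ suffice to derive a contradiction.
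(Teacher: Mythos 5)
Your proposal is correct and follows essentially the same strategy as the paper's proof: reduce everything to Lemma~\ref{lm: semi-standard sum with standard order is standard}, extract witnesses to the failure of standardness, and play instances of associativity against the Distinguishing Lemma~\ref{lm: distinguishing} until the candidate decompositions cannot be reconciled. The only differences are in the bookkeeping — you pivot on the one-point order and four three-term associations with a single copy of $X_{\mathfrak{C}}$, where the paper fixes an arbitrary non-empty order and contradicts a five-term expression flanked by two copies of $X_{\mathfrak{C}}$ — and your case checks (including the uses of $A_i + 1 \not\cong 1 + A_i$ and $|A_i| \geq 2$) all go through.
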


\begin{proof}
Let $X$ be a any non-empty order. Suppose toward a contradiction that $\oplus$ is not standard. By Lemma \ref{lm: semi-standard sum with standard order is standard}, there must be an order $A$ such that
\[
A \oplus X \cong A + X \not\cong X + A.
\]
There must also be an order $B$ such that
\[
X \oplus B \cong B + X \not\cong X + B.
\]

Let $\mathfrak{C} = \mathcal{P}(X) \cup \mathcal{P}(A) \cup \mathcal{P}(B)$. We claim that the following two identities hold:
\[
X_\mathfrak{C} \oplus A \cong X_\mathfrak{C} + A, \quad X_\mathfrak{C} \oplus (A + X) \cong X_\mathfrak{C} + A + X.
\]

Indeed, by associativity, we have
\[
X_\mathfrak{C} \oplus (A \oplus X) \cong (X_\mathfrak{C} \oplus A) \oplus X.
\]

Since $A \oplus X \cong A + X$, there are two possibilities for the sum on the left and four for the sum on the right. For the left, $X_\mathfrak{C} \oplus (A \oplus X)$ is either $X_\mathfrak{C} + A + X$ or $A + X + X_\mathfrak{C}$. In either case, $X_\mathfrak{C}$ appears on one side of the sum. This eliminates two of the possibilities on the right where $X_\mathfrak{C}$ appears in the middle. The remaining possibilities for the right sum are $X_\mathfrak{C} + A + X$ and $X + A + X_\mathfrak{C}$.

Now observe that of the four possible matches of the left sum to the right sum, the only non-contradictory one is that both are $X_\mathfrak{C} + A + X$ (we use that $A + X \not\cong X + A$ here). It follows that
\[
X_\mathfrak{C} \oplus (A + X) \cong X_\mathfrak{C} + A + X \quad \text{and} \quad X_\mathfrak{C} \oplus A \cong X_\mathfrak{C} + A,
\]
as claimed.

By a similar argument, we can show that
\[
B \oplus X_\mathfrak{C} \cong X_\mathfrak{C} + B, \quad (B + X) \oplus X_\mathfrak{C} \cong X_\mathfrak{C} + B + X.
\]

Now consider the sum
\[
X_\mathfrak{C} \oplus A \oplus X \oplus B \oplus X_\mathfrak{C}.
\]

The rest of the proof can be summarized as \textit{associate this sum in different ways until you reach a contradiction}. Here is one possible approach: consider,

\[
X_\mathfrak{C} \oplus (A \oplus X \oplus B) \oplus X_\mathfrak{C} \cong (X_\mathfrak{C} \oplus A \oplus X) \oplus (B \oplus X_\mathfrak{C}).
\]

For the sum on the left, we end up with a sum of one of the following forms:
\[
X_\mathfrak{C} + (\cdots) + X_\mathfrak{C}, \quad X_\mathfrak{C} + X_\mathfrak{C} + (\cdots), \quad (\cdots) + X_\mathfrak{C} + X_\mathfrak{C}.
\]

For the sum on the right, using our results above, we end up with a sum of the form:
\[
X_\mathfrak{C} + (\cdots) + X_\mathfrak{C} + (\cdots),
\]
which yields a contradiction in every case.

\end{proof}

Having achieved our initial goal of characterizing the standard sums, we now move on to investigating different ways of creating non-standard regular, associative sums, and their properties. Ultimately, we will have acquired enough tools to show that none of the reasonable properties from \Cref{subsect: some reasonable properties of sums} follow from regularity and associativity.

\subsection{Product bi-colorings} \label{subsect: product bi-colorings}
To conclude our introduction, we give an alternate concrete characterization of sums.

Given a sum $\oplus$, define a rule $P_\oplus$ that takes in two linear orders and outputs a 2-coloring on their product as follows, given $A, B\in LO$, let $P_\oplus(A, B) = p_{A \oplus B}: A \times B \rightarrow\{0, 1\}$ be such that:

\[p_{A\oplus B}(a, b) = 
    \begin{cases}
        1, \text{ if } e_A^{A \oplus B}(a) < e_B^{A \oplus B}(b)\\
        0, \text{ else}
    \end{cases}
\]

Call such a rule the \textit{associated product bi-coloring} of the sum. In general:

\begin{definition} \label{def: product bi-coloring}
    A \textit{product bi-coloring} is a rule $P$ that takes in two linear orders and outputs a 2-coloring on their product, such that given $A, B\in LO$, if $P(A, B) = p: A \times B \rightarrow\{0, 1\}$, then for all $a, a' \in A$, $b, b' \in B$

    \begin{enumerate}
        \item[(i)] if $b < b'$, then $p(a, b) \leq p(a, b')$;
        \item[(ii)] if $a' < a$, then $p(a, b) \leq p(a', b)$.
    \end{enumerate}
\end{definition}

\begin{proposition} \label{prop: product bi-colorings correspond to sums}
    Given any product bi-coloring $P$, there is a sum $\oplus_P$ whose associated product bi-coloring is $P$.
\end{proposition}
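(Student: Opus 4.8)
The plan is to realize $\oplus_P$ directly on disjoint unions, in the spirit of the concrete model produced in \Cref{prop: concrete characterization of sums}. Given linear orders $A$ and $B$, write $p = P(A,B)\colon A \times B \to \{0,1\}$ and define $A \oplus_P B$ to be the set $A \sqcup B$ equipped with the relation $\prec$ declared by: $c \prec d$ if and only if either (a) $c,d \in A$ and $c <_A d$; or (b) $c,d \in B$ and $c <_B d$; or (c) $c \in A$, $d \in B$ and $p(c,d) = 1$; or (d) $c \in B$, $d \in A$ and $p(d,c) = 0$. The associated embeddings $e_A^{A\oplus_P B}$ and $e_B^{A\oplus_P B}$ are taken to be the two inclusions of $A$ and $B$ into $A \sqcup B$.

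First I would check that $\prec$ is a strict linear order. Irreflexivity is immediate from that of $<_A$ and $<_B$. For totality on a mixed pair $c \in A$, $d \in B$: since the union is disjoint we have $c \neq d$, and exactly one of $p(c,d)=1$, $p(c,d)=0$ holds, which by (c) and (d) means exactly one of $c \prec d$, $d \prec c$ holds; this simultaneously gives asymmetry for mixed pairs, while for same-side pairs asymmetry comes from $<_A$, $<_B$. The substantive point is transitivity. Given $x \prec y \prec z$, one argues by cases on which of $A$, $B$ contains each of $x,y,z$; when all three lie on one side there is nothing to prove, and the six mixed cases each reduce, via clauses (c)--(d), to a comparison between two values of $p$, supplied by exactly one of the monotonicity conditions in \Cref{def: product bi-coloring} --- condition (ii) in the three cases where two of the points lie in $A$, condition (i) in the three where two lie in $B$. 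For instance, if $x,y \in A$ and $z \in B$ then $x <_A y$ and $p(y,z)=1$, and applying condition (ii) to $x <_A y$ yields $p(y,z) \le p(x,z)$, hence $p(x,z)=1$ and $x \prec z$; the ``ABA'' case $x,z \in A$, $y \in B$ instead uses condition (ii) contrapositively to rule out $z \le_A x$. One can halve the bookkeeping using the symmetry $A \leftrightarrow B$ together with order reversal.

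It then remains to confirm that $\oplus_P$ really is a sum and that its associated product bi-coloring is $P$. A morphism $1 \to A \oplus_P B$ is simply the choice of an element $c \in A \sqcup B$; since $A$ and $B$ are disjoint and the inclusions are injective, $c$ lies in the image of exactly one of $e_A^{A\oplus_P B}$, $e_B^{A\oplus_P B}$, and there is a unique morphism from $1$ into the corresponding summand whose composite with that inclusion picks out $c$ --- which is exactly the defining property in \Cref{def: sums}. Finally, by the very definition of $\prec$ we have $e_A^{A\oplus_P B}(a) \prec e_B^{A\oplus_P B}(b)$ if and only if $p(a,b)=1$, so the associated product bi-coloring of $\oplus_P$ at $(A,B)$ is $p = P(A,B)$, for every pair $A,B$. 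The only real work in all of this is the transitivity case check, and it is mechanical; I anticipate no genuine obstacle, only the need to organize the cases so that the correspondence with conditions (i) and (ii) is transparent.
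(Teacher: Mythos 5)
Your proposal is correct and follows essentially the same route as the paper: define the order on the disjoint union $A \sqcup B$ by using $p(a,b)$ to decide mixed comparisons, take the inclusions as the associated embeddings, and verify transitivity by a case analysis driven by conditions (i) and (ii) of \Cref{def: product bi-coloring}. Your write-up is if anything slightly more explicit than the paper's (you also spell out the verification that the resulting operation is a sum and that its associated bi-coloring recovers $P$), but there is no substantive difference in approach.
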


\begin{proof}
    Given two linear orders $A, B$, define $<_{\oplus_P}$ on the disjoint union $A \sqcup B$ by letting $<_{\oplus_P}$ respect the ordering on $A$ and $B$, and if $a \in A$, $b \in B$, let $a <_{\oplus_P} b$ if for $p = P(A, B)$, $p(a, b) = 1$, $b <_{\oplus_P} a$ else. Let the embeddings be the disjoint union embeddings.

    We prove this is a well-defined linear order. That it is a sum follows immediately since the ordering on the base orders is preserved.

    It is clearly irreflexive and connected. If three elements are in the same order, transitivity is immediate as well. Say $a, a' \in A$, $b, b' \in B$:

    If $a' <_{\oplus_P} a$, $a <_{\oplus_P} b$, then $1 = p(a, b) \leq p(a', b)$ by condition (ii), therefore, $p(a', b) = 1$ and $a' <_{\oplus_P} b$. The $b <_{\oplus_P} a' <_{\oplus_P} a$ case is completely analogous. If $a' <_{\oplus_P} b$, $b <_{\oplus_P} a$, then $p(a', b) = 1$ and $p(a, b) = 0$, so we can't have $a \leq a'$, therefore $a' <_{\oplus_P} a$.

    If $b <_{\oplus_P} a$, $a <_{\oplus_P} b'$, then $p(a, b) = 0$, $p(a, b') = 1$, so by condition (i), we can't have $b' \leq b$, therefore $b <_{\oplus_P} b'$. Similarly, if $b <_{\oplus_P} b'$, $b' <_{\oplus_P} a$, then $p(a, b') = 0$, hence $p(a, b) = 0$ as well, so $b <_{\oplus_P} a$.
\end{proof}

\section{Sum-generating classes} \label{sect: sum-generating classes}

In this section, we begin by looking at regular, associative sums which emulate the structure of $+_w$. In \Cref{th: characterization of sum-generating classes with simple sums}, we will characterize these sums as those resulting from so-called \textit{sum-generating classes}. In \Cref{subsect: filtrations and sifted sums}, we will show how to use filtrations of sum-generating classes to obtain sums with similar properties to the Hessenberg sum. These classes will be of independent interest due to their algebraic properties, which we will consider in \Cref{subsect: algebraic properties}.

\begin{example}
    Say a linear order $B$ is \textit{initially dense} if any initial segment of it embeds a dense suborder. The sum

    \[
    A +_s B = 
    \begin{cases}
        B_s + A + B', \text{ if } B \cong B_s + B' \text{ for some } B' \text{ initially dense and } B_s \text{ scattered}\\
        A + B, \text{ else }
    \end{cases}
    \]

    is regular and associative by the same argument that $+_w$ is.
\end{example}

Further, we could also separate initial segments from both summands and combine them in different ways.

\begin{example} \label{ex: form of simple sum}
    Consider the regular, associative sum $\oplus$ such that
    \[
    A \oplus B = \alpha + \beta + A' + B', \text{ if } A \cong \alpha + A', B \cong \beta + B' \text{ for } \alpha, \beta \text{ the longest well-ordered initial segments}.
    \]
\end{example}

Another variation on the theme of $+_w$ comes as follows. Let $\#$ be the Hessenberg sum on the well-orders\footnote{For a detailed introduction to this operation, refer to \Cref{subsect: good sums of well-orders}}.

\begin{example} \label{ex: initial global extension of the Hessenberg sum}
    Given linear orders $A$, $B$ so that $A, B \cong \gamma + A', \tau + B'$ for $A', B'$ with no least element and $\gamma, \tau \in Ord$,
       
    \[
    A +_h B = \tau \# \gamma + A' + B'
    \]

    is a regular, associative sum.
\end{example}

Classes like the well-orders and scattered orders, which yield sums by separating longest initial segments, are very versatile, allowing for a variety of different sums to be built from them. In this section, we will characterize such classes of orders.

We begin by characterizing classes of orders $\mathcal{C}$ that give rise to sums such as $+_s$ and $+_w$, that is, regular associative sums of the form

    \[
    A \oplus_\mathcal{C} B = \tag{1}
    \begin{cases}
        \beta + A + B', \text{ if } B \cong \beta + B' \text{ for the longest } \beta \in \mathcal{C}\\
        A + B, \text{ else }
    \end{cases}
    \]

\begin{remark}
    Note that part of what it means for a sum of this form to well-defined is that every linear order $B$ must have a (possibly empty) longest initial segment in $\mathcal{C}$.
\end{remark}

The following definition will provide our characterization.

\begin{definition} \label{def: left sum-generating class}
    A class $\mathcal{C}$ of linear orders is \textit{left sum-generating} (a \textit{left SGC}) if the following hold:
    
    \begin{enumerate}
        \item[(i)] $\mathcal{C}$ is closed under isomorphism;
        \item[(ii)] $A + B \in \mathcal{C}$ implies that $B \in \mathcal{C}$;
        \item[(iii)] if $A_\alpha \in \mathcal{C}$ for all $\alpha < \beta$, then $\sum_{\alpha < \beta} A_\alpha \in \mathcal{C}$.
    \end{enumerate}
\end{definition}

There is, of course, an analogous definition to the right.

\begin{definition} \label{def: right sum-generating class}
    A class $\mathcal{C}$ of linear orders is \textit{right sum-generating} (a \textit{right SGC}) if the following hold:
    
    \begin{enumerate}
        \item[(i)] $\mathcal{C}$ is closed under isomorphism;
        \item[(ii)] $A + B \in \mathcal{C}$ implies that $A \in \mathcal{C}$;
        \item[(iii)] if $A_\alpha \in \mathcal{C}$ for all $\alpha < \beta$, then $\sum_{\alpha < \beta^*} A_\alpha \in \mathcal{C}$.
    \end{enumerate}
\end{definition}

\begin{remark} \label{rmk: complementary sum-generating class}
    Given a left SGC $\mathcal{C}$, there exists a complementary right SGC $\mathcal{C}^\perp$ defined by $A \in \mathcal{C}^\perp$ iff no non-empty initial segment of $A$ is in $\mathcal{C}$. Likewise, we may define the complement operation on right SGCs by using final segments instead.

    For any SGC $\mathcal{C}$, the complement behaves as expected, $(\mathcal{C}^\perp)^\perp = \mathcal{C}$. Consider the case that $\mathcal{C}$ is left sum-generating, then $A \in (\mathcal{C}^\perp)^\perp$ iff $A$ has no non-empty final segment in $\mathcal{C}^\perp$ iff every non-empty final segment of $A$ has a non-empty initial segment in $\mathcal{C}$. Using closure under ordinal sums and final segments, we get that $A \in \mathcal{C}$ iff $A \in (\mathcal{C}^\perp)^\perp$. The case for right SGCs is entirely analogous.
\end{remark}

We could have also given an equivalent definition of SGCs which will be useful for the proof of the main characterization.

\begin{proposition} \label{prop: left-sum-generating classes are closed under right-limits}
    If $\mathcal{C}$ satisfies conditions (i) and (ii) from Definition \ref{def: left sum-generating class}, the following are equivalent:
    \begin{enumerate}
        \item $\mathcal{C}$ is a left SGC;
        \item \begin{enumerate}
            \item[(i)] for any $A, B \in \mathcal{C}$, $A+B \in \mathcal{C}$;
            \item[(ii)] if $A_0 \preceq A_1 \preceq \dots$ is any sequence of elements of $\mathcal{C}$ of arbitrary length such that for any of the embeddings $\varphi_\alpha^\beta: A_\alpha \xhookrightarrow{} A_\beta$, $\varphi_\alpha^\beta(A_\alpha)$ is an initial segment of $A_\beta$, then the direct limit of this sequence is in $\mathcal{C}$ (\textit{closure under right limits}).
        \end{enumerate}
    \end{enumerate}

    The analogous equivalence holds for right SGCs with limits of final segments (\textit{closure under left limits})\footnote{Throughout this section we will focus on left sum-generating classes, the proofs and constructions are entirely analogous for right SGCs.}.
\end{proposition}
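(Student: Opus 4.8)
The plan is to prove the two implications separately, translating condition (iii) of Definition~\ref{def: left sum-generating class} into the finite-sum and right-limit closure conditions via the standard dictionary between ordinal sums and direct limits of initial segments; each direction is a transfinite induction.

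\emph{From (1) to (2).} Condition 2(i) is immediate from 1(iii) applied with $\beta = 2$ and the two-term sequence $(A,B)$. For 2(ii), let $(A_\alpha)_{\alpha<\lambda}$ be a direct system in $\mathcal C$ whose transition maps $\varphi_\alpha^\gamma$ all have initial-segment image, let $A=\varinjlim_\alpha A_\alpha$, and let $\psi_\alpha\colon A_\alpha\to A$ be the colimit maps. I would first record the routine facts that $A$ is a linear order, that $A=\bigcup_\alpha\psi_\alpha(A_\alpha)$, and that each $\psi_\alpha(A_\alpha)$ is an initial segment of $A$: since $\psi_\alpha=\psi_\gamma\circ\varphi_\alpha^\gamma$ for $\alpha\le\gamma$, the images $\psi_\alpha(A_\alpha)$ form an increasing chain, each an initial segment of the next, and this property passes to their union $A$. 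We may assume $\lambda$ is a limit ordinal; otherwise $A\cong A_\mu$ for $\lambda=\mu+1$ (or $A=\emptyset\in\mathcal C$ for $\lambda=0$), and there is nothing to show. Put $L_\alpha=\bigcup_{\gamma<\alpha}\psi_\gamma(A_\gamma)$ and $N_\alpha=\psi_\alpha(A_\alpha)\setminus L_\alpha$. Then $L_\alpha$ is an initial segment of $A$ contained in the initial segment $\psi_\alpha(A_\alpha)$, so $N_\alpha$ is convex and $\psi_\alpha(A_\alpha)=L_\alpha+N_\alpha$ as linear orders; since $\psi_\alpha(A_\alpha)\cong A_\alpha\in\mathcal C$, condition (ii) yields $N_\alpha\in\mathcal C$. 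Finally the $N_\alpha$ partition $A$ into convex pieces in order type $\lambda$, so $A=\sum_{\alpha<\lambda}N_\alpha$; as $\emptyset\in\mathcal C$ (by 1(iii) with $\beta=0$, covering possibly empty $N_\alpha$), condition 1(iii) gives $A\in\mathcal C$.

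\emph{From (2) to (1).} Given $A_\alpha\in\mathcal C$ for all $\alpha<\beta$, set $S_\gamma=\sum_{\alpha<\gamma}A_\alpha$ for $\gamma\le\beta$ and prove $S_\gamma\in\mathcal C$ by induction on $\gamma$. For $\gamma=0$, $S_0=\emptyset$, which lies in $\mathcal C$ (if $\mathcal C\ne\emptyset$, write any $X\in\mathcal C$ as $X+\emptyset$ and apply condition (ii); the case $\mathcal C=\emptyset$ is degenerate). For $\gamma=\delta+1$ we have $S_{\delta+1}=S_\delta+A_\delta$ with both summands in $\mathcal C$ by the inductive hypothesis and the assumption, so $S_{\delta+1}\in\mathcal C$ by 2(i). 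For $\gamma$ a limit, each $S_\delta$ with $\delta<\gamma$ is literally an initial segment of $S_{\delta'}$ for $\delta\le\delta'\le\gamma$ and $S_\gamma=\bigcup_{\delta<\gamma}S_\delta$, so $S_\gamma$ is the direct limit of the chain $(S_\delta)_{\delta<\gamma}$ of elements of $\mathcal C$ along initial-segment inclusions, and 2(ii) gives $S_\gamma\in\mathcal C$. Taking $\gamma=\beta$ completes the induction. The right-SGC statement follows by applying this equivalence to the reverse orders, under which ``initial segment'' and ``final segment'' are swapped and $\sum_{\alpha<\beta}$ becomes $\sum_{\alpha<\beta^*}$.

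I expect the only genuinely delicate step to be the passage from (1) to (2)(ii): one must present the direct limit as an ordinal sum even across limit stages, and the key is that the colimit maps (not merely the transition maps) have initial-segment image, so that the successive differences $N_\alpha$ are convex and --- crucially via condition (ii) rather than (iii) --- remain in $\mathcal C$. Everything else is bookkeeping with nested initial segments and a transfinite induction.
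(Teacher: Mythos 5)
Your proposal is correct and follows essentially the same route as the paper: the forward direction decomposes the direct limit into the successive convex differences $N_\alpha$ (the paper's $B_\alpha = A_\alpha - \bigcup_{\beta<\alpha}\varphi_\beta^\alpha(A_\beta)$), puts each in $\mathcal{C}$ via condition (ii), and reassembles them as an ordinal-indexed sum, while the converse is the same transfinite induction on partial sums using 2(i) at successors and 2(ii) at limits. Your extra care in distinguishing colimit maps from transition maps and in handling the empty order is a welcome tightening of details the paper leaves implicit, but it is not a different argument.
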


\begin{proof}
    $(\Rightarrow)$ For 2.(i), consider a sequence of length $\beta = 2$ using (iii) in Definition \ref{def: left sum-generating class}. 2.(ii) follows by induction on the length of the sequence of elements. 
    
    The base case is immediate. If the sequence is of length $\alpha + 1$, then the limit is $A_{\alpha} \in \mathcal{C}$. If the sequence is of length $\lambda \in Lim-\{0\}$, for any $\alpha < \lambda$ define
    $$B_\alpha = A_\alpha - \bigcup_{\beta<\alpha}\varphi_\beta^\alpha(A_\beta),$$

    these orders are in $\mathcal{C}$ by condition (ii). Note that $$\lim_{\alpha < \lambda} A_\alpha = \sum_{\alpha < \lambda} B_\alpha.$$

    $(\Leftarrow)$ By induction. The base case is again immediate, and the successor step is covered by 2.(i) with the inductive hypothesis. If $\lambda \in Lim-{0}$, let the $A'_\alpha$ be the sequence of partial sums
    $$A'_\alpha = \sum_{\beta < \alpha} A_\beta$$
    with the embeddings just being inclusions. Take their limit, which will be the complete sum.
\end{proof}

Finally, we come to our characterization -- the theorem that gives SGCs their name.

\begin{theorem} \label{th: characterization of sum-generating classes with simple sums}
    The class $\mathcal{C}$ is left sum-generating iff $\oplus_\mathcal{C}$ is a regular, associative sum of the form $(1)$.
\end{theorem}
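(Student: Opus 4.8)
The plan is to prove the two implications of the biconditional separately; in both directions the workhorse is the observation that repeatedly peeling off longest $\mathcal{C}$-initial segments reduces an iterated sum to a normal form $\gamma+\beta+A+B'+C'$. Assume first that $\mathcal{C}$ is a left SGC. I would begin by checking that $\oplus_{\mathcal{C}}$ is well defined, i.e.\ that every order $Y$ has a longest initial segment in $\mathcal{C}$: the initial segments of $Y$ lying in $\mathcal{C}$ form a chain under inclusion, nonempty since $\emptyset$ (the empty sum) lies in $\mathcal{C}$; choosing a cofinal well-ordered subchain, its direct limit is its union, which equals the union of the whole family and lies in $\mathcal{C}$ by closure under right limits (\Cref{prop: left-sum-generating classes are closed under right-limits}); being an initial segment containing every member of the family, it is the longest. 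Regularity is then immediate, since closure under isomorphism makes isomorphisms carry longest $\mathcal{C}$-initial segments to longest $\mathcal{C}$-initial segments; and $\beta+A+B'$ decomposes into the middle copy of $A$ together with the suborder formed by the initial $\beta$ and the final $B'$, which is a copy of $\beta+B'=B$, so $\oplus_{\mathcal{C}}$ is a sum. The substantive point is associativity: writing $\gamma,\beta$ for the longest $\mathcal{C}$-initial segments of $C,B$ and $C=\gamma+C'$, $B=\beta+B'$, one gets $(A\oplus_{\mathcal{C}}B)\oplus_{\mathcal{C}}C = \gamma+(\beta+A+B')+C'$ at once, while $B\oplus_{\mathcal{C}}C = \gamma+\beta+B'+C'$ and I claim its longest $\mathcal{C}$-initial segment is exactly $\gamma+\beta$. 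It lies in $\mathcal{C}$ by closure under binary sums; any strictly longer $\mathcal{C}$-initial segment has the form $\gamma+\beta+P$ with $P$ a nonempty initial segment of $B'+C'$, and property (ii) of \Cref{def: left sum-generating class} gives $\beta+P\in\mathcal{C}$ --- if $P\subseteq B'$ this is an initial segment of $B$ strictly longer than $\beta$, and if $B'\subsetneq P$, say $P=B'+Q$, then applying (ii) again to $B+Q=\beta+P$ gives $Q\in\mathcal{C}$, hence $\gamma+Q$ is an initial segment of $C$ strictly longer than $\gamma$; both are impossible. Thus $A\oplus_{\mathcal{C}}(B\oplus_{\mathcal{C}}C) = (\gamma+\beta)+A+(B'+C')$ as well.

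For the converse, assume $\oplus_{\mathcal{C}}$ is a regular, associative sum of the form $(1)$. The key device is the reformulation $D\in\mathcal{C}\iff X\oplus_{\mathcal{C}}D\cong D+X$ for every $X$: the forward half is immediate from $(1)$, and for the backward half one applies the hypothesis with $X=X_{\mathfrak{C}}$ for some $\mathfrak{C}\supseteq\mathcal{P}(D)$ and reads off $D'=\emptyset$ from $\beta_D+X_{\mathfrak{C}}+D'\cong D+X_{\mathfrak{C}}$ via \Cref{lm: distinguishing}, where $D=\beta_D+D'$ with $\beta_D$ the longest $\mathcal{C}$-initial segment of $D$ (the edge case $\emptyset\notin\mathcal{C}$ is harmless: $\oplus_{\mathcal{C}}=\oplus_{\mathcal{C}\cup\{\emptyset\}}$, so we may assume $\emptyset\in\mathcal{C}$). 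Closure under isomorphism is then instant from regularity and the reformulation. Closure under binary sums uses the identity $A+B=B\oplus_{\mathcal{C}}A$, valid when $A\in\mathcal{C}$: associativity and the reformulation give $X\oplus_{\mathcal{C}}(A+B)\cong(X\oplus_{\mathcal{C}}B)\oplus_{\mathcal{C}}A = A+(B+X)$ whenever $A,B\in\mathcal{C}$, so $A+B\in\mathcal{C}$. A similar re-association --- comparing $X\oplus_{\mathcal{C}}S$ with $(X\oplus_{\mathcal{C}}M)\oplus_{\mathcal{C}}L$ for $L$ a nonempty $\mathcal{C}$-initial segment of $S=L+M$ and applying \Cref{lm: distinguishing} --- shows that every order actually has a longest $\mathcal{C}$-initial segment. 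Closure under arbitrary ordinal sums $\sum_{\alpha<\lambda}A_{\alpha}$ with all $A_{\alpha}\in\mathcal{C}$ is then a transfinite induction: successor steps are binary closure, and at a limit the partial sums $\sum_{\alpha<\gamma}A_{\alpha}$ all lie in $\mathcal{C}$, hence are all contained in the longest $\mathcal{C}$-initial segment of $S$, which must therefore be $S$ itself.

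The step I expect to be the real obstacle is property (ii) of \Cref{def: left sum-generating class}, namely $A+B\in\mathcal{C}\Rightarrow B\in\mathcal{C}$, in the converse direction. Here the plan is: let $\beta$ be the longest $\mathcal{C}$-initial segment of $B$ and $B=\beta+B'$; binary closure forces $B'$ to have no nonempty $\mathcal{C}$-initial segment, so $C\oplus_{\mathcal{C}}B'=C+B'$ for every $C$, whence $A+B=(A+\beta)\oplus_{\mathcal{C}}B'$. Re-associating $X_{\mathfrak{C}}\oplus_{\mathcal{C}}\big((A+\beta)\oplus_{\mathcal{C}}B'\big)\cong\big(X_{\mathfrak{C}}\oplus_{\mathcal{C}}(A+\beta)\big)\oplus_{\mathcal{C}}B'$ and using $A+B\in\mathcal{C}$ on the left yields, with $A+\beta=\delta+E$ for $\delta$ the longest $\mathcal{C}$-initial segment of $A+\beta$, an isomorphism $\delta+X_{\mathfrak{C}}+(E+B')\cong(A+\beta+B')+X_{\mathfrak{C}}$; \Cref{lm: distinguishing} (taking $\mathfrak{C}\supseteq\mathcal{P}(A+B)$) then forces $E+B'=\emptyset$, so $B'=\emptyset$ and $B=\beta\in\mathcal{C}$. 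The genuine difficulty is bookkeeping: choosing $\mathfrak{C}$ so that every block flanking $X_{\mathfrak{C}}$ is a finite sum of its elements, and tracking which side of $X_{\mathfrak{C}}$ each block ends up on; the conceptual content of each step is small.
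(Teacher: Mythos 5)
Your proposal is correct and follows essentially the same route as the paper: the forward direction is verified directly from the closure properties (with the longest $\mathcal{C}$-initial segment obtained as a union along a well-ordered cofinal chain, i.e.\ a right limit), and the converse uses the distinguishing lemma together with re-associations of sums involving $X_\mathfrak{C}$ to force the unwanted flanking blocks to be empty. The one place you genuinely streamline the paper's argument is closure under final segments: by writing $A+B=(A+\beta)\oplus_\mathcal{C} B'$ and using $A+B\in\mathcal{C}$ on the left-hand side of the associativity identity, you avoid the paper's case split on whether $A$ (or an initial segment of it) lies in $\mathcal{C}$ --- a small but real simplification.
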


\begin{proof}
    ($\Rightarrow$) First we show that $\oplus_\mathcal{C}$ is well-defined.
    
    Given any linear order $B$, if $B$ has no initial segment in $\mathcal{C}$, we are done; otherwise, let $$I = \{\beta \in \mathcal{P}(B) \mid \beta \text{ is an initial segment of } B, \beta \in \mathcal{C}\},$$ then by condition 2.(ii), the union of $I$ is in $\mathcal{C}$ and is the largest initial segment of $B$ in $\mathcal{C}$. Formally, let $A_0 \in \beta$ be arbitrary and, inductively, given $A_\alpha$, let $A_\alpha \subseteq A_{\alpha+1} \in I$ (if such an element of $I$ exists). At a limit stage $\lambda$, let $A_\lambda = \bigcup_{\alpha < \lambda} A_\alpha$. This yields a sequence $A_0 \subseteq A_1 \subseteq \dots$ of the form in 2.(ii) whose direct limit is $\bigcup I$.  

    By condition (i), the sum is regular. This is easy to check, say $f: B \rightarrow C$ is an isomorphism, then if we decompose $B$ as $\beta + B'$ and $C$ as $\gamma + C'$, we claim $f(\beta) = \gamma$. Clearly $f(\beta) \in \mathcal{C}$ by condition (i), thus $f(\beta) \subseteq \gamma$, but $f^{-1}(\gamma) \in \mathcal{C}$ and is also an initial segment, so $f^{-1}(\gamma) \subseteq \beta$ therefore $\gamma \subseteq f(\beta)$, hence also $f(B') = C'$.

    For associativity, say $A \cong \alpha + A'$, $B \cong \beta + B'$, $C \cong \gamma + C'$:

    \[(A \oplus_\mathcal{C} B) \oplus_\mathcal{C} C \cong (\beta + \alpha + A' + B') \oplus_\mathcal{C} C \cong \gamma + \beta + \alpha + A' + B' + C' \tag{2}\]

    Also:

    \[A \oplus_\mathcal{C} (B \oplus_\mathcal{C} C) \cong A \oplus_\mathcal{C} (\gamma+\beta+B'+C') \tag{3}\]

    Now, we have to verify that the longest initial segment of $\gamma+\beta+B'+C'$ is indeed $\gamma + \beta$. $\gamma + \beta \in \mathcal{C}$, so the longest initial segment has to contain $\gamma + \beta$, say it is $\gamma + \beta + \theta_B + \theta_C$, where $\theta_B$, $\theta_C$ are segments of $B'$ and $C'$ (respectively) where both $\theta_B$ and $\theta_C$ are allowed to be the empty order. Note that $\theta_C$ must indeed be empty since by condition (ii), $(\gamma + \beta + \theta_B) + \theta_C \in \mathcal{C}$ implies that $\theta_C \in \mathcal{C}$. But we may apply condition (ii) again to $(\gamma + \beta) + \theta_B$, to get that $\theta_B \in \mathcal{C}$, so it must also empty.

    Therefore (2) $\cong$ (3). 

    ($\Leftarrow$) For this direction we'll need some way to track where each segment of $B$ goes in the sum. We use the distinguishing lemma.

    First we show (i) holds of $\mathcal{C}$. Say $\beta \in \mathcal{C}$, $\beta \cong \beta'$, and let $\mathfrak{C} = (\{\beta\} \cup \mathcal{P}(\beta')) - \{\emptyset\}$ (where subsets are ordered as suborders) to obtain $X_\mathfrak{C}$ as in Lemma \ref{lm: distinguishing}. By regularity, there is an isomorphism $f: X_\mathfrak{C} \oplus_\mathcal{C} \beta \xrightarrow{\sim} X_\mathfrak{C} \oplus_\mathcal{C} \beta'$, and since $X_\mathfrak{C} \oplus_\mathcal{C} \beta = \beta + X_\mathfrak{C}$, $X_\mathfrak{C} \oplus_\mathcal{C} \beta' \cong \varepsilon + X_\mathfrak{C} + \delta$, where $\varepsilon$ is the longest initial segment of $\beta'$ in $\mathcal{C}$ and $\beta' \cong \varepsilon + \delta$. By the distinguishing lemma, $f(\delta) = \emptyset$ and $f(\varepsilon) \cong\beta$, so $\beta' = \varepsilon$ and we must have that $\beta' \in \mathcal{C}$ in order for $X_\mathfrak{C} \oplus_\mathcal{C} \beta' = \beta' + X_\mathfrak{C}$.

    2.(ii) follows by the well-definedness of $\oplus_\mathcal{C}$. Say $A_0 \preceq A_1 \preceq \dots$ is a sequence of elements of $\mathcal{C}$ of the type described in 2.(ii), consider its direct limit $A$. Every $A_i$ is isomorphic to an initial segment of $A$, so there is a cofinal sequence of initial segments of $A$ which are in $\mathcal{C}$, thereby in order for $\oplus_\mathcal{C}$ to be well-defined, we must have that $A$ is itself in $\mathcal{C}$.

    2.(i) requires a little symbol shuffling.
    
    Say $\alpha, \beta \in \mathcal{C}$, let $\mathfrak{C} = (\mathcal{P}(\alpha) \cup \mathcal{P}(\beta)) - \{\emptyset\}$. 

    $$\alpha + \beta + X_\mathfrak{C} = (X_\mathfrak{C} \oplus_\mathcal{C} \beta) \oplus_\mathcal{C} \alpha \cong X_\mathfrak{C} \oplus_\mathcal{C} (\beta \oplus_\mathcal{C} \alpha) = X_\mathfrak{C} \oplus_\mathcal{C} (\alpha + \beta) \cong \varepsilon + X_\mathfrak{C} + \delta$$

    Where $\varepsilon + \delta \cong \alpha + \beta$ ($\varepsilon$ is the longest initial segment of $\alpha + \beta$ in $\mathcal{C}$). By the distinguishing lemma, $\delta = \emptyset$ and $\alpha + \beta \cong \epsilon \in \mathcal{C}$, so $\alpha + \beta \in \mathcal{C}$ by (i).

    For (ii), say $\alpha + \beta \in \mathcal{C}$. Let $\mathfrak{C} = (\mathcal{P}(\alpha) \cup \mathcal{P}(\beta)) - \{\emptyset\}$ we want to show that $\beta \in \mathcal{C}$. 
    
    Assume $\alpha \in \mathcal{C}$, then if $\beta \cong \Bar{\beta} + \beta'$, where $\Bar{\beta}$ is the longest initial segment of $\beta$ in $\mathcal{C}$,

    $$\alpha + \Bar{\beta} + X_\mathfrak{C} + \beta' \cong (X_\mathfrak{C} \oplus_\mathcal{C} \beta) \oplus_\mathcal{C} \alpha \cong X_\mathfrak{C} \oplus_\mathcal{C} (\beta \oplus_\mathcal{C} \alpha) = X_\mathfrak{C} \oplus_\mathcal{C} (\alpha + \beta) = \alpha + \beta + X_\mathfrak{C}$$

    So, $\beta' = \emptyset$ and $\beta = \Bar{\beta} \in \mathcal{C}$.

    If $\alpha \notin \mathcal{C}$, consider its longest initial segment in $\mathcal{C}$, say $\alpha \cong \Bar{\alpha} + \alpha'$. By the above $\alpha' + \beta \in \mathcal{C}$, so we can assume no initial segment of $\alpha$ is in $\mathcal{C}$. Say $\beta \cong \Bar{\beta} + \beta'$ is again any decomposition where $\Bar{\beta} \in \mathcal{C}$ but no initial segment of $\beta'$ is in $\mathcal{C}$, then

    $$\alpha + \beta + X_\mathfrak{C} \cong X_\mathfrak{C} \oplus_\mathcal{C} ((\alpha + \Bar{\beta}) \oplus_\mathcal{C} \beta') \cong (X_\mathfrak{C} \oplus_\mathcal{C} (\alpha + \Bar{\beta})) \oplus_\mathcal{C} \beta' \cong (X_\mathfrak{C} \oplus_\mathcal{C} (\alpha + \Bar{\beta})) + \beta'$$

    Tells us that since $\beta'$ will stay to the right of $X_\mathfrak{C}$, we must have $\beta' = \emptyset$, therefore $\beta = \Bar{\beta} \in \mathcal{C}$, and we are done.
\end{proof}

\begin{remark}
    An alternative sum to that given in (1) takes a right SGC $\mathcal{C}$, and separates the right summand's longest final segment. The above characterization holds analogously.

    Note that this sum would be the same as (1) with the left SGC $\mathcal{C}^\perp$.
\end{remark}

The sum in (1) is called the \textit{simple sum generated by $\mathcal{C}$}.

Simple sums allow us to generate a variety of different non-standard sums. If $L$ is any non-empty linear order, let $\mathcal{C} = \langle L \rangle$, be the left SGC generated by $L$. The simple sum generated by $\mathcal{C}$ will be a non-standard regular, associative sum.

The standard sums are also examples of simple sums. The usual sum is the simple sum generated by $\{\emptyset\}$, and the reverse sum is generated by $LO$.

Simple sums are extremely well-behaved, we will show that they are all canonically regular in \ref{subsect: algebraic properties}. They will also track points under the normal embeddings. This follows immediately from the definition of point-tracking, considering that $$A \oplus (B \oplus C) \cong C_L + B_L + A + B_R + C_R \cong (A \oplus B) \oplus C,$$ while $A\oplus B = B_L + A + B_R$ and $B \oplus C = C_L + B + C_R$. The induced isomorphism will be the composition of the two isomorphisms given above where each summand maps to their corresponding sections in the result.

As mentioned previously, SGCs are quite flexible, in fact, they allow us to characterize sums of the form in \Cref{ex: initial global extension of the Hessenberg sum}.

\begin{definition} \label{def: sum on a class}
    Given a class of orders $C$, we say $\oplus$ is a \textit{sum on $C$}, if it is well-defined within $C$ and can be extended to a (global) sum\footnote{Given any sum on $C$, we may extend it to a global sum by letting it behave exactly the same as the usual sum outside its domain. That said, this extension may not retain any of the restriction's properties.}. 
    
    Properties such as associativity and regularity carry over with their usual meanings in this restricted setting.
\end{definition}

Whenever we consider a regular sum $\oplus$ on $C$, we will assume that $C$ is closed under isomorphism, likewise, if $\oplus$ is associative on $C$, we will require that its outputs be in $C$.

\begin{proposition} \label{prop: sum extending sum on an SGC}
    If $\mathcal{C}$ is a left SGC and $\oplus'$ is a regular, associative sum on $\mathcal{C}$, the sum $\oplus$ on $LO$ defined by \[A \oplus B = A_L\oplus'B_L+A_R+B_R, \] where \( A_L \) and \( B_L \) are the longest initial segments of \( A \) and \( B \) that belong to \( \mathcal{C} \), is an associative regular sum.
\end{proposition}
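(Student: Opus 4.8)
The plan is to verify the three properties required of a sum—well-definedness, regularity, and associativity—for the operation $A \oplus B = A_L \oplus' B_L + A_R + B_R$, by reducing each to the corresponding property of $\oplus'$ on $\mathcal{C}$ together with the structural facts about left SGCs established earlier.

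\textbf{Well-definedness.} First I would note that every linear order $A$ has a well-defined longest initial segment $A_L \in \mathcal{C}$: this is exactly the argument already given in the proof of \Cref{th: characterization of sum-generating classes with simple sums}, using closure of $\mathcal{C}$ under right limits (\Cref{prop: left-sum-generating classes are closed under right-limits}, condition 2.(ii)) to take the union of the directed family of initial segments of $A$ lying in $\mathcal{C}$. Writing $A \cong A_L + A_R$, $B \cong B_L + B_R$, the output $A_L \oplus' B_L + A_R + B_R$ makes sense because $A_L, B_L \in \mathcal{C}$ so $A_L \oplus' B_L$ is defined, and it is a genuine sum (in the sense of \Cref{def: sums}) since $A_L \oplus' B_L$ decomposes into a copy of $A_L$ and a copy of $B_L$, and then concatenating $A_R$ and $B_R$ gives a decomposition of $A \oplus B$ into $A_L + A_R \cong A$ and $B_L + B_R \cong B$ — here one has to be a little careful that the pieces interleave correctly, but since $A_R$ and $B_R$ sit entirely to the right, the copy of $A$ is $(\text{copy of }A_L) \sqcup A_R$ and similarly for $B$, and both are suborders of the total order.

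\textbf{Regularity.} If $A \cong \tilde A$ and $B \cong \tilde B$, then as shown in the proof of \Cref{th: characterization of sum-generating classes with simple sums} any isomorphism carries the longest initial segment in $\mathcal{C}$ to the longest initial segment in $\mathcal{C}$; hence $A_L \cong \tilde A_L$, $A_R \cong \tilde A_R$, $B_L \cong \tilde B_L$, $B_R \cong \tilde B_R$. Then $A_L \oplus' B_L \cong \tilde A_L \oplus' \tilde B_L$ by regularity of $\oplus'$ on $\mathcal{C}$, and concatenating the isomorphic right pieces gives $A \oplus B \cong \tilde A \oplus \tilde B$.

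\textbf{Associativity.} This is the main obstacle, and the key point is the same computation that made the proof of \Cref{th: characterization of sum-generating classes with simple sums} work: I must check that when I form $B \oplus C = B_L \oplus' C_L + B_R + C_R$, the longest initial segment of this order lying in $\mathcal{C}$ is exactly $B_L \oplus' C_L$. Since $B_L, C_L \in \mathcal{C}$, closure of $\mathcal{C}$ under $\oplus'$ — which is part of what "$\oplus'$ is a sum on $\mathcal{C}$" means (its outputs lie in $\mathcal{C}$) — gives $B_L \oplus' C_L \in \mathcal{C}$; conversely, if the longest initial segment were $(B_L \oplus' C_L) + \theta$ with $\theta$ a nonempty initial segment of $B_R + C_R$, then writing $\theta = \theta_B + \theta_C$ with $\theta_B \subseteq B_R$, $\theta_C \subseteq C_R$, two applications of condition (ii) of \Cref{def: left sum-generating class} force $\theta_C \in \mathcal{C}$ and then $\theta_B \in \mathcal{C}$; but then $B_L + \theta_B$ would be an initial segment of $B$ in $\mathcal{C}$ strictly longer than $B_L$ (once one checks $B_L + \theta_B$ is obtained from $B_L \oplus' B_L$... — more carefully: $B_L \oplus' C_L$ has $B_L$ as an initial segment? not necessarily, so instead I argue directly that $B_L \oplus' C_L + \theta_B \in \mathcal{C}$, and since $\oplus'$ is a sum this order has an initial-segment copy of $B_L$ with a final piece equimorphic... ). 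The cleanest route: use condition (ii) to peel $\theta_C$ then $\theta_B$ off $(B_L\oplus' C_L)+\theta_B+\theta_C$; now $B_L \oplus' C_L + \theta_B \in \mathcal{C}$, and it equals (up to iso) the order obtained by replacing $C_L$ in $B_L \oplus' C_L$ by $C_L$ and appending $\theta_B$ — but I only need that $\theta_B$, being a final segment of something in $\mathcal{C}$ adjoined on the right, lies in $\mathcal{C}$ by (ii), hence $B_L + \theta_B$ is a longer initial segment of $B$ in $\mathcal{C}$ unless $\theta_B = \emptyset$; contradiction, and similarly $\theta_C = \emptyset$. Granting this, $(A \oplus B) \oplus C$ and $A \oplus (B \oplus C)$ both reduce to $\bigl((A_L \oplus' B_L) \oplus' C_L\bigr) + A_R + B_R + C_R$ versus $\bigl(A_L \oplus' (B_L \oplus' C_L)\bigr) + A_R + B_R + C_R$, and these are isomorphic by associativity of $\oplus'$ on $\mathcal{C}$ together with regularity of $+$. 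I expect the bookkeeping around identifying the longest initial segment of $B \oplus C$ to be the only genuinely delicate part; everything else is a transcription of the arguments already in the proof of \Cref{th: characterization of sum-generating classes with simple sums}.
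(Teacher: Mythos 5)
Your proposal is correct and is exactly the argument the paper intends: it omits a written proof of this proposition, deferring to the forward direction of the proof of \Cref{th: characterization of sum-generating classes with simple sums}, and your three steps (existence and isomorphism-invariance of the $\mathcal{C}$-decomposition, plus the peeling argument via condition (ii) showing the longest initial segment of $B\oplus C$ in $\mathcal{C}$ is precisely $B_L\oplus' C_L$) are a faithful transcription of that argument with $+$ replaced by $\oplus'$ on the left parts. The only tidying needed is in the meandering middle of your associativity paragraph: the clean conclusion is simply that $\theta_C$ (then $\theta_B$) lies in $\mathcal{C}$ by condition (ii), hence is a nonempty initial segment of $C_R$ (resp.\ $B_R$) in $\mathcal{C}$, contradicting maximality of $C_L$ (resp.\ $B_L$) via closure under sums.
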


The proof of this proposition is not difficult, especially considering the beginning of the proof of Theorem \ref{th: characterization of sum-generating classes with simple sums}. We also obtain a converse.

\begin{theorem} \label{th: shuffle sum of sum-generating segments}
Suppose that \( \mathcal{C} \) is a class of linear orders closed under isomorphism, such that

\begin{enumerate}
    \item every linear order \( A \) has a longest initial segment \( A_L \) in \(\mathcal{C} \), and
    \item the sum \( \oplus \) from Proposition \ref{prop: sum extending sum on an SGC} with $\oplus' = +$ is associative.
\end{enumerate}

Then the class \( \mathcal{C} \) is a left SGC.
\end{theorem}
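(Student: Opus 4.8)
The plan is to verify the three defining properties of a left SGC (closure under isomorphism, closure under taking final segments of a $+$-decomposition, and closure under ordinal sums) by exploiting the associativity of $\oplus$ for cleverly chosen inputs, together with the uniqueness of the ``longest initial segment in $\mathcal{C}$'' guaranteed by hypothesis (1). Closure under isomorphism is already assumed, so the real work is conditions (ii) and (iii) of \Cref{def: left sum-generating class}. For the whole argument it will be convenient to first record a basic well-definedness observation: since $\oplus$ with $\oplus' = +$ sends $A, B$ to $A_L + B_L + A_R + B_R$, and this must have a well-defined longest initial segment in $\mathcal{C}$, the assignment $A \mapsto A_L$ must be canonical — in particular if $A \in \mathcal{C}$ then $A_L = A$. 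Following the pattern in the proof of \Cref{th: characterization of sum-generating classes with simple sums}, I would use the distinguishing lemma (\Cref{lm: distinguishing}) to pin down where the pieces of each summand land after associating in two different ways.

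For condition (ii), suppose $A + B \in \mathcal{C}$; we must show $B \in \mathcal{C}$. Write $B \cong \bar B + B'$ with $\bar B$ the longest initial segment of $B$ in $\mathcal{C}$ (so no nonempty initial segment of $B'$ lies in $\mathcal{C}$), and let $\mathfrak{C} = (\mathcal{P}(A) \cup \mathcal{P}(B)) \setminus \{\emptyset\}$, with $X = X_{\mathfrak C}$ as in \Cref{lm: distinguishing}. Note $X$ has no nonempty initial segment in $\mathcal{C}$ of full size — indeed I would choose $X$ so that it has no nonempty initial segment in $\mathcal{C}$ at all, which requires a small argument: any nonempty initial segment $I$ of $X$ extends to a strictly longer isomorphic copy (last clause of \Cref{lm: distinguishing}), so if $I \in \mathcal{C}$ then by closure under isomorphism all these longer copies would also need to be in $\mathcal{C}$; whether this forces a contradiction depends on $\mathcal{C}$, so more carefully I would just track the pieces. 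Consider $(X \oplus (A + B))$ versus the two associations of $X \oplus A \oplus B$-type expressions. Concretely, using $A + B \in \mathcal C$ and the definition of $\oplus$, compute $X \oplus (A+B) = X_L + (A+B) + X_R$; and compare with $(X \oplus A) \oplus B$ after suitable bookkeeping. The distinguishing lemma says that in any isomorphism between two such sandwich expressions built from finite sums of elements of $\mathfrak C$ around $X$, the block $X$ is preserved and the flanking blocks match up; applying this to the identity forced by associativity $X \oplus (A \oplus B) \cong (X \oplus A) \oplus B$ pins $B'$ to the right of the copy of $X$, whereas on the other side $B'$ would have to be absorbed into an initial-segment-of-$\mathcal{C}$ position, forcing $B' = \emptyset$ and hence $B = \bar B \in \mathcal{C}$. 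The case where $A \notin \mathcal C$ is handled exactly as at the end of the proof of \Cref{th: characterization of sum-generating classes with simple sums}: replace $A$ by its final segment past its own longest-$\mathcal{C}$ initial segment.

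For condition (iii), closure under ordinal sums, by \Cref{prop: left-sum-generating classes are closed under right-limits} it suffices to prove the two conditions 2.(i) and 2.(ii) there, namely closure under binary sums $\alpha + \beta$ (for $\alpha,\beta \in \mathcal C$) and closure under right limits. Closure under right limits follows from well-definedness of $\oplus$: if $A_0 \preceq A_1 \preceq \cdots$ is an increasing chain of members of $\mathcal C$ each sitting as an initial segment of the next, its direct limit $A$ has a cofinal family of initial segments in $\mathcal C$, so the longest initial segment of $A$ in $\mathcal C$ (which exists by hypothesis (1)) must be all of $A$. For binary sums, set $\mathfrak C = (\mathcal P(\alpha) \cup \mathcal P(\beta)) \setminus \{\emptyset\}$ and run the same distinguishing-lemma computation as in the $(\Leftarrow)$, 2.(i) step of \Cref{th: characterization of sum-generating classes with simple sums}: from $(X_{\mathfrak C} \oplus_{} \beta) \oplus_{} \alpha \cong X_{\mathfrak C} \oplus_{} (\beta \oplus_{} \alpha)$ and the fact that $\beta \oplus \alpha$ reduces to $\alpha + \beta$ on the $\mathcal C$-part, the distinguishing lemma forces the trailing block to be empty, so $\alpha + \beta$ equals its own longest $\mathcal C$-initial segment, i.e. $\alpha + \beta \in \mathcal C$. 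The main obstacle, and the step I would spend the most care on, is exactly the bookkeeping in condition (ii): making sure that when $A$ (or an initial piece of $B$) is itself partly or wholly in $\mathcal{C}$, the ``longest initial segment'' operation interacts correctly with the sandwiching by $X_{\mathfrak C}$, so that the distinguishing lemma genuinely applies to finite sums of elements of $\mathfrak C$ on both sides of $X_{\mathfrak C}$. This is precisely the subtlety already navigated in the last paragraph of the proof of \Cref{th: characterization of sum-generating classes with simple sums}, and the argument here is a direct adaptation of it.
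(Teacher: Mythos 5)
Your overall strategy is the paper's: establish closure under right limits from hypothesis (1), then use the distinguishing lemma together with two different associations of a triple sum involving $X_{\mathfrak C}$ to force the offending trailing blocks to be empty. However, there are two concrete gaps. The first is the point you explicitly punt on: you say you ``would choose $X$ so that it has no nonempty initial segment in $\mathcal{C}$ at all,'' notice that the attempted argument via the last clause of \Cref{lm: distinguishing} does not obviously close, and then retreat to ``just track the pieces.'' This cannot be waved away, because unlike the simple sum of \Cref{th: characterization of sum-generating classes with simple sums}, the sum here evaluates $X_{\mathfrak C} \oplus Y$ as $(X_{\mathfrak C})_L + Y_L + (X_{\mathfrak C})_R + Y_R$, so you cannot compute either side of any associativity identity without pinning down $(X_{\mathfrak C})_L$. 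The correct resolution is not to force $(X_{\mathfrak C})_L = \emptyset$ but to prove a dichotomy: if some non-empty initial segment $I$ of $X_{\mathfrak C}$ lies in $\mathcal{C}$, then the last clause of the distinguishing lemma gives a cofinal chain of initial segments isomorphic to $I$, and closure under right limits (which must therefore be established \emph{first}) yields $X_{\mathfrak C} \in \mathcal{C}$. One then runs two separate associativity computations, e.g.\ for closure under sums, $(\alpha \oplus \beta) \oplus X_{\mathfrak C}$ versus $\alpha \oplus (\beta \oplus X_{\mathfrak C})$ when $X_{\mathfrak C} \in \mathcal{C}$, and $X_{\mathfrak C} \oplus (\alpha \oplus \beta)$ versus $(X_{\mathfrak C} \oplus \alpha) \oplus \beta$ when no non-empty initial segment of $X_{\mathfrak C}$ is in $\mathcal{C}$; note that which side of $X_{\mathfrak C}$ you sum on changes between the cases.

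The second gap is an ordering problem in your treatment of condition (ii). You write $B \cong \bar B + B'$ with $\bar B$ the longest initial segment of $B$ in $\mathcal{C}$ and assert parenthetically that no non-empty initial segment of $B'$ lies in $\mathcal{C}$. Maximality of $\bar B$ alone does not give this: a non-empty initial segment $I$ of $B'$ could lie in $\mathcal{C}$ while $\bar B + I$ does not, unless $\mathcal{C}$ is already known to be closed under binary sums. This fact is genuinely needed, since computing $(\bar B$-type$) \oplus B'$ as an honest concatenation requires knowing that $B'$ contributes nothing to the $\mathcal{C}$-initial segment. The fix is to prove closure under sums before closure under final segments, which is the order the paper follows; your proposal treats final segments first and sums second, so as written the argument is circular at this step. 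With the dichotomy on $X_{\mathfrak C}$ carried out and the two closure properties proved in the right order, your outline does assemble into the paper's proof.
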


\begin{proof}

We need to show that \( \mathcal{C} \) is closed under final segments, sums, and right limits. As mentioned in Theorem \ref{th: characterization of sum-generating classes with simple sums}, condition (1) implies that it is closed under right limits.

For closure under sums, fix \( \alpha, \beta \in \mathcal{C} \) and suppose toward a contradiction that \( \alpha+\beta \notin \mathcal{C} \). Let $\mathfrak{C} = \mathcal{P}(\alpha) \cup \mathcal{P}(\beta)$, and let $X_\mathfrak{C}$ be as in the distinguishing lemma.

If there is a non-empty initial segment $I$ of $X_\mathfrak{C}$ such that \( I \in \mathcal{C} \), then the final conclusion of the distinguishing lemma furnishes a find a cofinal sequence of initial segments of \( X_\mathfrak{C} \) that are isomorphic to \( I \). Since \( \mathcal{C} \) is closed under right limits, it follows \( X_\mathfrak{C} \in \mathcal{C} \). Thus, either \( X_\mathfrak{C} \in \mathcal{C} \) or no non-empty initial segment of \( X_\mathfrak{C} \) belongs to \( \mathcal{C} \). We work through these cases in turn.

First, suppose \( X_\mathfrak{C} \in \mathcal{C} \). Since \( \alpha, \beta \in \mathcal{C} \), we have \( \alpha \oplus \beta = \alpha + \beta \). Since \( \alpha+\beta \notin \mathcal{C} \) we have \( \alpha+\beta = A+B \), where \( A \) is the maximal initial segment of \( \alpha+ \beta \) belonging to \( \mathcal{C} \) and \( B \) is non-empty.

On one hand we have
\[
(\alpha \oplus \beta) \oplus X_\mathfrak{C} = (\alpha+ \beta) \oplus X_\mathfrak{C} = A + X_\mathfrak{C} + B,
\]
and on the other
\[
\alpha \oplus (\beta \oplus X_\mathfrak{C}) = \alpha \oplus (\beta + X_\mathfrak{C}) = \alpha + \beta + X_\mathfrak{C}.
\]

By associativity we have \( A + X_\mathfrak{C} + B \cong \alpha + \beta + X_\mathfrak{C} \), a contradiction by the distinguishing lemma.

Now, suppose no non-empty initial segment of \( X_\mathfrak{C} \) belongs to \( \mathcal{C} \). Observe that by definition of \( \oplus \) we have \( X_\mathfrak{C} \oplus \alpha = \alpha + X_\mathfrak{C} = \alpha \oplus X_\mathfrak{C} \). In one instance of associativity
\[
X_\mathfrak{C} \oplus (\alpha \oplus \beta) = X_\mathfrak{C} \oplus (\alpha + \beta) = A + X_\mathfrak{C} + B,
\]
and in the other
\[
(X_\mathfrak{C} \oplus \alpha) \oplus \beta = (\alpha \oplus X_\mathfrak{C}) \oplus \beta = \alpha \oplus (\beta + X_\mathfrak{C}) = \alpha + \beta + X_\mathfrak{C}.
\]

Again a contradiction by associativity. Thus \( \mathcal{C} \) is closed under sums.

For closure under final segments, suppose \( \alpha \in \mathcal{C} \) and \( \alpha \cong \Bar{\alpha} + \alpha' \). We wish to show \( \alpha' \in \mathcal{C} \). Suppose toward a contradiction this is false, so that \( \alpha' = A + B \) where \( A \) is the maximal initial segment of \( \alpha' \) in \( \mathcal{C} \) and \( B \) is non-empty. Since \( \mathcal{C} \) is closed under sums we must have that \( B \) has no non-empty initial segment \( B' \) belonging to \( \mathcal{C} \). It follows by definition of \( \oplus \) that \( (\Bar{\alpha} + A) \oplus B = \Bar{\alpha} + A + B \), i.e. \( \alpha \cong (\Bar{\alpha} + A) \oplus B \).

Let $\mathfrak{C} = \mathcal{P}(\alpha)$ and define \( X_\mathfrak{C} \) as above. Again we work through the two possible cases.

Suppose first \( X_\mathfrak{C} \in \mathcal{C} \). We have that
\[
((\Bar{\alpha} + A) \oplus B) \oplus X_\mathfrak{C} \cong \alpha \oplus X_\mathfrak{C} = \alpha + X_\mathfrak{C},
\]
and also
\[
(\Bar{\alpha} + A) \oplus (B \oplus X_\mathfrak{C}) = (\Bar{\alpha} + A) \oplus (X_\mathfrak{C} + B).
\]

Suppose \( X_\mathfrak{C} + B \) decomposes as \( C + D \), where \( C \) is the maximal initial segment of \( X_\mathfrak{C} + B \) in \( \mathcal{C} \). Since \( X_\mathfrak{C} \in \mathcal{C} \), \( X_\mathfrak{C} \) is an initial segment of \( C \). Thus regardless of the decomposition of \( (\Bar{\alpha} + A) \), \( X_\mathfrak{C} \) appears as a non-final segment in the sum \( (\Bar{\alpha} + A) \oplus (X_\mathfrak{C} + B) \). Since we must have \[ \alpha + X_\mathfrak{C} \cong (\Bar{\alpha} + A) \oplus (X_\mathfrak{C} + B), \] we get a contradiction as before.

Now suppose no initial segment of \( X_\mathfrak{C} \) belongs to \( \mathcal{C} \), then
\[
X_\mathfrak{C} \oplus ((\Bar{\alpha} + A) \oplus B) \cong X_\mathfrak{C} \oplus \alpha = \alpha + X_\mathfrak{C}.
\]

On the other hand, we have $$(X_\mathfrak{C} \oplus (\Bar{\alpha} + A)) \oplus B \cong (X_\mathfrak{C} \oplus (\Bar{\alpha} + A)) + B.$$
Thus, by associativity \( B \) appears as a non-empty final segment in $\alpha + X_\mathfrak{C}$, again a contradiction. 

Hence, \( \mathcal{C} \) is closed under final segments, and we are done.
\end{proof}

\subsection{Filtrations and sifted sums} \label{subsect: filtrations and sifted sums}

So far, we shown how to construct two of the sums given at the introduction to this section. How are we to make sense of $+_h$ -- the global extension of the Hessenberg sum?

We get a partial answer from Proposition \ref{prop: sum extending sum on an SGC} but, in general, how do we construct sums on sum-generating classes? Ideally, we would like to find some approach to defining sums like the Hessenberg sum ($\#$) by leveraging the structure of the SGC it is built on. To do this, we will use filtrations of sum-generating classes.

Let $\dots\subseteq\mathcal{C}_2\subseteq\mathcal{C}_1\subseteq\mathcal{C}_0$ be a decreasing filtration of left sum-generating classes of possibly unbounded ordinal length. Given some order $L$, let $L'_\alpha$ be the longest initial segment of $L$ in $\mathcal{C}_\alpha$ for all $\alpha \in Ord$. Let $L_\alpha = L_\alpha'-L_{\alpha+1}'$ and $\hat{\mathcal{C}}_\alpha = \{L_\alpha \;|\; L \in LO\}$.

Let\footnote{While adding over $Ord^*$ is not possible, for any give order $L$, there will be an $\alpha \in Ord$ such that for all $\beta \geq \alpha$, $L_\beta = \emptyset$. We take $\sum_{\alpha \in Ord^*} L_\alpha$ to be shorthand for $\sum_{\beta \in \alpha^*} L_\beta$.}
\[\tag{4}A \oplus B = \left ( \sum_{\alpha \in Ord^*}A_\alpha\oplus_\alpha B_\alpha\right) + A_R+B_R\]

where $A_R, B_R$ are the respective longest final segments in $\mathcal{C}_0^\perp$, and for all $\alpha$, the $\oplus_{\alpha}$ are sums.

\begin{example} \label{ex: global extension of the Hessenberg sum}
    If for all $\alpha$ we let $\oplus_\alpha = +$ and $\mathcal{C}_\alpha = \langle\omega^\alpha\rangle$, the resulting global sum will be $+_h$.
\end{example}

In general, we call the process of constructing a sum of this sort \textit{sifting}, and the sum in (4) is the \textit{sifted sum generated by} $\{(\mathcal{C}_\alpha, \oplus_\alpha)\}_{\alpha \in Ord}$. This method will be essential when constructing global sums that do not satisfy the reasonable properties of sums from \ref{subsect: some reasonable properties of sums}, so we will develop it more for that purpose.

Consider orders $A, B, C$, if $\oplus$ is as in (4),

$$(A \oplus B) \oplus C =  \left (\left ( \sum_{\alpha \in Ord^*} A_\alpha \oplus_\alpha B_\alpha\right) + A_R+B_R \right) \oplus C.$$

We would like to show this is isomorphic to

$$\left ( \sum_{\alpha \in Ord^*} \left ( \sum_{\alpha \in Ord^*}A_\alpha\oplus_\alpha B_\alpha\right)_\alpha \oplus_\alpha C_\alpha\right) + A_R+B_R + C_R,$$

in other words, that $\sum_{\alpha \in Ord^*}A_\alpha\oplus_\alpha B_\alpha \in \mathcal{C}$. In which case, to prove associativity, we will just need

$$(A \oplus B)_\beta = \left ( \sum_{\alpha \in Ord^*}A_\alpha\oplus_\alpha B_\alpha\right)_\beta = A_\beta \oplus_\beta B_\beta.$$

We call a sifting scheme $\{(\mathcal{C}_\alpha, \oplus_\alpha)\}_{\alpha \in Ord}$ whose associated sum satisfies all these properties, \textit{effective}. 

In section 4, we will use sifting as a way to isolate cardinality in SGCs, in particular, from section 3 we will obtain a method of constructing sums at specific cardinalities, and we will require some way to join all these different sums together. Given an SGC $\mathcal{C}$, we will consider the decreasing filtration $$\mathcal{C}^{card}_\alpha = \{L \in \mathcal{C} \mid L \text{ has no final segment of cardinality} <\aleph_\alpha\}.$$

Further, we will be working with SGCs with similar structure to $\mathcal{W}$. Consider an SGC $\mathcal{C}$ generated by a finite set $S = \{L_1, \dots, L_n\}$, which is closed under non-empty final segments (up to isomorphism) and whose elements all have the same cardinality\footnote{If one of the orders has a maximal element, then $\mathcal{C} = \mathcal{W}$.}. In this case, $\mathcal{C}$ will simply consist of all ordinal-indexed sums of orders in $S\cup\{\emptyset\}$.

Applying the cardinality filtration, note that $\hat{\mathcal{C}}^{card}_\alpha = \{\emptyset\}$ for all $\alpha$ such that $|L_1| > \aleph_\alpha$, and will subsequently consist of sums of orders in $S$ indexed by ordinals of cardinality $\aleph_\alpha$ with no final segments of smaller cardinality. In this later case, non-empty elements of $\hat{\mathcal{C}}^{card}_\alpha$ will have cardinality $\aleph_\alpha$.

Whether the scheme $\{(\mathcal{C}^{card}_\alpha, \oplus_\alpha)\}_{\alpha \in Ord}$ is effective depends on the choice of sums $\oplus_\alpha$.

\begin{proposition} \label{prop: cardinality sifting is effective}
    In the above setting, if the sums $\oplus_\alpha$ are sums on $\hat{\mathcal{C}}^{card}_\alpha$, then $\{(\mathcal{C}^{card}_\alpha, \oplus_\alpha)\}_{\alpha \in Ord}$ will be an effective filtration.
\end{proposition}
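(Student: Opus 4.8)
The plan is to verify the two assertions that, by the discussion above, constitute effectiveness of the scheme for the sifted sum $\oplus$ of (4): that $\sum_{\alpha \in Ord^*} A_\alpha \oplus_\alpha B_\alpha \in \mathcal{C}$ for all $A,B$, and that $(A \oplus B)_\beta = A_\beta \oplus_\beta B_\beta$ for all $A,B$ and all $\beta$ (associativity then follows as sketched there). Two structural facts drive everything. First, each $\mathcal{C}^{card}_\alpha$ is again a left SGC: the clauses of \Cref{def: left sum-generating class} are immediate — closure under isomorphism is clear; if $A+B \in \mathcal{C}^{card}_\alpha$ then $B \in \mathcal{C}$ and every nonempty final segment of $B$, being one of $A+B$, has cardinality $\ge \aleph_\alpha$; and a nonempty final segment of an ordinal sum of members of $\mathcal{C}^{card}_\alpha$ contains a final segment of some summand. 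Second, $\hat{\mathcal{C}}^{card}_\alpha \subseteq \mathcal{C}^{card}_\alpha$ (a piece $L_\alpha = L'_\alpha \setminus L'_{\alpha+1}$ is a final segment of $L'_\alpha$) while $\hat{\mathcal{C}}^{card}_\alpha \subseteq (\mathcal{C}^{card}_{\alpha+1})^\perp$ (adjoining any nonempty initial segment of $L_\alpha$ to $L'_{\alpha+1}$ would contradict maximality of $L'_{\alpha+1}$); since $(-)^\perp$ reverses inclusions (\Cref{rmk: complementary sum-generating class}) and the filtration decreases, $\hat{\mathcal{C}}^{card}_\alpha \subseteq (\mathcal{C}^{card}_\beta)^\perp$ whenever $\alpha<\beta$.

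The key point is a closure lemma: for any pieces $D_\alpha \in \hat{\mathcal{C}}^{card}_\alpha$ (all but boundedly many empty), $\sum_{\alpha \in Ord^*} D_\alpha \in \mathcal{C}$. This is where the hypotheses on $\mathcal{C} = \langle S \rangle$ are used. Since $S$ is finite, closed under nonempty final segments, and of uniform cardinality, every nonempty member of $\hat{\mathcal{C}}^{card}_\alpha$ is an ordinal sum of members of $S$ whose index ordinal lies in $\hat{\mathcal{W}}^{card}_\alpha$ (the analogous piece of the cardinality filtration of the well-orders); hence $\sum_{\alpha \in Ord^*} D_\alpha$ is an $S$-indexed sum whose index is $\sum_{\alpha \in Ord^*}(\text{index of } D_\alpha)$, and the latter is a genuine ordinal: it is exactly the Cantor normal form obtained by concatenating, in decreasing order of exponent, these homogeneous blocks, with no absorption between distinct blocks. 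So $\sum_{\alpha \in Ord^*} D_\alpha \in \langle S \rangle = \mathcal{C}$. The first assertion (condition (a)) is the case $D_\alpha = A_\alpha \oplus_\alpha B_\alpha$, which lies in $\hat{\mathcal{C}}^{card}_\alpha$ as $\oplus_\alpha$ is a sum on that class.

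For the second assertion I would use twice the following: if $\mathcal{D}$ is a left SGC and $X = X_1 + X_2$ with $X_1 \in \mathcal{D}$ and $X_2 \in \mathcal{D}^\perp$, then $X_1$ is the longest initial segment of $X$ in $\mathcal{D}$ and $X_2$ the longest final segment in $\mathcal{D}^\perp$ (a strictly longer initial segment is $X_1 + J$ with $\emptyset \neq J$ initial in $X_2$, and $J$ would have to lie in $\mathcal{D}$ by clause (ii), contradicting $X_2 \in \mathcal{D}^\perp$). Write $C_\alpha = A_\alpha \oplus_\alpha B_\alpha$ and $P = \sum_\alpha C_\alpha$. Applying this with $\mathcal{D} = \mathcal{C}^{card}_0 = \mathcal{C}$ to $A \oplus B = P + (A_R + B_R)$, using $P \in \mathcal{C}$ (closure lemma) and $A_R + B_R \in \mathcal{C}_0^\perp$ (a nonempty initial segment of it in $\mathcal{C}$ would, by clause (ii), force a nonempty initial segment of $A_R$ or $B_R$ into $\mathcal{C}$), identifies $(A \oplus B)'_0 = P$ and $(A \oplus B)_R = A_R + B_R$. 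Applying it with $\mathcal{D} = \mathcal{C}^{card}_\beta$ to $P = (\sum_{\alpha \ge \beta} C_\alpha) + (\sum_{\alpha < \beta} C_\alpha)$: the second summand is a sum of reverse-ordinal type of members of the right SGC $(\mathcal{C}^{card}_\beta)^\perp$ (each $C_\alpha$ with $\alpha<\beta$ lies in $(\mathcal{C}^{card}_{\alpha+1})^\perp \subseteq (\mathcal{C}^{card}_\beta)^\perp$), hence lies in it; and $\sum_{\alpha \ge \beta} C_\alpha \in \mathcal{C}^{card}_\beta$, being in $\mathcal{C}$ by the closure lemma and having every nonempty final segment contain a final segment of $C_\beta \in \mathcal{C}^{card}_\beta$, hence of cardinality $\ge \aleph_\beta$. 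So $P'_\beta = \sum_{\alpha \ge \beta} C_\alpha$, and likewise $P'_{\beta+1} = \sum_{\alpha \ge \beta+1} C_\alpha$, giving $P_\beta = P'_\beta \setminus P'_{\beta+1} = C_\beta$; noting $(A \oplus B)'_\beta$ is an initial segment of $(A \oplus B)'_0 = P$, hence equals $P'_\beta$, we conclude $(A \oplus B)_\beta = C_\beta = A_\beta \oplus_\beta B_\beta$.

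The real obstacle is the closure lemma — and, underneath it, the fact that (4) defines a sum in the first place: that the filtration decomposition of every order terminates ($\bigcap_\alpha \mathcal{C}^{card}_\alpha = \{\emptyset\}$, and a fixed order admits no class-length strictly decreasing chain of initial segments), exhausts $L'_0$, and is continuous at limits ($\mathcal{C}^{card}_\lambda = \bigcap_{\alpha<\lambda}\mathcal{C}^{card}_\alpha$). A left SGC is not closed under sums of order type $Ord^*$, so these reverse-indexed sums cannot be controlled formally; the hypotheses on $\mathcal{C} = \langle S \rangle$ are precisely what reduces the matter to elementary facts about ordinals under the cardinality filtration of $\mathcal{W}$ — namely that every ordinal $\delta$ has an expression $\delta = \sum_{\alpha \in Ord^*}\delta_\alpha$ with $\delta_\alpha \in \hat{\mathcal{W}}^{card}_\alpha$ (grouping the Cantor normal form by the cardinality of the exponents), that this expression terminates and is limit-continuous, and that re-decomposing it recovers the $\delta_\alpha$. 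Granting these, everything transfers along the index correspondence and the formal argument above applies.
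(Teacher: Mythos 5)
Your argument is correct in substance and arrives at the same two verifications as the paper's proof, but it is organized rather differently: where the paper identifies the strata $(A \oplus B)_\beta$ by a bare-hands induction on $\beta$ using cardinality comparisons, you route everything through the uniqueness of $\mathcal{D}$-decompositions (\Cref{th: rigidity}), after first recording that each $\mathcal{C}^{card}_\alpha$ is itself a left SGC and that $\hat{\mathcal{C}}^{card}_\alpha \subseteq \mathcal{C}^{card}_\alpha \cap (\mathcal{C}^{card}_{\alpha+1})^\perp$. This buys a non-inductive, one-shot identification $P'_\beta = \sum_{\alpha \ge \beta} C_\alpha$ and makes explicit several things the paper leaves implicit: the treatment of the tails $A_R + B_R$, the fact that the filtration decomposition is well defined at all, and exactly where the hypotheses on $\mathcal{C} = \langle S \rangle$ enter. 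The paper's induction is shorter but leans on the same underlying facts (each $A_\alpha \oplus_\alpha B_\alpha$ is pinned to stratum $\alpha$ of the output by the cardinality constraints encoded in $\hat{\mathcal{C}}^{card}_\alpha$), so the two proofs are cousins rather than strangers.

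One caveat. Your closure lemma as stated --- $\sum_{\alpha \in Ord^*} D_\alpha \in \mathcal{C}$ for arbitrary $D_\alpha \in \hat{\mathcal{C}}^{card}_\alpha$ with all but \emph{boundedly} many empty --- is false: with $\aleph_{\alpha_0} = |L_1|$, taking $D_{\alpha_0 + n} = \sum_{\gamma < \omega_{\alpha_0+n}} L_1$ for every $n < \omega$ gives a reverse-$\omega$-indexed concatenation of blocks whose index ordinals have strictly increasing cardinality; the resulting index order contains an infinite descending sequence and is not an ordinal, so the concatenation is not an ordinal-indexed sum of generators and need not lie in $\langle S \rangle$. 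What saves both of your applications is that for any single order $L$ only finitely many strata $L_\alpha$ are non-empty (the Cantor normal form of the index ordinal of $L'_0$ has finitely many terms, hence finitely many cardinality groups; equivalently, an infinite increasing sequence of non-empty stratum indices would force an infinite descending sequence inside one generator block, contradicting the lower bound $\aleph_{\alpha}$ on the cardinality of that block's final segments inside $L'_\alpha$). So in your uses only finitely many $D_\alpha$ are non-empty and closure under finite sums (condition 2.(i) of \Cref{prop: left-sum-generating classes are closed under right-limits}) already suffices. Your CNF-concatenation justification in fact only covers this finite case, so the repair is just to state the lemma with ``finitely many'' and record why finiteness holds; note that the paper's own one-line justification of this step (``it is also an ordinal-indexed sum of elements in $S$'') elides exactly the same point.
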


\begin{proof}
    Without loss of generality, we only need to consider sums of orders in $\mathcal{C}$. Say $A, B \in \mathcal{C}$, the sum $\sum_{\alpha \in Ord^*}A_\alpha\oplus_\alpha B_\alpha \in \mathcal{C}$ will still be in $\mathcal{C}$, since it is also an ordinal-indexed sum of elements in $S$.

    By induction on $\beta$, if $\aleph_\beta < |L_1|$, then

    $$(A \oplus B)_\beta = \emptyset = A_\beta \oplus_\beta B_\beta.$$

    If $|L_1| = \aleph_\beta$, then $(A \oplus B)_\beta$ will be the final segment of $A \oplus B$ indexed by an ordinal smaller than $\aleph_\beta$, given that each $\oplus_\alpha$ remains within each $\hat{\mathcal{C}}^{card}_\alpha$, by definition this will just be $A_\beta \oplus_\beta B_\beta$, since $A_\beta \oplus_\beta B_\beta \subseteq (A \oplus B)_\beta$, and equality holds since any final segment of $A_{\beta+1} \oplus_{\beta+1} B_{\beta+1}$ has at least cardinality $\aleph_\beta^+$, so it can not be an initial segment of an element of $\hat{\mathcal{C}}^{card}_\beta$.

    If this holds for all $\delta<\beta$, note that $A_\beta \oplus_\beta B_\beta$ has to be a final segment of $(A \oplus B)_\beta$, since it cannot be in $\mathcal{C}^{card}_{\beta+1}$ given that it has cardinality $\aleph_\beta$. But, since $A_{\beta+1} \oplus_{\beta+1} B_{\beta+1} \in \hat{\mathcal{C}}^{card}_{\beta+1}$, it must be all of $(A \oplus B)_\beta$.
\end{proof}

\subsection{Algebraic properties} \label{subsect: algebraic properties}

It will prove fruitful to study SGCs for their own sake, as they turn out to have interesting algebraic properties.

To motivate our investigation, let's begin by questioning the definition of a left SGC itself. In particular, condition (ii). Might it actually be the case that for every left SGC $\mathcal{C}$, $A + B \in \mathcal{C}$ implies that $A, B \in \mathcal{C}$?

\begin{example}
    Consider $\mathcal{C} = \{L \in LO\;|\;L \text{ has no maximal element}\}$. $\mathcal{C}$ is clearly closed under isomorphism and satisfies condition (ii). No sum of linear orders without a right endpoint has a right endpoint, therefore (iii) is also satisfied.

    That said, for example, $1 + \mathbb{Q} \in \mathcal{C}$, even though $1 \notin \mathcal{C}$.

    Note that this class appears to be dual to the class of well-orders in the following sense: given any linear order $A$, in order to find the decomposition $A \cong A_L + A_R$, where $A_L$ is the longest initial segment of $A$ in $\mathcal{C}$, we must find a final segment $A_R$ such that every one of its initial segments has a top point, in other words, such that every final segment of $(A_R)^*$ has a bottom point, which implies that $(A_R)^*$ is well-ordered. Therefore, we must find the decomposition $A^* \cong \alpha + \Bar{A}$ where $\alpha$ is the longest initial segment isomorphic to an ordinal. $\alpha^* \cong A_R$ and $\Bar{A}^* \cong A_L$ in this decomposition.
\end{example}

Let's isolate this notion of duality.

\begin{definition} \label{def: dual class}
    Given a left SGC $\mathcal{C}$, define its \textit{dual class} $\mathcal{C}^*$ to be the class of all orders $L$ such that $L^*$ has no initial segment in $\mathcal{C}$.
\end{definition}

\begin{remark} \label{rmk: dual to well-orders}
    If $\mathcal{W}$ is the class of well-orders, as shown above, we have $$\mathcal{W}^* = \{L \in LO\;|\;L \text{ has no maximal element}\}.$$
\end{remark}

\begin{proposition} \label{prop: dual of a left SGC is a left SGC}
    For any left SGC $\mathcal{C}$, $\mathcal{C^*}$ is also a left SGC.
\end{proposition}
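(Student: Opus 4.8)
The plan is to verify the three conditions of Definition~\ref{def: left sum-generating class} for $\mathcal{C}^*$ directly. I read the definition of $\mathcal{C}^*$ in the way suggested by Remark~\ref{rmk: complementary sum-generating class}: $L \in \mathcal{C}^*$ exactly when $L^*$ has no \emph{nonempty} initial segment lying in $\mathcal{C}$ (every left SGC contains $\emptyset$, by condition~(iii) with an empty index set, so the qualifier is forced). Condition~(i) is immediate, since an isomorphism $L \xrightarrow{\sim} L'$ induces an isomorphism $L^* \xrightarrow{\sim} (L')^*$ matching up nonempty initial segments, and $\mathcal{C}$ is isomorphism-closed. For condition~(ii), suppose $A + B \in \mathcal{C}^*$; then $(A+B)^* = B^* + A^*$ has no nonempty initial segment in $\mathcal{C}$, and since every initial segment of $B^*$ is also an initial segment of $B^* + A^*$, the same is true of $B^*$, i.e.\ $B \in \mathcal{C}^*$. (This is the mirror image of the assertion in Remark~\ref{rmk: complementary sum-generating class} that $\mathcal{C}^\perp$ satisfies condition~(ii) of a right SGC.)

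The only step with content is condition~(iii). Suppose $A_\alpha \in \mathcal{C}^*$ for all $\alpha < \beta$, put $S = \sum_{\alpha < \beta} A_\alpha$, and assume toward a contradiction that $S^*$ has a nonempty initial segment $I \in \mathcal{C}$. Then $R := I^*$ is a nonempty final segment of $S$, and I would invoke the standard fact that a nonempty final segment of an ordinal-indexed sum has a canonical shape: letting $\gamma$ be the least $\alpha < \beta$ for which $R$ meets the $\alpha$-th block (it exists since $\beta$ is well-ordered), every block strictly above $\gamma$ lies above the points of $R$ in block $\gamma$ and hence is contained in $R$, so
\[
R = R_0 + \sum_{\gamma < \alpha < \beta} A_\alpha, \qquad R_0 := R \cap A_\gamma \text{ a nonempty final segment of } A_\gamma .
\]
Reversing, $I = R^* = \bigl(\textstyle\sum_{\gamma < \alpha < \beta} A_\alpha\bigr)^* + R_0^*$, so $R_0^*$ is a final segment of $I$. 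Since $I \in \mathcal{C}$ and $\mathcal{C}$ is a left SGC, condition~(ii) for $\mathcal{C}$ gives $R_0^* \in \mathcal{C}$; but $R_0^*$ is a nonempty initial segment of $A_\gamma^*$, contradicting $A_\gamma \in \mathcal{C}^*$. Hence $S \in \mathcal{C}^*$, completing the verification.

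I do not anticipate a genuine obstacle; the one point needing a little care is the block decomposition of a final segment of $\sum_{\alpha<\beta} A_\alpha$ used above, which relies on the index set being well-ordered. A tidier but less self-contained alternative would be to observe that $\mathcal{C}^* = \{L : L^* \in \mathcal{C}^\perp\}$ and prove once and for all that the reversal $\{L : L^* \in \mathcal{D}\}$ of any right SGC $\mathcal{D}$ is a left SGC (the condition~(iii) check there is the same computation, using $(\sum_{\alpha<\beta}A_\alpha)^* = \sum_{\alpha<\beta^*}A_\alpha^*$); I would nevertheless use the direct argument above so as not to lean on the unproven claims of Remark~\ref{rmk: complementary sum-generating class}.
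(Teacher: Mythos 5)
Your proof is correct, but it is organized differently from the paper's. The paper does not verify condition (iii) of Definition~\ref{def: left sum-generating class} directly; instead it routes through the equivalent characterization of Proposition~\ref{prop: left-sum-generating classes are closed under right-limits}, checking separately that $\mathcal{C}^*$ is closed under binary sums (an offending initial segment of $B^*+A^*$ either sits inside $B^*$ or has the form $B^*+\bar{A}^*$, and condition~(ii) for $\mathcal{C}$ kills both cases) and under right limits (an offending initial segment of the reversed limit meets some $A_\alpha^*$, and condition~(ii) for $\mathcal{C}$ again gives a contradiction). Your single block-decomposition argument --- locate the least block $\gamma$ that the final segment $R=I^*$ meets, peel off $R_0^*$ as a final segment of $I$, and apply closure of $\mathcal{C}$ under final segments --- subsumes both of these cases at once and is the more self-contained option, since it does not lean on the earlier equivalence. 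The underlying mechanism is identical in both treatments (reverse, localize the offending segment, use condition~(ii) of $\mathcal{C}$); the paper's version has the mild advantage of reusing machinery it has already set up, while yours makes visible that only conditions~(i) and~(ii) of $\mathcal{C}$ are ever invoked, which is consistent with (indeed slightly sharpens) the paper's remark after the proof that closure of $\mathcal{C}$ under limits is never needed. Your handling of the empty-segment convention and of the well-ordered block decomposition is the right amount of care.
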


\begin{proof}
    Since $\mathcal{C}$ is closed under isomorphisms, $\mathcal{C}^*$ is as well. 

    Say $A + B \in \mathcal{C}^*$, then $(A + B) ^* = B^* + A^*$ has no initial segment in $\mathcal{C}$, so specifically, $B^*$ has no initial segment in $\mathcal{C}$, therefore $B \in \mathcal{C}^*$. If $A, B \in \mathcal{C}^*$, then since $(A + B) ^* = B^* + A^*$, if $(A+B)^*$ had an initial segment in $\mathcal{C}$, it would have to be of the form $B^* + \Bar{A}^*$, for $\Bar{A}$ some non-empty final segment of $A$, but then by (ii) we must have $\Bar{A}^* \in \mathcal{C}$, a contradiction to $A^*$ having no initial segments in $\mathcal{C}$.

    Given any increasing sequence $A_0 \preceq A_1 \preceq \dots$ of elements of $\mathcal{C}^*$, let $A$ be its direct limit. If $A^*$ has some initial segment in $\mathcal{C}$, then $A^* \cong \Bar{A}^* + \hat{A}^*$, where $\Bar{A}^* \in \mathcal{C}$, but then for some $\alpha \in Ord$, $A_\alpha^* \cong \tilde{A}^* + \hat{A}^*$, where $\Bar{A}^* \cong R^* + \tilde{A}^*$ (i.e. $A \cong \hat{A} + \tilde{A} + R$), so $\tilde{A}^* \in \mathcal{C}$, a contradiction to $A_\alpha^* \in \mathcal{C}^*$.
\end{proof}

Note that we never used the fact that $\mathcal{C}$ is closed under limits in this proof, we only needed conditions (i), (ii) and 2.(i). Therefore, we only need a class satisfying conditions (i), (ii) and 2.(i) for its dual to be left sum-generating.

But this phenomenon isn't limited to the dual operation. Proving that $\mathcal{C}^\perp$ is sum-generating also only requires that $\mathcal{C}$ satisfy (i), (ii) and 2.(i).

\begin{example}
    Let $\mathcal{C} = \{ A \;|\; A \cong A' + \mathbb{Q} \text{ for some } A' \in LO\}$. 
    
    $\mathcal{C}$ satisfies (i), (ii) and 2.(i), but fails to satisfy 2.(ii), since for example, we may consider the sequence $A_\alpha = \alpha\times\mathbb{Q} = \mathbb{Q} + \dots + \mathbb{Q}$ for all $\alpha \in\omega_1$. This sequence is of the type in 2.(ii), and its right limit is $\omega_1 \times \mathbb{Q}$, but this order has no final copy of $\mathbb{Q}$, so it can't be in $\mathcal{C}$. 
    
    Despite this, $\mathcal{C}^*$ is left sum-generating by Proposition \ref{prop: dual of a left SGC is a left SGC}. 
    
    $A \in \mathcal{C}^*$ iff $A^*$ has no initial segment ending with an isomorphic copy of $\mathbb{Q}$ iff $A$ does not convexly embed $\mathbb{Q}$. Independently of our proposition, it is easy to see that this class is indeed left sum-generating (in fact, it is also right sum-generating). Note that this class differs from the one used to generate $+_s$ since, for example $\mathbb{R} \in \mathcal{C}^*$, even though the reals are not scattered.
\end{example}

\begin{example}
    Consider $\mathcal{S}$, the class of scattered orders. This is both a left and right SGC (in fact, it is the intersection of all classes that are both left and right SGCs and contain $1$\footnote{This follows by applying Hausdorff's characterization \cite{Hausdorff1908}.}). Notice how $^*$ and $^\perp$ behave on $\mathcal{S}$ (as a left SGC):
    
    \begin{itemize}
        \item $A \in \mathcal{S}^*$ iff every non-empty final segment of $A$ embeds $\mathbb{Q}$, while

        \item $A \in \mathcal{S}^\perp$ iff every non-empty initial segment of $A$ embeds $\mathbb{Q}$.
    \end{itemize}

    If we apply these operations, considering it a right SGC, this will flip. Further, note that while $\mathcal{S}$ is both a left and right SGC, $\mathcal{S}^*$ and $\mathcal{S}^\perp$ are only one of the two.
\end{example}

A unary inversion operator that is defined the same for all SGCs, regardless of whether they are a left or right SGC is the \textit{inverse}.

\begin{definition} \label{def: inverse of SGC}
    Given a (left or right) SGC $\mathcal{C}$, its \textit{inverse} $\mathcal{C}^{-1}$ is the (right or left) SGC all of whose elements are the inverse of elements of $\mathcal{C}$\footnote{If $(A, <_A)$ is a linear order its \textit{inverse} is the order $(A, <_A^*)$ such that $a<_A^*b$ iff $b<_Aa$.
    
    Unlike $^*$ and $^\perp$, the proof that $^{-1}$ outputs an SGCs does require the input to satisfy all the conditions of an SGC.}.
\end{definition} 

\begin{remark} \label{def: relation between involution operators}
    Note that for any SGC $\mathcal{C}$, $(\mathcal{C}^*)^\perp = \mathcal{C}^{-1}$.
\end{remark}

In the case of the scattered orders, we have $\mathcal{S}^{-1} = \mathcal{S}$.

Given $\mathcal{C}$ an SGC, is it the case that $\mathcal{C}^{**} = \mathcal{C}$? We certainly know this to be false if $\mathcal{C}$ satisfies (i), (ii) and 2.(i), but not 2.(ii), since in that case $\mathcal{C}^{**}$ will be sum-generating even though $\mathcal{C}$ is not.

\begin{proposition} \label{prop: inverse is involutive}
    If $\mathcal{C}$ is right sum-generating, $\mathcal{C}^{**} = \mathcal{C}$.
\end{proposition}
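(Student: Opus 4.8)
The plan is to reduce the proposition to the three facts about the unary operators on sum-generating classes that are already in hand: that $\perp$ is an involution (\Cref{rmk: complementary sum-generating class}), that $(\mathcal{C}^*)^{\perp}=\mathcal{C}^{-1}$ (\Cref{def: relation between involution operators}), and that $(-)^{-1}$ is (trivially) an involution. The one extra ingredient needed is that $\perp$ and $(-)^{-1}$ commute, i.e.\ $(\mathcal{D}^{\perp})^{-1}=(\mathcal{D}^{-1})^{\perp}$ for every SGC $\mathcal{D}$; granting this, the conclusion is a short computation. I would first record this commutativity as a lemma. It is purely formal: passing to the reverse order interchanges initial and final segments, and $S\in\mathcal{D}$ is equivalent to $S^{*}\in\mathcal{D}^{-1}$, so unwinding both sides shows that each equals the class $\{A : I^{*}\notin\mathcal{D}$ for every nonempty initial segment $I$ of $A\}$ when $\mathcal{D}$ is right sum-generating, the left case being symmetric. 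The only thing to be careful about here is that $\perp$ is defined by forbidden \emph{final} segments on right SGCs and by forbidden \emph{initial} segments on left SGCs, and that $(-)^{-1}$ swaps the two; no closure axiom is used.

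The main computation then runs as follows. Apply $(\mathcal{C}^{*})^{\perp}=\mathcal{C}^{-1}$ (\Cref{def: relation between involution operators}, which holds for right SGCs exactly as for left ones, since all the cited results transfer verbatim), and apply $(-)^{-1}$ to both sides; using the commutativity lemma on the left and involutivity of $(-)^{-1}$ on the right yields $\bigl((\mathcal{C}^{*})^{-1}\bigr)^{\perp}=\mathcal{C}$, hence $(\mathcal{C}^{*})^{-1}=\mathcal{C}^{\perp}$ by involutivity of $\perp$. Now apply \Cref{def: relation between involution operators} once more, this time with the right SGC $\mathcal{C}^{*}$ in place of $\mathcal{C}$: this gives $(\mathcal{C}^{**})^{\perp}=(\mathcal{C}^{*})^{-1}=\mathcal{C}^{\perp}$, and a final application of involutivity of $\perp$ gives $\mathcal{C}^{**}=\mathcal{C}$. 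An equivalent, more hands-on route avoids \Cref{def: relation between involution operators}: unwind the definitions directly to show $\mathcal{C}^{**}=\{M : \text{every nonempty initial segment of }M\text{ has a nonempty final segment in }\mathcal{C}\}$, observe that this is precisely $(\mathcal{C}^{\perp})^{\perp}$, and conclude by \Cref{rmk: complementary sum-generating class}.

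The genuine (if modest) obstacle is the bookkeeping rather than any displayed manipulation: one must check that \Cref{prop: dual of a left SGC is a left SGC}, \Cref{rmk: complementary sum-generating class}, and \Cref{def: relation between involution operators}, all stated for left SGCs, apply equally to right SGCs, so that $\mathcal{C}^{*}$, $\mathcal{C}^{\perp}$, and $\mathcal{C}^{-1}$ are themselves bona fide SGCs to which the identities may be fed back; and one must verify the commutativity lemma with the left/right asymmetry of $\perp$ tracked correctly. It is worth emphasizing that the standing hypothesis that $\mathcal{C}$ is genuinely sum-generating — in particular closure under limits, axiom~2.(ii), which as noted before the statement cannot be omitted — is used only indirectly, entering through the proof (in \Cref{rmk: complementary sum-generating class}) that $\perp$ is an involution.
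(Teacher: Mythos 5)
Your argument is correct, but it is a genuinely different route from the paper's. The paper proves the statement concretely: it unwinds the double dual to the characterization ``$A \in \mathcal{C}^{**}$ iff every nonempty final segment of $A$ has a nonempty initial segment in $\mathcal{C}$,'' and then runs the transfinite construction explicitly --- exhausting $A$ by an increasing chain of initial segments in $\mathcal{C}$ and invoking closure under sums, final segments, and right limits --- to conclude $A \in \mathcal{C}$. You instead derive the identity formally from the involutivity of $\perp$, the involutivity of $(-)^{-1}$, your commutativity lemma $(\mathcal{D}^{\perp})^{-1} = (\mathcal{D}^{-1})^{\perp}$ (which is indeed purely definitional, as you say), and \Cref{def: relation between involution operators}. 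The algebra checks out, and your final ``hands-on route'' ($\mathcal{C}^{**} = (\mathcal{C}^{\perp})^{\perp} = \mathcal{C}$) is in my view the cleanest formulation: it makes visible that the proposition is exactly the $\perp$-involution in disguise. What your approach buys is economy and a clear picture of how the three inversion operators interact; what it costs is transparency about where the real work happens. Be aware of two things. First, \Cref{def: relation between involution operators} is asserted in the paper without proof, and after unwinding definitions its nontrivial inclusion $(\mathcal{C}^{*})^{\perp} \subseteq \mathcal{C}^{-1}$ is \emph{literally the same statement} as the proposition; citing it is non-circular only because $\mathcal{C}^{*} = (\mathcal{C}^{-1})^{\perp}$ holds by definition, so that remark itself reduces to the $\perp$-involution of \Cref{rmk: complementary sum-generating class} --- which is where the closure-under-sums-and-limits argument actually lives, compressed into one sentence. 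If a referee pressed on self-containment, you would still need to write out that transfinite argument somewhere; your proof relocates it rather than eliminates it. Second, your instinct to track chirality is warranted: the paper states the proposition for right SGCs but its own proof (and the definition of $^{*}$ in \Cref{def: dual class}) uses left-SGC conventions throughout, so the mirrored definitions you supply are genuinely needed and not merely bookkeeping.
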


\begin{proof}
    $A \in \mathcal{C}^{**}$ iff no initial segment of $A^*$ is in $\mathcal{C}^*$ iff for every initial segment $B^*$ of $A^*$, $B^{**} = B$ has an initial segment in $\mathcal{C}$ iff every final segment $B$ of $A$ has an initial segment in $\mathcal{C}$.

    Let $A\in \mathcal{C}^{**}$. We construct an increasing sequence of linear orders in $\mathcal{C}$ whose direct limit is $A$. Treating $A$ as a final segment of itself, let $A_0$ be an initial segment of it that is in $\mathcal{C}$. By induction, if $A_\alpha$ is not $A$, let $B$ be a final segment of $A$ such that $A_\alpha + B \cong A$, let $A_{\alpha+1} = A_\alpha + \Bar{B}$, where $\Bar{B}$ is an initial segment of $B$ in $\mathcal{C}$. If $\lambda$ is a limit and we have constructed $A_\alpha$ for all $\alpha < \lambda$, let $A_\lambda$ be their direct limit. Since we do not allow count $\emptyset$ as a linear order, this construction has to eventually stop. $A = \varinjlim A_\alpha \in \mathcal{C}$.

    If $A \in \mathcal{C}$, given any decomposition $A = \Bar{A} + A'$, $A' \in \mathcal{C}$, so $A \in \mathcal{C}^{**}$.
\end{proof}

This proposition paired with the previous identity also gives us that $^{-1}$ and $^\perp$ commute on any SGC. In fact, all of $^*$, $^\perp$ and $^{-1}$ must commute, since it is easy to see that $^{-1}$ also has to satisfy $(\mathcal{C}^{-1})^{-1} = \mathcal{C}$.

The above proof also shows that if $\mathcal{C}$ satisfies (i), (ii) and 2.(i), then $\mathcal{C} \subseteq \mathcal{C}^{**}$. In fact, it will be the smallest left sum-generating class containing $\mathcal{C}$. This is easy to see by the first part of the above proof, where for any $A \in \mathcal{C}^{**}$ we constructed a sequence of elements of $\mathcal{C}$ whose right limit is $A$ and since any left sum-generating class has to be closed under right limits.

The intersection of two sum-generating classes will also be a sum-generating class, denote the intersection of $\mathcal{A}$ and $\mathcal{B}$ by $\mathcal{A}\cdot\mathcal{B}$. That said, the union doesn't need to be an SGC since it won't necessarily be closed under sums or right limits. What is the smallest sum-generating class that contains any two sum-generating classes?

Note that we may add two sum generation classes in the obvious way: let $\mathcal{A}\oplus\mathcal{B}$ be the smallest class containing $\mathcal{A}\cup\mathcal{B}$ that is closed under finite sums and isomorphism. This is clearly the smallest class satisfying (i), (ii) and 2.(i) and containing both $\mathcal{A}$ and $\mathcal{B}$. Let $\mathcal{A} + \mathcal{B} = (\mathcal{A} \oplus \mathcal{B})^{**}$, by our remarks following Proposition \ref{prop: inverse is involutive}, this is the smallest sum-generating class containing both $\mathcal{A}$ and $\mathcal{B}$. Any element of $\mathcal{A} + \mathcal{B}$ will be a well-ordered sum of orders of the form $A + B$ or $B + A$, where $A \in \mathcal{A}$ and $B \in \mathcal{B}$. The definition $\mathcal{A} + \mathcal{B} = (\mathcal{A} \oplus \mathcal{B})^{**}$ also works for right SGCs.

Further, $+$ and $\cdot$ are commutative, associative operations, and for any SGC $\mathcal{C}$, they witness the collection of all its sub-SGCs under $\subseteq$ being a bounded lattice. 

Using Boolean algebra notation, let $\mathbf{1} := LO$, $\mathbf{0} := \{\emptyset\}$.

Given any SGCs $\mathcal{A}, \mathcal{B}$, the following identities hold:

$$\mathcal{A}\cdot\mathbf{1} = \mathcal{A},\ \mathcal{A} \cdot \mathbf{0} = \mathbf{0}, \ \mathcal{A} + \mathbf{0} = \mathcal{A} \text{, and } \mathcal{A} + \mathbf{1} = \mathbf{1}.$$

\begin{remark}
    Generally, $^*$ and $^{-1}$, do not behave as one would hope with respect to these operations. For example $\mathcal{W}\cdot \mathcal{W}^*$ is the class of well-orders isomorphic to a non-zero limit ordinal, while $\mathcal{W}$ and $\mathcal{W}^{-1}$ have different chirality. But even when this doesn't constitute a problem, such as in the case of the scattered orders, $\mathcal{S} \cdot \mathcal{S}^{-1} = \mathcal{S} + \mathcal{S}^{-1} = \mathcal{S}$.
    
    On the other hand, $^\perp$ comes closer to behaving nicely. If $\mathcal{C}$ is any SGC,

    $$\mathcal{C} \cdot \mathcal{C}^\perp = \mathbf{0}, \ \mathcal{C} \oplus \mathcal{C}^\perp = \mathbf{1}.$$

    The latter fact comes from the observation that any linear order may be decomposed into it longest initial (or final) segment in $\mathcal{C}$ added to an element of $\mathcal{C}^\perp$.

    In general, by definition $\mathcal{C} \cap \mathcal{C}^\perp = \mathbf{0}$.
\end{remark}

Despite what one might expect, these operations do not distribute over each other. While $$\mathcal{A} \cdot \mathcal{B} + \mathcal{A} \cdot \mathcal{C} \subseteq \mathcal{A}\cdot(\mathcal{B} + \mathcal{C}) \text{ and } \mathcal{A} + (\mathcal{B} \cdot \mathcal{C}) \subseteq (\mathcal{A} + \mathcal{B}) \cdot (\mathcal{A} + \mathcal{C})$$ are immediate, we have counterexamples to the opposite inclusions.

If we let $\mathcal{A} = \langle1\rangle^*$ (the class of linear orders without a maximal element), $\mathcal{B} = \langle1\rangle$ (the well-orders), and $\mathcal{C} = \langle\mathbb{Q}\rangle$. Note that $\mathcal{A}\cdot(\mathcal{B} + \mathcal{C}) = \mathcal{A} \cdot \langle 1, \mathbb{Q}\rangle$ and $\mathcal{A} \cdot \mathcal{B} + \mathcal{A} \cdot \mathcal{C} = \langle \omega, \mathbb{Q}\rangle$, but $2 + \mathbb{Q} \in \mathcal{A} \cdot \langle 1, \mathbb{Q}\rangle$ even though it is not in $\langle \omega, \mathbb{Q}\rangle$.

In the other case, let $\mathcal{A} = \langle\mathbb{Q}\rangle,\ \mathcal{B} = \mathcal{S},\ \mathcal{C} = \langle \omega \rangle$\footnote{$\mathcal{C}$ is (up to order-type) the class of limit ordinals.}. $$\mathcal{A} + (\mathcal{B} \cdot \mathcal{C}) = \langle \mathbb{Q}, \omega\rangle \neq \mathcal{S} = (\mathcal{A} + \mathcal{B}) \cdot (\mathcal{A} + \mathcal{C}).$$

Therefore, we do not obtain a Boolean algebra on the SGCs with these operations. 

That said, the inversion operation do behave nicely with the $\textit{subclass of}$ relation. Say $\mathcal{A} \subseteq \mathcal{B}$, then $\mathcal{B}^* \subseteq \mathcal{A}^*$ and $\mathcal{B}^\perp \subseteq \mathcal{A}^\perp$, while $\mathcal{A}^{-1} \subseteq \mathcal{B}^{-1}$.

Despite the algebraic similarity between the lattice sub-SGCs of an SGC and the lattice of subspaces of a vector space, the former lattice is not modular. Consider $\mathcal{A} = \langle \omega \rangle$, $\mathcal{B} = \mathcal{S}$ and $\mathcal{C} = \langle1\rangle^*$. Even though, $\mathcal{A} \subseteq \mathcal{C}$ and $\mathbb{Z} \cong \omega^* + \omega \in (\mathcal{A} \oplus \mathcal{B}) \cdot \mathcal{C}$, despite $\omega^* + \omega \notin \mathcal{A} \oplus (\mathcal{B} \cdot \mathcal{C})$.

To conclude our discussion of the lattice structure of sub-SGCs, we fully characterize this lattice in the case of $\langle1\rangle$, illustrating an interesting algebraic fact about the class of well-orders as an SGC.

\begin{definition} \label{def: principal SGC}
    A sum-generating class $\mathcal{C}$ is \textit{principal} if it is generated by a single elements (i.e. there is an $A \in \mathcal{C}$ such that $\langle A \rangle = \mathcal{C}$).
\end{definition}

\begin{theorem} \label{th: the well-orders are a principal SGC}
    Every sum-generating subclass $\mathcal{C}$ of $\mathcal{W}$ is principal. In fact, $\mathcal{C}$ is generated by an additively indecomposable ordinal.
\end{theorem}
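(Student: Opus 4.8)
The plan is to identify the generator as the least nonzero member of $\mathcal{C}$ and extract additive indecomposability from the closure of $\mathcal{C}$ under final segments. Since every element of $\mathcal{C}$ is a well-order, I would work throughout with order types, i.e. ordinals. Condition (iii) with an empty index already gives $\emptyset \in \mathcal{C}$, so if $\mathcal{C} = \{\emptyset\}$ then $\mathcal{C} = \langle \emptyset\rangle$ and we are done (the degenerate case). Otherwise, the collection of nonzero ordinals occurring as order types in $\mathcal{C}$ is a nonempty subclass of $\mathit{Ord}$, hence has a least element $A$; note $A \neq 0$.

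Next I would show $A$ is additively indecomposable. If not, write $A \cong \alpha + \beta$ with $\alpha, \beta < A$ and both nonzero. Then $\beta$ is (isomorphic to) a final segment of $A \in \mathcal{C}$, so by condition (ii) of \Cref{def: left sum-generating class} we get $\beta \in \mathcal{C}$ with $0 < \beta < A$, contradicting the minimality of $A$. Hence $A$ is additively indecomposable (and nonzero, so it is $\omega^{\xi}$ for some $\xi$, including the case $A = 1 = \omega^{0}$).

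Finally I would prove $\mathcal{C} = \langle A\rangle$. Since $\mathcal{C}$ is itself a left SGC containing $A$, minimality of $\langle A\rangle$ gives $\langle A\rangle \subseteq \mathcal{C}$. For the reverse inclusion, take any nonzero $\gamma \in \mathcal{C}$ and apply ordinal division on the left by $A$ (legitimate since $A > 0$): $\gamma = A\cdot q + r$ with $r < A$. Then $r$ is a final segment of $\gamma$, so condition (ii) gives $r \in \mathcal{C}$, and minimality of $A$ forces $r = 0$. Thus $\gamma = A \cdot q = \sum_{\xi < q} A$, which lies in $\langle A\rangle$ by condition (iii), since it is an ordinal-indexed sum of copies of $A$. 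Together with $\emptyset \in \langle A\rangle$ this yields $\mathcal{C} \subseteq \langle A\rangle$, so $\mathcal{C} = \langle A\rangle$ with $A$ additively indecomposable, proving both assertions.

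I do not expect a serious obstacle here: the argument is essentially the ordinal division algorithm combined with the defining closure axioms of a left SGC. The only points requiring care are bookkeeping ones: recognizing that a final segment of an ordinal $\gamma$ is exactly a right summand $r$ in a decomposition $\gamma \cong \alpha + r$ (so the division remainder genuinely falls under condition (ii)), and working with order types so that "the least nonzero element of $\mathcal{C}$" is well-defined. As a byproduct the proof shows $\langle A\rangle$ is precisely the class of left-multiples of $A$, but this description is not needed for the statement.
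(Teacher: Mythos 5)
Your proof is correct. It shares the paper's overall skeleton -- take the least (nonzero) ordinal $A$ in $\mathcal{C}$, use closure under final segments plus minimality to see $A$ is additively indecomposable, then show $A$ generates -- but the final step is carried out by a genuinely different mechanism. The paper leans on Cantor normal form: it identifies the least element as $\omega^{\gamma}$ via the characterization of AI ordinals, notes that every $\omega^{\beta}$ with $\beta \geq \gamma$ lies in $\langle \omega^{\gamma}\rangle$, and then argues that every $\epsilon \in \mathcal{C}$ has all its CNF terms of degree at least $\gamma$ (since the smallest term is a final segment, hence in $\mathcal{C}$), so $\epsilon \in \langle \omega^{\gamma}\rangle$ by closure under finite sums. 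You instead apply left division by $A$: writing $\gamma = A \cdot q + r$ with $r < A$, the remainder $r$ is a final segment of $\gamma$, hence in $\mathcal{C}$, hence zero by minimality, so $\gamma = \sum_{\xi < q} A \in \langle A\rangle$ by condition (iii). Your route is more self-contained -- it never needs the characterization of AI ordinals as powers of $\omega$, and your indecomposability argument is a one-line appeal to the definition rather than to CNF -- and it yields the sharper byproduct that $\mathcal{C}$ is exactly the class of left-multiples of $A$. The paper's route makes the connection to the CNF machinery of Section 4 (which it uses elsewhere) more visible. You are also slightly more careful than the paper in handling the degenerate case $\mathcal{C} = \{\emptyset\}$, which the paper's proof silently excludes.
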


\begin{proof}
    Let $\alpha \in \mathcal{C}$ be smallest ordinal in the class. By Cantor Normal Form, the right additive indecomposability of AI ordinals (both proven in \ref{subsect: good sums of well-orders}), and property (ii) of sum-generating classes, $\alpha = \omega^\gamma$ for some $\gamma \in Ord$.

    Note that for any $\beta > \gamma$, $\omega^\beta \in \langle\alpha\rangle$, since $\omega^\beta = \omega^\gamma \cdot \omega^\delta$, which is the result of summing $\omega^\gamma$ indexed by $\omega^\delta$, where $\delta$ is such that $\gamma + \delta  = \beta$ (see lemma 2.25 of \cite{jech2014set}).

    If $\epsilon \in \mathcal{C}$, then every $\omega^\beta$ in its CNF is greater than $\alpha$ since the $\omega^\beta$ of smallest degree has to be in $\mathcal{C}$ by one application of (ii). Hence we must have all of $\epsilon$ in $\langle\alpha\rangle$ by closure under sums.
\end{proof}

\begin{joke}
    $Ord$ is a PID.
\end{joke}

The lattice of sub-SGCs of $\langle 1 \rangle$ will be linearly ordered and consist of classes $\langle \omega^\alpha\rangle$. We have, $\langle \omega^\alpha\rangle \subseteq \langle \omega^\beta \rangle$ iff $\beta \leq \alpha$. So, we have shown that this lattice is in fact isomorphic to $(Ord \cup \{\infty\}, \leq^*)$.

\begin{center}
\begin{tikzcd}[row sep=small]
	{\langle1\rangle} & 0 \\
	{\langle\omega\rangle} & 1 \\
	{\langle\omega^2\rangle} & 2 \\
	\vdots & \vdots \\
	{\langle\varepsilon_0\rangle} & {\varepsilon_0} \\
	\vdots & \vdots \\
	{\{0\}} & \infty
	\arrow["{\mathrel{\rotatebox{90}{$\subset$}}}", no head, from=1-1, to=2-1]
	\arrow["{\mathrel{\rotatebox{90}{$<^*$}}}", no head, from=1-2, to=2-2]
	\arrow["{\mathrel{\rotatebox{90}{$\subset$}}}", no head, from=2-1, to=3-1]
	\arrow["{\mathrel{\rotatebox{90}{$<^*$}}}", no head, from=2-2, to=3-2]
	\arrow["{\mathrel{\rotatebox{90}{$\subset$}}}", no head, from=3-1, to=4-1]
	\arrow["{\mathrel{\rotatebox{90}{$<^*$}}}", no head, from=3-2, to=4-2]
	\arrow["{\mathrel{\rotatebox{90}{$\subset$}}}", no head, from=4-1, to=5-1]
	\arrow["{\mathrel{\rotatebox{90}{$<^*$}}}", no head, from=4-2, to=5-2]
	\arrow["{\mathrel{\rotatebox{90}{$\subset$}}}", no head, from=5-1, to=6-1]
	\arrow["{\mathrel{\rotatebox{90}{$<^*$}}}", no head, from=5-2, to=6-2]
	\arrow["{\mathrel{\rotatebox{90}{$\subset$}}}", no head, from=6-1, to=7-1]
	\arrow["{\mathrel{\rotatebox{90}{$<^*$}}}", no head, from=6-2, to=7-2]
\end{tikzcd}
\end{center}

Further, $\langle \omega^\alpha\rangle + \langle \omega^\beta\rangle = \langle \omega^{\min(\alpha, \beta)}\rangle$, while $\langle \omega^\alpha\rangle \cdot \langle \omega^\beta\rangle = \langle \omega^{\max(\alpha, \beta)}\rangle$, so these operations will be distributive over the subclasses of the ordinals.

Another interesting example of a principal SGC is $\langle1, \mathbb{Q}\rangle = \langle\mathbb{Q}+1\rangle$. We can obtain $1$ in $\langle\mathbb{Q}+1\rangle$ by simply stripping it from $\mathbb{Q} + 1$, and $\mathbb{Q}$ by taking a countable sum to get $\omega\times(\mathbb{Q} + 1)$, which is a countable, dense linear order without endpoints, therefore isomorphic to $\mathbb{Q}$. This whole class won't have the same property as the well-orders, since it contains subclasses such as $\langle\omega, \mathbb{Q}\rangle$ which are not principal\footnote{If it were equal to some $\langle A \rangle$, then $A \cong B + \omega$ since it must have a final segment isomorphic to an ordinal in order to generate $\omega$ (you can't get well-orders by taking limits of something that is not well-ordered, the only way to get a well-order is to strip it out, or take limits of well-orders), but it can't be finite or else we could generate all the ordinals ($\langle1\rangle$), so it must be $\omega$ itself. That said, $\omega$ is not dense, therefore any limit of $B + \omega$ won't be either, so we can't generate $\mathbb{Q}$.}.

Given that we care about sums of linear orders, let's consider one more binary operation among SGCs that will allow us to introduce an important concept.

\begin{definition} \label{def: instance of a sum}
    Given two orders $A$ and $B$, $C$ is an \textit{instance of a sum} of $A$ and $B$ if there is some sum  $\oplus$ such that $A \oplus B = C$. Equivalently, $C$ is an instance of a sum of $A$ and $B$ if there are disjoint suborders $A', B' \subseteq C$ such that $A' \cup B' = C$ and $A \cong A'$, $B \cong B'$.
\end{definition}

\begin{definition} \label{def: shuffle sum}
    Given two SGCs $\mathcal{A}, \mathcal{B}$ of the same chirality, let their \textit{shuffle sum} $\mathcal{A} \boxplus \mathcal{B}$ be the SGC of the same chirality consisting of all instances of sums of elements of $\mathcal{A}$ with elements of $\mathcal{B}$.
\end{definition}

\begin{example}
    If $\alpha, \beta \in Ord$, $\langle \omega^\alpha \rangle \boxplus \langle \omega^\beta \rangle = \langle \omega^{\max(\alpha, \beta)} \rangle$. This follows by the fact that any instance of a sum of two well-orders is again a well-order, paired with the bounds on instances of sums of two ordinals given in Theorem \ref{th: min sum is minimal/bounds on instances of a sum of ordinals}.

    The shuffle sum may also yield quite unwieldy results, for example, $\langle \mathbb{Q}\rangle \boxplus \langle \mathbb{Q} \rangle$ contains $\mathbb{Q}(1, 2)$ -- the rationals, partitioned into two dense orders, one of which has every point replaced with $2$. We'll talk more about such orders and how they may be obtained as instances of sums of $\mathbb{Q}$ with itself in the next section.
\end{example}

We conclude this section by talking some more about the decomposition of orders generated by $^\perp$.

\begin{theorem}[Rigidity] \label{th: rigidity}
Given a left SGC $\mathcal{C}$, every linear order $A$ has a unique decomposition as $A \cong A_L + A_R$ with $A_L \in \mathcal{C}$ and $A_R \in \mathcal{C}^\perp$. 

This decomposition is preserved under isomorphism: if $f: A \xrightarrow[]{\sim} B$, then $B_L = f(A_L)$ and $B_R = f(A_R)$.
\end{theorem}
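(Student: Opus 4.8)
The plan is to let $A_L$ be the longest initial segment of $A$ lying in $\mathcal{C}$ and $A_R$ the complementary final segment, and then verify the three assertions in turn: that this choice is a decomposition of the required form, that it is the only one, and that it is isomorphism-invariant. There is no single hard step here; the only thing requiring care is keeping straight which closure property of $\mathcal{C}$ (final segments, finite sums, right limits, closure under isomorphism) is invoked where, and using that $\mathcal{C}^\perp$, being a right SGC by \Cref{rmk: complementary sum-generating class}, inherits closure under isomorphism. The one mild subtlety — and the step I would treat most carefully — is the reduction in the uniqueness clause from ``unique up to isomorphism'' to ``literally unique as a pair of complementary segments of a fixed order'', which is what makes the argument short.

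\emph{Existence.} This is essentially contained in the well-definedness argument for $\oplus_\mathcal{C}$ in the proof of \Cref{th: characterization of sum-generating classes with simple sums}: the union $A_L$ of all initial segments of $A$ that belong to $\mathcal{C}$ is realized as the right limit of an increasing transfinite chain of initial segments drawn from $\mathcal{C}$, so $A_L \in \mathcal{C}$ by \Cref{prop: left-sum-generating classes are closed under right-limits}, and by construction it is the longest such initial segment. Writing $A_R = A \setminus A_L$, we have $A \cong A_L + A_R$. To see $A_R \in \mathcal{C}^\perp$, suppose some non-empty initial segment $I$ of $A_R$ were in $\mathcal{C}$; then $A_L + I$ would be an initial segment of $A$ lying in $\mathcal{C}$ by closure under finite sums, contradicting the maximality of $A_L$. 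Hence no non-empty initial segment of $A_R$ is in $\mathcal{C}$, i.e.\ $A_R \in \mathcal{C}^\perp$.

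\emph{Uniqueness.} I would first prove the sharper statement that, for the fixed order $A$, the decomposition of the underlying set of $A$ into an initial segment belonging to $\mathcal{C}$ and a final segment belonging to $\mathcal{C}^\perp$ is literally unique; the version stated up to isomorphism then follows by realizing any abstract decomposition $A \cong A_L' + A_R'$ as a genuine pair of complementary segments of $A$ and applying the sharp form. So suppose $A = L_1 + R_1 = L_2 + R_2$ with $L_1, L_2$ initial segments of $A$ in $\mathcal{C}$ and $R_1, R_2$ the complementary final segments, in $\mathcal{C}^\perp$. Since the initial segments of a linear order are linearly ordered by inclusion, we may assume $L_1 \subseteq L_2$; then $L_2 \setminus L_1$ is an initial segment of $R_1$, and from $L_2 = L_1 + (L_2 \setminus L_1) \in \mathcal{C}$ together with condition (ii) of \Cref{def: left sum-generating class} we get $L_2 \setminus L_1 \in \mathcal{C}$. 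But $R_1 \in \mathcal{C}^\perp$ has no non-empty initial segment in $\mathcal{C}$, so $L_2 \setminus L_1 = \emptyset$, whence $L_1 = L_2$ and $R_1 = R_2$.

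\emph{Isomorphism-invariance.} Given $f \colon A \xrightarrow{\sim} B$, the image $f(A_L)$ is an initial segment of $B$ isomorphic to $A_L$, hence in $\mathcal{C}$ by closure under isomorphism, and $f(A_R)$ is a final segment of $B$ isomorphic to $A_R$, hence in $\mathcal{C}^\perp$. Thus $B = f(A_L) + f(A_R)$ is a decomposition of $B$ of exactly the form produced above, so by the uniqueness just established $B_L = f(A_L)$ and $B_R = f(A_R)$, completing the proof.
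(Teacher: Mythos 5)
Your proposal is correct and follows essentially the same route as the paper's proof: existence via the longest initial segment in $\mathcal{C}$ (using closure under right limits and finite sums), uniqueness by showing any competing segment must be a non-empty initial segment of an order in $\mathcal{C}^\perp$ lying in $\mathcal{C}$, and isomorphism-invariance as an immediate consequence of uniqueness. The only (harmless) difference is organizational — you first prove literal uniqueness of concrete decompositions and then transfer abstract ones, whereas the paper pushes an isomorphism $g$ between two abstract decompositions directly — but the underlying argument is identical.
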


\begin{proof}
Existence is given by letting $A_L$ be the longest initial segment of $A$ in $\mathcal{C}$, and letting $A_R$ be the corresponding final segment.

If we prove uniqueness, we'll immediately get preservation under isomorphism by the proof of the forward direction of Theorem \ref{th: characterization of sum-generating classes with simple sums}.

Say $A_L + A_R \cong A \cong A_L' + A_R'$, where $A_L' \in \mathcal{C}$, $A_R' \in \mathcal{C}^*$. Let $g: A_L' + A_R' \xrightarrow{\sim} A_L + A_R$. $g(A_L') \cap A_R$ is a final segment of $g(A_L') \cong A_L'$, therefore is in $\mathcal{C}$, and it is also an initial segment of $A_R'$, hence must also be in $\mathcal{C}^\perp$, so it must be empty. Similarly, $g(A_R') \cap A_L$ must also be empty. Therefore, $g(A_L')  \subseteq A_L$ and $g(A_R')  \subseteq A_R$, since the map is onto, these must be equalities.
\end{proof}

\begin{corollary} \label{cor: simple sums are canonically regular}
    Simple sums are canonically regular.
\end{corollary}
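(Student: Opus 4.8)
The plan is to write down the induced isomorphism explicitly, the only real input being the isomorphism-invariance of the $\mathcal{C}$--$\mathcal{C}^{\perp}$ decomposition. Fix a left SGC $\mathcal{C}$ and let $\oplus = \oplus_{\mathcal{C}}$ be the simple sum it generates. Using \Cref{prop: concrete characterization of sums}, realize $\oplus$ on disjoint unions as follows: for orders $A$ and $B$, take the Rigidity decomposition $B \cong B_L + B_R$ of \Cref{th: rigidity}, with $B_L \in \mathcal{C}$ the longest initial segment of $B$ lying in $\mathcal{C}$ and $B_R \in \mathcal{C}^{\perp}$ the complementary final segment; then $A \oplus B$ is the order $B_L + A + B_R$ prescribed by formula $(1)$, which has underlying set $A \sqcup B$ with $B_L < A < B_R$ and each block carrying its own order, and $e_A^{A\oplus B}$, $e_B^{A\oplus B}$ are the inclusion maps.

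Now let $\phi : A \xrightarrow{\sim} A'$ and $\psi : B \xrightarrow{\sim} B'$ be isomorphisms. By the isomorphism-preservation clause of \Cref{th: rigidity}, $\psi$ carries the decomposition of $B$ onto that of $B'$, so it restricts to isomorphisms $\psi|_{B_L} : B_L \xrightarrow{\sim} B'_L$ and $\psi|_{B_R} : B_R \xrightarrow{\sim} B'_R$. Define $\Gamma : A \oplus B \to A' \oplus B'$ to be $\phi$ on the copy of $A$ and $\psi$ on the copy of $B$; this is a well-defined bijection of $A \sqcup B$ with $A' \sqcup B'$. It is order-preserving: restricted to each of the three convex blocks $B_L$, $A$, $B_R$ it is one of the order-isomorphisms $\psi|_{B_L}$, $\phi$, $\psi|_{B_R}$, and it sends those blocks onto $B'_L$, $A'$, $B'_R$ respectively, which sit in $A' \oplus B' = B'_L + A' + B'_R$ in the same left-to-right order, so all cross-block comparisons are respected as well. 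Hence $\Gamma$ is an isomorphism.

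Finally, by construction $\Gamma \circ e_A^{A\oplus B} = e_{A'}^{A'\oplus B'} \circ \phi$ and $\Gamma \circ e_B^{A\oplus B} = e_{B'}^{A'\oplus B'} \circ \psi$, which is precisely the commuting diagram demanded by \Cref{def: canonical regularity}; thus $\oplus_{\mathcal{C}}$ is canonically regular. The same argument applies to simple sums generated by right SGCs, using the dual Rigidity statement. I do not expect a genuine obstacle here: the substantive work has already been done in \Cref{th: rigidity}, and what remains is the bookkeeping check that $\phi \sqcup \psi$ intertwines the embeddings --- the one point to be careful about is that this check really does reduce to knowing that $\psi$ preserves the $\mathcal{C}$--$\mathcal{C}^{\perp}$ split.
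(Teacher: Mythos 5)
Your proof is correct and is exactly the argument the paper intends: the corollary is stated as an immediate consequence of the Rigidity Theorem, and your write-up just makes explicit that the isomorphism-preservation clause of \Cref{th: rigidity} lets $\phi \sqcup \psi$ serve as the induced isomorphism $B_L + A + B_R \xrightarrow{\sim} B'_L + A' + B'_R$ intertwining the inclusion embeddings. No gaps; this matches the paper's approach.
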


We call the decomposition $A = A_L + A_R$ from the rigidity theorem the $\mathcal{C}$-decomposition of $A$. We also say that $\mathcal{C}$ determines the cut $A_L + A_R$ in $A$.

By uniqueness, given $A \in \mathcal{C}$ and $B \in \mathcal{C}^\perp$, the sum $A + B$ is its own $\mathcal{C}$-decomposition. In particular, if neither $A$ nor $B$ are empty, such a sum belongs to neither $\mathcal{C}$ nor $\mathcal{C}^\perp$. If $A'$ is a final segment of $A$ and $B'$ is an initial segment of $B$, then $A' \in \mathcal{C}$ and $B' \in \mathcal{C}^\perp$, so that $A' + B'$ is also a $\mathcal{C}$-decomposition. In this sense, cuts determined by $\mathcal{C}$ are invariant under ``zooming in'' around the cut.

\begin{proposition} \label{prop: reverse sum of two orders is in one of the SGCs}
If $A \in \mathcal{C}$ and $B \in \mathcal{C}^\perp$, then $B + A$ belongs to either $\mathcal{C}$ or $\mathcal{C}^\perp$.
\end{proposition}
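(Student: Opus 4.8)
The plan is to apply the Rigidity theorem (\Cref{th: rigidity}) to the order $B + A$ and track where the induced cut lands relative to the given decomposition into $B$ followed by $A$. Write $B + A \cong (B+A)_L + (B+A)_R$ for the $\mathcal{C}$-decomposition, so that $(B+A)_L \in \mathcal{C}$ is the longest initial segment of $B+A$ lying in $\mathcal{C}$ and $(B+A)_R \in \mathcal{C}^\perp$. The goal is to show this cut is forced to be trivial, i.e.\ that either $(B+A)_L$ or $(B+A)_R$ is empty, which immediately gives $B+A \in \mathcal{C}^\perp$ in the first case and $B+A \in \mathcal{C}$ in the second.

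First I would dispose of the degenerate case $B = \emptyset$, where $B + A = A \in \mathcal{C}$ directly. Assuming $B$ non-empty, I would observe that $(B+A)_L$ cannot be a non-empty initial segment of $B$: any such segment would be a non-empty initial segment of $B$ belonging to $\mathcal{C}$, contradicting $B \in \mathcal{C}^\perp$ (in particular $(B+A)_L \ne B$, since $B$ is itself a non-empty initial segment of $B$). Hence $(B+A)_L$ is either empty, in which case $B + A \cong (B+A)_R \in \mathcal{C}^\perp$ and we are done, or it strictly extends past $B$, so that $(B+A)_L \cong B + A'$ for some non-empty initial segment $A'$ of $A$.

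In this remaining case, the complementary final segment satisfies $A \cong A' + (B+A)_R$, so $(B+A)_R$ is a final segment of $A$. Since $A \in \mathcal{C}$ and $\mathcal{C}$ is closed under final segments (condition (ii) of \Cref{def: left sum-generating class}), we get $(B+A)_R \in \mathcal{C}$; but $(B+A)_R \in \mathcal{C}^\perp$ as well, and $\mathcal{C} \cap \mathcal{C}^\perp = \mathbf{0}$, so $(B+A)_R = \emptyset$ and therefore $B + A \cong (B+A)_L \in \mathcal{C}$. There is no real obstacle here; the only thing to get right is to analyze the $\mathcal{C}$-decomposition of $B+A$ rather than of $A$ or $B$ separately, and to notice that $B \in \mathcal{C}^\perp$ forbids the cut from falling strictly inside $B$ while $A \in \mathcal{C}$ (via closure under final segments together with $\mathcal{C} \cap \mathcal{C}^\perp = \mathbf{0}$) forbids it from falling strictly inside $A$.
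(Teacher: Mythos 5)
Your proof is correct and follows essentially the same route as the paper's: both arguments locate the $\mathcal{C}$-decomposition cut of $B+A$, rule out its falling strictly inside $B$ using $B \in \mathcal{C}^\perp$, and rule out its falling strictly inside $A$ using closure of $\mathcal{C}$ under final segments together with $\mathcal{C} \cap \mathcal{C}^\perp = \mathbf{0}$. The only difference is presentational — you run a direct case analysis where the paper argues by contradiction from the assumption that both pieces are non-empty.
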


\begin{proof}
Suppose not. Then the $\mathcal{C}$-decomposition of $\alpha + \beta \cong B + A$ for some non-empty $\alpha \in \mathcal{C}$ and $\beta \in \mathcal{C}^\perp$, let $f: \alpha + \beta \xrightarrow[]{\sim} B + A$ witness this decomposition. We cannot have $f(\alpha) \in \mathcal{C}$ be an initial segment of $B \in \mathcal{C}^\perp$, so $f(\alpha)$ must contain $B$ as an initial segment. This implies that $f(\beta) \in \mathcal{C}^\perp$ is a final segment of $A \in \mathcal{C}$, which is a contradiction since $f(\beta) \neq \emptyset$.
\end{proof}

\begin{corollary} \label{cor: SGC decomposition of sum of two orders}
Suppose $A$ and $B$ are orders with $\mathcal{C}$-decompositions $A = A_L + A_R$ and $B = B_L + B_R$ respectively. Then the $\mathcal{C}$-decomposition of $A + B$ is either $(A_L+ A_R + B_L) + B_R$ or $A_L + (A_R + B_L + B_R)$.
\end{corollary}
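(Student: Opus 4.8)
The plan is to reduce the statement to Proposition~\ref{prop: reverse sum of two orders is in one of the SGCs}, applied to the ``middle'' block $A_R + B_L$, together with the uniqueness clause of the Rigidity Theorem (\Cref{th: rigidity}).

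First I would write $A + B \cong A_L + A_R + B_L + B_R$ using the given $\mathcal{C}$-decompositions. Since $B_L \in \mathcal{C}$ and $A_R \in \mathcal{C}^\perp$, Proposition~\ref{prop: reverse sum of two orders is in one of the SGCs} (taking its $A$ to be $B_L$ and its $B$ to be $A_R$, so that its ``$B + A$'' is our $A_R + B_L$) shows that $A_R + B_L$ lies in $\mathcal{C}$ or in $\mathcal{C}^\perp$. These are exactly the two alternatives in the conclusion.

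Next I would treat each case. If $A_R + B_L \in \mathcal{C}$, then since $A_L \in \mathcal{C}$ and $\mathcal{C}$ is closed under finite sums (condition 2.(i) of \Cref{prop: left-sum-generating classes are closed under right-limits}) we get $A_L + A_R + B_L \in \mathcal{C}$, while $B_R \in \mathcal{C}^\perp$. Thus $(A_L + A_R + B_L) + B_R$ presents $A + B$ as a member of $\mathcal{C}$ followed by a member of $\mathcal{C}^\perp$, and by the uniqueness part of \Cref{th: rigidity} this must be the $\mathcal{C}$-decomposition of $A + B$. If instead $A_R + B_L \in \mathcal{C}^\perp$, then because $\mathcal{C}^\perp$ is a right SGC (\Cref{rmk: complementary sum-generating class}) it too is closed under finite sums, so $A_R + B_L + B_R \in \mathcal{C}^\perp$; combined with $A_L \in \mathcal{C}$ and uniqueness in \Cref{th: rigidity}, this identifies the $\mathcal{C}$-decomposition of $A + B$ as $A_L + (A_R + B_L + B_R)$.

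There is essentially no obstacle here: the whole argument rides on Proposition~\ref{prop: reverse sum of two orders is in one of the SGCs} and the rigidity of $\mathcal{C}$-decompositions. The only minor points to verify are the degenerate cases in which one or more of $A_L, A_R, B_L, B_R$ is empty — these cause no trouble, since the empty order belongs to every SGC and the same bracketing arguments go through — and that the two bracketings produced above literally match those in the statement, which holds by construction.
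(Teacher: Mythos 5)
Your proposal is correct and follows the paper's own proof exactly: both reduce to Proposition~\ref{prop: reverse sum of two orders is in one of the SGCs} applied to the middle block $A_R + B_L$, with closure under finite sums and the uniqueness clause of \Cref{th: rigidity} supplying the (implicit, in the paper's terser version) justification for each case.
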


\begin{proof}
By Proposition \ref{prop: reverse sum of two orders is in one of the SGCs}, we have that either $A_R + B_L \in \mathcal{C}$ or $A_R + B_L \in \mathcal{C}^\perp$. In the first case, the $\mathcal{C}$-decomposition of $A + B$ is $(A_L + A_R + B_L) + B_R$, and in the second, $A_L + (A_R + B_L + B_R)$.
\end{proof}

There is also an analogue of Proposition \ref{prop: reverse sum of two orders is in one of the SGCs} for direct limits of initial and final segments.

\begin{proposition}\label{prop: contrary right and left limits are in one of the classes} Let $\mathcal{C}$ be a right SGC, then
\begin{enumerate}
    \item[(i)] if $A$ is a right limit of orders in $\mathcal{C}$, then $A$ belongs to either $\mathcal{C}$ or $\mathcal{C}^\perp$; dually,
    \item[(ii)] if $B$ is a left limit of orders in $\mathcal{C}^\perp$, then $B$ belongs to either $\mathcal{C}$ or $\mathcal{C}^\perp$.
\end{enumerate}
\end{proposition}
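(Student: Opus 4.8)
The plan is to deduce both parts from the rigidity theorem (\Cref{th: rigidity}) together with the elementary identity $\mathcal{C}\cap\mathcal{C}^\perp=\{\emptyset\}$, proving (i) in full and obtaining (ii) by reversing all orders.

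For (i), I would first unpack the hypothesis via \Cref{prop: left-sum-generating classes are closed under right-limits}: that $A$ is a right limit of orders in $\mathcal{C}$ means there is a directed system $(A_\alpha)$ of members of $\mathcal{C}$ whose transition maps realize each $A_\alpha$ as an initial segment of the later ones, with $A=\varinjlim A_\alpha$. The only consequences I need are that the canonical maps $A_\alpha\to A$ have images $\bar A_\alpha$ forming an increasing chain of initial segments of $A$ with $\bigcup_\alpha \bar A_\alpha=A$, and that each $\bar A_\alpha\cong A_\alpha$, so $\bar A_\alpha\in\mathcal{C}$ by closure under isomorphism; in other words, $A$ is the union of a cofinal chain of initial segments each lying in $\mathcal{C}$ (note we need only cofinality of the chain, not continuity at limit stages).

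Now I would invoke \Cref{th: rigidity} for the \emph{left} SGC $\mathcal{C}^\perp$ (this is a left SGC since $\mathcal{C}$ is a right SGC, with $(\mathcal{C}^\perp)^\perp=\mathcal{C}$, by \Cref{rmk: complementary sum-generating class}) to write $A=A_L+A_R$ with $A_L\in\mathcal{C}^\perp$ and $A_R\in\mathcal{C}$. If $A_L=\emptyset$ then $A=A_R\in\mathcal{C}$. Otherwise, suppose toward a contradiction that $A_R\neq\emptyset$ and pick $r\in A_R$; since the $\bar A_\alpha$ cover $A$, some $\bar A_\alpha$ contains $r$, and because $\bar A_\alpha$ is an initial segment of $A$ with every element of $A_L$ below $r$, we get $A_L\subseteq\bar A_\alpha$, so $\bar A_\alpha=A_L+(\bar A_\alpha\cap A_R)$ exhibits $A_L$ as an initial segment of the member $\bar A_\alpha$ of $\mathcal{C}$. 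By condition (ii) of \Cref{def: right sum-generating class} this forces $A_L\in\mathcal{C}$, hence $A_L\in\mathcal{C}\cap\mathcal{C}^\perp=\{\emptyset\}$, contradicting $A_L\neq\emptyset$. Therefore $A_R=\emptyset$ and $A=A_L\in\mathcal{C}^\perp$, proving (i).

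For (ii) I would pass to reverses: the operations ${}^{*}$, ${}^{\perp}$, ${}^{-1}$ on SGCs commute and are involutive, ${}^{-1}$ interchanges chirality, and reversal turns initial segments into final segments and left limits into right limits. Hence $B$ is a left limit of orders in $\mathcal{C}^\perp$ iff $B^{*}$ is a right limit of orders in $(\mathcal{C}^\perp)^{-1}=(\mathcal{C}^{-1})^\perp$, which is a right SGC; applying (i) to this class gives $B^{*}\in(\mathcal{C}^{-1})^\perp$ or $B^{*}\in\mathcal{C}^{-1}$, that is, $B\in\mathcal{C}^\perp$ or $B\in\mathcal{C}$. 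Equivalently, one reruns the argument of (i) with the roles of initial and final segments swapped, using that $\mathcal{C}^\perp$ is a left SGC and hence closed under final segments together with the decomposition $B=B_L+B_R$ with $B_L\in\mathcal{C}^\perp$, $B_R\in\mathcal{C}$. I expect the only delicate step to be the bookkeeping of the direct-limit hypothesis in the second paragraph — verifying that it genuinely presents $A$ as an increasing cofinal union of initial segments lying in $\mathcal{C}$ — together with keeping initial and final segments straight in part (ii); once the $\mathcal{C}^\perp$-decomposition is in hand the contradiction is immediate.
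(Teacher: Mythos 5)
Your proof is correct and follows essentially the same route as the paper's: apply the rigidity decomposition $A = A_L + A_R$ coming from the left SGC $\mathcal{C}^\perp$, then use closure of $\mathcal{C}$ under initial segments together with $\mathcal{C}\cap\mathcal{C}^\perp=\{\emptyset\}$ to force one piece to be empty. The only differences are cosmetic: you pick a point of $A_R$ directly instead of normalizing the chain to regular cofinal length, and you spell out the symmetric case (ii), which the paper leaves to the reader.
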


\begin{proof}
    We consider the first case, the proof of the second is symmetric. 
    
    Say $A = \bigcup_{\alpha < \lambda} A_\alpha$, where $A_\alpha \in \mathcal{C}$ and if $\alpha < \beta$, then $A_\alpha$ is an initial segment of $A_\beta$. Without loss of generality, we may assume that $\lambda$ is a regular cardinal, it it is not, consider a sequence with the same right limit of length $\mbox{cf}(\lambda)$.
    
    Let $A \cong A_L + A_R$ be its $\mathcal{C}^\perp$-decomposition. If $A_L, A_R \neq \emptyset$, then for any large enough $\alpha$, $A_L$ will be a strict initial segment of $A_\alpha$, therefore $A_L \in \mathcal{C}$, which is a contradiction to $A_L \neq \emptyset$.
\end{proof}

\subsection{Commutativity of simple sums} \label{subsect: commutativity of simple sums}

\begin{definition} \label{def: commutativity}
    A sum $\oplus$ is \textit{commutative} if for any orders $A, B$, $$A \oplus B \cong B \oplus A.$$
\end{definition}

The next section will be entirely dedicated to commutativity, which is the last new algebraic property we will consider. To underscore the novelty of this concept in our investigation, we prove that no simple sum is commutative.

\begin{theorem} \label{th: simple sum is not commutative}
    If $\mathcal{C}$ is a left SGC, $\oplus_\mathcal{C}$ is not commutative.
\end{theorem}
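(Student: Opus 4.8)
The plan is to exhibit, for each left SGC $\mathcal{C}$, an explicit pair of orders on which $\oplus_\mathcal{C}$ fails to commute, splitting into two cases according to whether the one-point order $1$ lies in $\mathcal{C}$. The only structural input needed is the description of the simple sum coming from \Cref{th: rigidity} together with the forward direction of \Cref{th: characterization of sum-generating classes with simple sums}: writing $B = B_L + B_R$ for the $\mathcal{C}$-decomposition of $B$ (so $B_L$ is the longest initial segment of $B$ lying in $\mathcal{C}$, possibly empty), one has $A \oplus_\mathcal{C} B = B_L + A + B_R$. So to evaluate an instance of the sum it suffices to identify the longest $\mathcal{C}$-initial segment of the right argument.

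First I would treat the case $1 \in \mathcal{C}$. By closure under ordinal sums (condition (iii) of \Cref{def: left sum-generating class}) we get $\omega = \sum_{\alpha < \omega} 1 \in \mathcal{C}$. Since $\omega \in \mathcal{C}$, its $\mathcal{C}$-decomposition is $\omega + \emptyset$, so $1 \oplus_\mathcal{C} \omega = \omega + 1$; since $1 \in \mathcal{C}$, its $\mathcal{C}$-decomposition is $1 + \emptyset$, so $\omega \oplus_\mathcal{C} 1 = 1 + \omega \cong \omega$. As $\omega + 1 \not\cong \omega$, commutativity fails.

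In the remaining case $1 \notin \mathcal{C}$, the only nonempty initial segment of $1$ is $1$ itself, so $1 \in \mathcal{C}^\perp$; by \Cref{rmk: complementary sum-generating class}, $\mathcal{C}^\perp$ is a right SGC, hence closed under sums of the form $\sum_{\alpha < \beta^*} A_\alpha$, which gives $\omega^* = \sum_{\alpha < \omega^*} 1 \in \mathcal{C}^\perp$. (Alternatively: every nonempty initial segment of $\omega^*$ is isomorphic to $\omega^*$, and $\omega^* \notin \mathcal{C}$, since otherwise condition (ii) applied to the final segment $1$ of $\omega^*$ would force $1 \in \mathcal{C}$.) Since $1, \omega^* \in \mathcal{C}^\perp$, the $\mathcal{C}$-decompositions of both have empty left part, so $1 \oplus_\mathcal{C} \omega^* = 1 + \omega^*$ while $\omega^* \oplus_\mathcal{C} 1 = \omega^* + 1$; these are not isomorphic, as $1 + \omega^*$ has a least element and $\omega^* + 1$ does not.

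Every left SGC falls under one of the two cases, so this finishes the argument; note in particular that it also recovers the non-commutativity of $+$ (the case $\mathcal{C} = \{\emptyset\}$) and of $+^*$ (the case $\mathcal{C} = LO$). The proof has no genuinely hard step; the one place demanding a little care is the second case, where one must verify $\omega^* \in \mathcal{C}^\perp$ — not merely $\omega^* \notin \mathcal{C}$ — and, dually, $1 \in \mathcal{C}^\perp$, so that the $\mathcal{C}$-decompositions of $\omega^*$ and of $1$ really have trivial left part and the two instances of the sum can be read off directly.
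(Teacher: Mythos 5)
Your proof is correct and follows essentially the same route as the paper's: split on whether $1 \in \mathcal{C}$, and use the pairs $(1,\omega)$ and $(1,\omega^*)$ respectively to exhibit the failure of commutativity. Your second case is in fact slightly more careful than the paper's, which asserts $\omega^* \in \mathcal{C}$ where it clearly means $\omega^* \in \mathcal{C}^\perp$ — the verification you supply is exactly what is needed there.
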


We have developed enough tools so as to have multiple ways to approach this theorem. In fact, before reading our proof, we encourage the reader to attempt carrying out this proof however they see fit.

A proof using the distinguishing lemma is quite efficient, but we opt for one that holds in any subclass of $LO$ closed under isomorphism and containing $1, \omega, \omega^*, \omega+1$, and $(\omega+1)^*$.

\begin{proof}
    Assume $\oplus_\mathcal{C}$ is commutative.

     By the rigidity theorem, either $1 \in \mathcal{C}$ or $1 \in \mathcal{C}^\perp$. Assume the former first. By closure under ordinal sums $\omega \in \mathcal{C}$, so

    $$ \omega + 1 = 1 \oplus_\mathcal{C} \omega \cong \omega \oplus_\mathcal{C} 1 = 1 + \omega \cong \omega,$$

    which is a contradiction. If $1 \in \mathcal{C}^\perp$, then, similarly, $\omega^* \in \mathcal{C}$, but

    $$\omega^* \oplus_\mathcal{C} 1 = \omega^* + 1 \cong \omega^* \not\cong (\omega+1)^* \cong 1 + \omega^* \cong 1 \oplus_\mathcal{C} \omega^*.$$
\end{proof}

\section{Good sums of linear orders} \label{sect: good sums of linear orders}

Recall that our original theorem stating that the class of linear orders has no coproducts could be proven using Lemma \ref{lm: coproduct is isomorphic to the usual sum} by noting that, in general, the usual sum is not commutative. From an algebraic perspective, the lack of existence of coproducts in $LO$ can be seen as stemming from a failure of commutativity in candidate coproducts (sums). Given that one of our initial motivations was to generalize coproducts in the setting of linear orders, it would be natural to consider commutative sums, the sums which, from an algebraic perspective, most resemble coproducts. Pairing commutativity with our previous reasonable properties yields the following definition.

\begin{definition} \label{def: good sums}
    A sum on a class $C$ of linear orders is said to be \textit{good} on $C$  if it is regular, associative, and commutative on $C$.

    We say that a class $C$ is \textit{good} if there exists a good sum on $C$. 
\end{definition}

This section will be dedicated to studying good sums on different classes of orders. We will begin by investigating and characterizing good sums on the ordinals and immediate extensions thereof as an illustration of how good sums may be defined in highly structured contexts. We will then begin discussing how good sums may be defined in non-structured settings, leading to the idea of \textit{complicated classes}, classes of orders within which there is maximal freedom to define sums. Ultimately, in \ref{sect: revisiting the properties of sums} we will demonstrate how this investigation of good sums on subclasses of $LO$ can be paired with our work on SGCs to show that the original properties of sums we considered in \ref{subsect: some reasonable properties of sums} are independent from regularity and associativity.

\subsection{Good sums on the well-orders} \label{subsect: good sums of well-orders}

We begin by showing that the class of well-orders is good. Working up to order-type, we will refer to a well-order by the unique ordinal isomorphic to it.

An important property of the class of well-orders (also shared with the class of scattered orders) is that it is closed under arbitrary instances of sums. This gives us a lot of freedom when defining new sums, since we don't have to worry about the possibility of leaving $\mathcal{W}$ when defining a sum.

In order to define our first good sum on the ordinals, we will need to develop the basic structure theory of the class of ordinals. The most important piece of structure theory is Cantor's normal form theorem.

\begin{theorem}[Cantor Normal Form \cite{cantor1895bei}] \label{th: cantor normal form} Any non-zero ordinal $\alpha$ may be  uniquely represented as

$$\alpha = \omega^{\beta_1} \cdot k_1 + \dots + \omega^{\beta_n} \cdot k_n$$

where $n > 1$, $\alpha \geq \beta_1 > \dots > \beta_n$, and for all $i \in \{1, \dots, n\}$, $0 < k_i < \omega$.
    
\end{theorem}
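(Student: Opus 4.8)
The plan is to prove existence by transfinite induction on $\alpha$, extracting a leading term by a division argument, and to prove uniqueness by showing that the leading exponent and coefficient of a normal form are canonically determined by the ordinal it represents, then cancelling them off and inducting on the remainder.

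For existence, I would fix a non-zero $\alpha$ and first record that $\gamma \leq \omega^\gamma$ for every ordinal $\gamma$ (a routine transfinite induction), so that $\omega^{\alpha+1} > \alpha$ and the class $\{\gamma : \omega^\gamma \leq \alpha\}$ is a non-empty set bounded above; let $\beta_1$ be its largest element, so $\omega^{\beta_1} \leq \alpha < \omega^{\beta_1+1} = \omega^{\beta_1}\cdot\omega$. By the ordinal division algorithm there are unique $k_1, \rho$ with $\alpha = \omega^{\beta_1}\cdot k_1 + \rho$ and $\rho < \omega^{\beta_1}$; monotonicity of multiplication together with $\omega^{\beta_1}\cdot k_1 \leq \alpha < \omega^{\beta_1}\cdot\omega$ forces $0 < k_1 < \omega$. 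Since $\rho < \omega^{\beta_1} \leq \alpha$, I apply the inductive hypothesis to $\rho$: if $\rho = 0$ then $\alpha = \omega^{\beta_1}\cdot k_1$ already has the required shape, and otherwise $\rho = \omega^{\beta_2}\cdot k_2 + \dots + \omega^{\beta_n}\cdot k_n$ in normal form, where $\omega^{\beta_2} \leq \rho < \omega^{\beta_1}$ gives $\beta_2 < \beta_1$ and $\beta_1 \leq \omega^{\beta_1} \leq \alpha$ gives $\beta_1 \leq \alpha$; prepending the leading term produces a normal form for $\alpha$. (Strictly the statement should allow $n \geq 1$, the case $n = 1$ being exactly $\rho = 0$.)

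For uniqueness, the crucial estimate is that a normal form $\omega^{\beta_1}\cdot k_1 + \dots + \omega^{\beta_n}\cdot k_n$ has value lying in $[\,\omega^{\beta_1}\cdot k_1,\ \omega^{\beta_1}\cdot(k_1+1)\,)$. I would derive this from two sublemmas: that $\beta' < \beta$ implies $\omega^{\beta'}\cdot m < \omega^{\beta}$ for every $m < \omega$, and that a finite sum of ordinals each less than $\omega^\beta$ is again less than $\omega^\beta$ (since $\omega^\beta$, being a power of $\omega$, is closed under addition); applied to the tail this gives $\omega^{\beta_2}\cdot k_2 + \dots + \omega^{\beta_n}\cdot k_n < \omega^{\beta_1}$, hence the interval claim. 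It follows that from $\alpha$ one recovers $\beta_1$ as the unique ordinal with $\omega^{\beta_1} \leq \alpha < \omega^{\beta_1+1}$ and then $k_1$ as the unique natural number with $\omega^{\beta_1}\cdot k_1 \leq \alpha < \omega^{\beta_1}\cdot(k_1+1)$, so any two normal forms for $\alpha$ agree in their leading term. Cancelling it on the left (left-cancellability of ordinal addition) leaves two normal forms for the smaller ordinal $\rho$, and the inductive hypothesis finishes the argument.

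I expect the only real obstacles to be the background facts about ordinal arithmetic rather than the inductive bookkeeping: the division algorithm, the left-cancellation law $\gamma + \delta = \gamma + \delta' \Rightarrow \delta = \delta'$, the inequality $\gamma \leq \omega^\gamma$ used to well-define $\beta_1$, and above all the ``small tail'' bound $\sum_{i \geq 2}\omega^{\beta_i}\cdot k_i < \omega^{\beta_1}$ that makes the leading term canonical. These are all standard and can be quoted from, e.g., \cite{jech2014set}; granting them, the two inductions above go through routinely.
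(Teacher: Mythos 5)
The paper does not prove this theorem; it is quoted as a classical result with a citation to Cantor, so there is no in-paper argument to compare against. Your proposal is the standard and correct proof: existence via the largest $\beta_1$ with $\omega^{\beta_1}\leq\alpha$ (using $\gamma\leq\omega^\gamma$ and continuity of exponentiation), the division algorithm to extract $k_1<\omega$, and induction on the remainder $\rho<\omega^{\beta_1}$; uniqueness via the tail bound $\sum_{i\geq 2}\omega^{\beta_i}\cdot k_i<\omega^{\beta_1}$ (additive indecomposability of powers of $\omega$) followed by left cancellation. You are also right that the hypothesis should read $n\geq 1$ rather than $n>1$ as printed, since otherwise additively indecomposable ordinals such as $\omega$ would have no normal form.
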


\begin{remark}
    Cantor normal form is interesting from the order-theoretic point of view, since it decomposes any well-order uniquely into additively indecomposable well-orders. As we will see in this section, the normal form makes defining good sums on $Ord$ much easier.
\end{remark}

\begin{definition} \label{def: additive indecomposability}
    A linear order $A$ is said to be \textit{additively indecomposable} (AI) if whenever $A \cong B + C$, either $A \preceq B$ or $A \preceq C$ (equivalently $A \equiv B$ or $A \equiv C$).
\end{definition}

These summands also satisfy a stronger notion of indecomposability.

\begin{definition} \label{def: indecomposability}
    A linear order $A$ is \textit{strongly indecomposable} if whenever $A$ is an instance of a sum of $B$ and $C$, either $A \equiv B$ or $A \equiv C$.
\end{definition}

The following two propositions are well-known facts about the ordinals, for proofs see \cite{rosenstein1982linear}.

Firstly, recall that $\preceq$ coincides with $\leq$, and $\equiv$ with $=$ on the class of ordinals.

\begin{proposition} \label{prop: for ordinals embeds in is equivalent to less} The following two statements are true for all $\alpha, \beta \in Ord$:

    \begin{itemize}
        \item[(i)] $\alpha \preceq \beta$ iff $\alpha \leq \beta$;
        \item[(ii)] $\alpha \equiv \beta$ iff $\alpha = \beta$.
    \end{itemize}
\end{proposition}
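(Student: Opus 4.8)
The plan is to reduce everything to the single observation that any order-embedding between ordinals is \emph{expanding}: if $f : \alpha \to \beta$ is an embedding of ordinals, then $f(\gamma) \geq \gamma$ for every $\gamma < \alpha$. I would prove this by transfinite induction on $\gamma$. Assuming $f(\delta) \geq \delta$ for all $\delta < \gamma$, the order-preserving property gives $f(\gamma) > f(\delta) \geq \delta$ for each such $\delta$, hence $f(\gamma) \geq \delta + 1$; taking the supremum over $\delta < \gamma$ yields $f(\gamma) \geq \sup_{\delta < \gamma}(\delta+1) = \gamma$. The base case $\gamma = 0$ is trivial, and the successor and limit cases are both handled by this one supremum estimate.

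Granting the expansion lemma, part (i) is immediate in both directions. If $\alpha \leq \beta$ then $\alpha$ is, as a set, an initial segment of $\beta$, and the inclusion map is an order-embedding, so $\alpha \preceq \beta$. Conversely, suppose $f : \alpha \to \beta$ witnesses $\alpha \preceq \beta$. For every $\gamma < \alpha$ we have $\gamma \leq f(\gamma)$ by the expansion lemma and $f(\gamma) < \beta$ because $f$ maps into $\beta$; hence $\gamma < \beta$ for all $\gamma < \alpha$, which is precisely $\alpha \leq \beta$.

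Part (ii) then follows formally. If $\alpha = \beta$ the identity witnesses $\alpha \equiv \beta$. Conversely, if $\alpha \equiv \beta$ then by definition $\alpha \preceq \beta$ and $\beta \preceq \alpha$, so part (i) yields $\alpha \leq \beta$ and $\beta \leq \alpha$, and antisymmetry of the ordinal ordering forces $\alpha = \beta$.

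I do not expect a serious obstacle here; the only delicate point is verifying the supremum estimate at limit stages of the induction, which is valid because $f(\gamma)$ is a strict upper bound for $\{f(\delta) : \delta < \gamma\}$ and therefore an upper bound for $\{\delta + 1 : \delta < \gamma\}$. One could alternatively derive (ii) from the Fundamental Theorem (\Cref{th: fundamental theorem}), but that route still requires noting that an embedding of one ordinal into another has image an initial segment up to isomorphism, so the direct argument above is cleaner and self-contained.
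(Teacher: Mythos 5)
Your proof is correct. The paper gives no argument of its own for this proposition—it simply cites Rosenstein's book as a well-known fact—and your expansion lemma (every embedding $f:\alpha\to\beta$ satisfies $f(\gamma)\geq\gamma$, proved by transfinite induction) together with the deduction of (i) and then (ii) by antisymmetry is precisely the standard argument, so your write-up is a correct, self-contained version of the proof the paper defers to the literature.
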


We can also give a concrete characterizations of the AI ordinals.

\begin{theorem} \label{th: ordinal is additively indecomposable iff it is power of omega}
    A non-zero ordinal $\alpha$ is AI iff it is of the form $\omega^\beta$ for some $\beta \in Ord$.
\end{theorem}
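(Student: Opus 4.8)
The plan is to reduce the statement to a routine bookkeeping exercise with Cantor Normal Form, once the definition of additive indecomposability is translated into the usual ordinal-arithmetic language.

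\textbf{Step 1: reformulate AI for ordinals.} By \Cref{prop: for ordinals embeds in is equivalent to less}, on $Ord$ the relation $\preceq$ is just $\leq$, and in any decomposition $\alpha = \beta + \gamma$ one has $\beta,\gamma \leq \alpha$. Unwinding \Cref{def: additive indecomposability}, a nonzero ordinal $\alpha$ fails to be AI precisely when there is a decomposition $\alpha = \beta + \gamma$ with $\beta < \alpha$ and $\gamma < \alpha$; and if such a decomposition exists then automatically $\beta,\gamma > 0$ (as $\beta = 0$ forces $\gamma = \alpha$ and vice versa). Using left-cancellativity of ordinal addition and ordinal subtraction, this is in turn equivalent to: there is some $\beta < \alpha$ with $\beta + \alpha \neq \alpha$. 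So I would first record that $\alpha > 0$ is AI iff $\beta + \alpha = \alpha$ for every $\beta < \alpha$, equivalently iff $\alpha$ has no decomposition $\alpha = \beta + \gamma$ with $0 < \beta,\gamma < \alpha$.

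\textbf{Step 2: ($\Leftarrow$).} Suppose $\alpha = \omega^\beta$ and let $\gamma < \omega^\beta$. Writing $\gamma$ in Cantor Normal Form (\Cref{th: cantor normal form}), every exponent occurring in $\gamma$ is $< \beta$, so by iterating the absorption identity $\omega^\delta\cdot k + \omega^\beta = \omega^\beta$ (valid for $\delta < \beta$, $k < \omega$) across the terms of $\gamma$ one gets $\gamma + \omega^\beta = \omega^\beta$, which is exactly the condition from Step 1. The absorption identity is standard ordinal arithmetic — it follows from $1 + \omega^\eta = \omega^\eta$ for $\eta \geq 1$ together with $\omega^\delta\cdot\omega^\eta = \omega^{\delta+\eta}$, taking $\eta$ with $\delta + \eta = \beta$ — and I would either cite it from \cite{rosenstein1982linear} or give the one-line derivation. (The degenerate case $\alpha = 1 = \omega^0$ is covered, since then the only $\gamma < \alpha$ is $0$.)

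\textbf{Step 3: ($\Rightarrow$).} Suppose $\alpha > 0$ is AI and write its Cantor Normal Form $\alpha = \omega^{\beta_1} k_1 + \dots + \omega^{\beta_n} k_n$. If $n \geq 2$, then $\alpha = \omega^{\beta_1}k_1 + \gamma$ with $\gamma = \omega^{\beta_2}k_2 + \dots + \omega^{\beta_n}k_n$, and both summands are nonzero and strictly below $\alpha$ — contradicting Step 1. Hence $n = 1$. If moreover $k_1 \geq 2$, then $\alpha = \omega^{\beta_1} + \omega^{\beta_1}(k_1 - 1)$ is again a forbidden decomposition. So $n = 1$ and $k_1 = 1$, i.e. $\alpha = \omega^{\beta_1}$.

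I do not anticipate a serious obstacle: the argument is entirely Cantor Normal Form plus elementary ordinal-arithmetic absorption facts. The only point needing genuine care is Step 1 — getting the translation of \Cref{def: additive indecomposability} into the "$\beta + \alpha = \alpha$" form exactly right, including the trivial edge case $\alpha = 1$ — after which Steps 2 and 3 are immediate.
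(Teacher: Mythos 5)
Your proof is correct. Note that the paper itself gives no proof of this statement: it is presented, together with \Cref{prop: for ordinals embeds in is equivalent to less}, as a well-known fact with a pointer to \cite{rosenstein1982linear}, so there is no in-paper argument to compare against. What you give is the standard Cantor-normal-form/absorption proof, and the one step that genuinely needs care — translating \Cref{def: additive indecomposability} into ``$\beta+\alpha=\alpha$ for every $\beta<\alpha$,'' which implicitly uses that any decomposition $\alpha\cong B+C$ of a well-order forces $B$ and $C$ to be ordinals $\leq\alpha$ — is handled correctly.
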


It turns out that both notions of indecomposability coincide for the well-orders, so this proposition also characterizes the strongly indecomposable orders.

\begin{proposition} \label{prop: ordinal is additively indecomposable iff it is strongly indecomposable}
    A non-zero ordinal $\alpha$ is AI iff it is strongly indecomposable.
\end{proposition}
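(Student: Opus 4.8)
The plan is to prove both implications; the reverse is essentially immediate, and the forward one carries the content. For the reverse: if $\alpha \cong B + C$ then $\alpha$ is in particular an instance of a sum of $B$ and $C$ (split off the initial and final segments), so strong indecomposability gives $\alpha \equiv B$ or $\alpha \equiv C$, which by \Cref{prop: for ordinals embeds in is equivalent to less} is the same as $\alpha \preceq B$ or $\alpha \preceq C$ --- i.e. additive indecomposability. For the forward direction, \Cref{th: ordinal is additively indecomposable iff it is power of omega} reduces the claim to showing that every $\omega^{\delta}$ is strongly indecomposable.

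So fix $\alpha = \omega^{\delta}$ and an arbitrary instance of a sum, realized by a partition $\alpha = B' \sqcup C'$ into suborders; since suborders of a well-order are well-ordered, $B'$ and $C'$ have order types $\beta$ and $\gamma$, which are ordinals. We must show $\beta = \omega^{\delta}$ or $\gamma = \omega^{\delta}$ (equivalently $\alpha \equiv \beta$ or $\alpha \equiv \gamma$, again by \Cref{prop: for ordinals embeds in is equivalent to less}), so we assume $\beta < \omega^{\delta}$ and prove $\gamma = \omega^{\delta}$, by induction on $\alpha$. The arithmetic we need is that additively indecomposable ordinals are closed under ordinal addition --- immediate from the definition --- whence $\mu + \omega^{\delta} = \omega^{\delta}$ for all $\mu < \omega^{\delta}$, and consequently $\omega^{\delta}$ admits a decomposition
\[
\omega^{\delta} = \sum_{i < \lambda} M_i, \qquad M_i \cong \omega^{\delta_i} \text{ with } \delta_i < \delta,
\]
such that moreover every tail $\sum_{i_0 \le i < \lambda} M_i$ is again isomorphic to $\omega^{\delta}$: take $\lambda = \omega$ with all $\delta_i = \delta'$ when $\delta = \delta' + 1$, and $\lambda = \operatorname{cf}(\delta)$ with $(\delta_i)_{i<\lambda}$ increasing and cofinal in $\delta$ when $\delta$ is a limit.

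Now set $B'_i = B' \cap M_i$, $C'_i = C' \cap M_i$. Since the $M_i$ are consecutive convex blocks, $\beta = \sum_i \beta_i$ and $\gamma = \sum_i \gamma_i$, where $\beta_i, \gamma_i$ are the order types of $B'_i, C'_i$. Each $M_i \cong \omega^{\delta_i}$ is AI and is an ordinal smaller than $\alpha$, so by the induction hypothesis, for every $i$ either $\beta_i = \omega^{\delta_i}$ or $\gamma_i = \omega^{\delta_i}$. Let $S = \{ i < \lambda : \beta_i = \omega^{\delta_i} \}$. If $S$ were cofinal in $\lambda$, then $\beta \ge \sum_{i \in S} \omega^{\delta_i} = \omega^{\delta}$, contradicting $\beta < \omega^{\delta}$; so $S$ is bounded, say $S \subseteq i_0$ with $i_0 < \lambda$. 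Then $\gamma_i = \omega^{\delta_i}$ for every $i \ge i_0$, so $\gamma \ge \sum_{i_0 \le i < \lambda} \omega^{\delta_i} = \omega^{\delta}$, which forces $\gamma = \omega^{\delta}$. The base case $\alpha = 1$ is immediate, and with the easy direction this finishes the equivalence.

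The main obstacle is this inductive step: the induction hypothesis only tells us that inside each block $M_i$ at least one of $B'_i, C'_i$ is a full copy of $\omega^{\delta_i}$, and one must argue that $\beta < \omega^{\delta}$ --- through the absorption property of AI ordinals under addition --- forces the ``$C'$ is full'' alternative to hold on a cofinal set of blocks. Everything else (suborders of ordinals are ordinals, the collapse of $\equiv$ and $\preceq$ to $=$ and $\le$ on $Ord$, and the homogeneous cofinal decomposition of $\omega^{\delta}$) is routine. We note that if one is willing to invoke the later bound that any instance of a sum of ordinals $\beta$ and $\gamma$ has order type at most the natural sum $\beta \# \gamma$, together with the fact that the ordinals below $\omega^{\delta}$ are closed under natural sum, the forward direction collapses to the one line $\omega^{\delta} \le \beta \# \gamma < \omega^{\delta}$; we give the self-contained argument because that bound is only established afterwards.
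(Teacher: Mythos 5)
Your proof is correct, but it takes a genuinely different route from the paper's. The paper deliberately defers the proof of this proposition until after \Cref{th: min sum is minimal/bounds on instances of a sum of ordinals} and then derives it in a few lines from the two bounds $\alpha +_{min} \beta \leq \gamma \leq \alpha \# \beta$, comparing leading Cantor normal form terms: the Hessenberg upper bound pins down the leading exponent, and the min-sum lower bound forces one summand to equal $\omega^\gamma$. (Your parenthetical ``one-liner'' via $\omega^{\delta} \le \beta \# \gamma < \omega^{\delta}$ is essentially the upper-bound half of exactly this argument, so your worry that the bound ``is only established afterwards'' is moot --- the paper sequences things so that it is available.) Your main argument is instead a self-contained transfinite induction on the exponent, using the decomposition of $\omega^{\delta}$ into a $\lambda$-indexed sum of smaller indecomposable blocks with all tails isomorphic to $\omega^{\delta}$, the blockwise induction hypothesis, and a cofinality/pigeonhole argument on the set $S$; this is the classical direct proof and checks out (the tail sums telescope by absorption, and $S$ unbounded really does force $\beta \ge \omega^{\delta}$). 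What your approach buys is independence from the min/Hessenberg machinery, at the cost of length; what the paper's buys is brevity and a demonstration that the bounds theorem has teeth. One phrasing slip worth fixing: ``additively indecomposable ordinals are closed under ordinal addition'' is false as stated (the class of AI ordinals is not closed under $+$); what you mean and use is that the ordinals \emph{below} an AI ordinal are closed under addition, equivalently the absorption identity $\mu + \omega^{\delta} = \omega^{\delta}$ for $\mu < \omega^{\delta}$, which is indeed immediate from \Cref{def: additive indecomposability}.
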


At the end of this section, we will show how a proof of this well known fact can be derived from our bounds on instances of sums of two ordinals.

The decomposition into strongly indecomposable orders furnished by CNF accounts for a lot of unexpectedly nice properties of the theory of sums on the ordinals.

For example, attempting to bound the number of non-isomorphic instances of a sum of two infinite well-orders, a na\"ive approach would yield that there are infinitely many possible orders. Due to CNF, the number is actually finite.

\begin{theorem} \label{th: finitely many instances of sum of two ordinals}
    For any ordinals $\alpha, \beta$, there are (up to isomorphism) only finitely many instances of a sum of $\alpha$ and $\beta$.
\end{theorem}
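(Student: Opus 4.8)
The plan is to reduce the statement to a claim about Cantor Normal Forms and then bound the number of CNFs that can appear as an instance of the sum. First I would recall that by Cantor Normal Form every ordinal is a finite concatenation of additively indecomposable ordinals $\omega^{\gamma}$, so write $\alpha = \omega^{\gamma_1}k_1 + \dots + \omega^{\gamma_m}k_m$ and $\beta = \omega^{\delta_1}\ell_1 + \dots + \omega^{\delta_n}\ell_n$. A key observation is that an instance of a sum of $\alpha$ and $\beta$ is itself a well-order (the class $\mathcal{W}$ is closed under arbitrary instances of sums, as noted at the start of \Cref{subsect: good sums of well-orders}), hence is again an ordinal $\rho$ with its own CNF. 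So the task is to show that only finitely many ordinals $\rho$ can arise by interleaving a copy of $\alpha$ and a copy of $\beta$.

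The main step is an upper bound on such a $\rho$. I would argue that any instance $\rho$ of a sum of $\alpha$ and $\beta$ satisfies $\rho \leq \alpha \# \beta$, where $\#$ is the Hessenberg (natural) sum: intuitively, no matter how the copies of $\alpha$ and $\beta$ are shuffled, the resulting order embeds into, and is embedded by, something controlled by the natural sum, because the natural sum is exactly the supremum of all ordinals obtained by shuffling. More carefully, if $\rho = A' \sqcup B'$ with $A' \cong \alpha$ and $B' \cong \beta$, then splitting $\rho$ at any point gives an initial segment which is an instance of a sum of an initial segment of $\alpha$ with an initial segment of $\beta$; a transfinite induction on $\rho$ (using \Cref{prop: for ordinals embeds in is equivalent to less} to pass freely between $\preceq$ and $\leq$) then shows $\rho \leq \alpha \# \beta$. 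Since there are only finitely many — indeed $\rho$ many, hence a set, but more to the point finitely many after the next step — ordinals below a fixed bound is false in general, so the bound alone is not enough; I need the extra rigidity coming from CNF.

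That extra rigidity is the following: the CNF of any instance $\rho$ of a sum of $\alpha$ and $\beta$ can only involve the exponents $\gamma_i$ and $\delta_j$ already appearing in $\alpha$ and $\beta$ (no new, larger or intermediate, powers of $\omega$ can be created by shuffling two fixed ordinals), and the coefficient of each such power $\omega^{\gamma}$ in $\rho$ is at most the sum of the corresponding coefficients in $\alpha$ and $\beta$. Granting this, $\rho$ ranges over CNFs built from the fixed finite set of exponents $\{\gamma_1,\dots,\gamma_m,\delta_1,\dots,\delta_n\}$ with coefficients bounded by fixed finite numbers, so there are only finitely many possibilities. To prove the claim about exponents and coefficients I would induct on $\alpha \# \beta$: write $\rho = A' \sqcup B'$, consider the final segment of $\rho$ consisting of its top AI block $\omega^{\gamma}k$; this block decomposes $A'$ and $B'$ into initial parts and final parts, the final parts being instances of sums whose CNF top exponents are, by the AI structure, controlled by $\gamma_1$ and $\delta_1$ respectively, forcing $\gamma \leq \max(\gamma_1,\delta_1)$ and giving a handle on $k$; then strip the top block and apply the induction hypothesis to the remainder.

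The main obstacle I anticipate is making the inductive bookkeeping on CNFs precise — in particular, proving cleanly that a shuffle of $\alpha$ and $\beta$ cannot manufacture a power of $\omega$ strictly between or above those occurring in $\alpha$ and $\beta$, and controlling how the coefficients of the shared top exponent combine. The cleanest route is probably to phrase everything in terms of the Hessenberg sum from the outset: prove $\rho \leq \alpha \# \beta$ and, symmetrically, that $\rho \geq$ something like the ``min sum'' (anticipating \Cref{th: min sum is minimal/bounds on instances of a sum of ordinals}), and then observe that the interval between these two bounds, when both endpoints have CNF supported on the fixed finite exponent set, contains only finitely many ordinals with CNF also supported on that set. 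I would present the CNF-exponent-stability lemma as the technical heart and relegate the counting to a short combinatorial remark once the exponents and coefficients are pinned down.
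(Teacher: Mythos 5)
The paper does not actually prove this theorem: it remarks that the result ``is not hard to show'' and defers to a bijective proof in Toulmin. So your CNF-based argument is a genuinely different (and more standard) route, and its main line is sound: an instance of a sum of two well-orders is a well-order, hence an ordinal $\rho$; one shows that the CNF of $\rho$ can only use exponents occurring in the CNFs of $\alpha$ and $\beta$ with coefficients bounded by the sums of the corresponding coefficients; and then the count is finite. Toulmin's proof buys an exact count via an explicit bijection; yours buys only finiteness but is self-contained modulo the CNF-stability lemma. Two cautions on your sketch. First, your closing ``cleanest route'' is not actually available: the interval between $\alpha +_{min} \beta$ and $\alpha \# \beta$ can contain \emph{infinitely} many ordinals whose CNF is supported on the fixed exponent set --- for $\alpha = \beta = \omega^2$ the interval $[\omega^2, \omega^2\cdot 2]$ contains $\omega^2 + \omega\cdot k + n$ for all $k, n < \omega$, all supported on $\{2,1,0\}$. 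So finiteness cannot be read off from the two bounds plus exponent support; you must prove the coefficient bounds directly, as in your third paragraph. Second, your induction is phrased top-down (peeling off the largest AI block), but initial segments of an ordinal are arbitrary ordinals below it and give you no rigidity. The clean version works bottom-up: the final segment of $\rho$ of type $\omega^{\epsilon_r}$ (the last CNF exponent) meets $A'$ and $B'$ in \emph{final} segments of $\alpha$ and $\beta$, whose isomorphism types form a finite list readable from the CNFs; strong indecomposability of $\omega^{\epsilon_r}$ (\Cref{prop: ordinal is additively indecomposable iff it is strongly indecomposable}) then forces $\epsilon_r$ to be the last exponent of $\alpha$ or of $\beta$, and one strips that block and recurses. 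With that lemma in place your counting step in the third paragraph is correct; note also that invoking $\#$ and $+_{min}$ here is a forward reference within the paper, though not a circular one.
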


This is not hard to show, but a nice bijective proof is given in theorem 1.38 of \cite{Toulmin1954ShufflingOA}.

We will prove that $\mathcal{W}$ is a good class by making use to the \textit{Hessenberg sum}. This is a commutative sum on the ordinals first studied by Gerhard Hessenberg in 1906 \cite{hessenberg}, it is also called the \textit{natural sum}, a name warranted by pervasiveness in various settings.

\begin{definition} \label{def: hessenberg sum}
    Given any two ordinals $\alpha, \beta$, let their expanded out Cantor normal forms be

    \[\alpha = \omega^{\beta_1} + \dots + \omega^{\beta_n}\]

    and

    \[\beta = \omega^{\gamma_1} + \dots + \omega^{\gamma_m}.\]

    The \textit{Hessenberg sum} of $\alpha$ and $\beta$ is the ordinal
    
    $$\alpha \# \beta = \omega^{\delta_1} + \dots + \omega^{\delta_{n + m}},$$

    where $\delta_1\leq\dots\leq\delta_{n+m}$ is a non-decreasing enumeration of $\{\beta_1, \dots, \beta_n, \gamma_1, \dots, \gamma_m\}$.
\end{definition}

The Hessenberg sum is clearly a sum in our sense. Further, it is easily seen to be commutative and associative, thereby good. While this definition is quite constructive, there is a more natural way to define this sum (we will see more in the following sections).

\begin{proposition} \label{prop: hessenberg sum is greater than any instance of a sum}
    If $\alpha \in Ord$ is an instance of a sum of $\beta, \gamma \in Ord$, then $\alpha \leq \beta \# \gamma$.
\end{proposition}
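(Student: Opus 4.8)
The plan is to prove the bound by transfinite induction on $\alpha$, reducing an arbitrary interleaving of $\beta$ and $\gamma$ to interleavings of their proper initial segments.

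Fix a witnessing partition: since $\alpha$ is an instance of a sum of $\beta$ and $\gamma$, write $\alpha = A \sqcup B$ where $A$ and $B$ are suborders of order type $\beta$ and $\gamma$ respectively (suborders of a well-order are themselves well-ordered, so these order types are genuine ordinals). The crucial observation is that for each $\xi \in \alpha$ the proper initial segment $\alpha_{<\xi}$ --- which, as $\alpha$ is an ordinal, is just $\xi$ itself --- intersects $A$ in an \emph{initial segment} $A_\xi$ of $A$, and intersects $B$ in an initial segment $B_\xi$ of $B$: indeed, if $a' < a < \xi$ with $a, a' \in A$ then $a' < \xi$, and $A$ carries the order induced from $\alpha$. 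Writing $\beta_\xi$ and $\gamma_\xi$ for the order types of $A_\xi$ and $B_\xi$, we obtain $\beta_\xi \le \beta$ and $\gamma_\xi \le \gamma$, with $\beta_\xi < \beta$ whenever $A_\xi \ne A$ and $\gamma_\xi < \gamma$ whenever $B_\xi \ne B$; moreover these cannot both be equalities, since $A_\xi = A$ and $B_\xi = B$ would force $\alpha = A \cup B \subseteq \alpha_{<\xi}$, contradicting $\xi \notin \alpha_{<\xi}$.

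For the induction, assume the statement holds for every ordinal below $\alpha$ and fix $\xi \in \alpha$. Then $\alpha_{<\xi}$ has order type $\xi < \alpha$ and decomposes as $A_\xi \sqcup B_\xi$, so it is an instance of a sum of $\beta_\xi$ and $\gamma_\xi$; by the inductive hypothesis $\xi \le \beta_\xi \# \gamma_\xi$. Since at least one of $\beta_\xi < \beta$, $\gamma_\xi < \gamma$ holds, strict monotonicity of $\#$ in each argument (standard, and readily checked from Cantor Normal Form) gives $\beta_\xi \# \gamma_\xi < \beta \# \gamma$ --- for instance, if $\beta_\xi < \beta$ then $\beta_\xi \# \gamma_\xi \le \beta_\xi \# \gamma < \beta \# \gamma$. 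Hence $\xi < \beta \# \gamma$ for every $\xi \in \alpha$, which is precisely $\alpha \le \beta \# \gamma$. The base case $\alpha = 0$ needs no separate treatment, the quantification over $\xi$ then being vacuous and yielding $0 \le 0 \# 0$.

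There is no real obstacle in this argument; it uses only the standard strict monotonicity of the natural sum in each variable, and the single point calling for care is the observation that cutting the ambient well-order at a point $\xi$ cuts $A$ and $B$ into initial segments, so that their order types drop strictly below $\beta$, resp.\ $\gamma$, unless $\xi$ sits above all of $A$, resp.\ above all of $B$. I would also record in passing that, because $\#$ is itself a sum, $\beta \# \gamma$ is realized as an instance of a sum of $\beta$ and $\gamma$, so the proposition in fact identifies $\beta \# \gamma$ as the \emph{largest} such instance.
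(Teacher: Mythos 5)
Your proof is correct. The paper obtains this proposition as the special case $\oplus = \#$ of a more general result, Theorem \ref{th: natural ordinal operations are greater than any instance of a sum}: for \emph{any} natural ordinal operation in Carruth's sense, every instance of a sum of $\beta$ and $\gamma$ is bounded above by $\beta \oplus \gamma$. That proof inducts on the pair of summands and argues by contradiction, cutting the interleaved order at the single point $\beta \oplus \gamma$ and examining the two traces; you instead induct on the instance $\alpha$ itself, cut at every point $\xi < \alpha$, and argue directly. The key combinatorial observation is the same in both arguments --- a cut of the interleaving restricts to initial segments of the two summands, at least one of which must be proper --- and both ultimately rest on strict monotonicity of the operation in each argument, which for $\#$ is Carruth's axiom (4) and is indeed standard. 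What the paper's extra generality buys is Corollary \ref{cor: natural ordinal operation is greater than hessenberg sum}, identifying $\#$ as the least natural ordinal operation that is a sum in the paper's sense; what your version buys is a self-contained direct argument that uses nothing about $\#$ beyond strict monotonicity, and your closing remark that $\beta \# \gamma$ is itself realized as an instance of a sum (so the bound is attained) matches the paper's point that $\#$ is a sum.
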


The intuition behind this folkloric result is that the Hessenberg sum provides us with the only way to combine two ordinals without any absorption.

To prove it, we consider Carruth's axioms for natural sums as given in \cite{carruth1942arithmetic}\footnote{Note that a sum in Carruth's sense is just a binary operation on the ordinals; it need not be a sum in our sense.}.

\begin{definition} \label{def: natural ordinal operation}
    (Carruth) A binary operation $\oplus: Ord \times Ord \rightarrow Ord$ is a \textit{natural ordinal operation} iff for any $\alpha, \beta, \gamma \in Ord$

    \begin{enumerate}
        \item $\alpha \oplus \beta = \beta \oplus \alpha$;
        \item $(\alpha \oplus \beta) \oplus \gamma = \alpha \oplus (\beta \oplus \gamma)$;
        \item $\alpha \oplus 0 = \alpha$;
        \item $\gamma \oplus \alpha > \gamma \oplus \beta$ iff $\alpha > \beta$.
    \end{enumerate}
\end{definition}

\begin{remark}
    As Carruth points out, the Hessenberg sum is the unique natural ordinal operation that satisfies the condition that for any ordinals $\alpha \geq \beta$ and $m, n < \omega$
    $$\omega^\alpha \cdot m + \omega^\beta \cdot n = \omega^\alpha \cdot m \oplus \omega^\beta \cdot n$$
\end{remark}

\begin{example}
    For a natural ordinal operation which isn't a sum, consider the operation $\oplus$ defined by

    \[
        \alpha \oplus \beta = \begin{cases}
            \alpha, \text{ if } \beta = 0\\
            \beta, \text{ if } \alpha = 0\\
            \alpha \# \beta\# 1, \text { if } \alpha, \beta \neq 0
        \end{cases}
    \]

    While this satisfies all of Carruth's axioms, it isn't a sum since, for example, $1 \oplus 1 = 3$.
\end{example}

\begin{theorem} \label{th: natural ordinal operations are greater than any instance of a sum}
    Suppose $\oplus$ is a natural ordinal operation. If $\gamma$ is an instance of a sum of two ordinals $\alpha, \beta$, then $\gamma \leq \alpha \oplus \beta$.
\end{theorem}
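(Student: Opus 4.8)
The plan is to argue by transfinite induction on $\gamma$, using only two easy consequences of Carruth's axioms (\Cref{def: natural ordinal operation}): that $0 \oplus \delta = \delta$ (axiom 3 together with commutativity), and that $\oplus$ is non-strictly monotone in each argument, i.e. $\delta \le \delta'$ forces $\delta \oplus \varepsilon \le \delta' \oplus \varepsilon$ and $\varepsilon \oplus \delta \le \varepsilon \oplus \delta'$ — this follows from the strict monotonicity of axiom 4 (negating $\gamma\oplus\alpha > \gamma\oplus\beta \iff \alpha>\beta$) together with commutativity. First I would remark that, since the class of well-orders is closed under instances of sums, $\gamma$ is itself (isomorphic to) an ordinal, so the inequality makes sense; then fix a partition $\gamma = A' \sqcup B'$ into suborders with $A' \cong \alpha$ and $B' \cong \beta$, and assume the result for all ordinals below $\gamma$.

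The case $\gamma = 0$ is trivial. In the successor case $\gamma = \delta + 1$ I would locate the greatest element $\delta$ of $\gamma$; it lies in exactly one of $A'$, $B'$, and by commutativity of $\oplus$ we may assume $\delta \in A'$. Then $A'$ has a greatest element, so $\alpha = \alpha_0 + 1$, the set $A' \cap \delta = A' \setminus \{\delta\}$ is an initial segment of $A'$ isomorphic to $\alpha_0$, and $B' \subseteq \delta$. Hence $\delta = (A' \cap \delta) \sqcup B'$ exhibits $\delta$ as an instance of a sum of $\alpha_0$ and $\beta$, so the induction hypothesis gives $\delta \le \alpha_0 \oplus \beta$. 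Since $\alpha_0 + 1 > \alpha_0$, axiom 4 yields $(\alpha_0+1)\oplus\beta > \alpha_0\oplus\beta$, hence $(\alpha_0+1)\oplus\beta \ge (\alpha_0 \oplus \beta)+1$, and therefore
\[
\gamma = \delta + 1 \le (\alpha_0 \oplus \beta)+1 \le (\alpha_0+1)\oplus\beta = \alpha\oplus\beta.
\]

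For the limit case, write $\gamma = \sup_{\xi<\gamma}\xi$ and restrict the partition to each proper initial segment: $\xi = (A'\cap\xi)\sqcup(B'\cap\xi)$, where $A'\cap\xi$ is an initial segment of $A'$ and hence isomorphic to some $\alpha_\xi \le \alpha$, and likewise $B'\cap\xi\cong\beta_\xi\le\beta$. Thus $\xi$ is an instance of a sum of $\alpha_\xi$ and $\beta_\xi$, and the induction hypothesis together with monotonicity of $\oplus$ gives $\xi \le \alpha_\xi\oplus\beta_\xi \le \alpha\oplus\beta$ for every $\xi<\gamma$. Taking the supremum over $\xi < \gamma$ yields $\gamma \le \alpha\oplus\beta$, completing the induction.

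I do not expect a serious obstacle; the argument is short and essentially routine, and notably uses neither associativity (axiom 2) nor the identity axiom beyond the trivial base case. The one place to be careful is the limit step, where one must verify that restricting a partition of $\gamma$ to an initial segment again splits it into initial segments of the copies of $\alpha$ and $\beta$ — which is immediate since initial segments of $\gamma$ are downward closed. A minor bookkeeping point is the appeal to commutativity of $\oplus$ to collapse the two symmetric subcases (top element in $A'$ versus in $B'$) of the successor step. Finally, it is worth flagging that running this argument with $\oplus = \#$ immediately re-proves the folklore \Cref{prop: hessenberg sum is greater than any instance of a sum}.
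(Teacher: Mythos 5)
Your proof is correct, but it runs along a genuinely different induction scheme from the paper's. The paper inducts on the pair $(\alpha,\beta)$ and argues by contradiction: assuming $\gamma > \alpha\oplus\beta$, it restricts the witnessing partition to the initial segment $\alpha\oplus\beta$ of $\gamma$, shows via the inductive hypothesis and axiom (4) that the restricted types must equal $\alpha$ and $\beta$ exactly, and then derives a contradiction from the fact that an ordinal cannot embed into a proper initial segment of itself (\Cref{prop: for ordinals embeds in is equivalent to less}). You instead induct directly on $\gamma$, peeling off the top point in the successor case (using discreteness of ordinals, $x>y\Rightarrow x\ge y+1$, together with strict monotonicity from axiom (4)) and taking a supremum over proper initial segments in the limit case (using only non-strict monotonicity). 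Your route is arguably cleaner: it is a direct argument rather than a proof by contradiction, it avoids the paper's final embedding step, and it makes explicit which axioms are actually needed --- commutativity and (4) only, with (3) entering only at the trivial base case --- an observation the paper's proof shares implicitly but does not flag. Both arguments correctly dispense with associativity. The one step in your write-up worth double-checking is the restriction claim in the limit case, and you address it: an initial segment $\xi$ of $\gamma$ is downward closed, so $A'\cap\xi$ and $B'\cap\xi$ are initial segments of $A'$ and $B'$ and hence have order types $\le\alpha$ and $\le\beta$ respectively.
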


\begin{proof}
    By induction on $(\alpha, \beta)$. The statement holds for $\alpha = 0$ and any $\beta$. The statement also holds for $\beta = 0$ and any $\alpha$. Suppose that it holds for all pairs $(\delta, \varepsilon)$ such that $\delta \leq \alpha$ and $\varepsilon \leq \beta$ and at least one of these inequalities is strict. We show that it holds for $(\alpha, \beta)$.

    Let $\gamma = \gamma_\alpha \cup \gamma_\beta$ be the witnesses to $\gamma$ being an instance of a sum of $\alpha$ and $\beta$. Suppose $\gamma > \alpha \oplus \beta$, and let $\mbox{tp}(\gamma_\alpha \cap \alpha \oplus \beta) = \gamma'_\alpha$ and $\mbox{tp}(\gamma_\beta \cap \alpha \oplus \beta) = \gamma'_\beta$ (treating $\alpha \oplus \beta$ as an initial segment of $\gamma$), so that $\alpha \oplus \beta$ is an instance of a sum of $\gamma'_\alpha$ and $\gamma'_\beta$. If either $\gamma'_\alpha < \alpha$ or $\gamma'_\beta < \beta$ then by induction we have $\alpha \oplus \beta \leq \gamma'_\alpha \oplus \gamma'_\beta$, contradicting (4) in \Cref{def: natural ordinal operation}. Thus $\gamma'_\alpha = \alpha$ and $\gamma'_\beta = \beta$. But since $\alpha \oplus \beta$ is strictly initial in $\gamma$, either $\gamma'_\alpha$ embeds in strict initial segment of $\gamma_\alpha$ (which is isomorphic to $\alpha$) or $\gamma'_\beta$ embeds in a strict initial segment of $\gamma_\beta$ (isomorphic to $\beta$), a contradiction in either case. 
\end{proof}

In the case that $\oplus$ is the Hessenberg sum, this theorem gives us a proof of Proposition \ref{prop: hessenberg sum is greater than any instance of a sum}. On the other hand, since the Hessenberg sum is a sum in our sense, we also get:

\begin{corollary} \label{cor: natural ordinal operation is greater than hessenberg sum}
    Let $\alpha, \beta$ be any ordinals, then for any natural ordinal operation $\oplus$, $$\alpha \# \beta \leq \alpha \oplus \beta$$
\end{corollary}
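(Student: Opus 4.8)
The plan is to deduce this immediately from \Cref{th: natural ordinal operations are greater than any instance of a sum}, using the fact that the Hessenberg sum is itself a sum in the sense of this paper. Recall that $\#$ is a sum-like operation: for any ordinals $\alpha, \beta$, the ordinal $\alpha \# \beta$ can be written as a disjoint union $A' \sqcup B'$ of two suborders with $A' \cong \alpha$ and $B' \cong \beta$ (this was observed right after \Cref{def: hessenberg sum}). By \Cref{def: instance of a sum}, this says precisely that $\alpha \# \beta$ is an instance of a sum of $\alpha$ and $\beta$.

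With that in hand, the argument is a one-liner. Fix a natural ordinal operation $\oplus$ and ordinals $\alpha, \beta$. Applying \Cref{th: natural ordinal operations are greater than any instance of a sum} with $\gamma := \alpha \# \beta$ — which is an instance of a sum of $\alpha$ and $\beta$ by the previous paragraph — yields $\gamma \leq \alpha \oplus \beta$, i.e.\ $\alpha \# \beta \leq \alpha \oplus \beta$, as desired.

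There is essentially no obstacle here: the content all lies in \Cref{th: natural ordinal operations are greater than any instance of a sum}, and the only thing to be careful about is invoking the (already-noted) observation that $\#$ really is one of our sums, so that its value on $(\alpha,\beta)$ falls under the scope of that theorem. It may be worth remarking that combining this corollary with \Cref{prop: hessenberg sum is greater than any instance of a sum} (equivalently, the instance of \Cref{th: natural ordinal operations are greater than any instance of a sum} with $\oplus = \#$) shows that $\#$ is the pointwise least natural ordinal operation, but that is a side comment rather than part of the proof.
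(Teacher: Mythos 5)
Your proposal is correct and is exactly the paper's argument: the text introduces this corollary with the remark that ``since the Hessenberg sum is a sum in our sense, we also get'' the bound, i.e.\ $\alpha \# \beta$ is an instance of a sum of $\alpha$ and $\beta$, so \Cref{th: natural ordinal operations are greater than any instance of a sum} applies directly. Nothing is missing.
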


Theorem \ref{th: natural ordinal operations are greater than any instance of a sum} thus also tells us that the Hessenberg sum is the unique sum which is a natural ordinal operation. One may wonder whether it is possible to show that the Hessenberg sum is the only good sum on $Ord$ by making use of this result. Surely, if condition (4) of Definition \ref{def: natural ordinal operation} holds for all good sums this characterization would follow.

This is not the case, with there in fact being many different good sums on $Ord$. In fact, one of the sums will yield the lower bound missing from Proposition \ref{prop: hessenberg sum is greater than any instance of a sum}.

To motivate these new classes of sums we begin by asking: how many possible instances of a sum are there for $\gamma = \omega^\alpha$ and $\delta = \omega^\beta$?

If $\alpha = \beta$, there are two. The first is the usual sum, where we obtain an order isomorphic to $\omega^\alpha \cdot 2$. The second one being where we split up the orders into infinitely many pieces to get an order $A = \omega^\alpha\; \sqcup\; \omega^\alpha$ with $<_A$ defined by

\[  
    (\varepsilon, i) < (\zeta, j) \text{ iff }
    \begin{cases}
        \varepsilon < \zeta \text{, or }\\
        \varepsilon = \zeta \text{ and } {i < j}
    \end{cases}
\]

This order is isomorphic to $2 \cdot \omega^\alpha = \omega^\alpha$. Visually, we get any element of $\delta$ and put it right after the corresponding element of $\gamma$, interweaving $\gamma$ and $\delta$ maximally.

On the other hand, if $\beta < \alpha$ the only possibilities will be $\omega^\alpha + \omega^\beta$ and $\omega^\alpha$.

In general, if we have $\omega^\alpha \cdot k_1$ and $\omega^\alpha \cdot k_2$ and if $k_1 \leq k_2$, we may shuffle in $\omega^\alpha \cdot k_1$ point-wise into the initial segment of $\omega^\alpha \cdot k_2$ isomorphic to $\omega^\alpha \cdot k_1$, to get a instance of a sum equal to $\omega^\alpha \cdot k_2$. This is the most efficient way of adding these two elements. Otherwise, we can choose how much to maximally shuffle in, leading to the conclusion that the only possible sums are $\omega^\alpha \cdot (k_2 + i)$, for $i < k_1$.

Using the absorption of ordinals of the form $\omega^\alpha$ into themselves and CNF, we can define a new good sum as follows.

\begin{definition} \label{def: lcm sum}
    If $\alpha = \sum_{i = 1} ^ n\omega^{\gamma_i} \cdot k_i + k$ and $\beta = \sum_{i = 1} ^ n\omega^{\gamma_i} \cdot m_i + m$ where $\gamma_1 > \gamma_2 > \dots > \gamma_n > 0$, define the \textit{lcm sum} of $\alpha$ and $\beta$ by
    $$\alpha+_{lcm}\beta = \sum_{i = 1} ^ n\omega^{\gamma_i} \cdot \max\{k_i, m_i\} + k + m$$
\end{definition}

\begin{remark}
    This sum is not a natural ordinal operation since it fails to satisfy condition (4) of \Cref{def: natural ordinal operation}. There are two ways to see this:

    \begin{itemize}
        \item[-] directly, by noting that $\omega\cdot3 +_{lcm} \omega \cdot2 = \omega \cdot 3 = \omega \cdot 3 +_{lcm} \omega$, even though $\omega \cdot 2 > \omega$;

        \item[-] or by using Corollary \ref{cor: natural ordinal operation is greater than hessenberg sum} since, for example, $\omega\cdot3 +_{lcm} \omega \cdot2 = \omega \cdot 3 < \omega \cdot 5 = \omega\cdot3 \# \omega \cdot2$.
    \end{itemize}

    That said, it is straightforward to prove that it is a good sum.
\end{remark}

We may generate an infinite amount of non-isomorphic good sums on $Ord$ by varying the definition of $+_{lcm}$. For example, for any ordinal $\alpha$, we may dictate that the sum $\oplus_\alpha$ act like the Hessenberg sum everywhere (i.e. summing coefficients next to all $\omega^\gamma$s) except at some $\omega^\alpha$ where it should take the maximum of the coefficients. Implicitly, these sums are variations on the sifted sum from Example \ref{ex: global extension of the Hessenberg sum}, with each $\oplus_\alpha$ being either the usual sum, or a sum interleaving the two orders.

These sums do not absorb as efficiently as we would like them to in order to get our lower bound. In particular, given for example $\omega^3 \cdot 2 + \omega$ and $\omega^3 \cdot 3 + \omega^2$ we would like to have our sum dynamically join the $\omega^3$s in the most efficient way possible, i.e. how the lcm sum does, while also absorbing $\omega$ into $\omega^2$ to yield $\omega^3 \cdot 3 + \omega^2$.

This is achieved by the following sum.

\begin{definition} \label{def: dynamic sum}
    The \textit{dynamic sum} of ordinals $\alpha = \sum_{i = 1} ^ n\omega^{\gamma_i} \cdot k_i$ and $\beta = \sum_{i = 1} ^ m\omega^{\delta_i} \cdot l_i$ (in CNF) inductively:

    \begin{itemize}
        \item[(i)] $\alpha * 0 = 0 * \alpha = \alpha$;
        \item[(ii)] If $\gamma_1 > \delta_1$, then $\alpha * \beta = \beta * \alpha = \alpha$;
        \item[(iii)] If $\gamma_1 = \delta_1 \neq 0$, then $\alpha * \beta = \beta * \alpha = \omega^{\gamma_1} \cdot \max\{k_1, l_1\} + (\sum_{2 < i < n} ^{} \omega^{\gamma_i} \cdot k_i * \sum_{2 < i < n} ^{}\omega^{\delta_i} \cdot l_i)$. If $\gamma_1 = \delta_1 = 0$, let $\alpha * \beta = k_1 + l_1$.
    \end{itemize}
\end{definition}

\begin{proposition} \label{prop: dynamic sum is a good sum}
    $*$ is a good sum on $Ord$.
\end{proposition}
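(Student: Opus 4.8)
The plan is to check, in order, that $*$ is well-defined, that it is a sum (in the sense of \Cref{def: instance of a sum}), and that it is regular, commutative, and associative; only associativity requires real work.

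\textbf{Well-definedness and the easy properties.} The recursion in \Cref{def: dynamic sum} terminates: every recursive call in clause (iii) with $\gamma_1=\delta_1\neq 0$ is applied to the pair $\alpha',\beta'$ obtained by deleting the leading Cantor-normal-form terms, hence to a pair with strictly fewer CNF terms in total, so $*$ is defined by induction on the total number of CNF terms of its two arguments. Regularity is automatic: $*$ is an operation on ordinals and isomorphic well-orders are equal ordinals, so $\alpha\cong\alpha'$ and $\beta\cong\beta'$ force $\alpha=\alpha'$, $\beta=\beta'$ and hence $\alpha*\beta=\alpha'*\beta'$. Commutativity is immediate because the defining clauses are visibly symmetric in the two arguments ($\max\{k_1,l_1\}$ and $k_1+l_1$ are symmetric), with a one-line induction covering the recursive clause.

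\textbf{$*$ is a sum.} I would show by the same induction that $\alpha*\beta$ is an instance of a sum of $\alpha$ and $\beta$, using three facts. (a) If $\deg\beta<\deg\alpha$ then $\beta+\alpha=\alpha$: indeed $\beta<\omega^{\deg\beta+1}\leq\omega^{\deg\alpha}$, and $\omega^{\deg\alpha}$ is additively indecomposable by \Cref{th: ordinal is additively indecomposable iff it is power of omega}, so $\beta+\omega^{\deg\alpha}=\omega^{\deg\alpha}$ and absorbing this into the leading block of $\alpha$ gives $\beta+\alpha=\alpha$; thus in clause (ii) the usual sum $\beta+\alpha=\alpha=\alpha*\beta$ already exhibits $\alpha*\beta$ as an instance of a sum of $\alpha$ and $\beta$. (b) For $\rho\geq 1$ and $k\leq l$, the ordinal $\omega^{\rho}\cdot l$ is an instance of a sum of $\omega^{\rho}\cdot k$ and $\omega^{\rho}\cdot l$: split each point into two consecutive points throughout the first $k$ of the $l$ blocks of order type $\omega^{\rho}$; each such block becomes $2\cdot\omega^{\rho}=\omega^{\rho}$, so the original points together with the untouched last $l-k$ blocks form a copy of $\omega^{\rho}\cdot l$ while the new points form a copy of $\omega^{\rho}\cdot k$. (c) If $C_1$ is an instance of a sum of $A_1,B_1$ and $C_2$ of $A_2,B_2$, then $C_1+C_2$ is an instance of a sum of $A_1+A_2$ and $B_1+B_2$ (concatenate the partitions). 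Clause (iii) with $\gamma_1=\delta_1=\rho\geq 1$ then follows: writing $\alpha=\omega^{\rho}k_1+\alpha'$, $\beta=\omega^{\rho}l_1+\beta'$ with, say, $k_1\leq l_1$, use (b) to present $\omega^{\rho}\cdot\max\{k_1,l_1\}=\omega^{\rho}l_1$ as an instance of a sum of a ``$\omega^{\rho}k_1$-part'' and a ``$\omega^{\rho}l_1$-part'', apply the induction hypothesis to $\alpha'*\beta'$, and concatenate via (c) to see $\alpha*\beta$ is an instance of a sum of $\omega^{\rho}k_1+\alpha'=\alpha$ and $\omega^{\rho}l_1+\beta'=\beta$. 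The base cases ($\alpha=0$, $\beta=0$, or $\gamma_1=\delta_1=0$) are the usual sum and are trivial.

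\textbf{Associativity.} This is the heart of the matter; I would prove $\alpha*(\beta*\gamma)=(\alpha*\beta)*\gamma$ by induction on the total number of CNF terms of $\alpha,\beta,\gamma$, the case where one of them is $0$ being trivial. For nonzero $\alpha,\beta,\gamma$ with leading exponents $\gamma_1,\delta_1,\eta_1$, the structural facts that drive everything are that $\deg(\xi*\zeta)=\max(\deg\xi,\deg\zeta)$ and that clause (ii) makes the argument of larger leading exponent absorb the other one completely. Hence: if $\gamma_1,\delta_1,\eta_1$ are pairwise distinct, the argument of maximal leading exponent is returned by every parenthesization, so both sides equal it; if exactly two of them coincide, a short case split on whether the third is larger or smaller (using that combining the two equal-degree arguments produces that same degree) shows both sides collapse to the same ordinal. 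The only genuinely recursive case is $\gamma_1=\delta_1=\eta_1=\rho$: for $\rho=0$ all three are natural numbers and the identity is ordinary associativity of $+$; for $\rho\geq 1$ both sides have the form $\omega^{\rho}\cdot\max\{k_1,l_1,q_1\}+(\text{tail})$, with tail $\alpha'*(\beta'*\gamma')$ on one side and $(\alpha'*\beta')*\gamma'$ on the other, which agree by the induction hypothesis together with associativity of $\max$. I expect the main obstacle to be nothing deep but simply the bookkeeping in the two-equal-exponents case and keeping straight the degenerate level-$0$ behaviour of clause (iii); there are several sub-cases, each routine.
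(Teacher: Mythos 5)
Your proposal is correct and takes essentially the same route as the paper's (much terser) proof: the sum property comes from the point-wise shuffle of $\omega^{\rho}\cdot k$ into $\omega^{\rho}\cdot l$ described just before the definition, commutativity is definitional, and associativity reduces to a case analysis on leading Cantor-normal-form exponents using absorption for unequal degrees and associativity of $\max$ for equal ones. Your write-up simply supplies the details the paper waves at (the explicit induction on the number of CNF terms, the doubling construction for fact (b), and the concatenation lemma (c)), but it is the same argument.
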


\begin{proof}
    That this operation is a sum follows by the previous description of infinite ordinal shuffles.
    
    Commutativity can be shown inductively and follows by definition.

    For associativity, given $\alpha, \beta, \delta \in Ord$, if any one of them has a greater degree than the rest, then the result will be that ordinal, no matter the arrangement of association. Next, look at the longest continuous chain of $\omega^\gamma$s for which all the ordinals have non-zero coefficients and note that the result of the whole sum will take the maximum of the coefficients in the three ordinals for each of these elements, this is done associatively. Wherever the chain stops, the next elements in the CNF must have disagreements of degree, so we simply consider the $*$ sum of whichever two segments have equal greatest degree, or take whichever segment has greatest degree.
\end{proof}

But this sum still won't be minimal, for example, when adding $\alpha = \omega^2 \cdot 3 + \omega \cdot 2$ and $\beta = \omega^2 \cdot 2 + \omega \cdot 4$, we are going to get $\alpha * \beta = \omega^2 \cdot 3 + \omega \cdot 4$, even though we could have spared ourselves the $\omega \cdot 4$ at the end by absorbing all of $\beta$ into $\omega^2 \cdot 3$ (this equals $\omega ^2 \cdot 2 + \omega^2$, so we may combine the $\omega^2\cdot2$s together and place $\omega \cdot 4$ before $\omega^2$ so that it gets absorbed). Incorporating this modification, yields the true lower bounding sum.

\begin{definition} \label{def: min sum}
    If $\alpha, \beta \in Ord$, say $\alpha = \omega\cdot\alpha' + n_\alpha$ and $\beta = \omega \cdot \beta' + n_\beta$, define the \textit{min sum} as

    \[
    \alpha +_{min} \beta = 
    \begin{cases}
        \max\{\alpha, \beta\}, \text{ if } \alpha' \neq \beta',\\
        \alpha + n_\beta, \text{ else }
    \end{cases}
    \]
\end{definition}

This is also another example of a good sum.

\begin{proposition} \label{prop: min sum is a good sum}
    $+_{min}$ is a good sum on $Ord$.
\end{proposition}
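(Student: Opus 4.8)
The plan is to reduce everything to a single reformulation of the definition of $+_{min}$ that makes commutativity and associativity transparent, and to dispatch the ``sum'' requirement by an explicit shuffle. Regularity is free: on $Ord$ we identify well-orders with their order types, so isomorphic inputs are literally equal and nothing needs to be checked. So the work is: (a) $+_{min}$ is a sum, (b) commutative, (c) associative.

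First I would record the following reformulation. Writing $\alpha = \omega\cdot\alpha' + n_\alpha$ and $\beta = \omega\cdot\beta' + n_\beta$ with $n_\alpha,n_\beta<\omega$, one checks directly from the two cases of \Cref{def: min sum} that
\[
\alpha +_{min}\beta \;=\; \omega\cdot\max(\alpha',\beta') \;+\; s(\alpha,\beta),
\]
where $s(\alpha,\beta)<\omega$ is the sum of the finite parts of those of $\alpha,\beta$ whose ``limit rank'' attains $\max(\alpha',\beta')$; concretely $s(\alpha,\beta)=n_\alpha$ if $\alpha'>\beta'$, $s(\alpha,\beta)=n_\beta$ if $\beta'>\alpha'$, and $s(\alpha,\beta)=n_\alpha+n_\beta$ if $\alpha'=\beta'$. (When $\alpha'\neq\beta'$ this just says $\max\{\alpha,\beta\}$ has leading term $\omega\cdot\max(\alpha',\beta')$; when $\alpha'=\beta'$ it is the identity $\alpha+n_\beta=\omega\cdot\alpha'+(n_\alpha+n_\beta)$.) Since $\max(\cdot,\cdot)$ and $s(\cdot,\cdot)$ are symmetric and finite addition is commutative, commutativity of $+_{min}$ is immediate.

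For associativity, apply the reformulation twice. Let $\alpha,\beta,\gamma$ have limit ranks $a,b,c$ and finite parts $p,q,r$, and set $m=\max(a,b,c)$. Then $\alpha+_{min}\beta$ has limit rank $\max(a,b)$ and finite part $s(\alpha,\beta)$, so $(\alpha+_{min}\beta)+_{min}\gamma=\omega\cdot m+s(\alpha+_{min}\beta,\gamma)$. The key computation is that $s(\alpha+_{min}\beta,\gamma)$ collapses to $p\cdot[a=m]+q\cdot[b=m]+r\cdot[c=m]$ (Iverson brackets): split into the cases $c<\max(a,b)$, $c=\max(a,b)$, $c>\max(a,b)$, using in the first that $\max(a,b)=m$ so $s(\alpha+_{min}\beta,\gamma)=s(\alpha,\beta)=p[a=m]+q[b=m]$ while $r[c=m]=0$, and similarly in the other two. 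The resulting expression $\omega\cdot m+\bigl(p[a=m]+q[b=m]+r[c=m]\bigr)$ is manifestly symmetric in the three pairs $(a,p),(b,q),(c,r)$, so $\alpha+_{min}(\beta+_{min}\gamma)$ evaluates to the same ordinal. This bookkeeping is the one place needing care, but it is completely elementary once the reformulation is available; I expect it to be the main (mild) obstacle.

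Finally, to see that $+_{min}$ is a sum one exhibits, for each $\alpha,\beta$, disjoint suborders of $\alpha+_{min}\beta$ isomorphic to $\alpha$ and $\beta$ covering it. By commutativity and the symmetry of the ``instance of a sum'' relation we may assume $\alpha'\geq\beta'$. If $\beta'=0$ then $\beta$ is finite and $\alpha+_{min}\beta=\max\{\alpha,\beta\}=\alpha\cong n_\beta+\alpha$ (a finite ordinal is absorbed to the left of a nonzero-limit leading term), which displays the decomposition. If $\beta'\geq1$, use the standard infinite shuffle: interleave the initial segment of $\alpha$ of order type $\omega\cdot\beta'$ with $\beta$'s initial $\omega\cdot\beta'$-block two points at a time — that block acquires order type $2\cdot\omega\beta'=\omega\beta'$, so its type is unchanged — then append $\beta$'s remaining $n_\beta$ points, then the rest of $\alpha$; the nonzero-limit tail absorbs the intervening finite block, so the total order type is $\omega\cdot\max(\alpha',\beta')+s(\alpha,\beta)=\alpha+_{min}\beta$ and the two colour classes have types $\alpha$ and $\beta$. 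This is the same mechanism already invoked for the lcm and dynamic sums.
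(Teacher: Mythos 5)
Your proof is correct and takes essentially the same route as the paper: the same pointwise shuffle of the smaller limit block into the larger one witnesses that $+_{min}$ is a sum, and your $\omega\cdot\max(\alpha',\beta')+s(\alpha,\beta)$ reformulation with the Iverson-bracket computation is a cleaner repackaging of the paper's three-case check of associativity on an ordered triple. The only slip is the sub-case $\alpha'=\beta'=0$, where $\alpha+_{min}\beta=\alpha+n_\beta$ is the ordinary concatenation rather than $\max\{\alpha,\beta\}$ and there is no limit term to absorb $n_\beta$ --- but that case is trivially an instance of a sum, so nothing is lost.
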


\begin{proof}
    If $\alpha' \neq \beta'$, w.l.o.g. $\alpha' < \beta'$, and $\alpha + \beta = \max\{\alpha, \beta\}$. Otherwise, we may shuffle $\omega \cdot \beta'$ into $\omega \cdot \alpha'$ and add the finite coefficients. This proves that $+_{min}$ is a sum.

    It is commutative by definition. 
    
    Say we want to add $\alpha \geq \beta \geq \delta$. We have three ways to do so:
    $$(\alpha +_{min} \beta) +_{min}\delta,\; (\alpha +_{min} \delta) +_{min}\beta,\; (\beta +_{min} \delta) +_{min}\alpha$$
    There are three important cases to consider for associativity:

    \begin{enumerate}
        \item $\alpha' > \beta'$, in which case the sum will be $\alpha$;

        \item $\alpha' = \beta' = \delta'$, then all three instances of association will equal $\alpha + n_\beta + n_\delta$;

        \item $\alpha' = \beta' > \delta'$, where all the sums will be $\alpha + n_\beta$.
    \end{enumerate} \end{proof}

\begin{remark}
    Observe that this sum behaves the same as the cardinal sum on the cardinal numbers.
\end{remark}

We finally get the missing bound in Proposition \ref{prop: hessenberg sum is greater than any instance of a sum}.

\begin{theorem} \label{th: min sum is minimal/bounds on instances of a sum of ordinals}
    Let $\gamma \in Ord$ be an instance of a sum of $\alpha, \beta \in Ord$, then
    $$\alpha+_{min}\beta \leq \gamma \leq \alpha \# \beta$$
\end{theorem}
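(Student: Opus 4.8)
The upper bound $\gamma \le \alpha \# \beta$ requires nothing new: the Hessenberg sum is a natural ordinal operation, so \Cref{th: natural ordinal operations are greater than any instance of a sum} (equivalently \Cref{prop: hessenberg sum is greater than any instance of a sum}) already says that every instance of a sum of $\alpha$ and $\beta$ is $\le \alpha\#\beta$. All the work is in the lower bound $\alpha +_{min}\beta \le \gamma$. The plan is to argue directly from the division-by-$\omega$ data that $+_{min}$ depends on: write $\alpha = \omega\cdot\alpha' + n_\alpha$, $\beta = \omega\cdot\beta' + n_\beta$, $\gamma = \omega\cdot\gamma' + n_\gamma$ with $n_\alpha,n_\beta,n_\gamma<\omega$. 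Since $\gamma$ has suborders isomorphic to $\alpha$ and to $\beta$, \Cref{prop: for ordinals embeds in is equivalent to less} gives $\alpha,\beta\le\gamma$, from which one reads off $\gamma' \ge \max\{\alpha',\beta'\}$ (if $\gamma'<\alpha'$ then $\gamma < \omega\cdot(\gamma'+1)\le\omega\cdot\alpha'\le\alpha$). Two of the three cases of \Cref{def: min sum} are then immediate: if $\alpha'\ne\beta'$ then $\alpha+_{min}\beta = \max\{\alpha,\beta\}\le\gamma$; and if $\alpha'=\beta'=:\delta$ but $\gamma'>\delta$, then $\gamma \ge \omega\cdot(\delta+1) = \omega\cdot\delta + \omega > \omega\cdot\delta + (n_\alpha+n_\beta) = \alpha+_{min}\beta$. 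So the whole content is the remaining case $\alpha'=\beta'=\gamma'=\delta$, where we must show $n_\gamma \ge n_\alpha + n_\beta$.

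In that case, the subcase $\delta = 0$ is trivial (the only instance of a sum of two finite orders is their concatenation, so $\gamma = n_\alpha + n_\beta$). For $\delta > 0$, fix witnesses $\gamma = \gamma_\alpha \sqcup \gamma_\beta$ with $\gamma_\alpha\cong\alpha$ and $\gamma_\beta\cong\beta$, and set $T_\gamma = \{x\in\gamma : |\gamma_{>x}| < \omega\}$; since $\omega\cdot\delta$ is a limit ordinal, $T_\gamma$ is exactly the ``last $n_\gamma$'' elements of $\gamma\cong\omega\cdot\delta + n_\gamma$, so $|T_\gamma| = n_\gamma$. The key claim is that the top $n_\alpha$ elements of $\gamma_\alpha$ and the top $n_\beta$ elements of $\gamma_\beta$ all lie in $T_\gamma$; as these two sets are disjoint, this yields $n_\gamma = |T_\gamma|\ge n_\alpha + n_\beta$. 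To prove the claim, suppose for contradiction that some $a$ among the top $n_\alpha$ elements of $\gamma_\alpha$ has infinitely many elements of $\gamma$ above it. Only finitely many of those are in $\gamma_\alpha$ (there are fewer than $n_\alpha$ elements of $\gamma_\alpha$ above $a$), so $\{y\in\gamma_\beta : y > a\}$ is infinite, hence $\gamma_{>a}$ is infinite. On the other hand $\gamma_{\le a}$ is an initial segment of $\gamma\cong\omega\cdot\delta + n_\gamma$ that contains the copy of $\omega\cdot\delta$ obtained by deleting the top $n_\alpha$ elements of $\gamma_\alpha$ (all of which are $\le a$), so $\gamma_{\le a}\cong\omega\cdot\delta + j$ for some finite $j$; then
\[
\omega\cdot\delta + n_\gamma \;\cong\; \gamma \;\cong\; (\omega\cdot\delta + j) + \gamma_{>a} \;\cong\; \omega\cdot\delta + \gamma_{>a},
\]
the finite $j$ being absorbed since $\gamma_{>a}$ is infinite, and left cancellation of ordinal addition forces $\gamma_{>a}\cong n_\gamma$, contradicting its infinitude. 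The symmetric argument with $\alpha$ and $\beta$ interchanged handles the top $n_\beta$ elements of $\gamma_\beta$, completing the claim and hence the case.

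Finally, $\alpha+_{min}\beta$ is genuinely an instance of a sum of $\alpha$ and $\beta$ by \Cref{prop: min sum is a good sum}, so the displayed inequality is sharp on the left, and $\alpha\#\beta$ is likewise an instance, so it is sharp on the right. The main obstacle is the case $\alpha'=\beta'=\gamma'=\delta$ above; everything else is bookkeeping, and the one genuinely delicate point is the absorption/left-cancellation argument showing that the finite tails of $\gamma_\alpha$ and $\gamma_\beta$ cannot ``escape'' the finite tail of $\gamma$ without forcing $\gamma' > \delta$.
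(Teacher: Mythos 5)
Your proof is correct, but it takes a genuinely different route from the paper's. The paper first establishes the auxiliary inequality $(\alpha +_{min} \beta) + \delta \geq \alpha +_{min} (\beta + \delta)$ by a three-way case analysis on $\alpha', \beta', \delta'$, then writes the witness $\gamma$ as an alternating interleaving $\gamma = \sum_{\varepsilon<\delta}(\alpha_\varepsilon + \beta_\varepsilon)$ with $\alpha = \sum_\varepsilon \alpha_\varepsilon$, $\beta = \sum_\varepsilon \beta_\varepsilon$, and runs a transfinite induction on the partial sums, with a separate continuity argument at limit stages. You instead reduce everything to the single nontrivial case $\alpha' = \beta' = \gamma'$ (the other two cases following from $\alpha, \beta \preceq \gamma$ and \Cref{prop: for ordinals embeds in is equivalent to less}) and then count: the set $T_\gamma$ of points with finite co-tail has size $n_\gamma$, and your absorption/left-cancellation argument shows it must contain the disjoint finite tails of $\gamma_\alpha$ and $\gamma_\beta$, giving $n_\gamma \geq n_\alpha + n_\beta$ directly. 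Your argument is shorter and more elementary, using only $\preceq\,\Leftrightarrow\,\leq$, absorption of finite ordinals, and left cancellation, and it avoids the limit-stage bookkeeping entirely; the paper's approach has the advantage of running parallel to the induction already used for \Cref{th: natural ordinal operations are greater than any instance of a sum} and of never needing to inspect the fine structure of the witnessing partition. One pedantic note: the parenthetical ``all of which are $\leq a$'' reads as if it applies to the top $n_\alpha$ elements of $\gamma_\alpha$ (some of which may lie above $a$); what you need, and what is true, is that the elements of $\gamma_\alpha$ \emph{below} its top $n_\alpha$ elements are all $<a$, so the convex copy of $\omega\cdot\delta$ sits inside $\gamma_{\leq a}$. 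With that reading the argument goes through.
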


\begin{proof}
    The upper bound was Proposition \ref{prop: hessenberg sum is greater than any instance of a sum}.

    First, let $\alpha = \omega \cdot \alpha ' + n_\alpha$, $\beta = \omega \cdot \beta' + n_\beta$ and $\delta = \omega\cdot \delta' + n_\delta$ be arbitrary ordinals. We show that
    
    $$(\alpha +_{min} \beta) + \delta \geq \alpha +_{min} (\beta +\delta).$$

    There are three cases to consider:

    \begin{enumerate}
        \item If $\alpha' > \beta'$, then $(\alpha +_{min} \beta) + \delta = \alpha + \delta$; and either $\delta' > \beta'$, so $\beta +\delta = \delta$ and we have equality, or $\delta' \leq \beta'$, in which case either $\beta' + \delta' > \alpha'$ so $\alpha +_{min} (\beta +\delta) = \beta + \delta \leq \alpha + \delta$, or $\beta' + \delta' < \alpha'$ so $\alpha +_{min} (\beta +\delta) = \alpha$, or $\beta' + \delta' = \alpha'$ so $\alpha +_{min} (\beta +\delta) = \alpha + n_\beta + n_\delta < \alpha + \delta$, since if $\delta < \omega$, then $\delta' = 0$ and $\beta' + \delta' = \beta' = \alpha'$, a contradiction.

        \item If $\alpha' < \beta'$, then $(\alpha +_{min} \beta) + \delta = \beta + \delta = \alpha +_{min} (\beta +\delta)$.

        \item If $\alpha' = \beta'$, then either $\delta \geq \omega$ and $\beta' + \delta' > \alpha'$, so 
        $$(\alpha +_{min} \beta) + \delta =\alpha + n_\beta + \delta = \alpha + \delta = \beta + \delta =  \alpha +_{min} (\beta +\delta)$$
        or $\delta < \omega$ and
        $$(\alpha +_{min} \beta) + \delta =\alpha + n_\beta + n_\delta =  \alpha +_{min} (\beta +\delta)$$
    \end{enumerate}

    Using this inequality, we get that for any $\alpha, \beta, \gamma, \delta \in Ord$

    $$(\alpha \,+_{min}\, \beta) + \gamma + \delta = ((\alpha \,+_{min}\, \beta) + \gamma) + \delta
\geq ((\alpha + \gamma) \,+_{min}\, \beta) + \delta 
\geq (\beta + \delta) \,+_{min}\, (\alpha + \gamma)$$
    
    Back to our original setup: since $\gamma$ is an instance of a sum of $\alpha$ and $\beta$, the following decomposition
    
    $$\gamma = \sum_{\varepsilon<\delta} (\alpha_\varepsilon + \beta_\varepsilon), \; \alpha = \sum_{\varepsilon < \delta} \alpha_\varepsilon,\; \beta = \sum_{\varepsilon < \delta} \beta_\varepsilon$$
    
    exists, where without loss of generality, we may assume that $\alpha_0 \neq 0$.

    By induction on $\nu \leq \delta$, we show that 

    $$\sum_{\varepsilon<\nu} (\alpha_\varepsilon + \beta_\varepsilon) \geq \sum_{\varepsilon < \nu} \alpha_\varepsilon +_{min} \sum_{\varepsilon < \nu} \beta_\varepsilon$$

    The base case is immediate. If it holds for $\phi$, then for $\nu = \phi+1$

    $$\sum_{\varepsilon<\nu} (\alpha_\varepsilon + \beta_\varepsilon) = \sum_{\varepsilon<\phi} (\alpha_\varepsilon + \beta_\varepsilon) + (\alpha_\nu + \beta_\nu) \geq (\sum_{\varepsilon < \phi} \alpha_\varepsilon +_{min} \sum_{\varepsilon < \phi} \beta_\varepsilon) + (\alpha_\nu + \beta_\nu) \geq \sum_{\varepsilon < \nu} \alpha_\varepsilon +_{min} \sum_{\varepsilon < \nu} \beta_\varepsilon$$

    (using the inequality for the last step).

    If $\nu \in Lim$, then

    $$\sum_{\varepsilon<\nu} (\alpha_\varepsilon + \beta_\varepsilon) = \lim_{\phi < \nu} \sum_{\varepsilon<\phi} (\alpha_\varepsilon + \beta_\varepsilon) \geq \lim_{\phi < \nu} (\sum_{\varepsilon < \phi} \alpha_\varepsilon +_{min} \sum_{\varepsilon < \phi} \beta_\varepsilon)$$

    so this time we have to show that $$\lim_{\phi < \nu} (\sum_{\varepsilon < \phi} \alpha_\varepsilon +_{min} \sum_{\varepsilon < \phi} \beta_\varepsilon) \geq \sum_{\varepsilon < \nu} \alpha_\varepsilon +_{min} \sum_{\varepsilon < \nu} \beta_\varepsilon$$

    Assume this is false, then we must have 
    \begin{align*}
        \sum_{\varepsilon < \nu} \alpha_\varepsilon +_{min} \sum_{\varepsilon < \nu} \beta_\varepsilon > \lim_{\phi < \nu} (\sum_{\varepsilon < \phi} \alpha_\varepsilon +_{min} \sum_{\varepsilon < \phi} \beta_\varepsilon) \geq \begin{cases}
            \lim_{\phi < \nu} \sum_{\varepsilon < \phi} \alpha_\varepsilon = \sum_{\varepsilon < \nu} \alpha_\varepsilon,\\ \lim_{\phi < \nu}\sum_{\varepsilon < \phi} \beta_\varepsilon = \sum_{\varepsilon < \nu} \beta_\varepsilon
        \end{cases} 
    \end{align*}

    But then, the left hand side can't be the maximum of the summands, so by definition both summands must be successor ordinals. But then we get a contradiction to the inductive hypothesis since there is some ordinal $\mu < \nu$ such that for all $\varepsilon > \mu$ both $\alpha_\varepsilon$ and $\beta_\varepsilon$ are $0$.
\end{proof}

This result is quite remarkable: we are bounding things that may or may not be the result of a reasonable (e.g. associative) sum with bounds which are not only sums, but associative and commutative.

We conclude by using these bounds to prove the characterization of the strongly indecomposable ordinals.

\begin{proof}[Proof of Proposition \ref{prop: ordinal is additively indecomposable iff it is strongly indecomposable}]
    We need to show that for any $\gamma$, $\omega^\gamma$ is strongly indecomposable. The case of $\gamma = 0$ is immediate, so assume $\gamma > 0$.
    
    Say $\omega^\gamma$ is an instance of a sum of $\alpha$ and $\beta$. Without loss of generality, if $\omega \leq \omega^\delta\cdot m + \dots =\alpha \leq \beta = \omega^\delta\cdot n + \dots$, then 
    $$\omega^\delta\cdot n + \dots = \alpha +_{min} \beta \leq \omega^\gamma \leq \alpha \# \beta = \omega^\delta \cdot (m + n) + \dots,$$
    so $\omega^\delta \leq \omega^\gamma$, assume this is not equality, then 
    $$\alpha \# \beta = \omega^\delta \cdot (n + m) + \dots < \omega^\delta\cdot (n + m + 1) < \omega^{\delta + 1} \leq \omega^\gamma \leq \alpha \# \beta,$$
    a contradiction. So $\delta = \gamma$, and $\omega^\gamma \preceq\beta$ by Proposition \ref{prop: for ordinals embeds in is equivalent to less}. In fact, we must have $\omega^\gamma = \beta$, or else $\omega^\gamma < \beta \leq \alpha +_{min} \beta$.
\end{proof}

To conclude this section, we prove that an intuitive characterization of the Hessenberg sum based on our previous discussion of how absorption can occur, fails. We do so by constructing a new sum. 

\begin{definition} \label{def: splitting summand into kappa pieces}
    Given some cardinal $\kappa$, we say a sum $\oplus$ on a class $C$ \textit{splits summands into $< \kappa$ pieces} if for any $A, B \in C$, we may find associated sum embeddings $e_A^{A \oplus B}$ and $e_B^{A \oplus B}$ such that both $e_A^{A \oplus B}(A)$ and $e_B^{A \oplus B}(B)$ constitute $< \kappa$-many disjoint convex pieces in $A \oplus B$.

    If $\kappa = \aleph_0$, we say $\oplus$ \textit{splits summands finitely}.
\end{definition}

The method of shuffling ordinals we used to define the lcm sum and all its variants does not split summands finitely. That said, the Hessenberg sum always splits summands finitely. Given Carruth's characterization of natural ordinal operations, and our discussion of shuffling, one may expect it to be the unique natural sum that does so.

\begin{example}
    Given ordinals $\alpha = \sum_{i = 1} ^ n\omega^{\gamma_i} \cdot k_i$ and $\beta = \sum_{i = 1} ^ m\omega^{\delta_i} \cdot l_i$ (in CNF), define $\oplus$ inductively by

    \begin{itemize}
        \item[(i)] $\alpha \oplus 0 = 0 \oplus \alpha = \alpha$;
        \item[(ii)] If $\gamma_1 > \delta_1$, then $\alpha \oplus \beta = \beta \oplus \alpha = \alpha$;
        \item[(iii)] If $\gamma_1 = \delta_1$, then $\alpha \oplus \beta = \beta \oplus \alpha = \omega^{\gamma_1} \cdot (k_1 + l_1) + (\sum_{2 < i < n} ^{} \omega^{\gamma_i} \cdot k_i \oplus \sum_{2 < i < n} ^{}\omega^{\delta_i} \cdot l_i)$.
    \end{itemize}

    This sum is good and splits summands finitely, but is not a natural ordinal operation since, for example, $\omega\oplus1 = \omega\oplus2$.
\end{example}

\subsubsection{A good extension of the well-orders} \label{subsect: a good extension of the well-orders}

To assuage any concerns that the ordinals may be special or unique as a good class, we present a good extension of them. 

Consider the class of all orders that do not embed $\mathbb{Z}$. A concrete characterization of this class of order-types is

$$WO+WO^* = \{\alpha + \beta^*\mid\alpha, \beta \in Ord\}.$$

Let $\tilde\oplus$ be your favorite good sum on the ordinals, define its extension, $\oplus$, to all of $WO+WO^*$ as

$$(\alpha_1 + \beta_1^*) \oplus (\alpha_2 + \beta_2^*) = (\alpha_1' \tilde\oplus \alpha_2') + (n_1 + n_2) + (\beta_1'^* \tilde\oplus^{-1} \beta_2'^*),$$

where $\alpha'_1, \alpha'_2, \beta'_1, \beta'_2 \in Lim$, and for $i \in \{1, 2\}$, $\alpha_i + \beta_i^* = \alpha'_i + n_i + \beta'^*_i.$

\begin{remark} \label{rmk: reverse sum in general}
    Given any sum $\oplus$, we may define a corresponding sum $\oplus^{-1}$ by $A \oplus^{-1}B = (B^* \oplus A^*)^*$. 
    
    If $\oplus$ is an associative or commutative sum on the well-orders, $\oplus^{-1}$ will also be an associative or commutative sum on the reverse well-orders. In general, if $\oplus$ is a global sum, $\oplus^{-1}$ will preserve its properties.
\end{remark}

Since $\tilde\oplus$ is good, $\oplus$ is also good given that no instance of a sum of two limit ordinals can be a non-limit ordinal.

\subsection{Rational shuffles} \label{subsect: rational shuffles}

The example of $\mathcal{W}$ as a good class relies on the structure theory of decompositions furnished by Cantor normal form\footnote{It also depended on the freedom when defining new sums within the well-orders, since the class is closed under arbitrary sums.}. We'd like to give one more example of a good class with a concrete good sum, whose definition relies on a different piece of structure theory: Cantor's identification of the countable dense linear orders without endpoints.

\begin{theorem} \label{th: cantor's identification of Q}
    Any countable dense linear order without endpoints is isomorphic to $\mathbb{Q}$.
\end{theorem}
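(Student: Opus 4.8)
The plan is to prove the stronger statement that any two countable dense linear orders without endpoints are isomorphic; since $\mathbb{Q}$ is itself such an order, the theorem follows. So let $A$ and $B$ be countable dense linear orders without endpoints, and fix enumerations $A = \{a_n : n \in \omega\}$ and $B = \{b_n : n \in \omega\}$ (each element listed exactly once, or with repetitions — it does not matter).

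I would construct, by recursion on $n$, an increasing chain $f_0 \subseteq f_1 \subseteq \cdots$ of finite partial maps from $A$ to $B$, each of which is a partial isomorphism (i.e.\ order-preserving on its domain), arranging that $a_n \in \operatorname{dom}(f_{2n+1})$ and $b_n \in \operatorname{ran}(f_{2n+2})$. Start with $f_0 = \emptyset$. At an odd stage we perform the ``forth'' move: given the finite partial isomorphism $f_k$ with domain $\{x_1 <_A \cdots <_A x_m\}$ and corresponding range $\{y_1 <_B \cdots <_B y_m\}$, if $a_n$ is already in the domain we set $f_{k+1} = f_k$; otherwise $a_n$ falls into exactly one of the $m+1$ ``gaps'' determined by the $x_i$, and we must pick $y \in B$ sitting in the corresponding gap among the $y_i$. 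If the gap is $(x_i, x_{i+1})$ we use density of $B$ to find $y$ with $y_i <_B y <_B y_{i+1}$; if $a_n <_A x_1$ we use that $B$ has no least element to find $y <_B y_1$; if $a_n >_A x_m$ we use that $B$ has no greatest element to find $y >_B y_m$. Setting $f_{k+1} = f_k \cup \{(a_n, y)\}$ keeps the map a partial isomorphism. The ``back'' move at even stages is entirely symmetric, working with $b_n$ and the inverse partial map, using density and the lack of endpoints in $A$.

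Finally I would let $f = \bigcup_n f_n$. The construction guarantees $\operatorname{dom}(f) = A$ (via the odd stages) and $\operatorname{ran}(f) = B$ (via the even stages); $f$ is order-preserving because each $f_n$ is and order-preservation is a property of pairs; and an order-preserving surjection between linear orders is automatically injective and hence an isomorphism. Therefore $A \cong B$, and in particular any countable dense linear order without endpoints is isomorphic to $\mathbb{Q}$.

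The only delicate point is the successor step, and the ``main obstacle'' there is purely bookkeeping: one must be careful that the hypotheses actually supply a witness in \emph{every} one of the three cases (interior gap, below everything, above everything), which is exactly where density and the absence of a least and greatest element are each used. There are no genuine difficulties beyond this routine case analysis.
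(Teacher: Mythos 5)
Your proposal is correct and is precisely the approach the paper takes: the paper's proof consists of the single line ``By the famous back-and-forth construction,'' and you have simply spelled out that construction in full, with the back and forth stages and the case analysis using density and the absence of endpoints all handled correctly.
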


\begin{proof}
    By the famous back-and-forth construction.
\end{proof}

By a very similar argument we get a generalization due to Skolem.

\begin{theorem} \label{th: skolem's extension of cantor's identification of Q}
Fix some \( k \), \( 1 \leq k \leq \omega \). Let \( X, Y \) be countable dense linear orders without endpoints (two copies of $\mathbb{Q}$). Fix a partition \( X = \bigcup_{i<k} X_i \) such that each \( X_i \) is dense in \( X \), and similarly \( Y = \bigcup_{i<k} Y_i \). There is an isomorphism \( f : X \to Y \) such that \( f(X_i) = Y_i \) for every \( i < k \).
\end{theorem}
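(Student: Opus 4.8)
The plan is to carry out Cantor's back-and-forth argument, but with bookkeeping that keeps track of which piece of the partition each point belongs to. Both $X$ and $Y$ are necessarily countably infinite (a dense linear order without endpoints is infinite), so fix enumerations $X = \{a_n : n < \omega\}$ and $Y = \{b_n : n < \omega\}$. For $x \in X$ write $\mathrm{col}(x)$ for the unique $i < k$ with $x \in X_i$, and likewise for points of $Y$. I will build an increasing chain $f_0 \subseteq f_1 \subseteq \cdots$ of finite partial maps, each an order-isomorphism from a finite subset of $X$ onto a finite subset of $Y$ with the extra property that $\mathrm{col}(f_n(x)) = \mathrm{col}(x)$ for all $x \in \mathrm{dom}(f_n)$, and then take $f = \bigcup_n f_n$.

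Start with $f_0 = \emptyset$. At a forth step, extending $f_n$, let $x$ be the first element of the enumeration of $X$ not yet in $\mathrm{dom}(f_n)$, put $i = \mathrm{col}(x)$, and let $L$ and $R$ be the (possibly empty) sets of points of $\mathrm{dom}(f_n)$ below and above $x$. Their images determine an interval $J = \{y \in Y : f_n[L] < y < f_n[R]\}$, an empty side imposing no constraint. The one thing to check is that $J \cap Y_i \neq \emptyset$: since $Y$ has no endpoints and $Y_i$ is dense in $Y$, $Y_i$ is both cofinal and coinitial in $Y$, so $J$ (whether a bounded interval, a ray, or all of $Y$) contains two points of $Y$, and density of $Y_i$ produces a point of $Y_i$ strictly between them, hence in $J$. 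Pick such a $y$ and set $f_{n+1} = f_n \cup \{(x,y)\}$; order is preserved and colors match since $\mathrm{col}(y) = i$. A back step is symmetric: take the first $y \in Y$ missing from the range and use density of $X_{\mathrm{col}(y)}$ in $X$ to find a suitable preimage. Alternating forth and back steps so that $a_n$ enters the domain and $b_n$ enters the range by stage $2n+2$, the union $f$ is an order-preserving bijection $X \to Y$ with $\mathrm{col}(f(x)) = \mathrm{col}(x)$ throughout, which is exactly $f(X_i) = Y_i$ for every $i < k$.

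I do not expect a genuine obstacle here: the only nonroutine point is the verification that the target interval $J$ always meets the prescribed colour class, and that is precisely what the density of each piece buys us, with the no-endpoints hypothesis used exactly to make the pieces cofinal and coinitial so that the placement succeeds even when the new point must go above or below everything chosen so far. The case $k = \omega$ needs no special treatment, since every finite stage only queries the colours of finitely many points, each with a single well-defined colour, so one never has to juggle all the colour classes at once.
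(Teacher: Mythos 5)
Your proof is correct and matches the paper's intended argument: the paper proves Cantor's theorem ``by the famous back-and-forth construction'' and then asserts the Skolem generalization follows ``by a very similar argument,'' which is exactly the colour-preserving back-and-forth you carry out. Your verification that the target interval always meets the required colour class (via density plus cofinality/coinitiality of each piece) is the one detail the paper leaves implicit, and you handle it correctly.
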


To construct our class, start with some decomposition $\mathbb{Q} = \bigcup_{n \in \omega} \mathbb{Q}$, where each $\mathbb{Q}_n$ is dense in $\mathbb{Q}$ (for example, take all the rationals which, in irreducible form, have a denominator of the form $p_{n+1}^k$, where $p_i$ is the $i^{\text{th}}$ prime, and $k \in \{1, 2, \dots\}$).

For each $n$, fix some non-empty\footnote{We may ignore all the empty orders. If you want to work with only a finite amount of orders, just repeat them.} order $I_n$, and let $\mathbb{Q}(I_n)$ be the \textit{shuffle by $I_n$}, i.e. the order that substitutes each point in $\mathbb{Q}$ that is contained in $\mathbb{Q}_n$ with $I_n$. Let $\tilde{\mathfrak{S}}$ be the class of all shuffles by $I_n$. Using Skolem's theorem, we can show that the original dense partition does not affect the order-type of these replacements.

In general, we want to let our good sum $+_{\tilde{\mathfrak{S}}}$ be such that $$\mathbb{Q}(I_n) +_{\tilde{\mathfrak{S}}} \mathbb{Q}(J_n) = \mathbb{Q}(I_n, J_n),$$ where $\mathbb{Q}(I_n, J_n)$ is the shuffle by

\begin{align*}
    K_n = 
    \begin{cases}
        I_m, \text{ if } n=2m\\
        J_m, \text{ if } n=2m+1
    \end{cases}
\end{align*}

We need some way to, given an order, recognize that it is a member of $\tilde{\mathfrak{S}}$, further we need to be able to recognize the orders $I_n$ that were used to create it. There is ambiguity as to how to do this, since there may be multiple equivalent representations of the same order, for example, $\mathbb{Q}(1) \cong \mathbb{Q}(\mathbb{Q}) \cong \mathbb{Q}(1, \mathbb{Q})$. To extract canonical representatives, we will need to avoid self-reference.

Recall that $\preceq_c$ means \textit{embeds convexly into}. Whenever we write $[\{x, y\}]$ or $(\{x, y\})$, we mean the intervals from $x$ to $y$ or $y$ to $x$, depending on whether $x \leq y$ or $y \leq x$.

\begin{definition} \label{def: self-referential condensation}
    Given a linear order $X$, define the \textit{self-referential} condensation to be the equivalence relation $\sim_X$ such that for any $x, y \in X$,
    $$x \sim_X y \text{ iff } X\not\preceq_c[\{x, y\}].$$
\end{definition}

\begin{lemma} \label{lm: self-referential condensation is a condensation}
    For any $X \in \tilde{\mathfrak{S}}$, $\sim_X$ is a condensation .
\end{lemma}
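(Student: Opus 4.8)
The plan is to verify directly that $\sim_X$ is a condensation, i.e.\ an equivalence relation all of whose classes are convex, by exploiting two structural features of any shuffle $X = \mathbb{Q}(I_n) \in \tilde{\mathfrak{S}}$. First, $X$ has neither a least nor a greatest element, since $\mathbb{Q}$ has none and each $I_n$ is non-empty. Second -- and this is the substantive point -- whenever $X \cong Y + Z$ with $Y$ and $Z$ non-empty, one has $X \preceq_c Y$ and $X \preceq_c Z$; equivalently, $X$ embeds convexly into every one of its non-empty initial segments and into every one of its non-empty final segments.

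To prove the second fact I would look at the cut of $X = \mathbb{Q}(I_n)$ corresponding to a decomposition $Y + Z$. The blocks of the shuffle (the copies of the $I_n$) are convex and pairwise disjoint, so at most one block is split by the cut. In every case $Y$ contains, as an initial segment, the sub-shuffle of $(I_n)$ living over a non-empty initial segment $J$ of $\mathbb{Q}$ that has no greatest point: either $J = \mathbb{Q}_{<q}$ for the rational $q$ whose block is split or is the last block of $Y$, or $Y$ is already the shuffle over such a $J$. Such a $J$, equipped with the partition $(\mathbb{Q}_n \cap J)_n$ inherited from $\mathbb{Q} = \bigcup_n \mathbb{Q}_n$, is a countable dense linear order without endpoints in which each $\mathbb{Q}_n \cap J$ is still dense, so by Skolem's theorem (\Cref{th: skolem's extension of cantor's identification of Q}) it is isomorphic as a partitioned order to $(\mathbb{Q}, (\mathbb{Q}_n)_n)$. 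Hence the sub-shuffle over $J$ is isomorphic to $X$, giving $X \preceq_c Y$; the argument for $Z$ is symmetric.

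Granting these two facts, reflexivity holds because $X$ is infinite and so cannot embed convexly into the singleton $[\{x,x\}]$, while symmetry is immediate from $[\{x,y\}] = [\{y,x\}]$. For transitivity, assume $x \sim_X y$ and $y \sim_X z$ and split into cases according to which of the three points lies between the other two. If $x$ or $z$ is in the middle, then $[\{x,z\}]$ is contained in $[\{y,z\}]$ or in $[\{x,y\}]$ respectively, and $x \sim_X z$ follows at once. The remaining case is $y$ strictly between $x$ and $z$, say $x < y < z$; suppose for contradiction there is a convex $C \cong X$ inside $[\{x,z\}] = [x,z]$. Since $X$ has no endpoints, $C$ avoids $x$ and $z$, so $C = C_1 + C_2$ with $C_1 = C \cap (x,y]$ and $C_2 = C \cap (y,z)$. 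If $C_1$ or $C_2$ is empty, then $C$ lies inside $[\{x,y\}]$ or $[\{y,z\}]$, contradicting $x \sim_X y$ or $y \sim_X z$; if both are non-empty, applying the second structural fact to $X \cong C_1 + C_2$ yields $X \preceq_c C_1 \subseteq [\{x,y\}]$, again a contradiction. Hence $x \sim_X z$. Finally, convexity of the classes follows easily: if $y \sim_X x \sim_X y'$ and $y < w < y'$, then $y \sim_X y'$ by transitivity, $w \in [\{y,y'\}]$, and $[\{y,w\}] \subseteq [\{y,y'\}]$, so $y \sim_X w$ and therefore $x \sim_X w$.

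The main obstacle is the ``convex additive indecomposability'' of elements of $\tilde{\mathfrak{S}}$ used in the middle case of transitivity, together with the bookkeeping needed to confirm that restricting the fixed dense partition of $\mathbb{Q}$ to a proper initial or final segment still produces a copy of the partitioned rationals, so that Skolem's theorem genuinely applies; once this is established the remaining verifications are routine.
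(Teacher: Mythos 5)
Your proof is correct and follows essentially the same route as the paper's: both reduce transitivity to the observation that cutting a shuffle at an interior point leaves a convex copy of the whole shuffle on each side (the paper phrases this as $(-\infty,q)(I_n) \cong \mathbb{Q}(I_n) \cong (q,\infty)(I_n)$ via the restricted partition and Skolem's theorem; you package the same fact as a standalone ``convex additive indecomposability'' claim). Your write-up is somewhat more explicit about the edge cases (where the cut falls relative to the blocks, and the subcase where the convex copy of $X$ misses $y$), but the underlying argument is the one the paper gives.
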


\begin{proof}
    Symmetry follows by definition, reflexivity holds since $X \not\cong 0, 1$. Convexity is also straightforward.

    For transitivity, note that if $x \sim_X y\sim_X z$, then if $X \preceq_c[\{x,z\}]$, let $X'$ be the copy of $X$ within $[\{x, z\}]$. If $y \in [\{x,z\}]$, then $y \in X'$, so if $X = \mathbb{Q}(I_n)$ and the $I_n$ segment that $y$ landed in corresponds to the rational number $q$, then the sets $(q, \infty)(I_n), (-\infty, q)(I_n) \cong \mathbb{Q}(I_n)$\footnote{Here, we replace according to the restriction of the original partition to these intervals.}, and we get a contradiction to $X \not\preceq_c[\{x,y\}]$. If not, then the setting is simpler since either one of $[\{x,y\}]$ or $[\{y,z\}]$ will contain $[\{x,z\}]$.
\end{proof}

In order to avoid problems recognizing the shuffle structure, we will say a dense rational shuffle $L$ is \textit{admissible} if $L/\sim_L$ is countable.

As expected, we're going to let the canonical shuffling orders for an admissible shuffle be the ones given by the condensation. By the homogeneity of any shuffle, it follows that these condensation classes are indeed dense in the condensation. Therefore, $L/\sim_L$ is a countable, dense linear order without endpoints (any final segment of a shuffle convexly embeds the whole order), hence isomorphic to $\mathbb{Q}$.

Using these canonical representation of the shuffles, we can formally define $+_{\tilde{\mathfrak{S}}}$.

\begin{definition} \label{def: shuffle sum}
    If $S_1, S_2$ are two admissible shuffles with two respective representations from the self-referential condensation as $\mathbb{Q}(I_n), \mathbb{Q}(J_m)$, let 
    $$S_1 +_{\tilde{\mathfrak{S}}} S_2 \cong  \mathbb{Q}(I_n, J_m).$$
\end{definition}

This sum is clearly regular and commutative, for associativity we need to show that the sum with respect to this representation yields an admissible shuffle, and that associativity indeed holds. It can be shown that this is only possible if we consider countable, not just admissible, shuffles. 

\begin{definition}
    Let $\mathfrak{S}$ be the class of all countable elements of $\tilde{\mathfrak{S}}$, and $+_\mathfrak{S}$ the restriction of $+_{\tilde{\mathfrak{S}}}$ to $\mathfrak{S}$.
\end{definition}

From now on, we will only consider shuffles in $\mathfrak{S}$.

Vacuously, every element of $\mathfrak{S}$ is admissible, and any two countable shuffles will sum to a countable shuffle, which will be admissible. To prove associativity, we'll need more information about the shuffling orders that result from the self-referential condensation.

\begin{definition} \label{def: equivalent lists of orders}
    Say two lists of countable linear orders $\{I_n\}_{n \in \omega}$ and $\{J_m\}_{m \in \omega}$ are \textit{equivalent} if for every $n \in \omega$, there is an $m \in \omega$ such that $J_m \cong I_n$, and vice-versa, if $m\in \omega$, then there is an $n \in \omega$ such that $I_n \cong J_m$.
\end{definition}

Notice that if $\{I_n\}_{n \in \omega}$ and $\{J_m\}_{m \in \omega}$ are equivalent, then $\mathbb{Q}(I_n) \cong \mathbb{Q}(J_n)$.

We abstract the key property of replacing orders emerging from the self-referential condensation.

\begin{definition} \label{def: minimal list of orders}
    A collection $\{I_n\}_{n \in \omega}$ of countable linear orders is \textit{minimal} if for all $n \in \omega$, $\mathbb{Q}_n(I_n) \not\preceq_c I_n$.
\end{definition}

Minimal orders can be characterized (up to equivalence) as those coming from the self-referential condensation.

\begin{proposition} \label{prop: uniqueness of minimal representations}
If $\{I_n\}_{n \in \omega}$ and $\{J_m\}_{m \in \omega}$ are minimal and $\mathbb{Q}(I_n) \cong \mathbb{Q}(J_m)$, then $\{I_n\}_{n \in \omega}$ and $\{J_m\}_{m \in \omega}$ are equivalent.
\end{proposition}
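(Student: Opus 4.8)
The plan is to use the self-referential condensation $\sim$ of \Cref{def: self-referential condensation} as an isomorphism invariant that recovers, from $\mathbb{Q}(I_n)$, the list $\{I_n\}$ up to equivalence. Fix an isomorphism $f\colon \mathbb{Q}(I_n)\xrightarrow{\sim}\mathbb{Q}(J_m)$, and write $X=\mathbb{Q}(I_n)$, $Y=\mathbb{Q}(J_m)$. Since the relation $x\sim_X y \iff X\not\preceq_c[\{x,y\}]$ is defined purely from the order structure of $X$, and $f$ carries the interval $[\{z,z'\}]$ in $X$ onto the interval $[\{f(z),f(z')\}]$ in $Y$, we have $z\sim_X z'\iff f(z)\sim_Y f(z')$; thus $f$ restricts to an order isomorphism from each $\sim_X$-class onto a $\sim_Y$-class. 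Hence it suffices to prove that for a minimal list $\{I_n\}$ the $\sim_X$-classes of $X=\mathbb{Q}(I_n)$ are exactly the substituted copies of the orders $I_n$ (each occurring, densely, for every $n$). Granting this, $f$ maps each $I_k$-block isomorphically onto some $J_m$-block, so $I_k\cong J_m$; ranging over $k$ and using symmetry shows every $I_n$ is some $J_m$ and vice versa, which is exactly equivalence in the sense of \Cref{def: equivalent lists of orders}.

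The characterization of the $\sim_X$-classes splits into two halves. \emph{Distinct blocks are never merged.} Suppose $x$ lies in the block over $q$ and $y$ in the block over $q'\neq q$, say $q<q'$. The set $Y_0$ of points of $X$ lying in blocks over rationals strictly between $q$ and $q'$ is convex in $X$ and is contained in $[\{x,y\}]$; moreover the partition of $(q,q')\cap\mathbb{Q}$ inherited from the fixed dense partition of $\mathbb{Q}$ is again a partition of a countable dense order without endpoints into dense pieces, so Skolem's theorem (\Cref{th: skolem's extension of cantor's identification of Q}) gives $Y_0\cong\mathbb{Q}(I_n)=X$. Therefore $X\preceq_c[\{x,y\}]$ and $x\not\sim_X y$, so no $\sim_X$-class meets two blocks.

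\emph{A single block is never split.} If $x,y$ lie in the same $I_k$-block, then $[\{x,y\}]$ is a sub-interval of (a copy of) $I_k$, hence $[\{x,y\}]\preceq_c I_k$; so it is enough to show $X=\mathbb{Q}(I_n)\not\preceq_c I_k$, which then forces $x\sim_X y$. This is the only place minimality (\Cref{def: minimal list of orders}) enters: minimality says $\mathbb{Q}_k(I_k)=\Sigma_{q\in\mathbb{Q}}I_k\not\preceq_c I_k$. I expect this implication to be the main obstacle. The approach is to exploit the self-similarity $X\cong\Sigma_{q\in\mathbb{Q}}X$ (another application of Skolem's theorem, using $\mathbb{Q}\cdot\mathbb{Q}\cong\mathbb{Q}$ with the induced partition still dense) together with the fact that each $I_k$-block is convex in $X$: a hypothetical convex embedding $X\hookrightarrow I_k$ would, via the densely many $I_k$-blocks inside $X$, have to be organized into a convex copy of $\Sigma_{q\in\mathbb{Q}}I_k$ inside $I_k$, contradicting minimality. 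Making this ``propagation'' precise — it is complicated by the stuff lying between consecutive $I_k$-blocks — is the delicate point and is where the argument really uses that $\{I_n\}$ is minimal rather than merely isomorphism-closed.

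Finally, once the $\sim_X$-classes of $\mathbb{Q}(I_n)$ are identified with its $I_n$-blocks and those of $\mathbb{Q}(J_m)$ with its $J_m$-blocks, the isomorphism $f$ restricts on each block to an isomorphism between an $I_k$-block and a $J_m$-block, so $I_k\cong J_m$ for some $m$ depending on the block; since blocks of every type $I_n$ occur in $X$, every $I_n$ is isomorphic to some $J_m$, and the symmetric argument applied to $f^{-1}$ shows every $J_m$ is isomorphic to some $I_n$. Hence $\{I_n\}_{n\in\omega}$ and $\{J_m\}_{m\in\omega}$ are equivalent, which proves \Cref{prop: uniqueness of minimal representations}.
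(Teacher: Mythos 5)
Your argument is in substance the same as the paper's: the two facts doing all the work are (a) any interval of $\mathbb{Q}(I_n)$ meeting two distinct blocks contains a convex copy of the whole shuffle (density plus Skolem), and (b) minimality forbids a single block from containing a convex copy of the shuffle. The paper applies (a) and (b) directly to $f(I_n)$ and $f^{-1}(J_m)$ to conclude $f$ carries blocks onto blocks; you package the same two facts as the statement that the $\sim_X$-classes of \Cref{def: self-referential condensation} are exactly the blocks and then invoke isomorphism-invariance of the condensation. That is a clean and slightly more conceptual framing, and your verification of (a) via Skolem's theorem on the induced partition of $(q,q')\cap\mathbb{Q}$ is correct.

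The one place you stall --- deriving $\mathbb{Q}(I_n)\not\preceq_c I_k$ from the minimality hypothesis, which you flag as ``the delicate point'' --- is not actually an obstacle. You are reading \Cref{def: minimal list of orders} literally as $\Sigma_{q\in\mathbb{Q}}I_k\not\preceq_c I_k$, but the condition the paper actually uses (see how the contradiction is phrased in the proofs of this proposition and of \Cref{prop: relation of arbitrary list and minimal list}, where the forbidden conclusion is precisely $\mathbb{Q}(I_n)\preceq_c I_n$) is that the \emph{full} shuffle $\mathbb{Q}(I_n)$ does not convexly embed into any $I_k$; the subscript in ``$\mathbb{Q}_n(I_n)$'' is best read as a typographical slip. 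Under that reading your step 2 closes in one line: $[\{x,y\}]$ is a sub-interval of the convex block $I_k$, so $X\preceq_c[\{x,y\}]$ would give $X\preceq_c I_k$, contradicting minimality. With that substitution your proof is complete; the ``propagation'' argument you were worried about (which would indeed founder on the material lying between consecutive $I_k$-blocks) is not needed.
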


\begin{proof}
Fix an isomorphism $f: \mathbb{Q}(I_n) \xrightarrow{\sim} \mathbb{Q}(J_m)$. We know by density of the shuffling that any interval $I$ in $\mathbb{Q}(I_n)$ intersecting more than one of the replacing orders $I_n$ contains a convex copy of $\mathbb{Q}(I_n)$; the same is true for any interval $J$ in $\mathbb{Q}(J_m)$ intersecting more than one of the $J_m$. 

For any $I_n$, this means that $f(I_n) \subseteq J_m$ for some $m \in \omega$. We must have equality or else for some other $n' \in \omega$, $f(I_{n'}) \cap J_m \neq \emptyset$, which implies that $\mathbb{Q}(J_m) \cong \mathbb{Q}(I_n) \preceq_c J_m$. Thus, $I_n \cong J_m$. Symmetrically, for any $J_m$, there is an $I_n$ with $I_n \cong J_m$. Hence, the lists are symmetrical.
\end{proof}

It will be useful to know the relation between minimal and non-minimal representations of a shuffle.

\begin{proposition} \label{prop: relation of arbitrary list and minimal list}
Suppose that $\{I_n\}_{n \in \omega}$ is a minimal list and $\{J_m\}_{m\in \omega}$ is any other list such that $\mathbb{Q}(I_n) \cong \mathbb{Q}(J_n)$. For every $m \in \omega$, exactly one of the following holds:

\begin{itemize}
    \item[(i)] $J_m$ is isomorphic to an order in the list $\{I_n\}_{n\in \omega}$,
    \item[(ii)] $\mathbb{Q}(I_n) \preceq_c J_m$.
\end{itemize}

Moreover, in case (ii), we have $J_m \cong L + \mathbb{Q}(I_n) + R$, where $L$ is either isomorphic to an order in the list $\{I_n\}_{n \in \omega}$, or empty; and similarly $R$ is isomorphic to an order in the list $\{I_n\}_{n\in \omega}$, or empty. 
\end{proposition}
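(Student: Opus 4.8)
The plan is to fix an isomorphism $f: \mathbb{Q}(I_n) \xrightarrow{\sim} \mathbb{Q}(J_m)$ and analyze, for a fixed $m$, the preimage $f^{-1}(J_m)$ inside $\mathbb{Q}(I_n)$, which is a convex subset (an interval) $K \subseteq \mathbb{Q}(I_n)$ isomorphic to $J_m$. Recall the key density fact already used in the proof of Proposition \ref{prop: uniqueness of minimal representations}: any interval of $\mathbb{Q}(I_n)$ that meets more than one of the replacing blocks $I_n$ contains a convex copy of the whole order $\mathbb{Q}(I_n)$. So the dichotomy is driven by whether $K$ meets one block or several. If $K$ is contained in a single block $I_n$, then $J_m \cong K \preceq_c I_n$; by minimality of $\{I_n\}$ we have $\mathbb{Q}_n(I_n) \not\preceq_c I_n$, and since $\mathbb{Q}_n(I_n) \cong \mathbb{Q}(I_n)$ (restricting the partition to a dense subset of the index $\mathbb{Q}$ still gives a countable dense order without endpoints, hence a copy of $\mathbb{Q}$), we conclude $\mathbb{Q}(I_n) \not\preceq_c I_n$, so $\mathbb{Q}(I_n) \not\preceq_c K$. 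That puts us in case (i): I would argue that $K$ must actually equal a full block $I_n$ — if $K$ were a proper subinterval of $I_n$, the argument from Proposition \ref{prop: uniqueness of minimal representations} (applied to the copy of $\mathbb{Q}(I_n)$ that would appear as a convex suborder of $K$, coming from $f$ mapping other $I_{n'}$ into $J_m$) forces $\mathbb{Q}(I_n) \preceq_c K$, contradiction; so $f$ restricted to $K$ witnesses $J_m \cong I_n$, exactly case (i). If instead $K$ meets at least two blocks, then by the density fact $\mathbb{Q}(I_n) \preceq_c K \cong J_m$, which is case (ii); and cases (i) and (ii) are mutually exclusive precisely because $\mathbb{Q}(I_n) \not\preceq_c I_n$ for every $n$ (minimality again), so (i) would contradict (ii).

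For the ``moreover'' clause, suppose we are in case (ii), so $K$ is an interval of $\mathbb{Q}(I_n)$ meeting more than one block. Write the leftmost block $K$ meets as $I_{n_0}$ and the rightmost as $I_{n_1}$ (these exist and are distinct; if $K$ is unbounded on one side in $\mathbb{Q}(I_n)$, adjust the argument slightly, but since every final and initial segment of a shuffle is again a shuffle this causes no trouble). The portion of $K$ strictly between the first and last blocks it meets is an initial-and-final-segment-trimmed piece of $\mathbb{Q}(I_n)$ which is itself a copy of $\mathbb{Q}(I_n)$ — here I use that any interval of $\mathbb{Q}(I_n)$ that is unbounded above and below within the set of blocks it meets, and meets infinitely many (which follows from density: between the block of $n_0$ and that of $n_1$ there are rationals of every index, densely), is isomorphic to $\mathbb{Q}(I_n)$. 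Then $K \cong L + \mathbb{Q}(I_n) + R$ where $L$ is the part of $K$ lying inside the block $I_{n_0}$ (a final segment of $I_{n_0}$, or empty if $K$ contains all of that block) and $R$ is the part of $K$ inside $I_{n_1}$ (an initial segment of $I_{n_1}$, or empty). It remains to see that a nonempty $L$ is itself isomorphic to some $I_n$ from the list: $L$ is a final segment of $I_{n_0}$, and $I_{n_0}$ is one of the replacing orders, but a priori a final segment of $I_{n_0}$ need not be in the list. The fix is that $L$ sits inside $\mathbb{Q}(J_m)$ as a convex piece of the copy of $J_m = f[K]$, and more usefully: reapplying the whole dichotomy with the roles reversed — running $f^{-1}$ on blocks of $\mathbb{Q}(I_n)$ that $f[L]$ touches — $f[L]$ is an interval of $\mathbb{Q}(J_m)$; it cannot contain a convex copy of $\mathbb{Q}(J_m) \cong \mathbb{Q}(I_n)$ because then $\mathbb{Q}(I_n) \preceq_c L \preceq_c I_{n_0}$, contradicting minimality; hence $f[L]$ lies in a single block $J_{m'}$, so $L \cong f[L] \preceq_c J_{m'}$. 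I will then want to promote this to $L \cong$ some $I_n$; the cleanest route is to observe that $L$, being a final segment of the block $I_{n_0}$ and simultaneously a convex subset of a single block $J_{m'}$ which by case (i) (or its moreover-analogue) relates to the list $\{I_n\}$ — so one closes the loop by an induction/minimal-counterexample argument, exactly as in the analogous statements: take $L$ of minimal complexity and derive that it is already one of the $I_n$.

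I would carry out the steps in this order: (1) set up $f$ and the interval $K = f^{-1}(J_m)$; (2) the single-block case $\Rightarrow$ (i), using minimality and $\mathbb{Q}_n(I_n) \cong \mathbb{Q}(I_n)$; (3) the multi-block case $\Rightarrow$ (ii), using the density/convex-copy fact; (4) mutual exclusivity, from minimality; (5) the $L + \mathbb{Q}(I_n) + R$ decomposition in case (ii), isolating the first and last blocks met; (6) identifying $L$ and $R$ (or their emptiness) with list members, via the reversed dichotomy plus a minimality/induction argument. The main obstacle I anticipate is step (6): showing that the leftover end pieces $L$ and $R$ are not merely convex subsets of blocks but are themselves (isomorphic to) blocks in the list. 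This requires care because minimality is a statement about the $I_n$ as a family, not about their convex subsets, so one has to feed $L$ back through the isomorphism and the dichotomy and argue by induction on something like the $\sim$-rank or the number of blocks met, rather than directly. Everything else is a fairly routine application of the density homogeneity of rational shuffles and Skolem's theorem (Theorem \ref{th: skolem's extension of cantor's identification of Q}) to recognize copies of $\mathbb{Q}(I_n)$.
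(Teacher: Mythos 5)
Your treatment of the dichotomy is essentially the paper's: reduce to the isomorphism $f$, observe that an interval of $\mathbb{Q}(I_n)$ meeting two replacing blocks convexly embeds the whole shuffle, and use minimality to rule out $\mathbb{Q}(I_n)\preceq_c I_n$. That part is fine. The genuine gap is in your step (6), the ``moreover'' clause. You set $L$ up as \emph{the part of $K=f^{-1}(J_m)$ lying inside the leftmost block $I_{n_0}$ it meets}, i.e.\ as a possibly proper final segment of $I_{n_0}$, and then you correctly notice that such a fragment has no reason to be a list member. But your proposed repair does not work: you apply the ``reversed dichotomy'' to $f[L]$, yet $f[L]\subseteq f[K]=J_m$ by construction, so concluding that $f[L]$ lies in a single $J$-block is vacuous and yields nothing. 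The closing appeal to ``an induction/minimal-counterexample argument on the complexity of $L$'' is a placeholder, not an argument --- there is no well-founded rank identified to induct on, and the statement you would need at the inductive step is exactly the one in question.

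The missing idea (which is how the paper closes this) is to apply the two-block observation to the \emph{whole} block $I_{n_0}$ rather than to its fragment $L$. If $K$ did not contain all of $I_{n_0}$, then $I_{n_0}\setminus K\neq\emptyset$, so $f[I_{n_0}]$ is an interval of the $J$-shuffle meeting $J_m$ and also meeting some other block $J_{m'}$; by density it then contains a convex copy of $\mathbb{Q}(J_m)\cong\mathbb{Q}(I_n)$, so $\mathbb{Q}(I_n)\preceq_c I_{n_0}$, contradicting minimality of the list. Hence the extremal blocks met by $K$ are \emph{entirely} contained in $K$, so $L$ and $R$ are not fragments needing to be ``promoted'' --- each is literally a full block $I_{n}$ (or empty, when $K$ has no first or last block, which is why the paper splits into four cases according to the existence of a minimal/maximal index met). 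With that observation your decomposition $K\cong L+\mathbb{Q}(I_n)+R$ goes through directly and no induction is needed.
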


\begin{proof}
Fix $m \in \omega$ and suppose that $J_m$ is not isomorphic to any order in the list $\{I_n\}_{n \in \omega}$.

We first show $J_{m}$ convexly embeds $\mathbb{Q}(I_n)$. Suppose not. If $f: \mathbb{Q}(J_m) \xrightarrow[]{\sim} \mathbb{Q}(I_n)$, then as in the previous proposition, we must have $f(J_m) \subseteq I_n$ for some $n \in \omega$. This containment must be strict (by assumption). But then, by density, there is some other $m' \in \omega$ such that $f(J_{m'}) \cap I_n \neq \emptyset$. Let $q, r \in \mathbb{Q}$ be the rationals replaced by these orders, then 
$$\mathbb{Q}(I_n) \cong \mathbb{Q}(J_m) \cong (\{q, r\})(J_m)  \subseteq I_n,$$
in contradiction to the minimality of $I_n$.

To show that $J_m \cong L + \mathbb{Q}(I_n) + R$, there are four cases to check depending on whether or not there are minimal and maximal rationals $q$ with associated replacing order $I_{n'}$ such that $J_m \cap I_{n'} \neq \emptyset$. We carry out the case where there is such a minimal $q$, but not a maximal one.

Let $q \in \mathbb{Q}$ be the minimal such rational, and say $I_{n_q}$ is its associated replacing order. It must be the case that $I_{n_q}$ is fully contained in $J_m$, or else it intersects some other $J_{m'}$, and therefore embeds $\mathbb{Q}(I_n)$. Hence, $I_{n_q}$ is an initial segment of $J_m$.

Let $r \in \mathbb{R}$ be the supremum of all the rationals whose associated replacing orders $I_{n'}$ intersect $J_m$. This number exists, otherwise $J_m$ would be a final segment of $\mathbb{Q}(J_m)$. It follows that $J_m \cong I_{n_q} + (q, r)(I_n)$, proving the representation in this case.
\end{proof}

In case (ii.) of this proposition, we can view $J_m$ as a replacement of one of $\mathbb{Q}$, $1 + \mathbb{Q}$, $\mathbb{Q} + 1$ or $1 + \mathbb{Q} + 1$ by orders in the list $\{I_n\}$ such that each of these orders appears densely often in the replacement. The orders replacing the $1$s can be arbitrarily chosen from the list. We will call such orders \textit{extended shuffles} of the list $\{I_n\}_{n \in \omega}$. 

\begin{proposition} \label{prop: extended shuffles of minimal list lead to equivalent lists}
Suppose that $\{I_n\}_{n \in \omega}$ is a minimal list and $\mathbb{Q}(J_m)$ is a shuffle such that for every $m \in \omega$, one of the following conditions holds:
\begin{itemize}
    \item[(i)] $J_m$ is isomorphic to an order in the list $\{I_n\}_{n \in \omega}$,
    \item[(ii)] $J_m$ is an extended shuffle of the list $\{I_n\}_{n \in \omega}$.
\end{itemize}
Suppose further that for at least one $m \in \omega$, condition (ii) holds. Then $\mathbb{Q}(J_m) \cong \mathbb{Q}(I_n)$. 
\end{proposition}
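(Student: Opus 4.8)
The plan is to build an isomorphism $f\colon \mathbb{Q}(J_m)\xrightarrow{\sim}\mathbb{Q}(I_n)$ by a back-and-forth argument, in the spirit of Cantor's and Skolem's theorems (\Cref{th: cantor's identification of Q}, \Cref{th: skolem's extension of cantor's identification of Q}), but carried out at the level of the copies of the replacing orders rather than individual points. The key preliminary observation is that $\mathbb{Q}(J_m)$ is still a \emph{shuffle over a copy of $\mathbb{Q}$} of orders drawn from the list $\{I_n\}_{n\in\omega}$: replacing a point whose $J_m$-value falls in case (i) contributes a single copy of some $I_n$, while replacing a point whose value falls in case (ii) contributes, by definition of an extended shuffle, a densely-shuffled block of copies of orders from $\{I_n\}$ (possibly with an $I_{n'}$ tacked on one or both ends). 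Concatenating all of these, $\mathbb{Q}(J_m)$ is isomorphic to $\sum_{q\in\mathbb{Q}} K_q$ where each $K_q$ is some $I_{n}$ from the list; the point is to check that for every $n$ appearing in the original list, the set $\{q: K_q\cong I_n\}$ is still dense in $\mathbb{Q}$. Density of each $I_n$'s index set in the original $\mathbb{Q}(I_n)$, together with the fact that every extended-shuffle block is itself dense-in-itself and convexly embeds $\mathbb{Q}(I_n)$, should give this: between any two rationals in the new presentation one can find a sub-block coming from a case-(ii) coordinate (there is at least one such coordinate, and it appears densely), inside which every $I_n$ occurs densely.

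Once this is established, I would run the back-and-forth on the index order $\mathbb{Q}$: enumerate the copies of replacing orders on both sides, and at each stage match an as-yet-unmatched copy of $I_n$ on one side to a copy of the \emph{same} order $I_n$ on the other side, respecting the order relation already fixed, and arranging cofinality/coinitiality exhaustion as usual. The density statement from the previous paragraph is exactly what guarantees that at each finite stage, given a target order $I_n$ and an open interval in the index order, one can find a copy of $I_n$ there — so the forth and back steps never get stuck. The resulting order-isomorphism of index sets lifts to an isomorphism of the shuffles because corresponding blocks are literally isomorphic (both are $I_n$ for the matched $n$), so we glue the block isomorphisms together.

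The main obstacle I anticipate is bookkeeping around the extended-shuffle coordinates: an extended shuffle of $\{I_n\}$ can be a replacement of $1+\mathbb{Q}$, $\mathbb{Q}+1$, or $1+\mathbb{Q}+1$ rather than $\mathbb{Q}$, so the block $J_m$ contributes may have distinguished first and/or last sub-copies (the orders replacing the $1$'s). These endpoint sub-copies are still copies of orders from $\{I_n\}$, so they do not spoil the "shuffle of $I_n$'s" picture, but one has to be slightly careful that gluing densely-many blocks of the forms $1+\mathbb{Q}+1$ etc.\ along $\mathbb{Q}$ does not accidentally create a least or greatest element or a jump — it does not, since $\mathbb{Q}$ has no endpoints and is dense, so the endpoint sub-copies of one block are always separated from the next block by further blocks. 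A clean way to package this is to note that $\sum_{q\in\mathbb{Q}}(\text{extended shuffle block})$ is again a countable shuffle over $\mathbb{Q}$ of orders from $\{I_n\}$ with every $I_n$ dense, and then invoke the uniqueness of minimal representations (\Cref{prop: uniqueness of minimal representations}) — since $\{I_n\}$ is minimal and the new presentation uses exactly the orders $I_n$, equivalence of the lists forces $\mathbb{Q}(J_m)\cong\mathbb{Q}(I_n)$ directly, sidestepping an explicit back-and-forth. I would present the argument this second way if the minimality hypotheses propagate cleanly, and fall back on the explicit back-and-forth otherwise.
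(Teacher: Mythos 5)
Your proposal is correct and takes essentially the same route as the paper: the paper likewise re-associates the iterated replacement to present $\mathbb{Q}(J_m)$ as a shuffle $\mathbb{Q}[M_q][I_{(q,m)}]$ of the orders $I_n$ over the index order $\mathbb{Q}[M_q]\cong\mathbb{Q}$, and verifies that each $I_n$ appears densely by using the densely-many case-(ii) coordinates, exactly as you describe. One small correction to your ``clean packaging'': the final step needs the easy direction --- equivalent dense lists yield isomorphic shuffles, i.e.\ \Cref{th: skolem's extension of cantor's identification of Q} / your back-and-forth --- not \Cref{prop: uniqueness of minimal representations}, which is the converse implication.
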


Before proving the proposition, it will be helpful to briefly discuss iterated replacements.

If $Q$ is some linear order, and there is an indexed sequence of orders $\{I_q\}_{q\in Q}$, we can create a new order $Q[I_q] = \sum_{q \in \mathbb{Q}}I_q$ called the \textit{replacement of $Q$ by $\{I_q\}_{q\in Q}$}, where every point in $q \in Q$ is replaced by its corresponding point order $I_q$. Our shuffles are just a special case of this, where we replace by a sequence $\{I_q\}_{q\in \mathbb{Q}}$ such that for any $r \in \mathbb{Q}$, $\{q \in \mathbb{Q} \mid I_q \cong I_r\}$ is dense in $\mathbb{Q}$. When working with a shuffle, we may also consider its corresponding concrete representation as a replacement.

Suppose $Q$ is a linear order and $Q[I_q]$ is a replacement. If in turn each $I_q$ is a replacement $M_q[A_{(q, m)}]$, then we have $Q[I_q] = Q[M_q[A_{(q, m)}]]$. This order is naturally isomorphic to the replacement of $Q[M_q]$ by the orders $A_{(q, m)}$, i.e. we have $Q[M_q[A_{(q, m)}]] \cong Q[M_q][A_{(q, m)}]$. 

It will also be helpful to observe that if $\mathbb{Q}[M_q]$ is a replacement of $\mathbb{Q}$ such that for every $q \in \mathbb{Q}$ we have that $M_q$ is isomorphic to one of $1, \mathbb{Q}, 1 + \mathbb{Q}, \mathbb{Q} + 1$, or $1 + \mathbb{Q} + 1$, then $\mathbb{Q}[M_q] \cong \mathbb{Q}$, simply because this order is countable, dense, and without endpoints. 

\begin{proof}
Consider the sequence $\{J_q\}_{q\in \mathbb{Q}}$ such that for any $q \in \mathbb{Q}$, $J_q$ is the order $J_n$ that $q$ is replaced by in $\mathbb{Q}(J_n)$. So, $\mathbb{Q}(J_n) = \mathbb{Q}[J_q]$.

By hypothesis, we may write each $J_q$ as a replacement $M_q[I_{(q, m)}]$, where $M_q$ is one of $1, \mathbb{Q}, 1 + \mathbb{Q}, \mathbb{Q} + 1$ or $1 + \mathbb{Q} + 1$. Each $I_{(q, m)}$ is isomorphic to some $I_n$, and in the case when $M_q \neq 1$, each $I_n$ appears densely often as $I_{(q, m)}$ in the replacement $M_q[I_{(q, m)}]$. We may view $\mathbb{Q}[J_q] = \mathbb{Q}[M_q[I_{(q, m)}]]$ instead as the replacement $\mathbb{Q}[M_q][I_{(q, m)}]$ of $\mathbb{Q}[M_q] \cong \mathbb{Q}$. Thus, to show that $\mathbb{Q}(J_m) \cong \mathbb{Q}(I_n)$, it suffices to check that each $I_n$ appears densely often as $I_{(q, m)}$ in the replacement $\mathbb{Q}[M_q][I_{(q, m)}]$. 

Fix $n \in \omega$. By assumption, there is at least one $m \in \omega$ such that $J_m$ is an extended shuffle of $\{I_n\}_{n \in \omega}$, and there are densely many $q$ such that $J_q \cong J_m$. Hence, there are densely many $q$ for which $M_q \neq 1$. Fix $a < c$ in $\mathbb{Q}[M_q]$. We claim there is $b$ with $a < b < c$ such that $I_b = I_n$. If $a, c \in M_q$ for some $q$, then since $I_n$ appears densely often as $I_{(q, m)}$ in the replacement $M_q[I_{(q, m)}]$, there is such a $b$ in $M_q$. And if $a \in M_q$ and $c \in M_r$ for some $q < r$, then since there is $t$ with $q < t < r$ with $M_t \neq 1$, there is such a $b \in M_t$. We are done. 
\end{proof}

We are finally ready to prove the associativity of $+_\mathfrak{S}$.

\begin{lemma} \label{lm: normal representation of sum is a minimal representation}
Suppose that $X \cong \mathbb{Q}(I_n)$ and $Y \cong \mathbb{Q}(J_n)$ are minimal representations of $X$ and $Y$ as shuffles. Then $\mathbb{Q}(I_n, J_n)$ is a minimal representation of $X +_\mathfrak{S} Y$. 
\end{lemma}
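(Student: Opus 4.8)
The plan is to show that the list $\{K_n\}_{n \in \omega}$ obtained by interleaving $\{I_n\}$ and $\{J_n\}$ (as in the definition of $+_{\mathfrak S}$) is a minimal list in the sense of \Cref{def: minimal list of orders}, i.e.\ that $\mathbb{Q}_n(K_n) \not\preceq_c K_n$ for every $n$. Since each $K_n$ is by construction equal to some $I_m$ or some $J_m$, this reduces immediately to two symmetric cases. Taking $K_n \cong I_m$, minimality of $\{I_n\}$ gives $\mathbb{Q}(I_n) \not\preceq_c I_m$; what we must rule out is that, even so, $\mathbb{Q}(K_n) \preceq_c I_m$. The key observation is that $\mathbb{Q}(K_n)$ convexly embeds $\mathbb{Q}(I_n)$: indeed every nontrivial interval of the shuffle $\mathbb{Q}(K_n)$ that meets more than one replacing block contains a convex copy of $\mathbb{Q}(K_n)$ itself, and since all the $I_n$ appear densely among the $K_n$, it contains a convex copy of $\mathbb{Q}(I_n)$ (this is the same density-of-shuffling fact used in the proofs of \Cref{prop: uniqueness of minimal representations} and \Cref{prop: relation of arbitrary list and minimal list}). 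So $\mathbb{Q}(K_n) \preceq_c I_m$ would force $\mathbb{Q}(I_n) \preceq_c I_m$, contradicting the minimality of $\{I_n\}$. The case $K_n \cong J_m$ is identical with the roles of $\{I_n\}$ and $\{J_n\}$ swapped (using that $\mathbb{Q}(K_n) \preceq_c \mathbb{Q}(J_n)$ as well, again by density).

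Before giving this argument I would record the underlying lemma cleanly: \emph{if $\{K_n\}$ is a list of orders and $\{I_n\}$ is a sublist (up to isomorphism) each of whose members appears densely often among the $K_n$, then $\mathbb{Q}(I_n) \preceq_c \mathbb{Q}(K_n)$, and in fact any convex copy of $\mathbb{Q}(K_n)$ inside some order $L$ contains a convex copy of $\mathbb{Q}(I_n)$}. In the present situation $\{I_n\}$ is a sublist of $\{K_n\}$ with every $I_n$ densely occurring (half the even-indexed blocks), and likewise for $\{J_n\}$; and each $K_n$ is literally one of the $I$'s or $J$'s. I would also note at the outset that $\mathbb{Q}(I_n, J_n) \in \mathfrak{S}$ since it is a countable shuffle, so "minimal representation" makes sense for it and it is indeed (isomorphic to) $X +_{\mathfrak S} Y$ by \Cref{def: shuffle sum}.

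The main obstacle, and the only place requiring genuine care, is the density bookkeeping in the combinatorial lemma: one must be sure that in the interleaved list $K_{2m} = I_m$, $K_{2m+1} = J_m$, each individual $I_m$ still occurs \emph{densely} as a replacing block of $\mathbb{Q}(K_n)$ — which holds because the shuffle $\mathbb{Q}$ is partitioned into $\omega$-many dense pieces and the map $n \mapsto 2n$ picks out an infinite subfamily of indices, each of whose pieces is still dense. Once that is in hand, the convex-embedding claim $\mathbb{Q}(I_n) \preceq_c \mathbb{Q}(K_n)$ is a routine application of the "every fat interval of a shuffle contains a copy of the whole shuffle" principle, and the contradiction with minimality of $\{I_n\}$ (resp.\ $\{J_n\}$) closes the proof. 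No new ideas beyond those already deployed in \Cref{prop: uniqueness of minimal representations}--\Cref{prop: extended shuffles of minimal list lead to equivalent lists} are needed; this lemma is essentially the bookkeeping step that feeds into the subsequent proof of associativity of $+_{\mathfrak S}$.
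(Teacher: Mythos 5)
Your reduction has the right shape---non-minimality of the interleaved list means $\mathbb{Q}(I_n,J_n) \preceq_c K$ for some replacing block $K$, which is some $I_m$ or some $J_m$, and the goal is to contradict minimality of the corresponding list---but the bridge you use to get there is false. The ``underlying lemma'' you record, that $\mathbb{Q}(I_n) \preceq_c \mathbb{Q}(K_n)$ whenever $\{I_n\}$ is a densely occurring sublist of $\{K_n\}$, fails: take $\{I_n\}=\{1\}$ and $\{J_n\}=\{2\}$, so the interleaved shuffle is $\mathbb{Q}(1,2)$. Every interval of $\mathbb{Q}(1,2)$ meeting more than one replacing block contains a full $2$-block and hence a pair of adjacent points, so it is not densely ordered; every interval inside a single block is finite. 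Thus $\mathbb{Q} = \mathbb{Q}(1) \not\preceq_c \mathbb{Q}(1,2)$. The error is in the step ``since all the $I_n$ appear densely among the $K_n$, [a fat interval] contains a convex copy of $\mathbb{Q}(I_n)$'': a fat interval does contain every $I_n$ densely often, but the interleaved $J$-blocks cannot be excised without destroying convexity, so you only obtain $\mathbb{Q}(I_n)$ as a generally non-convex suborder. Since \Cref{def: minimal list of orders} is stated with $\preceq_c$, this is exactly the convexity you need and do not have.

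The implication you actually want---if $\mathbb{Q}(I_n,J_n) \preceq_c I_m$ then $\mathbb{Q}(I_n) \preceq_c I_m$---is true, but only as a consequence of the assumed embedding, not of a standing convex embedding $\mathbb{Q}(I_n) \preceq_c \mathbb{Q}(I_n,J_n)$. The paper's route (say the offending block is $J_m$) is: take the minimal representation $\mathbb{Q}(K_l)$ of the sum, apply \Cref{prop: relation of arbitrary list and minimal list} to conclude that every $J_{m'}$ is either isomorphic to some $K_l$ or an extended shuffle of $\{K_l\}$, with at least one of the latter occurring, and then \Cref{prop: extended shuffles of minimal list lead to equivalent lists} yields $\mathbb{Q}(J_n) \cong \mathbb{Q}(K_l) \cong \mathbb{Q}(I_n,J_n)$; only now does $\mathbb{Q}(J_n) \preceq_c J_m$ follow, contradicting minimality of $\{J_n\}$. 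Your density bookkeeping about the map $n \mapsto 2n$ is fine but is not where the difficulty lies; the missing content is the use of the extended-shuffle machinery to upgrade the assumed convex embedding into an isomorphism of shuffles.
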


\begin{proof}
Assume not. Then, without loss of generality, assume for some $m \in \omega$, $\mathbb{Q}(I_n, J_n) \preceq_c J_m$. If $\mathbb{Q}(K_l) \cong \mathbb{Q}(I_n, J_n)$ is a minimal representation, by Proposition \ref{prop: relation of arbitrary list and minimal list}, $J_m \cong L + \mathbb{Q}(I_n, J_n) + R$, where $L, R$ can be either one of the $K_l$, or empty. Since this will hold for any other $J_{m'}$ that convexly embeds $\mathbb{Q}(I_n, J_n)$, and if $J_{m'}$ doesn't convexly embed the sum then by Proposition \ref{prop: relation of arbitrary list and minimal list} it must be isomorphic to some $K_l$, as such we are exactly in the setting of Proposition \ref{prop: extended shuffles of minimal list lead to equivalent lists}. Therefore $\mathbb{Q}(J_n) \cong \mathbb{Q}(I_n, J_n)$, but this contradicts the minimality of $\{J_n\}_{n \in \omega}$, since then we have $\mathbb{Q}(I_n, J_n) \cong \mathbb{Q}(J_n) \preceq_c J_m$.
\end{proof}

This immediately gives us associativity, since
$$\mathbb{Q}(I_n) +_\mathfrak{S} (\mathbb{Q}(J_n) +_\mathfrak{S} \mathbb{Q}(K_n)) \cong \mathbb{Q}(I_n, J_n, K_n) \cong (\mathbb{Q}(I_n) +_\mathfrak{S} \mathbb{Q}(J_n)) +_\mathfrak{S} \mathbb{Q}(K_n).$$

Therefore, we have shown that the countable dense rational shuffles form a good class.

\subsection{Complicated classes} \label{subsect: compliacted classes}

In this section, we will present a sufficient condition for an arbitrary class of orders to be a good class. Unlike sums considered so far, the sums emerging from this result will not rely on any nice structural properties of the classes considered, but rather on their \textit{lack of structure}. This method is extremely flexible, allowing us to generate sums with practically no restrictions; the only potential drawback being a reliance on the axiom of choice.

Let $LO_\lambda = \{L \in LO \mid |L| = \lambda\}$.

\begin{definition} \label{def: complicated class}
    A class $K_\lambda \subseteq LO_\lambda$ is \textit{complicated} if given any two $A, B \in K_\lambda$, there are $2^\lambda$ non-isomorphic instances of a sum of $A$ and $B$.
\end{definition}

\begin{remark}
    Such classes could be thought of as \textit{sum-saturated}, the cardinality of possible instances of sums of two elements within the class will be the maximal one, without leaving the class.
\end{remark}

\begin{remark}
    By Theorem \ref{th: finitely many instances of sum of two ordinals}, no complicated class can contain two well-orders.
\end{remark}

Before making the sum-existence claim for complicated classes, we need to have a canonical class of representatives for the class of all order-types. Let $\mathcal{LO}_\kappa$ be a set of distinct representatives for all order-types in $\{L \in LO \mid L \text{ has universe } \kappa \in \mbox{Card}\}$

For any class of linear orders $\mathcal{C}$, let $$\overline{\mathcal{C}} = \mathcal{C} \cap \mathcal{LO}_\lambda.$$
This set has cardinality at most $2^\lambda$\footnote{Any linear order on $\lambda$ is just a subset of $\lambda \times \lambda$, therefore there are at most $|\mathcal{P}(\lambda \times \lambda)| = 2^\lambda$-many linear orders that can be imposed on $\lambda$.}.

\begin{theorem} \label{th: complicated class sum}
    Given $K_\lambda \subseteq LO_\lambda$ and any subclass $E \subseteq K_\lambda$, such that $K_\lambda - E$ is complicated, and there is a good sum $\oplus_E$ on $E$. If $<_\lambda$ is a well-order on $\overbar{K_\lambda}$, then exists is a good sum $\oplus$ on $K_\lambda$ extending $\oplus_E$.
\end{theorem}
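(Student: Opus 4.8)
The plan is to build $\oplus$ on $K_\lambda$ by transfinite recursion along the well-order $<_\lambda$ of $\overbar{K_\lambda}$, defining $A \oplus B$ one unordered pair $\{A, B\}$ at a time (working with the canonical representatives in $\overbar{K_\lambda}$, then transporting along isomorphisms to get regularity on all of $K_\lambda$). We enumerate the unordered pairs of elements of $\overbar{K_\lambda}$ in order type at most $2^\lambda$, using $<_\lambda$ to order pairs lexicographically. At stage $\sigma$ we are handed a pair $\{A, B\}$ and must choose an instance $C$ of a sum of $A$ and $B$, i.e. a specific linear order on a subset of $\lambda$ that is the disjoint union of a copy of $A$ and a copy of $B$. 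If both $A, B \in E$ we simply set $A \oplus B := A \oplus_E B$ (this is consistent because $\oplus_E$ is already good on $E$, and we never revisit such pairs). Otherwise at least one of $A, B$ lies in $K_\lambda - E$, and we will use complicatedness to choose $C$ avoiding all previously made commitments.

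The key constraints to maintain are commutativity (automatic, since we define on unordered pairs), regularity (handled by choosing the value on representatives and then declaring $A' \oplus B' \cong A \oplus B$ whenever $A' \cong A$, $B' \cong B$ — here we must be slightly careful that the representative of $A \oplus B$ is itself something we control, but since $\overbar{K_\lambda}$ is closed under isomorphism we can take the value to be an element of $\overbar{K_\lambda}$), and associativity. Associativity is the substantive point. The associativity requirement $A \oplus (B \oplus C) \cong (A \oplus B) \oplus C$ links three pairs of values; to keep the recursion manageable I would, at the stage where $\{A,B\}$ first comes up (with, say, at least one of $A,B$ outside $E$), not only choose $A \oplus B$ but simultaneously commit to $A \oplus B$ in a way that leaves room to satisfy every associativity instance it participates in later. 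The cleanest way: at each stage, the already-defined part of $\oplus$ together with the associativity law generates only $<2^\lambda$ many "forbidden" order types for the new value $A \oplus B$ (there are $\le\lambda + |\sigma| < 2^\lambda$ prior commitments, each ruling out boundedly or $\le 2^\lambda$... — this counting is exactly where one must be careful), whereas complicatedness of $K_\lambda - E$ guarantees $2^\lambda$ many candidate instances; so a valid choice exists. When a later associativity instance $A\oplus(B\oplus C)\cong(A\oplus B)\oplus C$ must be made true, at least one of the three outer sums involved will still be "free" (its defining pair not yet processed) precisely because we process pairs in an order that defers the hardest ones — this is the delicate scheduling argument.

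Concretely I would organize the recursion so that at stage $\sigma$ handling $\{A,B\}$ with $A \notin E$ or $B \notin E$, we look at all triples through which $A \oplus B$ has already been forced to satisfy an associativity equation (these come from pairs processed before $\sigma$), collect the finitely-or-fewer resulting isomorphism-type constraints "$A \oplus B \cong D$" they impose, check they are mutually consistent (they will be, by the inductive maintenance of consistency), and then — if they pin down $A\oplus B$, use that value; if not, pick any instance of a sum of $A$ and $B$ avoiding the $<2^\lambda$ types that would later create an unsatisfiable constraint, using complicatedness. One must verify: (1) the set of "bad" types to avoid genuinely has size $<2^\lambda$ at every stage, which requires that the ordinal length of the enumeration is $\le 2^\lambda$ and that each prior pair contributes $\le 2^\lambda$ but the total over a length-$<2^\lambda$ recursion still stays $<2^\lambda$ when $2^\lambda$ is regular — if $2^\lambda$ is singular this needs the sharper bound that each stage forbids only $<\mathrm{cf}(2^\lambda)$ types, or one instead argues more carefully; (2) the final relation is genuinely associative, by checking that every associativity instance was resolved at the stage of its latest-processed outer pair; (3) it is a sum, since every value was chosen to be an instance of a sum.

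The main obstacle I expect is the bookkeeping in step (1)–(2): setting up an enumeration of pairs (of length $\le 2^\lambda$) such that for every associativity triple $(A,B,C)$, among the three outer pairs $\{A,B\oplus C\}$, $\{A\oplus B,C\}$ — note $B\oplus C$ and $A\oplus B$ are themselves only determined mid-recursion, so these "pairs" are moving targets — at least one is processed strictly after enough information is available, and ensuring the cumulative set of forbidden order types never exhausts the $2^\lambda$ available instances. The way I would tame this is to not choose the enumeration in advance of the values: instead build the enumeration and the function $\oplus$ simultaneously, at each step picking the next pair to be one all of whose "associativity predecessors" are already handled, and showing such a pair always exists because the dependency relation among pairs is well-founded (each associativity instance $A\oplus(B\oplus C)=(A\oplus B)\oplus C$ only refers to sums of orders no larger, in a suitable well-founded measure on pairs from $\overbar{K_\lambda}$, than the pair being defined — this is false in general, so more honestly: one allows the value $A\oplus B$ to be revised downward-consistently, or one fixes it once and shows the constraint graph is a forest after quotienting). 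Pinning down that last point rigorously is the crux; everything else (regularity via transport along isomorphisms, commutativity, the sum property) is routine once the recursion goes through.
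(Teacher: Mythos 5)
Your overall scaffolding (transfinite recursion along $<_\lambda$ on canonical representatives, transport along isomorphisms for regularity, unordered pairs for commutativity, complicatedness supplying $2^\lambda$ fresh candidate values at each step) matches the paper's, but the one step you yourself flag as ``the crux'' --- maintaining associativity --- is exactly where the proposal does not go through, and the paper resolves it with an idea your recursion lacks. Processing one unordered pair at a time and hoping to satisfy each associativity instance ``at the stage of its latest-processed outer pair'' fails because the constraints are neither independent nor well-founded: once you set $A \oplus B = D_1$ and $B \oplus C = D_2$, the pairs $\{D_1, C\}$ and $\{A, D_2\}$ are both forced to the same value, yet a value chosen as an instance of a sum of $D_1$ and $C$ need not even be realizable as an instance of a sum of $A$ and $D_2$; worse, $\{A, D_2\}$ can simultaneously be forced by associativity instances arising from entirely different triples and from longer products bracketed in different ways. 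Your inductive claim that these accumulated constraints ``will be mutually consistent'' is precisely what needs proof and is not supplied, and your fallback suggestions (revising values downward-consistently, or arguing the constraint graph is a forest) are left undeveloped --- you concede the well-foundedness claim ``is false in general.''

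The paper avoids this entirely by never defining the sum pair-by-pair. It recursively builds a set $\mathcal{U}_\alpha$ of \emph{generator} indices together with an operation $\oplus_\alpha$ defined on all finite \emph{tuples} of orders seen so far, maintaining three invariants: every order encountered is the sum of a tuple of generators (with at most one entry from $E$); this decomposition is unique up to permutation, because each new tuple of generators is assigned a \emph{fresh} isomorphism type, i.e.\ an instance of a sum not yet in the range (this is the only place complicatedness is used, and since each prior commitment consumes exactly one type, one only ever needs to avoid a set of $<2^\lambda$ types, so your worry about the cofinality of $2^\lambda$ does not arise); and the tuple-sum is permutation-invariant. The binary sum is the restriction to pairs, and associativity becomes structural rather than scheduled: both $(A\oplus B)\oplus C$ and $A\oplus(B\oplus C)$ are, by construction, the single value assigned to the multiset obtained by concatenating the generator decompositions of $A$, $B$, and $C$. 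Unique factorization into generators is the missing idea; without it, or some substitute that makes the forced associativity values trivially coherent, your recursion has no mechanism to certify their consistency.
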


\begin{proof}
    Let $\langle L_\alpha \mid \alpha < 2^\lambda \rangle$ be the enumeration of $\overbar{K_\lambda}$ furnished by $<_\lambda$, and take $\mathcal{B}_E$ to consist of the ordinals assigned to representatives of types in $E$ by $<_\lambda$.

    By induction on $\alpha$, define $(\mathcal{U}_\alpha, \oplus_\alpha)$ such that:

    \begin{enumerate}
        \item[(a)] $\mathcal{U}_\alpha \subseteq \alpha\cup\mathcal{B}_E$;
        \item[(b)] If $\beta < \alpha$, then $\mathcal{U}_\beta = \mathcal{U}_\alpha \cap (\beta \cup \mathcal{B}_E)$;
        \item[(c)] For all $n < \omega$ and $\langle \beta_0, \dots, \beta_n \rangle \in \prescript{n + 1}{}{(\alpha\cup\mathcal{B}_E)}$, we define $\oplus_\alpha \langle L_{\beta_l} \mid l \leq n \rangle$ (a sum defined on sequences, with outputs in $\overbar{K_\lambda}$) with the following properties:

        \begin{enumerate}
            \item[\textbullet\textsubscript{1}] $\oplus_\alpha (\langle L_\beta \rangle) = L_\beta$, if $\beta \in \alpha\cup\mathcal{B}_E$;

            \item[\textbullet\textsubscript{2}] If $\beta < \alpha$, then $\oplus_\beta = \oplus_\alpha\restriction\{\langle L_{\beta_l} \mid l \leq n \rangle \mid \langle \beta_l \mid l \leq n \rangle \in \prescript{n+1}{}{(\beta \cup \mathcal{B}_E)}\}$;

            \item[\textbullet\textsubscript{3}] For all $l \leq m, n$, if $\beta_l, \gamma_l \in \mathcal{U}_\alpha\cap\alpha$ and $\oplus_\alpha \langle L_{\beta_0}, \dots, L_{\beta_n} \rangle \cong \oplus_\alpha \langle L_{\gamma_0}, \dots, L_{\gamma_m} \rangle$, then $n = m$ and $\langle L_{\beta_0}, \dots, L_{\beta_n} \rangle$ is a permutation of $\langle L_{\gamma_0}, \dots, L_{\gamma_m} \rangle$. Likewise, if $\delta, \varepsilon \in \mathcal{B}_E$ and $\oplus_\alpha \langle L_{\beta_0}, \dots, L_\delta \dots, L_{\beta_n} \rangle \cong \oplus_\alpha \langle L_{\gamma_0}, \dots, L_\varepsilon, \dots, L_{\gamma_m} \rangle$, then $n = m$, $\delta = \varepsilon$, and $\langle L_{\beta_0}, \dots, L_\delta \dots, L_{\beta_n} \rangle$ is a permutation of $\langle L_{\gamma_0}, \dots, L_\varepsilon, \dots, L_{\gamma_m} \rangle$. The sum of a sequence containing an element of $E$ is never isomorphic to one of elements of $K_\lambda - E$;

            \item[\textbullet\textsubscript{4}] If $\gamma \in (\alpha \cup \mathcal{B}_E)$, then for some $\beta_0, \dots, \beta_n \in \mathcal{U}_\alpha$, $L_\gamma \cong \oplus_\alpha \langle L_{\beta_0}, \dots, L_{\beta_n}\rangle$, where at most one of the indices is in $\mathcal{B}_E$;

            \item[\textbullet\textsubscript{5}] For any $\beta_0, \dots, \beta_n \in \alpha$, if $\sigma \in S_{n+1}$, $$\oplus_\alpha \langle L_{\beta_0}, \dots, L_{\beta_n} \rangle \cong \oplus_\alpha \langle L_{\beta_{\sigma(0)}}, \dots, L_{\beta_{\sigma(n)}} \rangle.$$
        \end{enumerate}
    \end{enumerate}
    
    Let $\mathcal{U}_0 = \mathcal{B}_E$, $\oplus_0 = \oplus_E$.
    
    For $\alpha+1$, let $\mathcal{U}_{\alpha+1} = \mathcal{U}_{\alpha}$ if $L_\alpha$ is in the range of $\oplus_\alpha$, and $\mathcal{U}_{\alpha} \cup \{\alpha\}$ otherwise. To satisfy condition (c).2, let $\oplus_{\alpha+1}\restriction\{\langle L_{\beta_l} \mid l \leq n \rangle \mid \langle \beta_l \mid l \leq n \rangle \in \prescript{n+1}{}{(\alpha \cup \mathcal{B}_E)}\} =\oplus_\alpha$. If $L_\alpha$ is in the range of $\oplus_\alpha$, let $\oplus_{\alpha+1}$ remain consistent with $\oplus_{\alpha}$ by adding any tuple containing $L_\alpha$ as $\oplus_\alpha$ would, by expanding any instances of $L_\alpha$ as tuple of orders in $\{L_\beta \mid \beta\in \mathcal{U}_\alpha\}$ which sum to it. 
    
    If the new order is not in the range, enumerate all unordered tuples (with repetition) of $\mathcal{U}_{\alpha+1}$ with at most one element in $\mathcal{B}_E$\footnote{We may consider unordered tuples by (c).3, (c).5. If one of the indices is not in $\mathcal{U}_\alpha$, expand it to a tuple in $\mathcal{U}_\alpha$ using (c).4. Collapse elements of $\mathcal{B}_E$ using $\oplus_0 = \oplus_E$.} lexicographically, prioritizing shorter tuples. Inductively, define our sum on these tuples. For tuples of length $1$, follow (c).1. Given a tuple of length $>1$, if $\alpha$ is in it, using the inductive hypothesis, collapse the tuple until it is length $2$, and let the sum of the tuple be any $L_\gamma$ for $\gamma > \alpha$ such that $L_\gamma$ is an instance of a sum of the two orders considered, which is not in the range of $\oplus_\alpha$ nor $\oplus_{\alpha+1}$ so far (we will never run out of these instances by the assumption that $K_\lambda - E$ is complicated). Note that the inclusion of $\alpha$ into $\mathcal{U}_{\alpha+1}$ guarantees that (c).4 is satisfied. Since we mandated that the new orders we map to are unique to each unordered tuple, (c).3 is satisfied.

    For limit stages, let $\mathcal{U}_\lambda =\bigcup_{\alpha < \lambda} \mathcal{U}_\alpha$, and take $\oplus_\lambda = \bigcup_{\alpha<\lambda}\oplus_\alpha$.

    The operation $\oplus = \bigcup_{\alpha < 2^\lambda}\oplus_\alpha$ restricted to pairs and extended to all of $K_\lambda$ by respecting regularity, is a good sum.
\end{proof}

\begin{remark}
    The condition that we have a good subclass $E$ is not restrictive, if we let $E = \emptyset$, we get the a sufficient condition for $K_\lambda$ to be a good class. As we will see though, the allowance for extension gives us a vast amount of flexibility.
\end{remark}

But do complicated classes even exist in general? 

\begin{example} \label{ex: binary shuffle complicated class}
    Let $B_{\aleph_0}$ be the class of all countable orders with $\mathbb{Q}$ as a convex suborder.

    All we need to show here is that there are $2^{\aleph_0}$ ways to combine $\mathbb{Q}$ with itself while remaining within this class. The trick to do this will be by showing that we can represent any infinite binary sequence as an instance of a sum. First, we show that we can combine $\mathbb{Q}$ with itself to make any of

    $$\mathbb{Q}, \mathbb{Q}(2), \mathbb{Q}(1,2)$$

    Effectively we will use these to encode our binary sequences, with $\mathbb{Q}$ acting as a spacer, $\mathbb{Q}(2)$ as $0$, and $\mathbb{Q}(1, 2)$ as $1$.

    Note that any $\omega$-sum of these orders will itself be an instance of a sum of two copies of the rationals. First, we show how to decompose all of these orders into two copies of the rationals, a ``red'' copy and a ``blue'' copy of which they are an instance of a sum. For $\mathbb{Q}$, let the red copy be $(-\infty, \pi) \cap \mathbb{Q}$, and the blue copy $(\pi, \infty) \cap \mathbb{Q}$. From $\mathbb{Q}(2)$, let the red copy be all successors, and the blue one to be all predecessors. Finally, for $\mathbb{Q}(1, 2)$, consider all predecessors and the points without predecessors or successors to be the red points, and the set of all successors to be the blue ones. Given an $\omega$-sum of these orders, note that the sum of all red orders, and the sum of all blue orders, are still countable DLOs without endpoints, therefore isomorphic to the rationals, hence the sum is an instance of a sum of two copies of the rationals.

    Since we can encode any countable binary sequence with two elements in $B_{\aleph_0}$, this class is complicated.
\end{example}

\begin{example} \label{ex: usable compliacted class}
    Consider $\mathcal{Q} = \langle \mathbb{Q}, \mathbb{Q}(1, 2) + \mathbb{Q}, \mathbb{Q}(2) + \mathbb{Q} \rangle$. 

This class is the closure of $$\{\mathbb{Q}, 1 + \mathbb{Q}, \mathbb{Q}(1, 2) + \mathbb{Q}, 1 + \mathbb{Q}(1, 2) + \mathbb{Q}, 2 + \mathbb{Q}(1, 2) + \mathbb{Q}, \mathbb{Q}(2) + \mathbb{Q}, 1 + \mathbb{Q}(2) + \mathbb{Q}, 2 + \mathbb{Q}(2) + \mathbb{Q}\}$$ under sums indexed by ordinals. So it is not hard to see that, by a similar approach to that of Example \ref{ex: binary shuffle complicated class}, we may encode binary sequences of length $\kappa$ as instances of sums of any two elements of $\mathcal{Q}_\kappa$. Therefore, $\mathcal{Q}_\kappa$ is $\kappa$-complicated.
\end{example}

\begin{definition} \label{def: global complicated class}
    A class $K$ of linear orders is \textit{complicated} if $K_\lambda$ is complicated for every $\lambda \in Card$.
\end{definition}

$\mathcal{Q}$ is our first example of a complicated with orders of arbitrary (infinite) cardinality.

\section{Revisiting the properties of sums} \label{sect: revisiting the properties of sums}

Having developed a robust theory for constructing new sums, we are now in a position to revisit our two original properties of sums, point-tracking and canonical regularity. We will show that neither of them follow from regularity paired with associativity. To illustrate the flexibility of our methods of constructing sums, we will also consider whether any global sum can represent a non-trivial group, showing that none of the structured sums we have considered so far can, while constructing a sum that represent $\mathbb{Z}/2\mathbb{Z}$.

The guiding heuristic for this section will be that whenever there is a reasonable property of sums that does not dictate rigid structure (compare properties like point-tracking and commutativity with being semi-standard), there will be a dichotomy between the structured sums, like those built from ordinal good sums that split summands finitely and simple sums, and the non-structured sums, such as those constructed from complicated classes.

\subsection{Point-tracking} \label{subsect: point-tracking}

Recall that the original point-tracking condition from Definition \ref{def: point-tracking} ensures that associativity is witnessed by an isomorphism that makes it such that the joint order between summands is the same in both instances of associativity is preserved. This is an inherently local property, and in cases where the global structure of our sums is nice, it is straightforward to show.

\begin{proposition} \label{prop: simple sums track points}
    Simple sums track points.
\end{proposition}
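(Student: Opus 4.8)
The plan is to unwind the definition of a simple sum and exhibit the point-tracking isomorphism explicitly. Recall that for a left SGC $\mathcal{C}$, the simple sum $\oplus_{\mathcal{C}}$ sends $A$ and $B$ to $B_L + A + B_R$, where $B = B_L + B_R$ is the $\mathcal{C}$-decomposition of $B$ (with $B_L \in \mathcal{C}$ the longest initial segment of $B$ in $\mathcal{C}$). Writing $\oplus$ for $\oplus_{\mathcal{C}}$ throughout, the first step is to compute both iterated sums of three given orders $A$, $B$, $C$ in terms of the $\mathcal{C}$-decompositions, as was already observed in the discussion following Theorem~\ref{th: characterization of sum-generating classes with simple sums}: one has
\[
A \oplus (B \oplus C) \cong C_L + B_L + A + B_R + C_R \cong (A \oplus B) \oplus C,
\]
where $B = B_L + B_R$ and $C = C_L + C_R$ are the $\mathcal{C}$-decompositions. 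To make this rigorous I would verify, using Corollary~\ref{cor: SGC decomposition of sum of two orders} (or directly via the rigidity theorem, Theorem~\ref{th: rigidity}), that the $\mathcal{C}$-decomposition of $B \oplus C = C_L + B + C_R$ has left part $C_L + B_L$ — here one uses that $C_L + B_L \in \mathcal{C}$ by closure under sums (Proposition~\ref{prop: left-sum-generating classes are closed under right-limits}(2)(i)) and that $B_R + C_R \in \mathcal{C}^\perp$ by closure of $\mathcal{C}^\perp$ under sums — and symmetrically that the left part of $A \oplus B = B_L + A + B_R$ is $B_L$ provided $A + B_R \in \mathcal{C}^\perp$; since $A$ need not be in $\mathcal{C}^\perp$ this requires a small argument showing the longest initial segment of $B_L + A + B_R$ in $\mathcal{C}$ is exactly $B_L + (\text{longest initial segment of } A \text{ in } \mathcal{C})$, matching the well-definedness computation in the forward direction of Theorem~\ref{th: characterization of sum-generating classes with simple sums}. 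In fact the cleanest route is to avoid mentioning $A$'s decomposition at all by working, as in that theorem, with the canonical representatives $C_L + B_L + A + B_R + C_R$ for both triple sums.

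The second step is to pin down the natural isomorphism. Each of $A \oplus (B \oplus C)$ and $(A \oplus B) \oplus C$ comes with a canonical isomorphism onto the common order $W := C_L + B_L + A + B_R + C_R$, obtained by applying the defining isomorphisms of $\oplus$ at each stage and composing; call these $\psi_L : A \oplus (B \oplus C) \xrightarrow{\sim} W$ and $\psi_R : (A \oplus B) \oplus C \xrightarrow{\sim} W$. The point-tracking isomorphism is then $\psi_R^{-1} \circ \psi_L$. To check that the point-tracking diagram from Definition~\ref{def: point-tracking} commutes, I would track where each of $A$, $B$, $C$ lands: under $\psi_L$ the image $e_A^{A\oplus(B\oplus C)}(A)$ is exactly the copy of $A$ sitting in the middle summand of $W$; the copy $e_B^{B\oplus C}(B)$ maps under the inner isomorphism into $C_L + B + C_R$ and then $\psi_L$ carries the $B_L$-part and $B_R$-part to the corresponding summands of $W$; and $e_C^{B\oplus C}(C)$ maps to $C_L + (\cdots) + C_R$. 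The same bookkeeping for $\psi_R$ shows $A$, $B$, $C$ land in the identical summands of $W$. Hence $\psi_R^{-1}\circ\psi_L$ restricts to the identity on (the images of) $A$, $B$, and $C$, so the induced maps on $A \oplus B$ and $B \oplus C$ agree, which is precisely what point-tracking asserts.

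I do not expect a genuine obstacle here — this is essentially a diagram chase made routine by the rigidity theorem, which guarantees that the $\mathcal{C}$-decompositions are canonical and preserved by isomorphism, so the summand-by-summand identifications are forced. The one place that requires a little care is the verification that the intermediate $\mathcal{C}$-decompositions are the "expected" ones, i.e.\ that no extra absorption occurs when we re-decompose $B \oplus C$ or $A \oplus B$; this is exactly the computation already carried out in the proof of Theorem~\ref{th: characterization of sum-generating classes with simple sums} (the argument that the longest initial segment of $\gamma + \beta + B' + C'$ is $\gamma + \beta$), and it transfers verbatim with the ordinals replaced by the general left/right SGC members. Given that, the proof is short: state $W$, name the two canonical isomorphisms to $W$, and observe the composite fixes the three summands.

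\begin{proof}
Write $\oplus$ for $\oplus_{\mathcal{C}}$. Let $A, B, C$ be linear orders, and let $B = B_L + B_R$, $C = C_L + C_R$ be their $\mathcal{C}$-decompositions, so $B_L, C_L \in \mathcal{C}$ are the longest initial segments in $\mathcal{C}$ and $B_R, C_R \in \mathcal{C}^\perp$. Set
\[
W := C_L + B_L + A + B_R + C_R.
\]

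First we identify the two triple sums with $W$. By definition $B \oplus C \cong C_L + B + C_R = C_L + B_L + B_R + C_R$. Its $\mathcal{C}$-decomposition has left part $C_L + B_L$: indeed $C_L + B_L \in \mathcal{C}$ by closure under sums (Proposition~\ref{prop: left-sum-generating classes are closed under right-limits}), and $B_R + C_R \in \mathcal{C}^\perp$ by closure of $\mathcal{C}^\perp$ under sums, so by the rigidity theorem (Theorem~\ref{th: rigidity}) this is \emph{the} $\mathcal{C}$-decomposition. Hence
\[
A \oplus (B \oplus C) \cong (C_L + B_L) + A + (B_R + C_R) = W.
\]
Similarly $A \oplus B \cong B_L + A + B_R$. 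As in the well-definedness argument in the proof of Theorem~\ref{th: characterization of sum-generating classes with simple sums}, the longest initial segment of $B_L + A + B_R$ in $\mathcal{C}$ is $B_L + A_L$, where $A_L$ is the longest initial segment of $A$ in $\mathcal{C}$ (closure under sums gives $B_L + A_L \in \mathcal{C}$; maximality follows since any larger initial segment would, by property (ii) of Definition~\ref{def: left sum-generating class}, force a longer initial segment of $A$ into $\mathcal{C}$). Thus
\[
(A \oplus B) \oplus C \cong C_L + (B_L + A_L) + (A_R + B_R) + C_R,
\]
where $A = A_L + A_R$; but $C_L + B_L + A_L + A_R + B_R + C_R = W$ after reassociating. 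Either way, composing the defining isomorphisms of $\oplus$ at each stage yields canonical isomorphisms
\[
\psi_L : A \oplus (B \oplus C) \xrightarrow{\ \sim\ } W, \qquad \psi_R : (A \oplus B) \oplus C \xrightarrow{\ \sim\ } W.
\]

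Now take the induced isomorphism to be $\Phi := \psi_R^{-1} \circ \psi_L$. Tracing the canonical embeddings: $\psi_L \circ e_A^{A \oplus (B\oplus C)}$ is the inclusion of $A$ as the $A$-summand of $W$; $\psi_L \circ e_{B\oplus C}^{A\oplus(B\oplus C)}$ carries $B \oplus C = C_L + B_L + B_R + C_R$ onto the $C_L$-, $B_L$-, $B_R$-, $C_R$-summands of $W$ in order, so $\psi_L$ sends $e_B^{B\oplus C}(B)$ onto the $B_L$- and $B_R$-summands and $e_C^{B\oplus C}(C)$ onto the $C_L$- and $C_R$-summands. Exactly the same computation shows $\psi_R$ sends the $A$-summand, the two $B$-summands, and the two $C$-summands of $(A\oplus B)\oplus C$ onto the identically-placed summands of $W$. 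Hence $\Phi$ restricts to the identity on the images of $A$, $B$, and $C$ inside the two triple sums; in particular the outer square and all the triangles in the diagram of Definition~\ref{def: point-tracking} commute. Therefore $\oplus_{\mathcal{C}}$ tracks points.
\end{proof}
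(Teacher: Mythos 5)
Your proof is correct and follows essentially the same route as the paper's: both identify the two associations with the common order $W = C_L + B_L + A + B_R + C_R$ and take the composite of the canonical isomorphisms, checking that each summand lands in its corresponding section. One small remark: your detour through the $\mathcal{C}$-decomposition of $A \oplus B$ (the claim that its longest initial segment in $\mathcal{C}$ is $B_L + A_L$) is correct but unnecessary, since the simple sum never decomposes its \emph{left} summand --- $(A \oplus B) \oplus C = C_L + (A \oplus B) + C_R$ directly, using only the decomposition of $C$.
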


\begin{proof}
    The induced $f: (A \oplus B) \oplus C \xrightarrow[]{\sim} A \oplus (B \oplus C)$ isomorphism will just be the composition of the usual isomorphisms witnessing
    $$(A \oplus B) \oplus C \cong C_L + B_L + A + B_R + C_R \cong A \oplus (B \oplus C).$$

    Verify that this works by running through commutative diagram using that $$A \oplus B \cong B_L + A + B_R, \;B \oplus C = C_L + B + C_R.$$
\end{proof}

That said, for sums which shuffle points more intrusively, it can fail.

Let $+_{lcm}$ denote the global extension of the lcm sum (the min or dynamic sum would do just as well) note that 
    $$\omega +_{lcm} (\omega +_{lcm}\omega) \cong (\omega +_{lcm} \omega) +_{lcm} \omega$$
    is a counterexample to point-tracking. By using the standard embeddings associated to this sum, we can prove this visually:

    \begin{tikzpicture}[scale=1.2, thick]
\node at (-0.5, 0) {$($};

\foreach \x in {0, 1,2,3} {
  \draw[fill={rgb,255:red,194;green,106;blue,119}, draw=none] (\x/2,0) circle (3pt);
}

\node at (2,0) {\dots};

\node at (2.5,0) {$+_{lcm}$};

\foreach \x in {6, 7, 8, 9} {
  \draw[fill={rgb,255:red,93;green,168;blue,153},draw=none] (\x/2,0) circle (3pt);
}

\node at (5,0) {\dots};

\node at (5.5,0) {$)$};

\node at (6,0) {$+_{lcm}$};

\foreach \x in {13, 14, 15, 16} {
  \draw[fill={rgb,255:red,204;green,187;blue,68},draw=none] (\x/2,0) circle (3pt);
}
\node at (8.5,0) {\dots};

\node at (-1, -0.5) {$=$};
\node at (-0.5, -0.5) {$($};

\foreach \x in {0, 1,2,3} {
  \draw[fill={rgb,255:red,194;green,106;blue,119}, draw=none]  (\x,-0.5) circle (3pt);
  \draw[fill={rgb,255:red,93;green,168;blue,153},draw=none] (\x + 0.5,-0.5) circle (3pt);
}

\node at (4,-0.5) {\dots};

\node at (4.5,-0.5) {$)$};

\node at (5, -0.5) {$+_{lcm}$};

\foreach \x in {11, 12, 13, 14} {
  \draw[fill={rgb,255:red,204;green,187;blue,68},draw=none] (\x/2,-0.5) circle (3pt);
}
\node at (7.5,-0.5) {\dots};

\node at (-1, -1) {$=$};

  \draw[fill={rgb,255:red,194;green,106;blue,119}, draw=none]  (0,-1) circle (3pt);
  \draw[fill={rgb,255:red,204;green,187;blue,68},draw=none] (0.5,-1) circle (3pt);
  \draw[fill={rgb,255:red,93;green,168;blue,153},draw=none] (1,-1) circle (3pt);
    \draw[fill={rgb,255:red,204;green,187;blue,68},draw=none] (1.5,-1) circle (3pt);
  \draw[fill={rgb,255:red,194;green,106;blue,119}, draw=none]  (2,-1) circle (3pt);
  \draw[fill={rgb,255:red,204;green,187;blue,68},draw=none] (2.5,-1) circle (3pt);
    \draw[fill={rgb,255:red,93;green,168;blue,153},draw=none] (3, -1) circle (3pt);
  \draw[fill={rgb,255:red,204;green,187;blue,68},draw=none] (3.5,-1) circle (3pt);
  \draw[fill={rgb,255:red,194;green,106;blue,119}, draw=none]  (4,-1) circle (3pt);
    \draw[fill={rgb,255:red,204;green,187;blue,68},draw=none] (4.5,-1) circle (3pt);
  \draw[fill={rgb,255:red,93;green,168;blue,153},draw=none] (5,-1) circle (3pt);
  \draw[fill={rgb,255:red,204;green,187;blue,68},draw=none] (5.5,-1) circle (3pt);

\node at (6,-1) {\dots};
\end{tikzpicture}

But, the other side of associativity yields:

\begin{tikzpicture}[scale=1.2, thick]

\foreach \x in {-1, 0,1,2} {
  \draw[fill={rgb,255:red,194;green,106;blue,119}, draw=none]  (\x/2,0) circle (3pt);
}

\node at (1.5,0) {\dots};

\node at (2,0) {$+_{lcm}$};

\node at (2.5,0) {$($};

\foreach \x in {6, 7, 8, 9} {
  \draw[fill={rgb,255:red,93;green,168;blue,153},draw=none] (\x/2,0) circle (3pt);
}

\node at (5,0) {\dots};

\node at (5.5,0) {$+_{lcm}$};

\foreach \x in {12, 13, 14, 15} {
  \draw[fill={rgb,255:red,204;green,187;blue,68},draw=none] (\x/2,0) circle (3pt);
}
\node at (8,0) {\dots};

\node at (8.5, 0) {$)$};

\node at (-1, -0.5) {$=$};

\foreach \x in {-1, 0,1,2} {
  \draw[fill={rgb,255:red,194;green,106;blue,119}, draw=none]  (\x/2,-0.5) circle (3pt);
}

\node at (1.5,-0.5) {\dots};

\node at (2,-0.5) {$+_{lcm}$};

\foreach \x in {2.5, 3.5,4.5,5.5} {
  \draw[fill={rgb,255:red,93;green,168;blue,153},draw=none] (\x,-0.5) circle (3pt);
  \draw[fill={rgb,255:red,204;green,187;blue,68},draw=none] (\x + 0.5,-0.5) circle (3pt);
}

\node at (6.5,-0.5) {\dots};

\node at (-1, -1) {$=$};

  \draw[fill={rgb,255:red,194;green,106;blue,119}, draw=none]  (-0.5,-1) circle (3pt);
  \draw[fill={rgb,255:red,93;green,168;blue,153},draw=none] (0,-1) circle (3pt);
  \draw[fill={rgb,255:red,194;green,106;blue,119}, draw=none]  (0.5,-1) circle (3pt);
    \draw[fill={rgb,255:red,204;green,187;blue,68},draw=none] (1,-1) circle (3pt);
  \draw[fill={rgb,255:red,194;green,106;blue,119}, draw=none]  (1.5,-1) circle (3pt);
  \draw[fill={rgb,255:red,93;green,168;blue,153},draw=none] (2,-1) circle (3pt);
    \draw[fill={rgb,255:red,194;green,106;blue,119}, draw=none]  (2.5, -1) circle (3pt);
  \draw[fill={rgb,255:red,204;green,187;blue,68},draw=none] (3,-1) circle (3pt);
  \draw[fill={rgb,255:red,194;green,106;blue,119}, draw=none]  (3.5,-1) circle (3pt);
    \draw[fill={rgb,255:red,93;green,168;blue,153},draw=none] (4,-1) circle (3pt);
  \draw[fill={rgb,255:red,194;green,106;blue,119}, draw=none]  (4.5,-1) circle (3pt);
  \draw[fill={rgb,255:red,204;green,187;blue,68},draw=none] (5,-1) circle (3pt);

\node at (5.5,-1) {\dots};
\end{tikzpicture}

As we can see, the maximal shuffling of this sums makes it so that any two points in the right two orders that have the same positions in their respective copies of $\omega$ get their orders flipped with respect to each other among instances of associativity. This contradicts the existence of a point-tracking isomorphism.

\subsection{Canonical regularity} \label{subsect: canonical regularity}

As a direct consequence of the rigidity theorem, every simple sum is canonically regular (sum-generated components are rigid under isomorphism).

Based on the previous example we had of a regular sum that is not canonically regular (Example \ref{ex: regular sum which is not canonically regular}), we may expect every regular associative sum to be canonically regular given that regular sums which aren't canonically regular have to shuffle their summands with little robustness.

Using the method of complicated classes, we show that this intuition is false.

Start by defining $\oplus_E$ on $E = \{2 + \mathbb{Q}(2) + \mathbb{Q}, \mathbb{Q}, 1 + \mathbb{Q} + \mathbb{Q}(2) + \mathbb{Q}\}$ by letting $\oplus_E$ be commutative, regular and satisfy
$$(2 + \mathbb{Q}(2) + \mathbb{Q}) \oplus_E \mathbb{Q} = 1 + \mathbb{Q} + \mathbb{Q}(2) + \mathbb{Q}$$ 
$$(2 + \mathbb{Q}(2) + \mathbb{Q}) \oplus_E (2 + \mathbb{Q}(2) + \mathbb{Q}) = 2 + \mathbb{Q}(2) + \mathbb{Q}$$ 
$$(2 + \mathbb{Q}(2) + \mathbb{Q}) \oplus_E (1 + \mathbb{Q} + \mathbb{Q}(2) + \mathbb{Q}) = 1 + \mathbb{Q} + \mathbb{Q}(2) + \mathbb{Q}$$ 
$$(1 + \mathbb{Q} + \mathbb{Q}(2) + \mathbb{Q}) \oplus_E \mathbb{Q} = 1 + \mathbb{Q} + \mathbb{Q}(2) + \mathbb{Q}$$
$$(1 + \mathbb{Q} + \mathbb{Q}(2) + \mathbb{Q}) \oplus_E (1 + \mathbb{Q} + \mathbb{Q}(2) + \mathbb{Q}) = 1 + \mathbb{Q} + \mathbb{Q}(2) + \mathbb{Q}$$
$$\mathbb{Q} \oplus_E \mathbb{Q} = \mathbb{Q}.$$

This is a possible set of outputs of a sum which make $\oplus_E$ associative on $E$.

Note that the only way to arrange

$$(2 + \mathbb{Q}(2) + \mathbb{Q}) \oplus_E \mathbb{Q} = 1 + \mathbb{Q} + \mathbb{Q}(2) + \mathbb{Q}$$

is by splitting the right summand, $\mathbb{Q}$, into at least two pieces, placing a piece of the form $(-\infty, x)$ between $0$ and $1$ in the $2$, while some segment $(x, y)$ goes immediately to the right of $1 \in 2$. Here $x$ must be irrational, and $y$ has to be either irrational or $\infty$.

With the notation from definition 5, let $\psi: \mathbb{Q} \xrightarrow[]{\sim} \mathbb{Q}$ be such that $q \mapsto q+1$, and $\phi: 2 + \mathbb{Q}(2) + \mathbb{Q} \xrightarrow[]{\sim} 2 + \mathbb{Q}(2) + \mathbb{Q}$ be the identity. Assume this sum is canonically regular, if $f: 1 + \mathbb{Q} + \mathbb{Q}(2) + \mathbb{Q} \xrightarrow[]{\sim} 1 + \mathbb{Q} + \mathbb{Q}(2) + \mathbb{Q}$ is the induced automorphism and $q_x \in (x-1, x)$, while $$e_{\mathbb{Q}}^{(2 + \mathbb{Q}(2) + \mathbb{Q}) \oplus_E \mathbb{Q}}(q_x) < e_{1+\mathbb{Q}(2)+\mathbb{Q}}^{(2 + \mathbb{Q}(2) + \mathbb{Q}) \oplus_E \mathbb{Q}}(1)$$
where $1$ here represents the $1 \in 2$ to the left of $2+\mathbb{Q}(2)+\mathbb{Q}$. As shown above,

$$f(e_{\mathbb{Q}}^{(2 + \mathbb{Q}(2) + \mathbb{Q}) \oplus_E \mathbb{Q}}(q_x)) = e_{\mathbb{Q}}^{(2 + \mathbb{Q}(2) + \mathbb{Q}) \oplus_E \mathbb{Q}}(q_x + 1) > e_{1+\mathbb{Q}(2)+\mathbb{Q}}^{(2 + \mathbb{Q}(2) + \mathbb{Q}) \oplus_E \mathbb{Q}}(1) = f(e_{1+\mathbb{Q}(2)+\mathbb{Q}}^{(2 + \mathbb{Q}(2) + \mathbb{Q}) \oplus_E \mathbb{Q}}(1)).$$

This is a contradiction to $f$ being order-preserving.

Extend this sum to a global sum by first using the complicated class construction to extend $\oplus_E$ to a sum $\oplus_0$ on all of $\mathcal{Q}_{\aleph_0}$. Noting that $\mathcal{Q}_{\aleph_0} = \hat{\mathcal{Q}}^{card}_0$, use the cardinality sifting scheme $\{(\mathcal{Q}^{card}_\alpha, \oplus_\alpha)\}_{\alpha \in Ord}$, where $\oplus_\alpha = +$ for $\alpha > 0$. $\mathcal{Q}$ is generated by a finite set of countable orders closed under non-empty final segments, therefore this filtration will be effective, and the resulting sifted sum $\oplus$ will be regular, associative and will extend $\oplus_E$, therefore it won't be canonically regular.

To recapitulate: we had to break canonical regularity, a local property. To do so, we started out by explicitly writing down a specific failure of canonical regularity, and subsequently built a good sum on a small set around it. This gave us enough consistency to extend the sum (using the complicated class construction) to the subclass of countable orders of a large complicated class. Leveraging the fact that this complicated class is nice since it is also an SGC and behaves well with respect to its associated cardinality filtration, we used sifting to extend this sum on the subclass of countable orders, to a global sum.

\subsection{Representing groups} \label{subsect: repressenting groups}

Let $\mathcal{LO}$ be the class of order-types.

\begin{definition} \label{def: representing a group}
    Given a group $(G, \cdot)$, we say an isomorphism invariant binary operation $\star: \mathcal{LO} \times \mathcal{LO} \rightarrow \mathcal{LO}$ \textit{represents} $G$ if there exists a one-to-one mapping $\phi: G \rightarrow\mathcal{LO}$ such that for any $g, h \in G$,

    $$\phi(g\cdot h) = \phi(g) \star \phi(h).$$

    If there exists some group $(G, \cdot)$ such that $\star$ represents $G$, we say $(\mathcal{LO}, \star)$ \textit{represents a group}. 
\end{definition}

All structured sums we have considered so far do not represent any non-trivial groups.

\begin{proposition} \label{prop: simple sums don't represent groups}
    If $\oplus$ is a simple sum, the only group that can be represented in $(\mathcal{LO}, \oplus)$ is the trivial group.
\end{proposition}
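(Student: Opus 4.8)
The plan is to use the rigidity theorem (Theorem \ref{th: rigidity}) to extract, for any order $X$, a canonical invariant of $X$ that behaves additively under $\oplus = \oplus_\mathcal{C}$, and then show that no nontrivial group can sit inside a monoid where elements have such a ``length-like'' invariant that is moreover strictly monotone in a suitable sense. Concretely, recall that $\oplus_\mathcal{C}$ has the form $A \oplus_\mathcal{C} B = B_L + A + B_R$, where $B = B_L + B_R$ is the $\mathcal{C}$-decomposition of $B$. The key structural observation is that the copy of $A$ sitting inside $A \oplus_\mathcal{C} B$ is ``visible'': in particular $A$ embeds into $A \oplus_\mathcal{C} B$, and more importantly the $\mathcal{C}$-decomposition of $A \oplus_\mathcal{C} B$ is determined by those of $A$ and $B$ via Corollary \ref{cor: SGC decomposition of sum of two orders} applied to $A \oplus_\mathcal{C} B \cong B_L + (A + B_R)$ — actually one should compute directly: $(A \oplus_\mathcal{C} B)_L = B_L + A_L$ or $B_L + (\text{something})$, and this gives a handle on how the left part grows.

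First I would argue that if $(\mathcal{LO}, \oplus_\mathcal{C})$ represents a group $G$ via $\phi : G \to \mathcal{LO}$, then letting $e = \phi(1_G)$, we have $e \oplus_\mathcal{C} e = e$, i.e. $e_L + e = e$ where $e = e_L + e_R$. Taking cardinalities and using that all these orders live in a set of bounded size is not quite enough; instead I would use an embedding/absorption argument. Since $e \oplus_\mathcal{C} e \cong e$ and $e$ embeds convexly into $e \oplus_\mathcal{C} e$ as the middle copy, by the Fundamental Theorem (Theorem \ref{th: fundamental theorem})-style reasoning one forces $e_L$ to be empty or $e$ to be ``infinitely absorbing'' on the left; in either case I would then look at $g \in G$ of finite order $n$, so $\phi(g)^{\oplus n} \cong e$. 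Iterating the formula $A \oplus_\mathcal{C} B = B_L + A + B_R$, the $n$-fold sum $\phi(g) \oplus_\mathcal{C} \cdots \oplus_\mathcal{C} \phi(g)$ contains $n$ disjoint convex copies of $\phi(g)$ (interleaved with pieces of the $\mathcal{C}$-decompositions), so $\phi(g) \cdot n \preceq_c \phi(g)^{\oplus n} \cong e$; combined with $e \preceq_c \phi(g)^{\oplus n}$ and the rigidity/decomposition bookkeeping, one derives that $\phi(g) \cong e$, hence $g = 1_G$ by injectivity of $\phi$. For elements of infinite order a cleaner route is: the sequence $\phi(g), \phi(g^2), \phi(g^3), \dots$ would have to be pairwise non-isomorphic (injectivity) yet $\phi(g^{k+1}) \cong \phi(g) \oplus_\mathcal{C} \phi(g^k)$ has a $\mathcal{C}$-left-part that is $\supseteq$ the left-part of $\phi(g^k)$ in a strictly increasing fashion unless things stabilize, and one plays off stabilization against $\phi(g^{-1})$: from $\phi(g^{-1}) \oplus_\mathcal{C} \phi(g) \cong e$ one gets $\phi(g)$ convexly embedded in $e$, and symmetrically $\phi(g^{-1}) \preceq_c e$; pushing the decomposition identities, every $\phi(g^k)$ convexly embeds in $e$, but then the chain of left-parts inside the fixed order $e$ must be eventually constant, contradicting injectivity of $\phi$ on $\langle g\rangle \cong \mathbb{Z}$.

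The cleanest way to package both cases is probably this: define for an order $X$ with $\mathcal{C}$-decomposition $X = X_L + X_R$ the pair-invariant, and observe via Corollary \ref{cor: SGC decomposition of sum of two orders} (or a direct computation on $A \oplus_\mathcal{C} B = B_L + A + B_R$) that $(A \oplus_\mathcal{C} B)_L \cong B_L + A_L$ when $A_R + B_R' \notin \mathcal{C}$ appropriately — the point being that $A_L$ is always an initial segment of $(A \oplus_\mathcal{C} B)_L$, so that $X_L \preceq_c (X \oplus_\mathcal{C} Y)_L$ for all $Y$. Then if $\phi$ represents $G$, for each $g$ we have $\phi(g)_L \preceq_c \phi(gh)_L$ for every $h$, and taking $h = g^{-1}$ gives $\phi(g)_L \preceq_c e_L$ for all $g$, with all these convex embeddings being initial-segment embeddings into the fixed order $e_L$; similarly on the right with $X_R$. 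An initial segment of $e_L$ of a given order-type, together with the matching final segment of $e_R$, pins down $\phi(g)$ up to isomorphism of its two outer halves, and the middle behaves the same way recursively — so the orbit $\{\phi(g) : g \in G\}$ injects into a set of decompositions all of whose pieces come from the fixed orders $e_L, e_R$; finishing requires showing this forces the orbit to be a singleton.

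The main obstacle, and where I'd expect to spend the most care, is the last step: ruling out that the group sits ``in the middle'' in some exotic way where the left and right $\mathcal{C}$-parts are all trivial (empty) but the orders themselves are genuinely non-isomorphic and still satisfy $\phi(g) \oplus_\mathcal{C} \phi(h) = \phi(gh)$. But if $\phi(g)_L = \phi(g)_R = \emptyset$ for all $g$ — i.e. every $\phi(g) \in \mathcal{C}^\perp$ — then $\phi(g) \oplus_\mathcal{C} \phi(h) = \phi(h)_L + \phi(g) + \phi(h)_R = \emptyset + \phi(g) + \emptyset = \phi(g)$, forcing $\phi(gh) = \phi(g)$ for all $h$, hence $|G| = 1$. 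The symmetric trap where all $\phi(g) \in \mathcal{C}$ gives $\phi(g)\oplus_\mathcal{C}\phi(h) = \phi(h) + \phi(g)$, and then $\phi(g) = \phi(g \cdot 1) = \phi(g) \oplus \phi(1) = e + \phi(g)$ while $\phi(g) = \phi(1\cdot g) = \phi(g) \oplus e$... wait, $\phi(1)\oplus\phi(g) = \phi(g) + e$, so $\phi(g)\cong\phi(g)+e$ and $\phi(g)\cong e + \phi(g)$, and from $\phi(g^{-1})\oplus\phi(g)=e$ i.e. $\phi(g)+\phi(g^{-1})=e$, cardinality and the Fundamental Theorem force everything constant again. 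The remaining mixed case is handled by the monotonicity of $X \mapsto X_L$ under $\oplus_\mathcal{C}$ established above. I would present the argument in this case-split form, with Theorem \ref{th: rigidity} and Corollary \ref{cor: SGC decomposition of sum of two orders} doing the heavy lifting.
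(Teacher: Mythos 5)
Your overall strategy --- tracking the $\mathcal{C}$-decomposition through $\oplus_{\mathcal{C}}$ via the rigidity theorem and closing with the Fundamental Theorem --- is exactly the paper's approach, but your write-up does not actually close the argument, and you say so yourself (``finishing requires showing this forces the orbit to be a singleton''). The gap is that you never assemble the two facts you already have into the conclusion they immediately yield. Write $e = \phi(1_G)$ and use $A \oplus_{\mathcal{C}} B = B_L + A + B_R$. Since $\mathcal{C}$ is closed under sums and $\mathcal{C}^\perp$ is as well, Theorem \ref{th: rigidity} gives $(A \oplus_{\mathcal{C}} B)_L \cong B_L + A_L$ \emph{unconditionally} --- no case split on whether $A_R + B_R \in \mathcal{C}$ is needed, and note that $A_L$ sits as a \emph{final}, not initial, segment of this (your ``$A_L$ is always an initial segment of $(A\oplus_{\mathcal{C}}B)_L$'' has the direction reversed). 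Applying this to $\phi(g) \oplus_{\mathcal{C}} e = \phi(g)$ gives $e_L + \phi(g)_L \cong \phi(g)_L$, so $e_L$ is isomorphic to an initial segment of $\phi(g)_L$; applying it to $\phi(g) \oplus_{\mathcal{C}} \phi(g^{-1}) = e$ gives $e_L \cong \phi(g^{-1})_L + \phi(g)_L$, so $\phi(g)_L$ is isomorphic to a final segment of $e_L$. Theorem \ref{th: fundamental theorem} then yields $\phi(g)_L \cong e_L$ for \emph{every} $g$, and symmetrically $\phi(g)_R \cong e_R$. Since the rigidity decomposition is exhaustive --- $\phi(g) \cong \phi(g)_L + \phi(g)_R$, with no third ``middle'' piece --- this forces $\phi(g) \cong e$, hence $g = 1_G$ by injectivity of $\phi$.

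Your closing case analysis is symptomatic of the same confusion. The ``exotic'' scenario in which ``the left and right $\mathcal{C}$-parts are all trivial (empty) but the orders themselves are genuinely non-isomorphic'' cannot occur, because $X_L = X_R = \emptyset$ forces $X = \emptyset$; and in the case where every $\phi(g) \in \mathcal{C}^\perp$ the computation should read $\phi(g) \oplus_{\mathcal{C}} \phi(h) = \phi(g) + \phi(h)$, since then $\phi(h)_R = \phi(h)$ rather than $\emptyset$. The detours through elements of finite versus infinite order, and the recursive ``pinning down of the middle,'' are all unnecessary once the Fundamental Theorem step above is carried out for an arbitrary $g$.
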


\begin{proof}
    Say $\phi: G \rightarrow \mathcal{LO}$ is a representation of a group $(G, \cdot)$, and let $e \in G$ be the identity, and $a \in G$ be any element. Since $$\phi(e)_L + \phi(a) + \phi(e)_R = \phi(a) \oplus \phi(e) = \phi(a \cdot e) = \phi (a),$$  we must have $\phi(e)_L + \phi(a)_L = \phi(a)_L$ by the rigidity theorem. Similarly, $$\phi(e) = \phi(a\cdot a^{-1}) = \phi(a) \oplus \phi(a^{-1}) = \phi(a^{-1})_L + \phi(a) + \phi(a^{-1})_R,$$ so $\phi(e)_L = \phi(a^{-1})_L + \phi(a)_L$. By the Fundamental Theorem, $\phi(e)_L = \phi(a)_L$. 
    
    The exact same procedure may be carried out to show that $\phi(a)_R = \phi(a)_R$, thus $\phi(e) = \phi(a)$, so $e = a$. Since $a$ was arbitrary, this shows that $G$ is trivial.
\end{proof}

With a simple application of the bounds for sums of ordinals we found in \ref{subsect: good sums of well-orders}, we can also discard the case of sums on the ordinals given the analogous definition for representing sums in $(Ord, \oplus)$.

\begin{proposition} \label{prop: ordinal sums don't represent groups}
  Given a sum $\oplus$, the only group that can be represented in $(Ord, \oplus)$ is the trivial group.
\end{proposition}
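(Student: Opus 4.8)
The plan is to extract from the bounds on instances of sums of ordinals only the trivial lower bound $\alpha,\beta\le\alpha\oplus\beta$, and then play the group identity off against inverses. The key observation is that if $\gamma$ is any instance of a sum of $\alpha$ and $\beta$, then $\gamma$ contains disjoint suborders isomorphic to $\alpha$ and to $\beta$, so $\alpha\preceq\gamma$ and $\beta\preceq\gamma$; by Proposition \ref{prop: for ordinals embeds in is equivalent to less} this gives $\alpha\le\gamma$ and $\beta\le\gamma$. In particular, since $\oplus$ is a sum, $\alpha\le\alpha\oplus\beta$ and $\beta\le\alpha\oplus\beta$ for all ordinals $\alpha,\beta$. (One could instead quote $\alpha+_{min}\beta\le\alpha\oplus\beta$ from Theorem \ref{th: min sum is minimal/bounds on instances of a sum of ordinals}, but the monotonicity statement is all that is needed and it is immediate.)

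Next I would fix a representation $\phi\colon G\to Ord$ of a group $(G,\cdot)$, with $e\in G$ the identity, and let $a\in G$ be arbitrary. From $\phi(a)=\phi(a\cdot e)=\phi(a)\oplus\phi(e)$ and the monotonicity just observed, $\phi(e)\le\phi(a)\oplus\phi(e)=\phi(a)$. On the other hand, from $\phi(e)=\phi(a\cdot a^{-1})=\phi(a)\oplus\phi(a^{-1})$ we get $\phi(a)\le\phi(a)\oplus\phi(a^{-1})=\phi(e)$. Hence $\phi(a)=\phi(e)$, and since $\phi$ is one-to-one, $a=e$. As $a$ was arbitrary, $G$ is trivial.

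There is no real obstacle here: the whole argument is the remark that a sum of two ordinals dominates each of its summands, combined with the fact that in a group every element is an inverse of something, so the identity both dominates and is dominated by every $\phi(a)$. The only point to be careful about is that the definition of representation forces $\phi$ to be injective, which is exactly what turns the equality $\phi(a)=\phi(e)$ into $a=e$.
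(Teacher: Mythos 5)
Your proposal is correct and takes essentially the same approach as the paper: both arguments rest on the fact that a sum of two ordinals dominates each summand, and then play the identity off against inverses. The only (harmless) difference is that you justify the monotonicity directly from the embedding of each summand into any instance of a sum together with Proposition \ref{prop: for ordinals embeds in is equivalent to less}, whereas the paper quotes the lower bound $\alpha +_{min} \beta \le \alpha \oplus \beta$ from Theorem \ref{th: min sum is minimal/bounds on instances of a sum of ordinals}.
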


\begin{proof}
    For any two ordinals $\alpha, \beta$, $\alpha \oplus \beta \geq \alpha +_{min} \beta \geq \max\{\alpha, \beta\}$. 
    
    If $\varepsilon$ represents the identity element, $\alpha$ represents some non-trivial element and $\alpha^{-1}$ its inverse, then
    $$\alpha = \varepsilon \oplus \alpha \geq \max\{\varepsilon, \alpha\}$$
    $$\alpha^{-1} = \varepsilon \oplus \alpha^{-1} \geq \max\{\varepsilon, \alpha^{-1}\}.$$
    So, $\alpha, \alpha^{-1} > \varepsilon$, but
    $$\varepsilon = \alpha \oplus \alpha^{-1} \geq \max\{\alpha, \alpha^{-1}\}$$
    a contradiction.
\end{proof}

At this point, it seems like we've exhausted most reasonable sums we usually work with, they're just too rigid! 

That said, noticing that representing a non-trivial group is a local property, we can use the more fluid complicated classes to ``force'' a representation of a non-trivial group.

Working in $\mathcal{Q}$, let our base set be $E = \{\mathbb{Q}(2) + \mathbb{Q}, \mathbb{Q}(1,2) + \mathbb{Q}\}$. Define the sum $\oplus_E$ on $E$ by

$$(\mathbb{Q}(2) + \mathbb{Q})\oplus_E (\mathbb{Q}(2) + \mathbb{Q}) = \mathbb{Q}(2) + \mathbb{Q}$$ 
$$(\mathbb{Q}(2) + \mathbb{Q}) \oplus_E (\mathbb{Q}(1,2) + \mathbb{Q}) = (\mathbb{Q}(1,2) + \mathbb{Q}) \oplus_E (\mathbb{Q}(2) + \mathbb{Q}) = \mathbb{Q}(1,2) + \mathbb{Q}$$ 
$$(\mathbb{Q}(1,2) + \mathbb{Q}) \oplus_E (\mathbb{Q}(1,2) + \mathbb{Q}) = \mathbb{Q}(2) + \mathbb{Q}.$$

It is not hard to check that these solutions are instances of sums of the pairs given and that $\oplus_E$ is a good sum on $E$ (this is just the multiplication table for $\mathbb{Z}/2\mathbb{Z}$). Extend $\oplus_E$ to a global sum $\oplus$ by following the same steps we did for our failure of canonical regularity.

The mapping $\phi: \mathbb{Z}/2\mathbb{Z} \rightarrow \mathcal{LO}$ such that $$\phi([0]) = \mathbb{Q}(2) + \mathbb{Q}, \; \phi([1]) = \mathbb{Q}(1,2) + \mathbb{Q}$$
is a representation of $\mathbb{Z}/2\mathbb{Z}$ in $(\mathcal{LO}, \oplus)$.

\printbibliography

\end{document}